\documentclass[letter,11pt]{article}

\usepackage[english]{babel}
\usepackage[utf8]{inputenc}

\usepackage{lmodern}

\usepackage[margin=.95 in, top=1.05 in, bottom= 1.1 in]{geometry}
\makeatletter
\g@addto@macro\normalsize{%
  \setlength\abovedisplayskip{7pt}
  \setlength\belowdisplayskip{7pt}
  \setlength\abovedisplayshortskip{7pt}
  \setlength\belowdisplayshortskip{7pt}
}
\makeatother

\usepackage{tocloft}

\usepackage{amsfonts}
\usepackage{mathrsfs}
\usepackage{bbm}
\usepackage{latexsym}
\usepackage{amssymb}
\usepackage{mathtools}
\usepackage{enumitem}
\setlist{nolistsep} 	%[label=(\roman{enumi}),ref=(\roman{enumi}),leftmargin=*]
\usepackage{amsthm}
\usepackage{tikz-cd}
\usetikzlibrary{tikzmark}

\usepackage{colortbl}
\usepackage{xcolor}
\definecolor{Color1}{rgb}{0.0, 0.42, 0.47}%Blue
\definecolor{Color2}{rgb}{0.78, 0.11, 0.0}%Scarlet

\usepackage{titlesec}
\titleformat{\section}
  {\large\center\bfseries}
  {\thesection.}{.7em}{}
\titlespacing*{\section}{0pt}{3.5ex plus 0ex minus 0ex}{1.5ex plus 0ex}
\titleformat{\subsection}
  {\center\bfseries}
  {\thesubsection.}{.7em}{}
\titlespacing*{\subsection}{0pt}{3.5ex plus 0ex minus 0ex}{1.5ex plus 0ex}
\titleformat{\subsubsection}
  {\center\bfseries}
  {\thesubsubsection.}{.7em}{}
\titlespacing*{\subsubsection}{0pt}{3.5ex plus 0ex minus 0ex}{1.5ex plus 0ex}

\addto\captionsenglish{}

\usepackage{titling}
\makeatletter
\renewenvironment{abstract}{
\begin{center}
{\bfseries \large\abstractname\vspace{\z@}}
\end{center}
\quotation
}
\makeatother

\usepackage[linktocpage=true]{hyperref}
\usepackage[capitalize]{cleveref}
\hypersetup{citecolor = Color1,colorlinks,
			linkcolor = black,
			urlcolor = Color2}

\newtheoremstyle{plain}{3mm}{3mm}{\slshape}{}{\bfseries}{.}{.5em}{}
\newtheoremstyle{definition}{2mm}{2mm}{}{}{\bfseries}{.}{.5em}{}
\theoremstyle{plain} %PLAIN
	
\newtheorem{theorem}{Theorem}[section]

\newtheorem{proposition}[theorem]{Proposition}

\newtheorem{conjecture}[theorem]{Conjecture}
\newtheorem{lemma}[theorem]{Lemma}
\newtheorem{corollary}[theorem]{Corollary}
\theoremstyle{definition} %DEFINITION
\newtheorem{definition}[theorem]{Definition}

\newtheorem{remark}[theorem]{Remark}
\newtheorem{example}[theorem]{Example}

\theoremstyle{plain} 
\newcounter{MainTheoremCounter}

\theoremstyle{plain} 
\newcounter{MainConjectureCounter}

\theoremstyle{plain}
\newtheorem*{namedthm}{\namedthmname}
\newcounter{namedthm}
\makeatletter
	\newenvironment{named}[2]
	{\def\namedthmname{#1}
	\refstepcounter{namedthm}
	\namedthm[#2]\def\@currentlabel{#1}}
	{\endnamedthm}
\makeatother

\newtheoremstyle{plainTWO}{\topsep}{\topsep}{\slshape}{}{\bfseries}{.}{.5em}{}
\theoremstyle{plainTWO}
\newtheorem*{vdwthm}{\vdwthmname}
\newcounter{vdwthm}
	

\newtheorem{vdwTWOthm}[theorem]{\vdwTWOthmname}
\makeatletter
	\newenvironment{vdwTWO}[2]
	{\def\vdwTWOthmname{Theorem}
	\vdwTWOthm[{#1} {{(#2)}}]\def\@currentlabel{#2}}
	{\endvdwTWOthm}
\makeatother

\usepackage{chngcntr}
\numberwithin{equation}{section}

\allowdisplaybreaks

\newcommand{\Cech}{\v{C}ech}

\newcommand{\Erdos}{Erd\H{o}s}
\newcommand{\Folner}{F\o{}lner}
\newcommand{\Poincare}{Poincar\'{e}}
\newcommand{\Szemeredi}{Szemer\'{e}di}

\newcommand{\Sarkozy}{S\'{a}rk\"{o}zy}

\newcommand{\N}{\mathbb{N}}
\newcommand{\Z}{\mathbb{Z}}

\newcommand{\Q}{\mathbb{Q}}

\newcommand{\define}[1]{{\itshape #1}}

\renewcommand{\epsilon}{\varepsilon}
\renewcommand{\leq}{\leqslant}
\renewcommand{\geq}{\geqslant}
\renewcommand{\setminus}{\backslash}

\renewcommand{\subset}{\subseteq}
\renewcommand{\supset}{\supseteq}

\usepackage[normalem]{ulem}

\usepackage[makeroom]{cancel}

\newcommand{\ultra}[1]{\mathfrak{#1}}
\newcommand{\cl}{\operatorname{cl}}
\newcommand{\abbrvfont}[1]{\small\textbf{#1}}
\newcommand{\wildcard}{\star}
\newcommand{\concat}{\mathbin{\raisebox{.8ex}{\scalebox{.8}{${\smallfrown}$}}}}

\author{By~~{\scshape Vitaly~Bergelson}~~and~~{\scshape Florian~K.~Richter}}
\date{\small \today}
\title{\bfseries Van der Waerden's theorem on arithmetic progressions --- a survey of some historical and modern developments
}

\begin{document}

\maketitle
\begin{abstract}
Van der Waerden's theorem, published in \emph{Nieuw.~Arch.~Wisk.~15 (1927)}, acted as a catalyst for major further developments in Ramsey theory.
In this survey, we delve into the legacy of this mathematical gem, tracing its historical origin, exploring its wide-ranging connections across research areas, and highlighting some of the important results it has inspired.
\end{abstract}

\tableofcontents
\thispagestyle{empty}

\setcounter{section}{-1}
\section{Prologue}
\label{sec_prologue}

Let $p$ denote a prime number. An integer $x$ is called a \define{quadratic residue} modulo $p$ if it is congruent to a perfect square modulo $p$; i.e., if there exists an integer $y$ such that
\[
x \equiv y^2\bmod{p}.
\]
Otherwise, $x$ is called a \define{quadratic nonresidue} modulo $p$.
In his 1906 thesis written under the direction of Schur and Frobenius, Jacobsthal \cite[page 33]{Jacobsthal06} showed that for any large enough prime $p$ one can find $4$ consecutive integers that are quadratic residues modulo $p$, and $4$ consecutive integers that are quadratic nonresidues modulo $p$ (cf.~also \cite{Jacobsthal07}). 
This led Schur to pose the following conjecture.

\begin{named}{Schur's conjecture on consecutive quadratic residues and nonresidues}{}
\label{conj_schur_quadratic_residues}
For every $k\in\N$ and all sufficiently large primes  $p$, there exist $k$ consecutive numbers which are quadratic residues modulo~$p$, and $k$ consecutive numbers which are quadratic non-residues modulo~$p$.
\end{named}

Schur's conjecture remained open for around 20 years, with its resolution ultimately relying on innovative contributions from combinatorics. 
According to the attestation of Brauer \cite[page 352]{Brauer55}, another one of Schur's students, Schur was aware that the quadratic residue case of his conjecture would follow from the following purely combinatorial statement.

\begin{named}{Schur's conjecture on arithmetic progressions}{}
\label{conj_schur_arithmetic_progressions}
For every $k\in\N$ and all sufficiently large integers $N$, if $\{1,\ldots,N\}$ is colored using two colors then at least one color contains an arithmetic progression of length $k$ together with its common difference.  
\end{named}

Significant progress towards Schur's second conjecture was made in 1927, when van der Waerden \cite{vdW27} proved (with the help of Artin and Schreier, see \cite{vdW98}) the following by now classical result.

\begin{named}{Van der Waerden's theorem}{}
\label{nam_vdW}
For every $r,k\in\N$ and all sufficiently large integers $N$, if $\{1,\ldots,N\}$ is colored using $r$ colors then at least one color contains an arithmetic progression of length $k$.
\end{named}
 
Van der Waerden regarded the statement of what would later bear his name -- initially formulated for $2$ colors rather than $r$ colors -- as a conjecture posed by Baudet, a Dutch mathematician who passed away in 1921. 
However, no written record from Baudet himself on this topic is known to exist (cf.~\cite{deBruijn77}), and 
it remains unclear whether he conceived the conjecture on his own or encountered it through another source, as it is a special case of Schur's conjecture on arithmetic progressions, which was popularized by Landau and other mathematicians at the time and hence widely known in Germany (cf.~\cite[p.~20]{Brauer69} and \cite[p.~XIII--XIV]{Schur73}).
We refer the reader to \cite[pp.~379–401]{Soifer09}, where the author undertakes a thorough investigation of the history of van der Waerden's theorem and arrives at the conclusion that Baudet conceived his conjecture independently from Schur.

The news of van der Waerden's accomplishment reached Schur quickly. 
Below, Brauer \cite[p.~16]{Brauer69} recalls the time when he and his brother visited their advisor Schur, and von Neumann brought the news:

\begin{quote}
\centering
\begin{minipage}{0.8\textwidth}
\textsl{``One day in September of 1927, my brother and I were visiting Schur when v. Neumann dropped in, having just returned from the annual meeting of the German Mathematical Society. He wanted to inform Schur that B.~L.~van der Waerden \cite{vdW27} had presented a proof of Schur's conjecture at the meeting in the slightly more general form that the numbers $1,2,\ldots,n$ are distributed in $k$ classes instead of two classes, by induction for $2$ and every $k$, at the suggestion of E.~Artin.}
\end{minipage}

\begin{minipage}{0.8\textwidth}
\hspace*{1.3em}\textsl{Schur was highly excited, but after a few minutes he was disappointed when he saw that this result did not prove his original conjecture. It follows directly from van der Waerden's Theorem only that there exist either a sequence of $l$ residues or $l$ non-residues.''}
\end{minipage}
\end{quote}
Here are some pertinent additional details provided by Brauer in his review of Khintchine's book \cite[p.~352]{Brauer55}.
\begin{quote}
\centering
\begin{minipage}{0.8\textwidth}
\hspace*{1.3em}
\textsl{``But a few days later, I [the reviewer] succeeded in proving Schur's conjecture for quadratic residues, the corresponding theorems for quadratic non-residues, higher power residues, and later the theorem for the \( k-1 \) classes of \( k \)-th power non-residues, while Schur proved the stronger form of van der Waerden's theorem (Sitzungsber. Preussische Akademie d. Wiss. Phys.-Math. Kl. (1928) pp. 9-16 and (1931) pp. 329-341). For the proof of each of these results, the theorem of van der Waerden is used.''}
\end{minipage}
\end{quote}
\medskip

We will now formulate and prove the ``stronger form of van der Waerden's theorem'' mentioned in the above quote, which extends \ref{conj_schur_arithmetic_progressions} from $2$ to $r$ colors.
Although often referred to as Brauer's theorem in the literature, we call it the Brauer-Schur theorem, because according to \cite{Brauer28, Brauer69}, its proof was conceived in tandem by both of them.

\begin{named}{Brauer-Schur theorem}{see {\cite[Satz~5]{Brauer28}}}
\label{nam_brauer-schur}
For every $r,k\in\N$ and all sufficiently large integers $N$, if $\{1,\ldots,N\}$ is colored using $r$ colors then at least one color contains an arithmetic progression of length $k$ together with its common difference.
\end{named}

\begin{proof}[Proof of the \ref{nam_brauer-schur} assuming van der Waerden's theorem]
We proceed by induction on the number $r$ of colors. The base case $r = 1$ is immediate. For the inductive step, assume the statement holds for $r - 1$, which ensures that that there exists $n$ such that for any $(r-1)$-coloring of $\{1,\ldots,n\}$ there is a color that contains an arithmetic progression of length $k$ together with its common difference.
If $N$ is chosen sufficiently large in terms of the parameters $r$, $k$, and $n$, then it follows from van der Waerden's theorem that any $r$-coloring of the set $\{1,\ldots,N\}$ admits a monochromatic arithmetic progression of length $kn$, say $\{a+d, a+2d, \dots, a+knd\}$. 
If any of the elements $d, 2d, \dots, nd$ has the same color as the elements in the progression, then we are done.
Otherwise, the set $\{d, 2d, \dots, nd\}$ is $(r - 1)$-colored. In this case, by the induction hypothesis, we can find a $k$-term progression and its difference within this set, completing the proof.
\end{proof}

With the Brauer-Schur theorem at hand, it is then easy to establish the existence of arbitrarily long strings of consecutive quadratic residues and nonresidues, thus resolving Schur's original conjecture. We follow the argument presented in \cite{Brauer69}.

\begin{proof}[Proof of \ref{conj_schur_quadratic_residues}]
We view the partitioning of $\{1, \ldots, p-1\}$ into quadratic residues and nonresidues as a $2$-coloring. By the \ref{nam_brauer-schur}, if $p$ is sufficiently large then we can find $a$ and $d$ such that $\{a,a+d,\ldots,a+(k-1)d\}\cup \{d\}$ consists only of quadratic residues or only of quadratic nonresidues. Note that a quadratic residue divided by a quadratic residue is a quadratic residue, and a quadratic nonresidue divided by a quadratic nonresidue also yields a quadratic residue. So in either case, if we divide the progression $\{a,a+d,\ldots,a+(k-1)d\}$ by its common difference $d$, we obtain $k$ consecutive quadratic residues mod~$p$. (Note that modulo $p$, every nonzero element has a multiplicative inverse, so division by $d$ is well defined.)

The proof for $k$ consecutive quadratic nonresidues mod~$p$ is slightly more technical.
Let $d$ denote the smallest quadratic nonresidue in $\{0,1,\ldots,p-1\}$. Note that since $d$ is a quadratic nonresidue, at least one of its prime factors must be a quadratic nonresidue. But since $d$ is the smallest quadratic nonresidue, we conclude that $d$ is a prime number.
If $p$ is sufficiently large then, according to the first part, we can find a string $n,n+1,\ldots,n+k!(k-1)-1$ of quadratic residues. If $d<k!$ then
\[
n,n+d,\ldots,n+(k-1)d
\]
is a progression of length $k$ of quadratic residues whose difference is a quadratic nonresidue. Diving by $d$ mod~$p$ yields $k$ consecutive quadratic nonresidues as desired.

If $d>k!$ then we can find $c\in\{1,\ldots,k!-1\}$ such that $d\equiv c\bmod k!$. Observe that $\frac{c-d}{j}$ is an integer for all $j=1,2,\ldots,k$ and
\[
0< \frac{c-d}{j}+d<d.
\]
So for $j=1,\ldots,k$ the number $(\frac{c-d}{j}+d)j= c+(j-1)d$ is a product of positive numbers smaller than $d$. Since all positive numbers smaller than $d$ are quadratic residues, we conclude that $c,c+d,\ldots,c+(k-1)d$ is a progression of length $k$ of quadratic residues whose difference is a quadratic nonresidue. As before, diving by $d$ yields $k$ consecutive quadratic nonresidues.
\end{proof}

\section{An overview of the structure of this survey}
\label{sec_overview_of_structure}

The goal of this survey is to present van der Waerden's theorem as a multifaceted and fundamental result in \emph{partition Ramsey theory}, with rich connections to other areas of mathematics. 
We note at the outset that van der Waerden's theorem has also had a significant impact on the development of \emph{density Ramsey theory}, where it continuous to inspire and shape ongoing research, but discussing these relations falls outside the scope of this survey.   
%Still, the density polynomial Hales-Jewett conjecture that we formulate in the last section does belong to density Ramsey theory.

In Section~\ref{sec_equivalent_forms_of_vdW}, we present eleven different equivalent formulations of van der Waerden’s theorem, grouped together thematically into three categories depending on whether they have predominantly combinatorial, topological, or algebraic flavor. Although this looks like an excessive amount, the wide variety of these reformulations offers distinct and insightful ways of understanding the theorem, which in turn leads to new ideas, techniques, and sometimes even to entire new directions of research.

In Section~\ref{sec_proof_of_equivalentcies_of_vdW} we provide the proofs of the equivalences between the various forms of van der Waerden’s theorem presented in Section~\ref{sec_equivalent_forms_of_vdW}. 
For the convenience of the reader, the diagram of the implications for which we provide a proof is given in \cref{fig_implicationgraph}. The tools and perspectives developed in this section not only allow us to compare and contrast viewpoints on van der Waerden's theorem, but also paves the way for alternative proof methods in the following section.

In Section~\ref{sec_proofs_of_vdW} we present three different proofs of van der Waerden's theorem. We provide a combinatorial proof which parallels existing proofs in the literature, but with the novelty that the induction is driven by an intersectivity lemma for subsets of the integers that are piecewise syndetic.
Our second proof uses topological dynamics, relying on a topological version of van der Corput's difference theorem.
A noteworthy aspect of this approach is that it can be upgraded with relative ease to yield a proof of the polynomial van der Waerden theorem (which is carried out in \cref{sec_poly_vdW}).
Finally, our third proof is a new rendering of the algebraic proof of van der Waerden's theorem which evolved from private communications with Neil Hindman. 

Van der Waerden's theorem inspired many generalizations, including those to higher dimensions, more general (semi)groups, and other configurations (such as systems of equations or polynomial progressions). Section~\ref{sec_outgrowths_of_vdW} is devoted to exploring some of these generalizations in more detail.
Specifically, we discuss Rado’s theorem, the Gallai-Witt theorem, the IP van
der Waerden Theorem, the Hales-Jewett theorem, the polynomial van der Waerden theorem, and the polynomial Hales-Jewett theorem.
In the case of Rado's theorem, we present a new proof harnessing the multiplicative combinatorial properties of the integers.  
For the Hales-Jewett theorem and the polynomial van der Waerden theorem, we build upon the approaches developed in Section~\ref{sec_proofs_of_vdW}, yielding new proofs of these theorems as well.

Finally, in \cref{sec_two_open_problems} we formulate two conjectures which in our opinion occupy a central position in the sub-area of Ramsey theory discussed in this survey and which can be seen as far-reaching extensions of van der Waerden’s theorem.

\section{Equivalent forms of van der Waerden's theorem}
\label{sec_equivalent_forms_of_vdW}

In this section, we provide a list of equivalent forms of van der Waerden's theorem. This will serve as a roadmap for our exploration of the historical and mathematical developments over the last~99~years that emerged from this result. 
Similar collections of equivalent formulations can be found in \cite{Rabung75, Brown75, McCutcheon99,LR14b}.

Our list contains combinatorial, topological, and algebraic formulations of van der Waerden's theorem, each offering its own perspective on the result.
In this section, we give the formulations of these equivalent forms;
the proofs of the equivalences are presented in \cref{sec_proof_of_equivalentcies_of_vdW}. 
To help the reader navigate the tangled interplay between the formulations on our list, we include an implication diagram in \cref{fig_implicationgraph} outlining the logical relations that we establish.

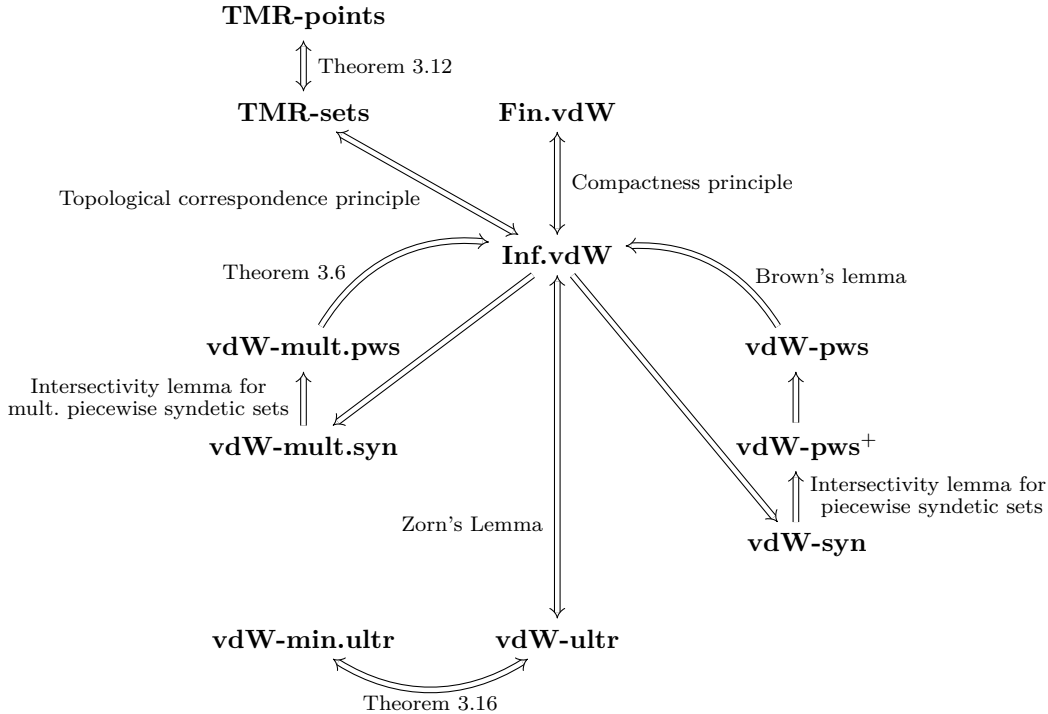
\begin{figure}[h!]
\centering
\begin{tikzcd}
& \textbf{\ref{TDII}}\arrow[d, Leftrightarrow, "\substack{\text{ \cref{lem_residual_set_of_multiply_recurrent_points}}}"]&&&&&&
\\
&\textbf{\ref{TDI}}\arrow[swap, ddr, Leftrightarrow, "\substack{\text{Topological correspondence principle}}"]&\textbf{\ref{COI}}\arrow[dd, Leftrightarrow, "\substack{\text{ Compactness principle}}"]&&&&&
\\
&&&&&&&
\\
&&\textbf{\ref{COII}}&&&&&
\\
&\textbf{\ref{MII}}\arrow[d, Leftarrow, "\substack{\text{Intersectivity lemma for }\\ \text{mult.~piecewise syndetic sets }}" left] \arrow[bend left=32, ur, Rightarrow, "\text{\cref{lem_mult_pws_partition_regularity}}" near start]&&\textbf{~~\ref{AII}}\arrow[swap, bend right=30, ul, Rightarrow, "\substack{\text{Brown's lemma}}" near start]&&&&
\\
&\textbf{\ref{MI}}\arrow[uur, Leftarrow] &&\textbf{~~\ref{AIII}}\arrow[u, Rightarrow]&&&&
\\
&&&\textbf{~~\ref{AI}}\arrow[swap, u, Rightarrow, "\substack{\text{ Intersectivity lemma for }\\\text{ piecewise syndetic sets}}"]\arrow[uuul, Leftarrow]&&&&
\\
&\textbf{\ref{UII}}\arrow[swap, bend right=32, r,Leftrightarrow, "\substack{\vspace*{.2em} \text{\cref{lem_ideals_contain_minimal_left_ideals}}}"]&\textbf{\ref{UI}}\arrow[uuuu,Leftrightarrow, "\substack{\text{ Zorn's Lemma }\vspace*{.6em}}" near start]&&&&&
\end{tikzcd}
\begin{minipage}{0.8\textwidth}
\caption{This diagram depicts the implications between the equivalent forms of van der Waerden's theorem considered in this survey. An arrow in the diagram with a label next to it corresponds to an implication for which we provide a proof in \cref{sec_proof_of_equivalentcies_of_vdW}, and the label specifies the key technical result underpinning this proof. Unlabeled arrows indicate immediate implications that do not require a proof. 
Below is a table that clarifies the abbreviations used in this diagram.}
\label{fig_implicationgraph}
\end{minipage}
\vspace*{.7em}

\setlength{\tabcolsep}{.07em}
\resizebox{0.88\textwidth}{!}{
\begin{tabular}{p{8em}l}
    \textbf{\ref{COI}}$\dotfill$&Finitary van der Waerden's theorem (Theorem 2.1)\\
    \textbf{\ref{COII}}$\dotfill$&Infinitary van der Waerden's theorem (Theorem 2.2)\\
    \textbf{\ref{AI}}$\dotfill$&van der Waerden's theorem for syndetic sets (Theorem 2.3)\\
    \textbf{\ref{AII}}$\dotfill$&van der Waerden's theorem for piecewise syndetic sets (Theorem 2.4)\\
    \textbf{\ref{AIII}}$\dotfill$&van der Waerden's theorem for piecewise syndetic sets -- amplified version (Theorem 2.5)\\
    \textbf{\ref{MI}}$\dotfill$&van der Waerden's theorem for multiplicatively syndetic sets (Theorem 2.6)\\
    \textbf{\ref{MII}}$\dotfill$&van der Waerden's theorem for multiplicatively piecewise syndetic sets (Theorem 2.7)\\
    \textbf{\ref{TDI}}$\dotfill$&Topological Multiple Recurrence Theorem -- set recurrence version (Theorem 2.8)\\
    \textbf{\ref{TDII}}$\dotfill$&Topological Multiple Recurrence Theorem -- point recurrence version (Theorem 2.9)\\
    \textbf{\ref{UI}}$\dotfill$&Ultrafilter formulation of van der Waerden's theorem (Theorem 2.12)\\
    \textbf{\ref{UII}}$\dotfill$&Ultrafilter formulation of van der Waerden's theorem -- version for minimal\\&ultrafilters (Theorem 2.13)\\
\end{tabular}
}
\end{figure}

\subsection{Classical versions of van der Waerden's theorem}

We begin by presenting the two most common formulations of van der Waerden's theorem found in the literature.
The first is van der Waerden's original way of stating the theorem.

\begin{vdwTWO}{Finitary van der Waerden's theorem}{\abbrvfont{Fin.vdW}}
\label{COI}
For any $r,k\in\N$ there exists a number $W(r,k)\in\N$ such that if $N\geq W(r,k)$ then any $r$-coloring of the set $\{1,\ldots,N\}$ admits a monochromatic $k$-term arithmetic progression. 
\end{vdwTWO}

This formulation of van der Waerden's theorem naturally raises the question about how large $W(r,k)$ has to be for the conclusion to hold.
The smallest such number $W(r,k)\in\N$ is called the \define{van der Waerden number} for $(r,k)$.
All known van der Waerden numbers to date are 
$W(2,3)=9$, $W(3,3)=27$ \cite{Chvatal70}, $W(4,3)=76$ \cite{BO79}, $W(2,4)=35$ \cite{Chvatal70}, $W(3,4)=293$ \cite{Kouril12}, $W(2,5)=178$ \cite{SS78}, and $W(2,6)=1132$ \cite{KP08}.

For larger values of $r$ and $k$, for a long time the best known upper bounds for $W(r,k)$ came from Gower's proof of \Szemeredi{}'s theorem \cite{Gowers01}, yielding
\[
W(r,k)\leq 2^{2^{r^{2^{2^{k+9}}}}}.
\]
This was improved recently by Leng, Sah, and Sawhney \cite{LSS24arXiv}, who provided better bounds for \Szemeredi{}'s theorem based on an improved version of the inverse theorem for the Gowers uniformity norms. Their findings imply 
\[
W(r,k)\leq 2^{2^{(\log r)^{c_k}}},
\]
where $c_k\geq 1$ is a constant depending only on $k$. 

In 1952, \Erdos{} and Rado \cite{ER52} established the lower bound
\[
W(r,k)\geq \sqrt{2(k-1)r^{k-1}}.
\]
When $k=p+1$ for some prime number $p$, Berlekamp showed $  
W(2,p+1)>p2^p$, which was extended in \cite{BCT18} to $  
W(2,p+1)>p^{r-1}2^p$ for any $2\leq r\leq p$.
For more on lower bounds for van der Waerden numbers, we refer to the survey \cite{GH11}.

Leaving behind the quantitative aspects of van der Waerden numbers, we now turn to the classical qualitative formulation of van der Waerden's theorem.

\begin{vdwTWO}{Infinitary van der Waerden's theorem}{\abbrvfont{Inf.vdW}}{}\label{COII}
Let $k\in\N$. Any finite coloring of $\N$ admits a monochromatic $k$-term arithmetic progression. 
\end{vdwTWO}

There are several ways to establish the equivalence \ref{COI}$\iff$\ref{COII}. We employ the \define{compactness principle}, an important tool in Ramsey theory.
The statement and proof of the compactness principle and a proof of the equivalence \ref{COI}$\iff$\ref{COII} are given in \cref{subsec_compactness_principle}.

\subsection{Additional combinatorial versions of van der Waerden's theorem}
\label{sec_additional_combinatorial_versions_of_vdW}

While the standard formulations of van der Waerden's theorem (\ref{COI} and~\ref{COII}) are simple and elegant, they raise several questions and leave certain aspects unaddressed.
For example, is there a way to identify the color that contains the progression? Can one ensure the existence of many monochromatic arithmetic progressions, and if so, how many? How much freedom do we have in choosing the common differences of these arithmetic progressions?
Some of the answers to these questions are provided by the equivalent forms of van der Waerden's theorem that we discuss in this section.

Given a set $A\subset\N$ and an integer $t\in\Z$, we define $A-t=\{n\in\N: n+t\in A\}$. Note that if $A-t$ contains a $k$-term arithmetic progression, then so does $A$. This simple observation suggests that shift-invariant notions of largeness are suitable to discern sets that contain (many) arithmetic progressions.  
The notions of \define{syndetic} and \define{thick} sets play a particularly significant role in this context.
They originated from topological dynamics, dating back to at least 1955 in \cite{GH55}, where instead of ``thick'' the term ``replete'' was used (cf.~\cite[p.~56]{Hindman20}).
\begin{itemize}
\item A set $S\subset\N$ is \define{syndetic} if there exists $h\in\N$ such that $S\cup (S-1)\cup\ldots\cup (S-h)=\N$.
\item
A set $T\subset\N$ is \define{thick} if %its complement is not syndetic. In other words,
for every $h\in\N$, $T\cap (T-1) \cap \ldots\cap(T-h)\neq\emptyset$.
\end{itemize}
Note that syndetic and thick sets are complementary notions, in the sense that a set is syndetic if and only if its complement is not thick. 

The following reformulation of van der Waerden's theorem was first considered by Kakeya and Morimoto \cite{KM30} and later again by Rabung \cite[Statement D]{Rabung75}. By now, it is also considered to be one of the classical reformulations of van der Waerden's theorem.

\begin{vdwTWO}{van der Waerden's theorem for syndetic sets}{\abbrvfont{vdW-syn}}{}%{vdW\hspace{-.12em}\_\hspace{.04em}a1}{}
\label{AI}
Let $k\in\N$. Every syndetic subset of $\N$ contains a $k$-term arithmetic progression. 
\end{vdwTWO}

Syndeticity imposes a rather rigid condition, requiring a uniform bound on the distance between consecutive elements across all of $\N$. We can relax 
this condition by allowing gaps.
Note that the intersection of a syndetic set and a thick set is always non-empty, leading to the following definition.
\begin{itemize}
\item
A set $A\subset\N$ is \define{piecewise syndetic} if there exist a syndetic set $S\subset \N$ and a thick set $T\subset \N$ such that $A=S\cap T$.
\end{itemize}
The term ``piecewise syndetic'' was coined by Furstenberg in \cite{Furstenberg81a}, but the property already appears in earlier works (see \cite{Brown71,Hindman73}).

Piecewise syndetic sets are particularly important in connection to van der Waerden's theorem. On the one hand, they are robust enough so that they cannot be ``destroyed'' by a finite coloring (see \cref{lem_pws_partition_regularity}). 
On the other hand, 
they are structurally rich enough to always contain arithmetic progressions of any length,
leading us to the next equivalent form of van der Waerden's theorem.

\begin{vdwTWO}{van der Waerden's theorem for piecewise syndetic sets}{\abbrvfont{vdW-pws}}{}%{vdW\hspace{-.12em}\_\hspace{.04em}a2}{}
\label{AII}
Let $k\in\N$. Every piecewise syndetic subset of $\N$ contains a $k$-term arithmetic progression.
\end{vdwTWO}

Along the way of deriving \ref{AII} from \ref{AI}, we obtain a strengthening of \ref{AII}, denoted by \ref{AIII}, which is of sufficient interest to merit being mentioned separately.
Roughly stated, it says that any piecewise syndetic set contains a piecewise syndetic set of $k$-term progressions.
To give the precise statement, it will be convenient to introduce the following terminology: Given a $k$-term arithmetic progression
\[
\{b, b+d,\ldots,b+(k-1)d\}
\]
we refer to $b$ as the \define{starter} and to $d$ as the \define{common difference} of this progression.

\begin{vdwTWO}{van der Waerden's thm.~for piecewise syndetic sets -- amplified version}{\abbrvfont{vdW-pws$^+$}}{}
%{vdW\hspace{-.12em}\_\hspace{.04em}a3}{}
\label{AIII}
Let $k\in\N$. For every piecewise syndetic set $A\subset\N$ there exist a piecewise syndetic set of starters $B\subset\N$ and a common difference $d\in\N$ such that $\{b,b+d,\ldots,b+(k-1)d: b\in B\}\subset A$.
\end{vdwTWO}

The implication \ref{COII}$\implies$\ref{AI} follows immediately from the observation that finitely many shifts of a syndetic set cover all of $\N$, and if a shift of the syndetic set contains a $k$-term progression, then the syndetic set itself must contain one.

The implication \ref{AI}$\implies$\ref{AIII} follows from an \define{intersectivity lemma} for piecewise syndetic sets, the statement and proof of which %, as well as a proof of the implication, are 
is given in \cref{sec_intersectivity_lemmas}.

Since the implication \ref{AIII}$\implies$\ref{AII} is immediate, the cycle 
\begin{center}
\begin{tikzcd}
\text{\ref{COII}}\arrow[swap, r, Rightarrow]&\text{\ref{AI}}\arrow[swap, r, Rightarrow]&\text{\ref{AIII}}\arrow[swap, r, Rightarrow]
&\text{\ref{AII}}
\arrow[to path={ -- ([yshift=-4ex]\tikztostart.south) -| (\tikztotarget)},Rightarrow, rounded corners=12pt]{lll}.
\end{tikzcd}
\end{center}
is completed by showing \ref{AII}$\implies$\ref{COII}. This is a consequence of \define{Brown's lemma}, which asserts that if $\mathbb{N}$ is partitioned into finitely many pieces then at least one of them is piecewise syndetic; see \cref{sec_intersectivity_lemmas} for more details.

The family of arithmetic progressions in $\N$ is not only shift-invariant, but also dilation-invariant. In particular, if $A/t=\{n\in\N:tn\in A\}$ contains a $k$-term arithmetic progression for some $t\in\N$ then $A$ does too. For this reason, dilation-invariant notions of largeness naturally lead to additional equivalent versions of van der Waerden's theorem. 
For a more comprehensive exploration of the richness of combinatorial structures related to dilation-invariant notions of largeness, we refer the reader to \cite{BH93b,Bergelson05,BBHS06,BBHS08,Bergelson10,BG20}.
In this survey, we restrict our attention to multiplicative analogues of syndetic, thick, and piecewise syndetic sets:
\begin{itemize}
\item A set $S\subset\N$ is \define{multiplicatively syndetic} if for some $h\in\N$ one has $S\cup S/2\cup\ldots\cup S/h=\N$.
\item
A set $T\subset\N$ is \define{multiplicatively thick} if for any $h\in\N$, $T\cap T/2\cap \ldots \cap T/h\neq\emptyset$. Equivalently, a set is multiplicatively thick if its complement is not multiplicatively syndetic.
\item
A set $A\subset\N$ is \define{multiplicatively piecewise syndetic} if there exist a multiplicatively syndetic set $S\subset \N$ and a multiplicatively thick set $T\subset \N$ such that $A=S\cap T$.
\end{itemize}

The following two theorems are the multiplicative counterparts of \ref{AI} and \ref{AII}, and we prove their equivalence to \ref{COII} in \cref{sec_mult_equivalences}.

\begin{vdwTWO}{van der Waerden's theorem for multiplicatively syndetic sets}{\abbrvfont{vdW-mult.syn}}{}%{vdW\hspace{-.12em}\_\hspace{.03em}m1}{}
\label{MI}
Let $k\in\N$. Every multiplicatively syndetic subset of $\N$ contains a $k$-term arithmetic progression.
\end{vdwTWO}

\begin{vdwTWO}{van der Waerden's thm.~for multiplicatively piecewise syndetic sets}{\abbrvfont{vdW-mult.pws}}{}%{vdW\hspace{-.12em}\_\hspace{.03em}m2}{}
\label{MII}
Let $k\in\N$. Every multiplicatively piecewise syndetic subset of $\N$ contains a $k$-term arithmetic progression.
\end{vdwTWO}

The ``multiplicative'' perspective on van der Waerden's theorem provided by \ref{MI} and \ref{MII} will prove particularly useful when discussing Rado's theorem on the partition regularity of linear homogeneous equations. Rado's theorem is a far-reaching generalization of van der Waerden's theorem, and by building upon the concepts introduced here, we will provide a relatively short proof of it in \cref{sec_rado}.

\subsection{Topological versions of van der Waerden's theorem}
\label{sec_top_formulation_of_vdW}

Next, we present two topological formulations of van der Waerden's theorem.
The connection between $k$-term progressions and recurrence properties of measure-preserving systems was pioneered by Furstenberg in his ergodic-theoretic proof of \Szemeredi{}'s theorem \cite{Furstenberg77}. The link between van der Waerden's theorem and topological dynamics was introduced shortly afterwards by Furstenberg and Weiss in \cite{FW78} (see also \cite[Chapters~2 and~8]{Furstenberg81a}).
These developments had a profound impact on arithmetic combinatorics and the study of dynamical systems alike.
Indeed, the use of topological dynamics not only enabled Furstenberg-Weiss to provide new proofs of classical results in partition Ramsey theory, but also opened new avenues for further innovation (see \cite{BL96,BL99,BL03,BBDF09,Moreira17,GKR19,Glasscock24}).

A \define{(topological dynamical) system} is a pair $(X,T)$, where $X$ is a compact metric space and $T\colon X\to X$ is a homeomorphism on $X$. The system $(X,T)$ is called \define{minimal} if every point has a dense orbit, i.e., for all $x\in X$ one has $\overline{\{x, Tx, T^2x,T^3x,\ldots\}}=X$.

%Define orbit
A simple pigeonhole argument (see \cref{lem_finite_cover}) shows that for any minimal system $(X,T)$ and any non-empty open set $U\subset X$ there exists $n\in\N$ such that $U\cap T^{-n}U\neq\emptyset$; this can be viewed as a topological analogue of the classical \Poincare{} recurrence theorem. The following higher-order analogue of this result is the first topological version of van der Waerden's theorem that we present (cf.~\cite[Theorem~1.5]{FW78} and \cite[Theorem~1]{BPT89}).

\begin{vdwTWO}{Topological Multiple Recurrence Theorem  -- set recurrence version}{\abbrvfont{TMR-sets}}{}
%{vdW\hspace{-.12em}\_\hspace{.02em}t1}
\label{TDI}
Let $k\in\N$ and let $(X,T)$ be a minimal system. For any non-empty open set $U\subset X$ there is $n\in\N$ such that $U\cap T^{-n}U\cap\ldots\cap T^{-(k-1)n}U\neq\emptyset$.
\end{vdwTWO}

\ref{TDI} is an instance of \define{set recurrence}, which deals with set-orbits $U, T^{-1}U,T^{-2} U,\ldots$ under the transformation $T$. We will now state a variant focusing on \define{point recurrence}, involving point-orbits $x, Tx, T^2x,\ldots$ under $T$.
Given $k\in\N$, we say $x\in X$ is \define{$k$-recurrent} if there exists a sequence $n_1<n_2<\ldots\in\N$ such that 
\[
(x,T^{n_i}x,T^{2n_i}x,\ldots, T^{(k-1)n_i}x)\to (x,x,x,\ldots,x)~~\text{as}~~ i\to\infty.
\]
Recall that a subset of a compact metric space is called \define{residual} if its complement is a meager set, i.e., a countable union of nowhere dense sets. 

\begin{vdwTWO}{Topological Multiple Recurrence Theorem -- point recurrence version}{\abbrvfont{TMR-points}}{}
%{vdW\hspace{-.12em}\_\hspace{.02em}t2}
\label{TDII}
Let $k\in\N$.
In any minimal system $(X,T)$, the set of $k$-recurrent points is a residual set.
\end{vdwTWO}

The equivalence between \ref{TDI} and van der Waerden's theorem follows from a \define{topological correspondence principle}, which will be stated and utilized to prove \ref{TDI}$\iff$\ref{COII} in \cref{sec_correspondence_principle}.
In the same section, we prove, following a simple topological argument, the equivalence \ref{TDI}$\iff$\ref{TDII}.

In \cref{sec_IP_vdW} we present a far-reaching generalization of \ref{TDI}, due to Furstenberg, which replaces recurrence along a single transformation $T$ with recurrence along IP~systems (see \cref{thm_IPvdW-dyn}), thereby yielding substantially stronger combinatorial consequences.

\subsection{Algebraic versions of van der Waerden's theorem}
\label{sec_algebraic_versions_of_vdW}

Our final perspective on van der Waerden's theorem has a more algebraic flavor because it relies on the topological algebra of the Stone-\Cech{} compactification of a semigroup.

Let $S$ be a set.
%Let $(S,+)$ be a commutative semigroup.
A \define{filter} on $S$ is any non-empty collection $\ultra{p}$ of subsets of $S$ that is closed under finite intersections and supersets and does not contain the empty-set.
An \define{ultrafilter} on $S$ is a maximal filter, i.e., a filter $\ultra{p}$ such that no other filter on $S$ contains $\ultra{p}$ as a proper subset. Equivalently, a filter $\ultra{p}$ is an ultrafilter if for all $m\in\N$ and $A_1,\ldots,A_m\subset S$,
\begin{equation}
\label{eqn_part_reg_ultrafilters}
(A_1\cup\ldots\cup A_m)\in\ultra{p}  \implies \exists i\in\{1,\ldots,m\} ~\text{with}~ A_i \in \ultra{p}.
\end{equation}
We denote by $\beta S$ the set of all ultrafilters on $S$.
For every $s\in S$, the collection $\delta_s\coloneqq \{A \subset S : s \in A \}$ is an ultrafilter on $S$; ultrafilters of this kind are called \define{principal}, whereas ultrafilters not of this form are called \define{non-principal}.
If $S$ has infinite cardinality, then the existence of non-principal ultrafilters follows from Zorn's lemma, see \cite[Theorem~3.8]{HS12a}.

Given $A \subset S$, we write $\cl(A) \coloneqq \{ \ultra{p} \in \beta S : A \in \ultra{p} \}$ for the \define{closure} of $A$ in $\beta S$.
We note that $\cl(A) \cap \cl(B) = \cl(A \cap B)$ and $\cl(A) \cup \cl(B) = \cl(A \cup B)$ for all $A,B \subset S$, and hence the family $\{ \cl(A) : A \subset S \}$ forms a base for a topology on $\beta S$.
With respect to this topology, the map $s \mapsto \delta_s$ embeds $S$ densely in $\beta S$, and $\beta S$ can be identified with the \define{Stone–\v{C}ech compactification} of $S$. This compactification is uniquely characterized by the following universal property (which we will not use in this paper but is worth noting):
for any bounded function $f \colon S \to K$ into a compact Hausdorff space $K$ there is a unique continuous function $\beta f \colon \beta S \to K$ such that $(\beta f)(\delta_s)=f(s)$ for all $s \in S$.

Assume now that $S$ is not just a set but a semigroup.
There are two natural ways to extend the semigroup structure from $S$ to $\beta S$: one turns $\beta S$ into a right-topological semigroup, while the other turns it into a left-topological semigroup. For our purposes, the choice of extension is not important, so we opt for the former. 
Also, for the remainder of this section, we restrict our attention to commutative semigroups $(S,+)$, which is sufficient for reformulating van der Waerden's theorem in the language of ultrafilters. The corresponding theory for non-commutative semigroups $(S,\cdot)$ will be addressed in \cref{sec_hales-jewett}, where it is used to provide a new proof of the Hales–Jewett theorem.

Given two ultrafilters $\ultra{p},\ultra{q}\in\beta S$ on a commutative semigroup $(S,+),$ we define
\begin{equation}
\label{eqn_ultrafilter_sum}    
\ultra{p}+\ultra{q}=\{A\subset S: \{s\in S:A-s\in\ultra{q}\}\in\ultra{p}\},
\end{equation}
where $A-s$ denotes the set $\{t\in S: s+t\in A\}$.
It is not hard to check that the collection $\ultra{p}+\ultra{q}$ obtained this way is again an ultrafilter on $S$; see, for example, a streamlined proof of this fact for $(\N,+)$ in \cite[p.~27]{Bergelson96}.
The following classical result  asserts that with the above defined topology and operation, $(\beta S,+)$ is a right-topological compact Hausdorff semigroup.

\begin{proposition}
\label{prop_betaS_compact_Hausdorff}
The topology on $\beta S$ is compact Hausdorff and for every fixed $\ultra{q}\in\beta S$ the map $\ultra{p}\mapsto \ultra{p}+\ultra{q}$ is continuous.  
\end{proposition}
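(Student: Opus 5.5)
The plan is to verify the three claims — Hausdorffness, compactness, and continuity of right translation — directly from the definition of the topology with basic open sets $\cl(A)$, $A\subset S$. Only the ultrafilter property \eqref{eqn_part_reg_ultrafilters} and the identities $\cl(A)\cap\cl(B)=\cl(A\cap B)$, $\cl(A)\cup\cl(B)=\cl(A\cup B)$ will be needed. A useful preliminary observation is that $\cl(S\setminus A)=\beta S\setminus\cl(A)$, since an ultrafilter contains exactly one of $A$ and $S\setminus A$. Consequently every basic open set is also closed.

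\emph{Hausdorff.} Let $\ultra{p}\neq\ultra{q}$. By maximality neither contains the other, so there is some $A\in\ultra{p}\setminus\ultra{q}$. Then $S\setminus A\in\ultra{q}$, and the sets $\cl(A)$ and $\cl(S\setminus A)$ are disjoint open neighbourhoods of $\ultra{p}$ and $\ultra{q}$, respectively.

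\emph{Compactness.} This is the main step, and I would prove it via the finite intersection property for basic closed sets. Since every closed set is an intersection of sets of the form $\cl(A)$, it suffices to show the following: if $\mathcal{A}$ is a family of subsets of $S$ such that every finite subfamily of $\{\cl(A):A\in\mathcal{A}\}$ has nonempty intersection, then the whole family has a common point. By the intersection identity, the hypothesis says that every finite intersection $A_1\cap\ldots\cap A_m$ of members of $\mathcal{A}$ lies in some ultrafilter, and in particular is nonempty. Hence $\mathcal{A}$ generates a filter, namely the collection of all supersets of finite intersections of members of $\mathcal{A}$. Zorn's lemma extends this filter to a maximal filter $\ultra{p}$, in the same way that non-principal ultrafilters are obtained in \cite[Theorem~3.8]{HS12a}. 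Then $\ultra{p}\in\bigcap_{A\in\mathcal{A}}\cl(A)$, as required.

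The one point that needs care is the equivalence between maximal filters and the partition property \eqref{eqn_part_reg_ultrafilters}. This equivalence is what makes $\cl$ respect unions and complements. The paper states the equivalence, so I will simply use it.

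\emph{Continuity of $\ultra{p}\mapsto\ultra{p}+\ultra{q}$.} Fix $\ultra{q}$. It suffices to check that preimages of basic open sets are open. From \eqref{eqn_ultrafilter_sum}, we have $A\in\ultra{p}+\ultra{q}$ if and only if $B_A:=\{s\in S:A-s\in\ultra{q}\}\in\ultra{p}$. Therefore the preimage of $\cl(A)$ under this map is exactly $\cl(B_A)$, which is a basic open set. This step is routine once the definition is unfolded.

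The only genuinely nontrivial input is the Zorn's lemma extension in the compactness step. Everything else reduces to set-theoretic bookkeeping with the identities stated before the proposition.
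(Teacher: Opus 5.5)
Your proof is correct and is essentially the standard argument from the sources the paper cites in place of a proof (e.g.\ \cite[Theorems~3.18 and~4.1]{HS12a}). You get Hausdorffness from a set $A\in\ultra{p}\setminus\ultra{q}$, compactness by extending a family with the finite intersection property to an ultrafilter via Zorn's lemma, and continuity from the identity that the preimage of $\cl(A)$ under $\ultra{p}\mapsto\ultra{p}+\ultra{q}$ is $\cl(\{s\in S: A-s\in\ultra{q}\})$; like the paper, you take for granted that $\ultra{p}+\ultra{q}$ is again an ultrafilter, so that the map lands in $\beta S$.
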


Although the topology on $\beta S$ is compact Hausdorff, if $S$ is infinite then it is non-metrizable.
For a proof of \cref{prop_betaS_compact_Hausdorff}, see for example \cite[Theorems~3.1 and~3.2]{Bergelson96}, \cite[Theorems~3.18 and~4.1]{HS12a}, or \cite[Lemmas~14.2 and~15.2]{Todorcevic97}.

One of the most important objects to study in a right-topological semigroup are left ideals:
A non-empty set $L\subset \beta S$ is a \define{left ideal} if for all $\ultra{p}\subset\beta S$ we have $\ultra{p}+L\subset L$.
A left ideal is called \define{minimal} if it does not contain any left ideal different from itself.

Since the image of a closed set under a continuous function in a compact space is closed, and since $(\beta S,+)$ is compact and right-topological, every left ideal of $(\beta S,+)$ is closed. This is not true for right ideals, which explains why left ideals are somewhat more useful for applications.

The following lemma, which is proved using Zorn's lemma,
shows that $\beta S$ has (many) minimal left ideals. 

\begin{lemma}[{\cite[Corollary 2.6]{HS12a}}]
\label{lem_existence_min_left_ideal}
Every left ideal of $(\beta S,+)$ contains a minimal left ideal.
\end{lemma}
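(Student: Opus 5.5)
The plan is a Zorn's lemma argument applied to closed left ideals, followed by a reduction from arbitrary left ideals to principal ones. Let $L\subset\beta S$ be a left ideal. First I will pass to a closed left ideal inside $L$: fix any $\ultra{q}\in L$ and consider $\beta S+\ultra{q}=\{\ultra{p}+\ultra{q}:\ultra{p}\in\beta S\}$. This set is non-empty. It is contained in $L$ because $L$ is a left ideal. It is a left ideal because $\ultra{r}+(\ultra{p}+\ultra{q})=(\ultra{r}+\ultra{p})+\ultra{q}$, using associativity of the extended operation; I take associativity as part of the standard structure theory cited with \cref{prop_betaS_compact_Hausdorff}. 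Finally, it is closed, since it is the image of the compact space $\beta S$ under the continuous map $\ultra{p}\mapsto\ultra{p}+\ultra{q}$ from \cref{prop_betaS_compact_Hausdorff}, and $\beta S$ is Hausdorff.

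Next I apply Zorn's lemma to the family $\mathcal{F}$ of all closed left ideals contained in $\beta S+\ultra{q}$, ordered by reverse inclusion. This family is non-empty. Let $\{L_i\}$ be a chain in $\mathcal{F}$. Its intersection is closed, and it is non-empty by compactness of $\beta S$, because a chain of non-empty closed sets has the finite intersection property. The intersection is also a left ideal, since $\ultra{p}+\bigcap L_i\subset\bigcap(\ultra{p}+L_i)\subset\bigcap L_i$. So every chain has a lower bound in $\mathcal{F}$, and Zorn's lemma gives a closed left ideal $M\subset L$ that is minimal among closed left ideals.

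It remains to show that $M$ is minimal among all left ideals, not just the closed ones; this is the only subtle point. Suppose $K\subset M$ is any left ideal and pick $\ultra{r}\in K$. By the first paragraph applied to $K$, the set $\beta S+\ultra{r}$ is a closed left ideal contained in $K\subset M$. Minimality of $M$ among closed left ideals then forces $\beta S+\ultra{r}=M$, so $M\subset K$ and hence $K=M$. Therefore $M$ is a minimal left ideal contained in $L$.

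The main obstacle is exactly this passage from arbitrary left ideals to closed ones, and the principal left ideal $\beta S+\ultra{q}$ handles it in both places. Everything else is routine compactness.
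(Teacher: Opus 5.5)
Your proof is correct and matches the approach the paper points to: the paper gives no proof of its own, saying only that the lemma ``is proved using Zorn's lemma'' and citing Hindman--Strauss, Corollary 2.6, whose standard argument is the one you give. That argument passes to the closed principal left ideal $\beta S+\ultra{q}\subset L$, applies Zorn's lemma to closed left ideals via compactness, and uses principal left ideals again to show that a minimal closed left ideal is minimal among all left ideals.
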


The notion of a minimal left ideal gives rise to that of a minimal ultrafilter. In fact, an ultrafilter belonging to some minimal left ideal is called a \define{minimal ultrafilter} on $S$.
Minimal ultrafilters are intimately connected to the notion of piecewise syndetic sets defined in \cref{sec_additional_combinatorial_versions_of_vdW}. Indeed,
a set $A\subset \N$ is piecewise syndetic if and only if there exists a minimal ultrafilter in the semigroup $(\N,+)$ containing $A$. Likewise, $A\subset \N$ is multiplicatively piecewise syndetic if and only if there exists a minimal ultrafilter in the semigroup $(\N,\cdot)$ containing that set.
We refer to \cite[Theorem~4.40]{HS12a} for more details.

A final important characteristic of right-topological compact semigroups is that they always contain at least one idempotent element, a result known as Ellis's lemma, which we state next. Idempotents play a crucial role in shaping both the topological and algebraic structure of $(\beta S,+)$, and they have far-reaching applications in fields such as Ramsey theory, combinatorics, and dynamical systems. For further details, see for instance \cite{Bergelson09, FK89}.

\begin{lemma}[Ellis's lemma, \cite{Ellis58}]
\label{lem_existence_idempotent}
Any right-topological compact Hausdorff semigroup contains an idempotent element, i.e., an element $\ultra{p}$ satisfying $\ultra{p}+\ultra{p}=\ultra{p}$. 
\end{lemma}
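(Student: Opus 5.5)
The plan is the classical Zorn's lemma argument combined with the Ellis--Numakura trick: first find a minimal nonempty compact subsemigroup, then show it consists of a single idempotent. Let $(G,\cdot)$ denote the right-topological compact Hausdorff semigroup, so that for each fixed $q\in G$ the map $p\mapsto pq$ is continuous.

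\emph{Step 1: a minimal compact subsemigroup.} Let $\mathcal{F}$ be the family of all nonempty closed subsets $A\subset G$ with $AA\subset A$, ordered by reverse inclusion. $\mathcal{F}$ is nonempty since $G\in\mathcal{F}$. If $\mathcal{C}\subset\mathcal{F}$ is a chain, then $\bigcap\mathcal{C}$ is closed. It is nonempty because $G$ is compact and a chain of nonempty closed sets has the finite intersection property. It is also closed under multiplication, since every member of the chain is. By Zorn's lemma there is a minimal element $A\in\mathcal{F}$.

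\emph{Step 2: $pA=A$ for any $p\in A$.} Fix $p\in A$. The set $Ap=\{ap:a\in A\}$ is the image of the compact set $A$ under the continuous map $x\mapsto xp$, so it is compact, hence closed because $G$ is Hausdorff. It is nonempty, and $Ap\subset A$. It is a subsemigroup, since $(ap)(bp)=(apb)p$ and $apb\in A$. By minimality, $Ap=A$. In particular $p\in Ap$, so the set $B=\{a\in A: ap=p\}$ is nonempty.

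\emph{Step 3: $B$ is a closed subsemigroup, hence $B=A$.} $B$ is the intersection of $A$ with the preimage of the closed point $\{p\}$ under the continuous map $x\mapsto xp$. Points are closed in a Hausdorff space, so $B$ is closed. It is a subsemigroup, since $a,b\in B$ gives $(ab)p=a(bp)=ap=p$. By minimality $B=A$. Thus $p\in B$, i.e.\ $pp=p$, and $p$ is the desired idempotent.

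The only delicate point, and the one I expect to need care, is using continuity on the correct side. Right-topological here means exactly that right multiplication $x\mapsto xq$ is continuous, and this is all we use: in Step 2 to get $Ap$ closed, and in Step 3 to get $B$ closed. We never need $pA$ to be closed or left multiplication to be continuous. With this orientation fixed, the argument applies verbatim to $(\beta S,+)$ by \cref{prop_betaS_compact_Hausdorff}.
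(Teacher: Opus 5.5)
Your proof is correct: it is the standard Ellis--Numakura argument (Zorn gives a minimal closed subsemigroup $A$, then $Ap=A$, then $\{a\in A: ap=p\}=A$), and it uses only continuity of right multiplication, which matches the paper's right-topological convention; the paper itself does not prove this lemma but simply cites Ellis. One cosmetic slip: the heading of Step~2 should read $Ap=A$ rather than $pA=A$, as your argument and closing remark make clear.
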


For more background on ultrafilters and on the topological algebra on the Stone-\Cech{} compactification of groups and semigroups, we refer the reader to \cite[Section 3]{Bergelson96} or \cite{HS12a}.

To recast van der Waerden's theorem in terms of the algebra on the space of ultrafilters, we consider two specific commutative semigroups: the additive semigroup of positive integers $(\N,+)$, and the semigroup $(I_k,+)$, where $I_k=\{(n,n+d,\ldots, n+(k-1)d): n\in\N,\,d\in\N\}$ denotes the set of all $k$-tuples whose entries form a $k$-term arithmetic progression in $\N$; observe that $(I_k,+)$ is a subsemigroup of $(\N^k,+)$. 
Also, let $\beta I_k$ denote the Stone-\Cech{} compactification of $I_k$ and let $\pi_i\colon I_k\to\N$ denote the $i$-th coordinate projection, which naturally extends to a map $\pi_i\colon \beta I_k\to \beta\N$.

\begin{vdwTWO}{Ultrafilter formulation of van der Waerden's theorem}{\abbrvfont{vdW-ultr}}{}%{vdW\hspace{-.12em}\_\hspace{.01em}u}{}
\label{UI}
For every $k\in\N$ there exists an ultrafilter $\ultra{q}\in\beta I_k$ such that $\pi_{1}(\ultra{q})=\ldots=\pi_{k}(\ultra{q})$.
\end{vdwTWO}

The equivalence \ref{COII}$\iff$\ref{UI} follows from Zorn's Lemma and we provide the details in \cref{sec_algebraic_vdW}.

One can formulate an ostensibly stronger version of \ref{UI} by invoking the structure on minimal left ideals of $(\beta\N,+)$.

\begin{vdwTWO}{Ultrafilter formulation of van der Waerden's theorem -- version for minimal ultrafilters}{\abbrvfont{vdW-min.ultr}}{}%{vdW\hspace{-.12em}\_\hspace{.01em}u}{}
\label{UII}
For every $k\in\N$ and every minimal ultrafilter $\ultra{p}\in\beta\N$ there exists an ultrafilter $\ultra{q}\in\beta I_k$ such that $\pi_{1}(\ultra{q})=\ldots=\pi_{k}(\ultra{q})=\ultra{p}$.
\end{vdwTWO}

The implication \ref{UII}$\implies$\ref{UI} is clear, whereas the reverse implication \ref{UI}$\implies$\ref{UII} is proved in \cref{sec_algebraic_vdW}. 

This concludes the presentation of all the equivalent formulations of van der Waerden's theorem that we aim to discuss in this survey. In the following section, we will prove these equivalences. For an overview of what this entails, we refer the reader back to the implication diagram in \cref{fig_implicationgraph} given at the beginning of this section.

\section{Proofs of equivalences between different forms of van der Waerden's theorem}
\label{sec_proof_of_equivalentcies_of_vdW}

The diverse formulations collected in \cref{sec_equivalent_forms_of_vdW} clearly illustrate the multifaceted nature of van der Waerden's theorem.
In this section, we prove the equivalences between these statements.
Through this process, we establish interesting connections between different settings,  
exposing a panoply of new ideas, techniques, and perspectives.

\subsection{The compactness principle and a proof of \ref{COI}$\iff$\ref{COII}}
\label{subsec_compactness_principle}

In this subsection we provide a proof of the equivalence
\begin{center}
\begin{tikzcd}
\text{\ref{COI}}\arrow[swap, r, Leftrightarrow]&\text{\ref{COII}}.
\end{tikzcd}
\end{center}
% \[
% \text{\ref{COI}}\iff\text{\ref{COII}}.
% \]
While the forward direction in the above equivalence is straightforward to obtain, the reverse implication depends on the compactness principle, which is a combinatorial tool for translating infinitary Ramsey statements into finitary ones.
This principle is used in this section to prove the implication \ref{COII}$\implies$\ref{COI}, and will be applied in a similar manner later in \cref{sec_hales-jewett} within the context of the Hales-Jewett theorem.

\begin{theorem}[Compactness principle, cf.~{\cite[p.~6]{GRS90}}]{}
\label{thm_compactness_principle}
Let $Y$ be an infinite set, let $r\in\N$, and let $\mathcal{H}$ be a collection of finite subsets of $Y$.
The following two statements are equivalent:
\begin{enumerate}	
[label=(\roman{enumi}),ref=(\roman{enumi}),leftmargin=*]
\item
\label{itm_compactness_theorem_for_colorings_i}
For any $r$-coloring of $Y$ there exists $F\in\mathcal{H}$ such that all elements in $F$ have the same color.
\item
\label{itm_compactness_theorem_for_colorings_ii}
There exists a finite set $Z\subset Y$ such that for any $r$-coloring of $Z$ there exists $F\in \mathcal{H}$ with $ F\subset Z$ and such that all elements in $F$ have the same color.
\end{enumerate}
\end{theorem}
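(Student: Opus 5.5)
The direction (ii)$\implies$(i) is immediate: given an $r$-coloring of $Y$, restrict it to the finite set $Z$ provided by (ii). This yields some $F\in\mathcal{H}$ with $F\subset Z$ that is monochromatic for the restricted coloring, hence also for the original one. The content lies in (i)$\implies$(ii), which I would prove by contraposition using Tychonoff's theorem. Suppose (ii) fails. Then for every finite $Z\subset Y$ there is a ``bad'' coloring $\chi_Z\colon Z\to\{1,\ldots,r\}$, meaning that no $F\in\mathcal{H}$ with $F\subset Z$ is monochromatic under $\chi_Z$. The goal is to glue these into a single bad coloring of all of $Y$, contradicting (i).

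Consider the space $X=\{1,\ldots,r\}^Y$ of all $r$-colorings of $Y$, equipped with the product topology. Since $\{1,\ldots,r\}$ is finite and discrete, $X$ is compact Hausdorff by Tychonoff's theorem. For each finite $Z\subset Y$, let $C_Z\subset X$ be the set of colorings $\chi$ such that no $F\in\mathcal{H}$ with $F\subset Z$ is $\chi$-monochromatic.

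First, $C_Z$ is closed, indeed clopen, because membership depends only on the finitely many coordinates in $Z$: it is a finite union of cylinder sets. Second, $C_Z$ is non-empty, since any extension of $\chi_Z$ to $Y$ lies in it. Third, if $Z\subset Z'$ then $C_{Z'}\subset C_Z$. Consequently $C_{Z_1}\cap\cdots\cap C_{Z_m}\supset C_{Z_1\cup\cdots\cup Z_m}\neq\emptyset$, so the family $\{C_Z\}$ has the finite intersection property. By compactness there is $\chi\in\bigcap_Z C_Z$.

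Every $F\in\mathcal{H}$ is finite, so taking $Z=F$ shows that $F$ is not $\chi$-monochromatic. This contradicts (i).

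The only real subtlety is justifying compactness and the closedness of $C_Z$. If one wishes to avoid Tychonoff's theorem for uncountable $Y$, an alternative is to use an ultrafilter $\mathfrak{u}$ on the directed set of finite subsets of $Y$ that contains all the cones $\{Z: Z\supset Z_0\}$. One then defines $\chi(y)$ to be the unique color $c$ with $\{Z: y\in Z,\ \chi_Z(y)=c\}\in\mathfrak{u}$, using \eqref{eqn_part_reg_ultrafilters}. For a given $F$, the finitely many relevant sets intersect in a set belonging to $\mathfrak{u}$; for any $Z$ in that set, $\chi$ agrees with $\chi_Z$ on $F$, and $F$ is not $\chi_Z$-monochromatic. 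I would present the Tychonoff argument and mention this alternative in passing.
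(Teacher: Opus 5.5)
Your proof is correct and is essentially the paper's argument in dual form. The paper covers $\{1,\ldots,r\}^Y$ by the open sets $\mathcal{C}_Z$ of colorings admitting a monochromatic $F\in\mathcal{H}$ with $F\subset Z$ and extracts a finite subcover; your closed sets $C_Z$ are exactly their complements, and your finite-intersection-property step via $C_{Z_1}\cap\cdots\cap C_{Z_m}\supset C_{Z_1\cup\cdots\cup Z_m}$ is the same use of Tychonoff and of monotonicity in $Z$ (the ultrafilter variant you sketch is also sound, though it uses the same amount of choice).
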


\begin{proof}
The implication \ref{itm_compactness_theorem_for_colorings_ii}$\implies$\ref{itm_compactness_theorem_for_colorings_i} is immediate, so it remains to prove \ref{itm_compactness_theorem_for_colorings_i}$\implies$\ref{itm_compactness_theorem_for_colorings_ii}.
We can view an $r$-coloring of $Y$ as a function
$\chi\colon Y\to\{1,\ldots,r\}$ simply by associating a number from $1$ to $r$ with each color. This means the space of all possible $r$-colorings of $Y$ can be identified with the product space $\{1,\ldots,r\}^Y$.
Note that the finite set $\{1,\ldots,r\}$, endowed with the discrete topology, is a compact Hausdorff space. By Tychonoff's theorem, $\{1,\ldots,r\}^Y$ endowed with the product topology is therefore also a compact Hausdorff space.

For any finite non-empty set $Z\subset Y$ let $\mathcal{C}_Z$ be the set of all colorings in $\{1,\ldots,r\}^Y$ for which there is monochromatic $F\in \mathcal{H}$ with $ F\subset Z$. Then $\mathcal{C}_Z$ is an open set in the product topology on $\{1,\ldots,r\}^Y$. Moreover, in light of~\ref{itm_compactness_theorem_for_colorings_i}, we have
\[
\bigcup_{\substack{Z\subset Y\\ 0<|Z|<\infty}} \mathcal{C}_Z = \{1,\ldots,r\}^Y.
\]
By compactness, the above cover admits a finite subcover, or in other words, there exist finite non-empty sets $Z_1,\ldots,Z_m\subset Y$ such that
$\mathcal{C}_{Z_1}\cup\ldots\cup\mathcal{C}_{Z_m} = \{1,\ldots,r\}^Y$.
Taking $Z=Z_1\cup\ldots\cup Z_m$ and noting that $\mathcal{C}_{Z_1}\cup\ldots\cup\mathcal{C}_{Z_m}\subset \mathcal{C}_{Z_1\cup \ldots\cup Z_m}$,
it follows that $\mathcal{C}_Z = \{1,\ldots,r\}^Y$, completing the proof.
\end{proof}

\begin{proof}[Proof of {\ref{COI}$\iff$\ref{COII}}]
The implication \ref{COI}$\implies$\ref{COII} is immediate, whereas the other implication \ref{COII}$\implies$\ref{COI} follows from \cref{thm_compactness_principle} applied to the set $Y=\N$ and the family $\mathcal{H}=\{\{a,a+d,\ldots,a+(k-1)d\}: a,d\in\N\}$.
\end{proof}

\subsection{Equivalences of versions of van der Waerden's theorem related to the additive structure of $\N$}
\label{sec_intersectivity_lemmas}

In this section, we establish the cycle of implications
\begin{center}
\begin{tikzcd}
\text{\ref{COII}}\arrow[swap, r, Rightarrow]&\text{\ref{AI}}\arrow[swap, r, Rightarrow]&\text{\ref{AIII}}\arrow[swap, r, Rightarrow]
&\text{\ref{AII}}
\arrow[to path={ -- ([yshift=-4ex]\tikztostart.south) -| (\tikztotarget)},Rightarrow, rounded corners=12pt]{lll}.
\end{tikzcd}
\end{center}
We start by proving \ref{COII}$\implies$\ref{AI} using a short and elementary argument.

\begin{proof}[Proof of {\ref{COII}$\implies$\ref{AI}}]
Let $S\subset\N$ be syndetic. By definition, this means there exists $h\in\N$ such that $S\cup (S-1)\cup\ldots\cup (S-h)=\N$. According to \ref{COII}, one of the cells in this partition of $\N$, say $S-j$, contains a $k$-term arithmetic progression. Shifting this arithmetic progression by $j$, we see then $S$ also contains a $k$-term arithmetic progression as desired.
\end{proof}

The implication \ref{AIII}$\implies$\ref{AII} is immediate and does not require a proof. Next, we show $\text{\ref{AII}}\implies\text{\ref{COII}}$, for which we use a classical result of Brown.
\begin{lemma}[Brown's lemma, see {\cite[Lemma 1]{Brown71}}]
\label{lem_browns_lemma}
If $\N$ is partitioned into finitely many parts, then at least one of the parts is a piecewise syndetic set.
\end{lemma}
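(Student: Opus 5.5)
By induction on the number of parts, it suffices to prove the following: if $A\subset\N$ and $B=\N\setminus A$, then either $A$ is piecewise syndetic or $B$ is. Granting this, a partition $\N=C_1\cup\ldots\cup C_r$ is handled by setting $A=C_1$ and $B=C_2\cup\ldots\cup C_r$. Either $C_1$ is piecewise syndetic, or $B$ is piecewise syndetic. In the second case we apply a relative version of the two-part statement to the partition of $B$ into $C_2$ and $C_3\cup\ldots\cup C_r$.

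It is therefore cleaner to prove directly the relative statement: if $P=S\cap T$ is piecewise syndetic (with $S$ syndetic, $T$ thick) and $P=A\cup B$, then $A$ or $B$ is piecewise syndetic. The lemma then follows by induction starting from $P=\N=\N\cap\N$.

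To prove the relative statement, let $h$ witness syndeticity of $S$, so every interval of length $h+1$ meets $S$. Suppose $A$ is not piecewise syndetic. I would use the working characterization that $A$ is piecewise syndetic iff there is $g$ such that $\bigcup_{j=0}^{g}(A-j)$ contains arbitrarily long intervals. This equivalence is routine. If $A=S'\cap T'$, then $\bigcup_{j\le g}(A-j)\supset$ the long intervals inside $T'$, trimmed by $g$. Conversely, set $S'=A\cup(\N\setminus T'')$ with $T''$ a thick set formed by the long intervals, spaced so that their complement is handled. Since $A$ is not piecewise syndetic, for each $g$ there is $L_g$ such that every interval of length $L_g$ contains a subinterval of length $g+1$ missing $A$ entirely.

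Now build long intervals on which $B$ has gaps of size at most some fixed $g'$. Take a long interval $I\subset T$. Since $A$ fails piecewise syndeticity with parameter $g=h$, inside every window of length $L_h$ in $I$ there is a subinterval $J$ of length $h+1$ disjoint from $A$. This $J$ meets $S$, hence meets $S\cap T\setminus A\subset B$, because $J\subset I\subset T$. So $B$ meets every window of length $L_h$ inside $I$. Consequently $\bigcup_{j\le L_h}(B-j)$ contains $I$ minus its last $L_h$ elements. Since $T$ contains arbitrarily long $I$, the characterization shows $B$ is piecewise syndetic.

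The main obstacle is the characterization of piecewise syndeticity in terms of bounded gaps on arbitrarily long intervals, in particular the converse direction. Starting from long intervals $I_n$ on which the gaps of $A$ are at most $g$, I would take $T'=\bigcup_n I_n$, which is thick, and $S'=A\cup(\N\setminus T')$. The set $S'$ is syndetic with bound roughly $g$, since between the intervals every point lies in $S'$. Finally $S'\cap T'=A\cap T'$, and $A\cap T'$ is piecewise syndetic and contained in $A$. Supersets of piecewise syndetic sets are piecewise syndetic, because $A=(A\cup(\N\setminus T'))\cap T'$ up to enlarging, so $A$ itself is piecewise syndetic. I would check this carefully at the ends of the intervals.
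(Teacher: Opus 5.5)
Your proof is correct, and at the top level it matches the paper: both prove the relative two-part statement (partition regularity of piecewise syndetic sets, \cref{lem_pws_partition_regularity}) and obtain Brown's lemma by starting from $P=\N$. The difference lies in the two-part step.

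The paper's proof of that step uses no interval characterization at all. It writes the piecewise syndetic set as $S\cap T$, split into $B\cup C$, and considers $D=(\N\setminus S)\cup B$. It then uses only the duality that a set fails to be thick iff its complement is syndetic. If $D$ is thick, then $B\supset S\cap D$ is piecewise syndetic. Otherwise $S\setminus B$ is syndetic and $C\supset(S\setminus B)\cap T$.

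Your window argument is a quantitative unfolding of that second branch. Each block $J$ of length $h+1$ missing $A$ meets $S$, so it contains a point of $S\setminus A$. What you are really proving is that $S\setminus A$ is syndetic with gap bound $L_h$; the hypothesis $I\subset T$ is only used afterwards, to land inside $B$. Your route gives explicit bounds and is closer to a hands-on combinatorial proof. The cost is that you must first establish the characterization via $\bigcup_{j\le g}(A-j)$ in both directions, which the paper's argument sidesteps.

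Two small repairs are needed:
\begin{itemize}
\item The negated characterization only yields, in each window of length $L$, a point $m$ with $\{m,\dots,m+g\}\cap A=\emptyset$. This block may stick out of the window, so take $L_g=L+g$.
\item The superset step should read $A=(A\cup(\N\setminus T'))\cap(T'\cup A)$, with $T'\cup A$ thick, rather than the identity you wrote.
\end{itemize}
No care is needed at the ends of the intervals. Every $m\in T'$ lies in some $I_n$ and hence has $m+j\in A$ for some $j\le g$, while every $m\notin T'$ already lies in $S'$.
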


Brown's lemma is a special case of the following result, which was first mentioned by Hindman in \cite{Hindman73}.
For completeness, we include a short and elementary proof of it following [BHR, Theorem 4]. For a different proof, see \cite[Theorem 1.24]{Furstenberg81a}.

\begin{lemma}[Partition regularity of piecewise syndetic sets]
\label{lem_pws_partition_regularity}
If a piecewise syndetic set is split into finitely many parts, then at least one of the parts is a piecewise syndetic set.
\end{lemma}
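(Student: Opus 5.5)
It suffices to treat two parts: if $A=C_1\cup C_2$ is piecewise syndetic, then $C_1$ or $C_2$ is piecewise syndetic. The general case follows by induction on the number of parts, grouping $C_2\cup\ldots\cup C_m$ into a single part. The argument will be elementary and combinatorial, working with a finite-window reformulation of piecewise syndeticity.

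\emph{Step 1 (reformulation).} We first show that $A$ is piecewise syndetic if and only if there exists $h\in\N$ such that for every $\ell\in\N$ there is an interval $I$ of length $\ell$ with the following property: every subinterval of $I$ of length $h$ meets $A$. For the forward direction, take $A=S\cap T$ where $S$ has gap bound $h$. The set $T$ contains arbitrarily long intervals, and inside them every window of length $h+1$ meets $S$, hence meets $A$. For the converse, let $I_\ell$ be the given intervals and put $T=\bigcup_\ell I_\ell$. Choosing the $I_\ell$ far apart, we may assume they are disjoint and increasing, and $T$ is thick. Then put $S=A\cup(\N\setminus T)$. This $S$ is syndetic: in a window of length $h$, either the window meets the complement of $T$, or it lies inside a single $I_\ell$ and so meets $A$. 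A window straddling two intervals contains a point outside $T$. Finally $S\cap T=A\cap T$, and the statement's notion allows replacing $A$ by $A\cap T\subset A$. To get equality $A=S\cap T$, take $T'=T\cup A$, which is still thick, and $S=A\cup(\N\setminus T')$.

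\emph{Step 2 (main argument).} Suppose $C_1$ is not piecewise syndetic. We show $C_2$ is. Let $h$ witness the property for $A$. Since $C_1$ fails the criterion of Step 1 for every gap bound, for each $m$ there is $\ell_m$ such that every interval of length $\ell_m$ contains a subinterval of length $m$ disjoint from $C_1$.

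We now aim at the gap bound $h$ for $C_2$ and need arbitrarily long intervals in which every $h$-window meets $C_2$. Fix $\ell$. Apply the failure of $C_1$ with $m=\ell$: any interval of length $\ell_\ell$ contains a subinterval $J$ of length $\ell$ with $J\cap C_1=\emptyset$. Take an interval $I$ of length $\ell_\ell$ on which $A$ has gap at most $h$, and pick such a $J\subset I$. Every $h$-window inside $J$ meets $A$ but not $C_1$, so it meets $C_2$. Since $\ell$ was arbitrary, Step 1 shows $C_2$ is piecewise syndetic.

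\emph{Expected obstacle.} The delicate point is Step 1, in particular getting the exact decomposition $A=S\cap T$ rather than merely a subset. The windows straddling boundaries of the $I_\ell$ also need care, which is handled by spacing the intervals so that a gap of non-$T$ points lies between consecutive ones. Step 2 is then essentially immediate; the order of quantifiers (choosing $m=\ell$ first, then $I$ of length $\ell_\ell$) is the only subtlety.
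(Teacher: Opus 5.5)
Your proof is correct, but it takes a different route from the paper. The paper stays entirely with the definition $A=S\cap T$. For $A=B\cup C$ it sets $D=(\N\setminus S)\cup B$. If $D$ is thick, then $B\supset S\cap D$ is piecewise syndetic. If not, then $\N\setminus D=S\setminus B$ is syndetic and $C\supset (S\setminus B)\cap T$. These are three lines of set algebra, using only the duality between syndetic and thick sets and upward closure. For that reason the argument carries over verbatim to the multiplicative notions, which is exactly how the paper obtains \cref{lem_mult_pws_partition_regularity}.

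You instead first prove a finitary window criterion and then make the mechanism explicit. A set that is not piecewise syndetic has, uniformly, arbitrarily long holes. A hole of $C_1$ placed inside a long stretch where $A$ has gaps at most $h$ forces $C_2$ to have gaps at most $h$ there. This shows concretely that $C_2$ inherits $A$'s gap bound. The criterion is also useful in its own right: the paper's proof of \cref{prop_additive_interesectivity_lemma} starts from its forward direction. The cost is that transporting your argument to other semigroups means restating the criterion with finite sets in place of intervals.

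One simplification: the converse in Step 1 needs neither the spacing of the $I_\ell$ nor a correction to get equality. Put $T_0=A\cup(A-1)\cup\cdots\cup(A-(h-1))$; your window condition says exactly that $T_0$ contains arbitrarily long intervals. Then $S=A\cup(\N\setminus T_0)$ satisfies $S\cup(S-1)\cup\cdots\cup(S-(h-1))=\N$, and $S\cap T_0=A$ exactly. So the ``expected obstacle'' you flagged disappears.
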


Since $\N$ is clearly a piecewise syndetic set, \cref{lem_pws_partition_regularity} is a strengthening of \cref{lem_browns_lemma}.

\begin{proof}[Proof of \cref{lem_pws_partition_regularity}]
It suffices to show that if a piecewise syndetic set $A\subset\N$ is split into two parts, $A=B\cup C$, then at least one of the parts $B$ or $C$ is a piecewise syndetic set.
By definition, there exist a syndetic set $S\subset\N$ and a thick set $T\subset \N$ such that $A=S\cap T$.
Define $D= (\N\setminus S)\cup B$. If $D$ is thick, then $B \supset S\cap D$ is piecewise syndetic and we are done. On the other hand, if $D$ is not thick then its complement $(\N\setminus D)=(\N\setminus B)\cap S$ is syndetic, and $C\supset (\N\setminus B)\cap A= ((\N\setminus B)\cap S)\cap T$, which implies $C$ is piecewise syndetic.
\end{proof}

\begin{proof}[Proof of {\ref{AII}$\implies$\ref{COII}}]
If $\N$ is finitely colored, then by \cref{lem_browns_lemma}, one of the colors is piecewise syndetic and hence contains a $k$-term arithmetic progression by \ref{AII}.
\end{proof}

The final implication, \ref{AI}$\implies$\ref{AIII}, hinges on an intersectivity lemma for piecewise syndetic sets that we state and prove next.
An intersectivity lemma is a type of set-arithmetic relationship that allows one to ``homogenize'' a set while preserving finite-scale arithmetic structure. The first result of this kind was given in \cite[Theorem~2.2]{Bergelson85}, in the context of sets with positive upper Banach density. Below is a variant that applies to piecewise syndetic sets. 

\begin{proposition}[Intersectivity lemma for piecewise syndetic sets]
\label{prop_additive_interesectivity_lemma}
For any piecewise syndetic set $A\subset\N$ there exists a syndetic set $L\subset\N$ such that for all finite non-empty $F\subset L$ the intersection
\[
\bigcap_{m\in F} (A-m)
\]
is piecewise syndetic.
\end{proposition}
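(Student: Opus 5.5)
I plan to prove this with the orbit-closure (symbolic dynamics) model of piecewise syndeticity. Everything reduces to the standard characterization: a set is piecewise syndetic if and only if the orbit closure of its indicator function contains a uniformly recurrent point that "sees" the set. Concretely, let $X=\{0,1\}^{\Z}$ with the shift $\sigma$, and let $x=1_A\in X$, extended by $0$ on $\Z\setminus\N$. Put $\Omega=\overline{\{\sigma^n x:n\in\N\}}$. By Zorn's lemma (or the usual compactness argument) $\Omega$ contains a minimal subsystem $Y$, and then every $y\in Y$ is uniformly recurrent: for every cylinder neighbourhood $U$ of $y$, the set $\{n:\sigma^n y\in U\}$ is syndetic. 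This is the same pigeonhole and finite-cover fact underlying \cref{lem_finite_cover}.

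\emph{Step 1.} I first show that if $A=S\cap T$ is piecewise syndetic, then $Y$ can be chosen so that some $y\in Y$ has $y(0)=1$. Let $h$ be the gap bound of $S$, so every interval of length $h+1$ meets $S$. Since $T$ is thick, $A$ meets every interval of length $h+1$ inside arbitrarily long intervals $I_j\subset T$. Passing to a limit point of the shifts $\sigma^{c_j}x$, where $c_j$ is the centre of $I_j$, yields $z\in\Omega$ in which every window of length $h+1$ contains a $1$. This property is closed and shift-invariant, so it passes to any minimal $Y\subset\overline{\{\sigma^n z\}}$. Shifting a suitable point of $Y$, we get $y\in Y$ with $y(0)=1$.

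\emph{Step 2.} Define $L=\{m\in\N: y(m)=1\}$. This is the return-time set of $y$ to the open set $[1]_0=\{w:w(0)=1\}$, which contains $y$. By uniform recurrence $L$ is syndetic; if needed, intersect $\N$ with $\Z$-syndeticity, which causes no trouble.

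\emph{Step 3.} Fix a finite $F\subset L$ and set $U_F=\{w\in X: w(m)=1\ \forall m\in F\}$. This is a clopen cylinder containing $y$, so $R=\{n\in\N:\sigma^n y\in U_F\}$ is syndetic, say with gap $g$. For each $N$, the finite window $y|_{[0,N+\max F]}$ occurs in $x$ at some position $t_N\in\N$, because $y\in\Omega$ and the shifts $\sigma^n x$ approximate $y$ on finite windows. Hence
\[
\bigcap_{m\in F}(A-m)\supset\{t_N+n:\ n\in R\cap[0,N]\}.
\]
The set on the right has gaps at most $g$ on the interval $[t_N,t_N+N]$. Taking the union over $N$ exhibits $\bigcap_{m\in F}(A-m)$ as containing a set with bounded gaps (bound $g$) on arbitrarily long intervals. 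Such a set is piecewise syndetic: take $T'$ to be the union of these intervals, which is thick, and $S'$ to be the intersection set together with $\N\setminus T'$, after padding so that $S'$ is syndetic. Then $S'\cap T'$ lies inside the intersection, which is therefore piecewise syndetic, since supersets of piecewise syndetic sets are piecewise syndetic.

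I expect the main obstacle to be Step 1 together with the bookkeeping in Step 3. The delicate part is making sure the chosen minimal point genuinely lies in the orbit closure of $1_A$ and has a $1$ at the origin, and converting "bounded gaps on arbitrarily long intervals" cleanly into the paper's definition $S\cap T$. An alternative route, via a minimal ultrafilter $\ultra{p}\ni A$ and $L=\{m:A-m\in\ultra{p}\}$ using the characterization cited from \cite[Theorem~4.40]{HS12a}, makes Step 3 trivial, since finite intersections of members of $\ultra{p}$ stay in $\ultra{p}$. The cost is that proving $L$ syndetic there requires more minimal-ideal algebra, so I would use the dynamical route.
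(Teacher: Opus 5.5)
Your proof is correct, and it takes a genuinely different route from the paper's.

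\textbf{The paper's route.} The paper stays entirely combinatorial. It fixes $h$ so that $T_0=A\cup(A-1)\cup\dots\cup(A-h)$ is thick, and extracts a thick $T$ that is almost contained in every $T_0-m$. It then uses partition regularity of piecewise syndetic sets (\cref{lem_pws_partition_regularity}) to build a decreasing chain $B_0=T\supset B_1\supset\cdots$ of piecewise syndetic sets, each $B_n$ either inside $A-n$ or disjoint from it. Setting $L=\{n:B_n\subset A-n\}$, every intersection over $F\subset L$ contains $B_{\max F}$. Syndeticity of $L$ comes from $T_0-n$ almost containing $B_{n+h}$.

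\textbf{Your route.} You pass to a minimal subsystem of the orbit closure of $1_A$ and take $L$ to be the support of a point $y$ there. You then transfer uniform recurrence of $y$ to the cylinder $U_F$ back to $A$ through finite windows.

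\textbf{What each approach buys.} The paper's argument avoids compactness and Zorn's lemma. It transfers verbatim to cancellative commutative semigroups and to the multiplicative setting, which the paper needs for \cref{prop_multiplicative_interesectivity_lemma} and for Rado's theorem. It also suits the purpose of \cref{sec_combinatorial_proof}, which is to give a purely combinatorial proof of van der Waerden's theorem driven by this lemma. Your argument yields more structure. Since $L$ is the set of $1$'s of a uniformly recurrent point, each $\bigcap_{m\in F}(L-m)$ is itself syndetic. Moreover, your Step~3 essentially proves \cref{prop_correspondence_principle_for_pws-sets} directly, whereas the paper deduces that proposition from the intersectivity lemma.

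\textbf{Two small simplifications.} First, normalizing $y(0)=1$ is unnecessary. Every point of $Y$ already has a $1$ in every window of length $h+1$, so $L$ is syndetic without invoking uniform recurrence; that is needed only in Step~3. Second, the padding in Step~3 is cleanest as follows. For $E=\bigcap_{m\in F}(A-m)$, the set $T'=E\cup(E-1)\cup\dots\cup(E-g)$ is thick. Then $S'=E\cup(\N\setminus T')$ is syndetic and $S'\cap T'\subset E$.
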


\begin{proof}
By definition, for any piecewise syndetic set $A$ there exists $h\in\N$ such that $T_0=A\cup A-1\cup\ldots\cup A-h$ is thick, which means that for every $n\in\N$ there is $t_n\in\N$ with $\{ t_n+1,\ldots,t_n+n\}\subset T_0$. If we define $T= \bigcup_{n\in\N} \{t_{2n}+1,\ldots, t_{2n}+n\}$, then it is straightforward to check that for every $m\in\N$ the set $T_0-m$ contains all but finitely many elements of $T$; this property will be used later in the proof.

We now construct a nested sequence of piecewise syndetic sets $B_0\supset B_1\supset B_2\supset \ldots $ as follows: Take $B_0=T$. If $B_{n-1}$ has already been defined, then, by the partition regularity of piecewise syndetic sets (\cref{lem_pws_partition_regularity}), at least one of $B_{n-1}\cap (A-n)$ or $B_{n-1}\setminus (A-n)$ must be a piecewise syndetic set; let $B_n$ denote whichever of the two is piecewise syndetic. Through this construction, we obtain piecewise syndetic sets $B_0\supset B_1\supset B_2\supset \ldots $ with the property that for all $n\in \N$ either $B_n\cap (A-n)=\emptyset$ or $B_n\subset (A-n)$.

Take $L=\{n\in\N: B_n\subset (A-n)\}$. By construction, for any finite non-empty $F\subset L$ the intersection
\[
\bigcap_{m\in F} (A-m)
\]
contains the set $B_{\max F}$ and is therefore piecewise syndetic. It remains to show that $L$ is syndetic. For any $n\in\N$, since $B_{n+h}\subset T$, the set $T_0-n=(A-n)\cup (A-n-1)\cup\ldots\cup (A-n-h)$ contains all but finitely many elements of $B_{n+h}$. So for some $j\in\{0,1,\ldots,h\}$ we have $B_{n+h}\cap (A-n-j)\neq \emptyset$, which implies
$B_{n+j}\cap (A-n-j)\neq \emptyset$, because $B_{n+h}\subset B_{n+j}$. It follows that $B_{n+j}\subset (A-n-j)$, or equivalently, $n+j\in L$. Since $n$ was arbitrary, we conclude that $L\cup (L-1)\cup \ldots\cup (L-h)=\N$.
\end{proof}

\begin{proof}[Proof of {\ref{AI}$\implies$\ref{AIII}}]
Let $A\subset \N$ be piecewise syndetic. Using \cref{prop_additive_interesectivity_lemma}, we can find a syndetic set $L$ such that for any finite, non-empty $F\subset L$ the intersection
\begin{equation}
\label{eqn_FIP_pw-syndetic_repeat}
\bigcap_{n\in F} (A-n)
\end{equation}
is piecewise syndetic. According to \ref{AI}, the syndetic set $L$ contains a $k$-term arithmetic progression, say $\{a, a+d, \ldots,  a+(k-1)d\}\subset L.$
In view of \eqref{eqn_FIP_pw-syndetic_repeat}, the set $B'=(A-a)\cap (A-a-d)\cap\ldots\cap (A-a-(k-1)d)$ is piecewise syndetic. This implies that the set
$
B=A\cap (A-d)\cap\ldots\cap (A-(k-1)d)
$
is also piecewise syndetic, because $B=B'+a$.
It is now easy to check that
for all $b\in B$ we have $\{b,b+d, b+2d, \ldots, b+(k-1)d\}\subset A$ as desired.
\end{proof}

\begin{remark}
It is not hard to see that the statements and proofs of \cref{lem_pws_partition_regularity} and \cref{prop_additive_interesectivity_lemma} can be extended with little effort to general cancellative commutative semigroups instead of $(\N,+)$.
\end{remark}

\subsection{Equivalences of versions of van der Waerden's theorem related to the multiplicative structure of $\N$}
\label{sec_mult_equivalences}

Next, we establish the validity of the following diagram:
\begin{center}
\begin{tikzcd}
\text{\ref{COII}}\arrow[swap, r, Rightarrow]&\text{\ref{MI}}\arrow[swap, r, Rightarrow]&\text{\ref{MII}}
%\arrow[bend left=40, ll, Rightarrow]
\arrow[to path={ -- ([yshift=-4ex]\tikztostart.south) -| (\tikztotarget)},Rightarrow, rounded corners=12pt]{ll}.
\end{tikzcd}
\end{center}
In essence, we follow the same strategy as the one used in \cref{sec_intersectivity_lemmas}. 
This requires the following two results, which are multiplicative analogues of \cref{lem_pws_partition_regularity} and \cref{prop_additive_interesectivity_lemma} from the previous section.

\begin{lemma}
\label{lem_mult_pws_partition_regularity}
If a multiplicatively piecewise syndetic set is split into finitely many parts, then at least one of the parts is a multiplicatively piecewise syndetic set.
\end{lemma}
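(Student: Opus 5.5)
We mimic verbatim the proof of \cref{lem_pws_partition_regularity}, replacing additive shifts $A-t$ by dilations $A/t$. The only facts about syndetic and thick sets used there were: a set is syndetic if and only if its complement is not thick; the intersection of a syndetic set with a thick set is by definition piecewise syndetic; and any superset of a piecewise syndetic set is piecewise syndetic. The multiplicative analogues hold for the same reasons.

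\textbf{Duality.} The definitions give: $S$ is multiplicatively syndetic iff there is $h$ with $S\cup S/2\cup\ldots\cup S/h=\N$. This holds iff there is $h$ such that no $n$ has $n\notin S/j$ for all $j\le h$, that is, iff $(\N\setminus S)\cap(\N\setminus S)/2\cap\ldots\cap(\N\setminus S)/h=\emptyset$ for some $h$. Here we use $(\N\setminus S)/j=\N\setminus (S/j)$, which holds since $n\mapsto jn$ is a map on $\N$. So $S$ is multiplicatively syndetic iff its complement is not multiplicatively thick, as the paper already notes.

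\textbf{Upward closure.} If $A\supset S\cap T$ with $S$ multiplicatively syndetic and $T$ multiplicatively thick, then $A=(S\cup A)\cap(T\cup A)$ exhibits $A$ as multiplicatively piecewise syndetic. Indeed, $(S\cup A)\cap(T\cup A)=(S\cap T)\cup A=A$, and supersets of multiplicatively syndetic (resp. thick) sets remain so.

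\textbf{Main argument.} By induction on the number of parts, it suffices to treat $A=B\cup C$. Write $A=S\cap T$ and set $D=(\N\setminus S)\cup B$.

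\emph{Case 1: $D$ is multiplicatively thick.} Then $S\cap D= S\cap B\subset B$, so $B$ is multiplicatively piecewise syndetic by upward closure.

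\emph{Case 2: $D$ is not multiplicatively thick.} Then by duality $\N\setminus D=S\setminus B$ is multiplicatively syndetic. Moreover $(S\setminus B)\cap T=A\setminus B\subset C$, so $C$ is multiplicatively piecewise syndetic.

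\textbf{Obstacle.} There is essentially none. The one point to check is that the complement duality survives division, which reduces to the identity $(\N\setminus S)/j=\N\setminus(S/j)$ verified above.
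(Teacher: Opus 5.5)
Your proof is correct and follows the paper's approach exactly: the paper proves this lemma by transcribing the proof of the additive partition regularity lemma into the multiplicative setting, with $D=(\N\setminus S)\cup B$ and a case split on whether $D$ is (multiplicatively) thick, and that is what you do. Your checks of the complement duality via $(\N\setminus S)/j=\N\setminus(S/j)$ and of upward closure under supersets fill in details that the paper leaves implicit.
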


\begin{proposition}[Intersectivity lemma for multiplicatively piecewise syndetic sets]
\label{prop_multiplicative_interesectivity_lemma}
For any multiplicatively piecewise syndetic set $A\subset\N$ there exists a multiplicatively syndetic set $L\subset\N$ such that for all finite non-empty $F\subset L$ the intersection
\[
\bigcap_{m\in F} A/m
\]
is multiplicatively piecewise syndetic.
\end{proposition}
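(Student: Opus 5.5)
The plan is to transcribe the proof of \cref{prop_additive_interesectivity_lemma} into the multiplicative semigroup $(\N,\cdot)$, replacing shifts $A-m$ by dilations $A/m$. The window $\{t+1,\ldots,t+n\}$ becomes the multiplicative window $t\cdot\{1,\ldots,n\}$, and the indices $n,n+1,\ldots,n+h$ become $n,2n,\ldots,hn$. We assume \cref{lem_mult_pws_partition_regularity}, which is proved exactly like \cref{lem_pws_partition_regularity}, since that argument used only complementation and the definitions.

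\textbf{Step 1 (a thick "core").} Since $A=S\cap T'$ with $S\cup S/2\cup\ldots\cup S/h=\N$ and $T'$ multiplicatively thick, the set $T_0=A\cup A/2\cup\ldots\cup A/h$ is multiplicatively thick. Indeed, for $x$ with $x,2x,\ldots,hnx\in T'$ and any $i\le n$, some $j\le h$ has $jix\in S\cap T'=A$. Hence for each $n$ there is $t_n$ with $t_n\{1,\ldots,n\}\subset T_0$. Let $T=\bigcup_n t_{n!^2}\{1,\ldots,n\}$, which is multiplicatively thick. For each fixed $m$, the set $T_0/m$ contains all but finitely many elements of $T$: if $n\ge m$ and $x\le n$, then $mx\le n^2\le n!^2$, so $m t_{n!^2}x\in T_0$. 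Any sufficiently sparse subsequence of windows works here.

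\textbf{Step 2 (nested sets).} Set $B_0=T$. Given $B_{n-1}$, use \cref{lem_mult_pws_partition_regularity} to let $B_n$ be whichever of $B_{n-1}\cap A/n$ and $B_{n-1}\setminus A/n$ is multiplicatively piecewise syndetic. Put $L=\{n: B_n\subset A/n\}$. For a finite $F\subset L$, the intersection $\bigcap_{m\in F}A/m$ contains $B_{\max F}$, so it is multiplicatively piecewise syndetic.

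\textbf{Step 3 (syndeticity of $L$).} This is the step where the index ordering needs care. Fix $n$ and let $N=h!\,n$, which exceeds every $jn$ with $j\le h$, so $B_N\subset B_{jn}$. Since $B_N\subset T$ is infinite and $T_0/n=\bigcup_{j\le h}A/(jn)$ contains all but finitely many points of $T$, some $j\le h$ satisfies $B_N\cap A/(jn)\neq\emptyset$. Therefore $B_{jn}\cap A/(jn)\ne\emptyset$, and by the dichotomy $jn\in L$. This shows $L\cup L/2\cup\ldots\cup L/h=\N$, so $L$ is multiplicatively syndetic.

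One technical point is that $B_N$ must be infinite. This holds because a multiplicatively piecewise syndetic set is infinite: multiplicative thickness forces infinitude, since $x,2x,\ldots$ in $T$ give arbitrarily large elements.
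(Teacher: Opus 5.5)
Your proposal is correct and follows the route the paper intends: the paper's proof is only the instruction to transcribe the additive argument. Your sparser windows $t_{n!^2}\{1,\ldots,n\}$ (already $t_{n^2}$ would suffice) and the index choice $B_{h!\,n}\subset B_{jn}$ handle exactly the points where a verbatim translation fails, since windows $t_{2n}\{1,\ldots,n\}$ would not make $T_0/m$ contain all but finitely many elements of $T$. The one loose end is the infinitude of $B_N$, which you justify using only the thick factor; it follows from your Step~1, since for a multiplicatively piecewise syndetic $A'$ the set $A'\cup A'/2\cup\ldots\cup A'/h'$ is multiplicatively thick, hence infinite, which is impossible if $A'$ is finite.
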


Proofs of \cref{lem_mult_pws_partition_regularity} and \cref{prop_multiplicative_interesectivity_lemma} can be obtained easily by modifying the proofs of \cref{lem_pws_partition_regularity} and \cref{prop_additive_interesectivity_lemma} given in \cref{sec_intersectivity_lemmas}. 
One only has to replace all instances of 
``syndetic'', ``thick'', and ``piecewise syndetic'' with ``multiplicatively syndetic'', ``multiplicatively thick'', and ``multiplicatively piecewise syndetic'', and replacing addition and subtraction with multiplication and division. Otherwise, the steps in the proofs remain the same.

\begin{proof}[Proof of {\ref{COII}$\implies$\ref{MI}}]
If $S\subset\N$ is multiplicatively syndetic, then there exists $h\in\N$ such that $S\cup S/2\cup\ldots\cup S/h=\N$. By \ref{COII}, there exists $j\in\{1,\ldots,h\}$ such that $S/j$ contains a $k$-term arithmetic progression. But then $S$ contains the dilation of this $k$-term arithmetic progression by $j$, which is still a $k$-term arithmetic progression.
\end{proof}

\begin{proof}[Proof of {\ref{MI}$\implies$\ref{MII}}]
Suppose $A\subset \N$ is multiplicatively piecewise syndetic. Using
\cref{prop_multiplicative_interesectivity_lemma}, we can find a multiplicatively syndetic set $L$ such that for any finite, non-empty $F\subset L$ the intersection
\begin{equation*}
\bigcap_{n\in F} A/n
\end{equation*}
is non-empty. Invoking \ref{MI}, we can find $a,d\in \N$ such that $\{a, a+d, \ldots,  a+(k-1)d\}\subset L$.
Let $n$ be any element in the intersection
\[
\bigcap_{i=0}^{k-1} A/(a+id).
\]
Then $\{an, an+dn,\ldots,an+(k-1)dn\}$ is a $k$-term progression contained in $A$.
\end{proof}

\begin{proof}[Proof of {\ref{MII}$\implies$\ref{COII}}]
This follows straightaway from \cref{lem_mult_pws_partition_regularity}.
\end{proof}

\subsection{The topological correspondence principle}
\label{sec_correspondence_principle}

In this subsection, we provide proofs for the equivalences
\begin{center}
\begin{tikzcd}
 \text{\ref{COII}}\arrow[swap, r, Leftrightarrow]&\text{\ref{TDI}}\arrow[swap, r, Leftrightarrow]&\text{\ref{TDII}}.
\end{tikzcd}
\end{center}

We start with the implication \ref{COII}$\implies$\ref{TDI}, which is based on a simple fact about minimal systems.

\begin{lemma}
\label{lem_finite_cover}
Let $(X,T)$ be a minimal system. For every non-empty open set $U\subset X$ there exists $M\in\N$ such that $X=\bigcup_{m=1}^M T^{-m}U$.
\end{lemma}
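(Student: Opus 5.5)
We argue by compactness, using minimality to show that the translates $T^{-m}U$, $m\in\N$, form an open cover of $X$. Since $T$ is a homeomorphism, each set $T^{-m}U$ is open. Hence it suffices to show that every $x\in X$ lies in $T^{-m}U$ for some $m\in\N$, i.e.\ that $T^m x\in U$ for some $m\geq 1$.

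\emph{Each point eventually visits $U$.} Fix $x\in X$. By minimality, the forward orbit closure $\overline{\{x,Tx,T^2x,\ldots\}}$ equals $X$. A point of this set lying in the non-empty open set $U$ must actually belong to the orbit itself, so some $T^n x\in U$ with $n\geq 0$. It remains to rule out that $n=0$ is the only possibility. For this, apply minimality to the point $Tx$ instead. The closure of $\{Tx,T^2x,\ldots\}$ is all of $X$, so it meets $U$, and therefore $T^m x\in U$ for some $m\geq 1$. Thus $x\in T^{-m}U$ with $m\in\N$.

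\emph{Extracting a finite subcover.} By the previous step, $\{T^{-m}U: m\in\N\}$ is an open cover of the compact space $X$. Choose a finite subcover $T^{-m_1}U,\ldots,T^{-m_j}U$ and set $M=\max_i m_i$. Then $X=\bigcup_{m=1}^M T^{-m}U$, as claimed.

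The only delicate point is ensuring that the exponent satisfies $m\geq 1$ rather than $m=0$. This is handled by applying minimality to $Tx$, which is legitimate because the definition of minimality requires a dense orbit for every point. Everything else is routine compactness.
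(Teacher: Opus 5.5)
Your proof is correct and follows the same route as the paper: minimality shows that the open sets $T^{-m}U$, $m\in\N$, cover $X$, and compactness then yields a finite subcover. Your step of applying minimality to $Tx$ to guarantee an exponent $m\geq 1$ spells out a detail the paper leaves implicit.
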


\begin{proof}
Since $(X,T)$ is minimal, the union
$\bigcup_{m\in\N}T^{-m}U$ covers all of $X$. By compactness, there exists a finite subcover, i.e., there exists some $M\in\N$ such that $\bigcup_{m=1}^M T^{-m}U=X$.
\end{proof}

\begin{proof}[Proof of {\ref{COII}$\implies$\ref{TDI}}]
Let $(X,T)$ be a minimal system, and $U\subset X$ a non-empty and open set.
By \cref{lem_finite_cover}, there exists some $M\in\N$ such that $X=T^{-1}U\cup\ldots\cup T^{-M}U$.
Now let $x\in X$ be arbitrary and define $C_i=\{n\in\N: T^nx\in T^{-i}U\}$. From $X=T^{-1}U\cup\ldots\cup T^{-M}U$, it follows that $\N=C_1\cup\ldots\cup C_M$. By \ref{COII}, there exists $m\in\{1,\ldots,M\}$ such that the set $C_m$ contains a $k$-term arithmetic progression $\{a, a+d,\ldots,a+(k-1)d\}$. It follows from the definition of $C_m$ that
\[
T^{m+a}x \in U\cap T^{-d}U\cap\ldots\cap T^{-(k-1)d}U, 
\]
which proves \ref{TDI}.
\end{proof}

We are moving now to the proof of the implication \ref{TDI}$\implies$\ref{COII}.
This proof will rely on a topological variant of Furstenberg's correspondence principle. The original correspondence principle (cf.~\cite[Proposition~3.1]{BHK05}) provides a link between subsets of the integers having positive upper Banach density\footnote{The upper Banach density of a set $E\subset\N$ is defined as
\[
d^*(E) = \lim_{N \to \infty} \sup_{M\in\N} \dfrac{|E \cap \{M+1,\dots,M+N\}|}{N}.
\]
} and probability measure preserving dynamical systems. It was used by Furstenberg to derive results in density Ramsey theory, such as \Szemeredi{}'s theorem on arithmetic progressions \cite{Szemeredi75}\footnote{\label{footnote_szemeredi_thm}\Szemeredi{}'s theorem on arithmetic progressions states that any set of integers with positive upper Banach density\footnote{The upper Banach density of a set $E\subset\N$ is defined as
\[
d^*(E) = \lim_{N \to \infty} \sup_{M\in\N} \dfrac{|E \cap \{M+1,\dots,M+N\}|}{N}.
\]
} contains arbitrarily long arithmetic progressions. Note that for any finite coloring of $\N$, at least one of the colors has positive upper Banach density. Therefore, \Szemeredi{}'s theorem generalizes van der Waerden's theorem.
}, from ergodic-theoretic multiple recurrence theorems \cite{Furstenberg77}.
The following variant of the correspondence principle relates finite colorings to minimal topological dynamical systems and will be used to derive van der Waerden's theorem from the topological multiple recurrence theorem.

\begin{proposition}[Topological correspondence principle for finite colorings]
\label{prop_top_correspondence_principle_1}
For any finite partition $\N=C_1\cup\ldots\cup C_r$ there exist a minimal system $(X,T)$ and an open cover $X=U_1\cup\ldots\cup U_r$ such that for all $\ell\in\N$, $n_1,\ldots,n_\ell\in\Z$ and all $i_1,\ldots,i_\ell\in\{1,\ldots,r\}$ we have
\[
T^{-n_1}U_{i_1}\cap \ldots\cap T^{-n_\ell}U_{i_\ell}\neq\emptyset \quad \implies\quad (C_{i_1}-n_1)\cap \ldots\cap (C_{i_\ell}-n_\ell)~\text{is infinite}.
\]
\end{proposition}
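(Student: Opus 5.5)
The plan is to use the standard shift-space construction. Encode the partition as a point $\omega\in\Omega=\{1,\ldots,r\}^{\Z}$, say $\omega(n)=i$ if $n\in C_i$ for $n\geq 1$, and $\omega(n)=1$ (or any fixed value) for $n\leq 0$. Let $\sigma$ be the left shift, $(\sigma\eta)(n)=\eta(n+1)$. The space $\Omega$ with the product topology is compact metrizable and $\sigma$ is a homeomorphism. Let $Y$ be the orbit closure of $\omega$ under positive iterates, but only the accumulation part: I take
\[
Y=\bigcap_{N\in\N}\overline{\{\sigma^n\omega: n\geq N\}},
\]
the $\omega$-limit set. This set is non-empty, compact, and $\sigma$-invariant with $\sigma(Y)=Y$. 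By Zorn's lemma (or compactness), $Y$ contains a non-empty closed invariant minimal subset $X$. Then $(X,\sigma|_X)$ is a minimal system, and I set $T=\sigma|_X$.

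For the cover I take the cylinder sets $U_i=\{\eta\in X: \eta(0)=i\}$. These are clopen in $X$ and cover $X$. The verification runs as follows. Suppose $\eta\in T^{-n_1}U_{i_1}\cap\ldots\cap T^{-n_\ell}U_{i_\ell}$. This means $\eta(n_j)=i_j$ for $j=1,\ldots,\ell$. The set $V=\{\zeta\in\Omega:\zeta(n_j)=i_j\ \forall j\}$ is an open neighbourhood of $\eta$ in $\Omega$. Since $\eta$ lies in the $\omega$-limit set, for every $N$ there is $m\geq N$ with $\sigma^m\omega\in V$, i.e. $\omega(m+n_j)=i_j$ for all $j$.

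Once $m>\max_j|n_j|$, all indices $m+n_j$ are positive, so $m+n_j\in C_{i_j}$. Equivalently, $m\in\bigcap_j (C_{i_j}-n_j)$. Since $m$ can be taken arbitrarily large, this intersection is infinite.

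The only delicate point is ensuring infiniteness and handling the negative coordinates introduced by the two-sided shift. I resolve this by passing to the $\omega$-limit set rather than the full orbit closure, so that witnesses $m$ can be found beyond any threshold. The arbitrary padding of $\omega$ on $n\leq0$ is then harmless. Minimality comes for free from Zorn's lemma applied to closed invariant subsets of $Y$. Non-emptiness of $Y$ and $\sigma(Y)=Y$ follow from compactness via a nested intersection of non-empty compact sets.
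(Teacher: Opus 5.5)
Your proof is correct and is essentially the paper's argument: the same encoding in $\{1,\ldots,r\}^{\Z}$ with arbitrary padding on $n\leq 0$, the $\omega$-limit set $Y$ of the forward orbit, a minimal subsystem obtained by Zorn's lemma, the cylinder cover $U_i=\{x\in X: x(0)=i\}$, and approximation of points of $X$ by $\sigma^m\omega$ with arbitrarily large $m$. Choosing $m>\max_j|n_j|$ so that every index $m+n_j$ is positive handles a detail the paper leaves implicit.
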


A \define{subsystem} of a given system $(X,T)$ is a pair $(X',T)$, where $X'$ is a closed subset of $X$ satisfying $TX'=X'$. Note that $(X',T)$ is itself a topological dynamical system with respect to the transformation $T$, where, by abuse of notation, $T$ is used to denote both the map on $X$ and its restriction to $X'$.

\begin{lemma}
\label{lem_minimal_systems}
Any system has a minimal subsystem.
\end{lemma}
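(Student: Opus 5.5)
The plan is to apply Zorn's lemma to the family $\mathcal{F}$ of all subsystems of a given system $(X,T)$, that is, all non-empty closed sets $X'\subset X$ with $TX'=X'$, ordered by reverse inclusion. The family is non-empty because $X$ itself belongs to it. Once we have a minimal element $X_0$ of $\mathcal{F}$ with respect to inclusion, we will check that $(X_0,T)$ is a minimal system in the sense defined above, namely that every point has a dense forward orbit.

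The first step is to show that every chain $\{X_\alpha\}$ in $\mathcal{F}$ has a lower bound in $\mathcal{F}$. The candidate is $Y=\bigcap_\alpha X_\alpha$.
\begin{itemize}
\item $Y$ is closed as an intersection of closed sets.
\item $Y$ is non-empty by compactness, since a chain of non-empty closed sets has the finite intersection property.
\item $TY\subset Y$ is immediate. For the reverse inclusion $Y\subset TY$, I use that $T$ is a homeomorphism, so $T$ is bijective and $T^{-1}X_\alpha=X_\alpha$ for each $\alpha$. Then $T^{-1}Y=\bigcap_\alpha T^{-1}X_\alpha=Y$, which gives $TY=Y$.
\end{itemize}
Zorn's lemma then yields a subsystem $X_0$ that contains no proper subsystem.

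The second step, which I expect to be the main obstacle, is to show that every $x\in X_0$ has a dense forward orbit in $X_0$. The difficulty is that the forward orbit closure $\overline{\{x,Tx,T^2x,\ldots\}}$ is only forward invariant, so it is not immediately a subsystem in the sense required ($TX'=X'$). I would handle this by passing to its $\omega$-limit set
\[
\omega(x)=\bigcap_{N\geq 0}\overline{\{T^nx:n\geq N\}}.
\]
This set has the following properties:
\begin{itemize}
\item It is non-empty and closed, by compactness.
\item It is contained in $X_0$, since $X_0$ is closed and forward invariant.
\item It satisfies $T\omega(x)=\omega(x)$. For $\supset$: if $T^{n_i}x\to y$, pass to a subsequence with $T^{n_i-1}x\to z$; then $z\in\omega(x)$ and $Tz=y$. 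For $\subset$: this follows from continuity of $T$.
\end{itemize}
Minimality of $X_0$ therefore forces $\omega(x)=X_0$. Hence the forward orbit of $x$ is dense in $X_0$, and in fact every point of $X_0$ is a limit of $T^nx$ with $n$ arbitrarily large.

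So $(X_0,T)$ is a minimal subsystem, which completes the plan. An alternative is to apply Zorn's lemma directly to closed, non-empty, forward-invariant sets and then use the same $\omega$-limit argument to show that a minimal such set is two-sided invariant.
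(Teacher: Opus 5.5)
Your proof is correct and takes the same route as the paper: Zorn's lemma on subsystems, with the intersection of a chain as a lower bound, followed by the observation that a Zorn-minimal subsystem cannot properly contain an invariant orbit closure. Your passage to the $\omega$-limit set $\omega(x)$ fills a detail the paper passes over. The paper simply asserts that the orbit closure of $x$ is a subsystem, but the forward orbit closure is in general only forward invariant and need not satisfy $TX'=X'$.
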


\begin{proof}
This is a standard application of Zorn's Lemma. Let $(X,T)$ be a given system, and consider the collection of all subsystems of $(X,T)$, ordered by set inclusion. By compactness, any nested family of subsystems has a non-empty intersection, which is again a subsystem. This guarantees that every chain in this partial order has a lower bound.

Applying Zorn's Lemma, we obtain a minimal element in this ordering, denoted $(X',T)$. By minimality, $(X',T)$ contains no proper subsystems. Therefore, for any point $x \in X'$, the closure of its orbit under $T$ cannot be a proper subsystem, as that would contradict the minimality of $(X',T)$. This implies that every point in $X'$ has a dense orbit. Consequently, $(X',T)$ is minimal.
\end{proof}

\begin{proof}[Proof of \cref{prop_top_correspondence_principle_1}]
We begin with a finite partition $\N = C_1 \cup \dots \cup C_r$. Denote the set of all functions from $\mathbb{Z}$ to $\{1, \dots, r\}$ by $\{1, \dots, r\}^\mathbb{Z}$, which, endowed with the product topology, is a compact metric space by Tychonoff's theorem. Let $T \colon \{1, \dots, r\}^\mathbb{Z} \to \{1, \dots, r\}^\mathbb{Z}$ be the left-shift map, defined by
\[
(Tx)(n) = x(n+1), \quad \forall n \in \mathbb{Z}.
\]
Thus, $(\{1, \dots, r\}^\mathbb{Z}, T)$ is a topological dynamical system.

Next, define $\chi \colon \mathbb{N} \to \{1, \dots, r\}$ by
\[
\chi(n) =
\begin{cases}
1, & \text{if } n \in C_1, \\
2, & \text{if } n \in C_2, \\
\vdots \\
r, & \text{if } n \in C_r,
\end{cases}
\]
and extend $\chi$ from $\N$ to $\Z$ in an arbitrary fashion, for example, by defining $\chi(0)=\chi(-1)=\chi(-2)=\ldots =1$.
Let $Y = \bigcap_{m \in \mathbb{N}} \overline{\{T^n \chi : n \geq m\}}$ denote the set of accumulation points of the forward orbit of $\chi$ under $T$. Then, $(Y, T)$ is a subsystem of $(\{1, \dots, r\}^\mathbb{Z}, T)$. By \cref{lem_minimal_systems}, this subsystem has a minimal subsystem, which we denote by $(X, T)$.

Define $U_i = \{x \in X \colon x(0) = i\}$. Clearly, $X = U_1 \cup \dots \cup U_r$ forms an open cover of $X$. While some of the $U_i$ may be empty, this does not affect the argument.

To complete the proof, we need to verify that for all $n_1, \dots, n_\ell \in \Z$ and $i_1, \dots, i_\ell \in \{1, \dots, r\}$, we have
\[
T^{-n_1} U_{i_1} \cap \dots \cap T^{-n_\ell} U_{i_\ell} \neq \emptyset \quad \implies \quad (C_{i_1} - n_1) \cap \dots \cap (C_{i_\ell} - n_\ell) \neq \emptyset.
\]
This follows directly from the properties of the shift map. Suppose the intersection on the left-hand side is non-empty, i.e., $T^{-n_1} U_{i_1} \cap \dots \cap T^{-n_\ell} U_{i_\ell} \neq \emptyset$. Taking $V_i = \{x \in \{1, \dots, r\}^\mathbb{Z} \colon x(0) = i\}$, it follows that $T^{-n_1} V_{i_1} \cap \dots \cap T^{-n_\ell} V_{i_\ell} \neq \emptyset$.
Since this set is open, and $X$ is contained in the orbit of $\chi$ under $T$, there exist infinitely many $m \in \mathbb{Z}$ such that $T^m \chi \in T^{-n_1} V_{i_1} \cap \dots \cap T^{-n_\ell} V_{i_\ell}$. Using the definition of $\chi$ and of the sets $V_1, \dots, V_\ell$, for any such $m$ we have
\[
m \in (C_{i_1} - n_1) \cap \dots \cap (C_{i_\ell} - n_\ell),
\]
thus completing the proof.
\end{proof}

\begin{remark}
Thus far, we have restricted our attention to topological dynamical systems $(X,T)$ where $X$ is a compact metric space. However, some of the above theorems are equally meaningful in the non-metrizable setup. For example, \ref{TDI} (Theorem~2.8) holds and is equivalent to van der Waerden's theorem when $X$ is a compact Hausdorff space (not necessarily metrizable) and $T \colon X \to X$ is a homeomorphism. In particular, the proof of \ref{COII}$\implies$\ref{TDI} presented above does not rely on metrizability, and thus works in this broader generality as well. 

A natural instance of a non-metrizable minimal system is given by any minimal left ideal $L$ of $(\beta\N,+)$, endowed with the transformation $T\colon L\to L$ given by $T(\ultra{p})= \ultra{p}+1$ for $\ultra{p}\in L$.
(We remind the reader that left ideals in $(\beta\N,+)$ are always closed.)
For any finite partition $\N=C_1\cup\ldots\cup C_r$, we obtain a natural partition of $L$ given by $L=U_1\cup\ldots\cup U_r$ where $U_i=\cl(C_i)\cap L$ and $\cl(\cdot)$ denotes the closure defined in \cref{sec_algebraic_versions_of_vdW}. 
Then for all $n_1,\ldots,n_\ell\in\Z$ and all $i_1,\ldots,i_\ell\in\{1,\ldots,r\}$ we have
\[
T^{-n_1}U_{i_1}\cap \ldots\cap T^{-n_\ell}U_{i_\ell}\neq\emptyset \quad \implies\quad (C_{i_1}-n_1)\cap \ldots\cap (C_{i_\ell}-n_\ell)~\text{is infinite}.
\]
This gives an alternative proof of \cref{prop_top_correspondence_principle_1}, %when the requirement of metrizability is dropped,
where instead of a (metrizable) system defined on the symbolic shift space $\{1,\ldots,r\}^\Z$ we get a (non-metrizable) system defined on the minimal left ideal $L\subset \beta\N$.
It is worth mentioning that using dynamics on minimal left ideals of $\beta\N$ leads to additional proofs of van der Waerden's theorem, see \cite[Section 2]{BH90}.
\end{remark}

With the topological correspondence principle (\cref{prop_top_correspondence_principle_1}) at hand, we can now prove \ref{TDI}$\implies$\ref{COII}.

\begin{proof}[Proof of {\ref{TDI}$\implies$\ref{COII}}]
Suppose we are given a finite coloring of $\mathbb{N}$, written as $\mathbb{N} = C_1 \cup \dots \cup C_r$, where $C_1, \dots, C_r$ are the color classes. By \cref{prop_top_correspondence_principle_1}, there exists a minimal system $(X, T)$ and an open cover $X = U_1 \cup \dots \cup U_r$ such that for all $\ell\in\N$, $n_1, \dots, n_\ell \in \mathbb{Z}$ and all $i_1, \dots, i_\ell \in \{1, \dots, r\}$, we have
\[
T^{-n_1} U_{i_1} \cap \dots \cap T^{-n_\ell} U_{i_\ell} \neq \emptyset \quad \implies \quad (C_{i_1} - n_1) \cap \dots \cap (C_{i_\ell} - n_\ell) \neq \emptyset.
\]
Take any $i \in \{1, \dots, r\}$ such that $U_i$ is non-empty. By \ref{TDI}, there exists $n \in \mathbb{N}$ such that
\[
U_i \cap T^{-n} U_i \cap \dots \cap T^{-(k-1)n} U_i \neq \emptyset,
\]
which implies
\[
C_i \cap (C_i - n) \cap \dots \cap (C_i - (k-1)n) \neq \emptyset.
\]
Thus, $C_i$ contains a $k$-term progression, completing the proof of \ref{COII}.
\end{proof}

The following lemma is needed for the proof of \ref{TDI}$\iff$\ref{TDII}.

\begin{lemma}
\label{lem_residual_set_of_multiply_recurrent_points}
Let $(X,T)$ be a system, let $d$ denote a metric on $X$, and assume for any non-empty open set $U\subset X$ there is $n\in\N$ such that
\[
U\cap T^{-n}U\cap\ldots\cap T^{-(k-1)n}U\neq\emptyset.
\]
Then for every $\epsilon>0$ the set
\[
\Omega_\epsilon(X)= \Bigg\{x\in X: \exists n\in\N~\text{with}~\max\limits_{1\leq i\leq k-1}d(x,T^{in}x)<\epsilon\Bigg\}
\]
is an open and dense subset of $X$.
\end{lemma}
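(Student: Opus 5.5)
Two things need to be shown: that $\Omega_\epsilon(X)$ is open, and that it is dense.

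\emph{Openness.} I will write $\Omega_\epsilon(X)$ as a union of open sets,
\[
\Omega_\epsilon(X)=\bigcup_{n\in\N}\Big\{x\in X: \max_{1\leq i\leq k-1} d(x,T^{in}x)<\epsilon\Big\}.
\]
For fixed $n$, the map $x\mapsto \max_{1\leq i\leq k-1} d(x,T^{in}x)$ is continuous, since $T$ is a homeomorphism and $d$ is continuous. Each set in the union is therefore the preimage of the open interval $[0,\epsilon)$ under a continuous map, and so it is open. Hence $\Omega_\epsilon(X)$ is open.

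\emph{Density.} Fix a non-empty open set $V\subset X$; I need to find a point of $\Omega_\epsilon(X)$ inside $V$.

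1. Choose a point $y\in V$, and let $U$ be the open ball around $y$ of radius $\delta=\min\{\epsilon/2,\ r\}$, where $r>0$ is small enough that $B(y,r)\subset V$. Then $U\subset V$, and any two points of $U$ are at distance less than $2\delta\leq\epsilon$.

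2. Apply the hypothesis to $U$. This gives $n\in\N$ and a point
\[
x\in U\cap T^{-n}U\cap\ldots\cap T^{-(k-1)n}U.
\]

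3. By the choice of $x$, both $x$ and $T^{in}x$ lie in $U$ for every $1\leq i\leq k-1$. Since $U$ has diameter less than $\epsilon$, it follows that $d(x,T^{in}x)<\epsilon$ for all such $i$. Thus $x\in\Omega_\epsilon(X)\cap V$.

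Since $V$ was an arbitrary non-empty open set, $\Omega_\epsilon(X)$ is dense.

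There is no real obstacle in this argument. The only point that needs care is choosing $U$ of diameter less than $\epsilon$ and contained in $V$, so that the multiple recurrence of the set $U$ turns into $\epsilon$-closeness of the individual orbit points $T^{in}x$ to $x$.
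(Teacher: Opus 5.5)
Your proof is correct and matches the paper's argument: openness comes from writing $\Omega_\epsilon(X)$ as a union over $n\in\N$ of sets that are open by continuity, and density comes from applying the recurrence hypothesis to a non-empty open set of diameter less than $\epsilon$. The only difference is that you prove density directly inside an arbitrary open $V$, while the paper runs the same step as a proof by contradiction, which is purely cosmetic (a small wording point: $[0,\epsilon)$ is open relative to $[0,\infty)$, the range of your continuous map, but it is not an open interval in $\mathbb{R}$).
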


\begin{proof}
Firstly, note that for every $n$, the set $\{x\in X: \max_{1\leq i\leq k-1}d(x,T^{in}x)<\epsilon\}$ is open. Since
\[
\Omega_\epsilon(X)=\bigcup_{n\in\N} \Bigg\{x\in X: \max_{1\leq i\leq k-1}d(x,T^{in}x)<\epsilon\Bigg\}
\]
it follows that $\Omega_\epsilon(X)$ is open. It remains to show that $\Omega_\epsilon(X)$ is dense. Assume, for the sake of contradiction, that it is not. Then there exists a non-empty open set $U\subset X$
with $U\cap \Omega_\epsilon(X)=\emptyset$. Without loss of generality, we can assume that $U$ has diameter smaller than $\epsilon$. According to the hypothesis, there exists $n\in\N$ such that
\[
U\cap T^{-n}U\cap\ldots\cap T^{-(k-1)n}U\neq\emptyset.
\]
Let $x$ be any point in this intersection. Since $x, T^nx,\ldots,T^{(k-1)n}x \in U$ and $U$ has diameter smaller than $\epsilon$, we have
\[
\max_{1\leq i\leq k-1}d(x,T^{in}x)<\epsilon,
\]
and therefore $x\in \Omega_\epsilon(X)$. This contradicts the assumption $U \cap \Omega_\epsilon(X) = \emptyset$.
\end{proof}

\begin{proof}[Proof of {\ref{TDI}$\iff$\ref{TDII}}]
A point $x \in X$ is $k$-recurrent (as defined in \cref{sec_top_formulation_of_vdW}) if and only if $x \in \Omega_{1/m}(X)$ for all $m \in \mathbb{N}$, where $\Omega_\epsilon(X)$ is as defined in \cref{lem_residual_set_of_multiply_recurrent_points}. By \cref{lem_residual_set_of_multiply_recurrent_points}, and assuming \ref{TDI}, the set $\Omega_{1/m}(X)$ is both open and dense. Therefore, the set of $k$-recurrent points is a countable intersection of dense open sets. By the Baire category theorem, this set is residual, completing the proof of \ref{TDI} $\implies$ \ref{TDII}.

For the reverse implication, note that every non-empty open set intersects any residual set. Thus, any non-empty open set $U \subset X$ contains a $k$-recurrent point, which implies that $U \cap T^{-n} U \cap \dots \cap T^{-(k-1)n} U \neq \emptyset$ for some $n \in \mathbb{N}$, proving \ref{TDII} $\implies$ \ref{TDI}.
\end{proof}

We conclude this section with yet another version of the topological correspondence principle which involves the fundamental notion of piecewise syndeticity and is of independent interest.

\begin{proposition}[Topological correspondence principle for piecewise syndetic sets, cf.~{\cite[Theorem~3.2.2]{McCutcheon99}}]
\label{prop_correspondence_principle_for_pws-sets}
For any piecewise syndetic set $A\subset\N$ there exists a minimal system $(X,T)$ and an non-empty open set $U\subset X$ such that for all $\ell\in\N$ and $n_1,\ldots,n_\ell\in\Z$ we have
\[
T^{-n_1}U\cap \ldots\cap T^{-n_\ell}U\neq\emptyset \quad \implies\quad (A-n_1)\cap \ldots\cap (A-n_\ell)~\text{is piecewise syndetic}.
\]
\end{proposition}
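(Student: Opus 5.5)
The plan is to mimic the proof of \cref{prop_top_correspondence_principle_1}, working in the symbolic space $\{0,1\}^\Z$ with the left shift $T$. The extra care goes into choosing the minimal subsystem so that the cylinder $\{x(0)=1\}$ meets it. Throughout I use the characterization already used in the proof of \cref{prop_additive_interesectivity_lemma}: a set $E\subset\N$ is piecewise syndetic if and only if there is $g\in\N$ such that $E\cup(E-1)\cup\ldots\cup(E-g)$ is thick. The forward direction appears in that proof. For the converse, note that $S=E\cup(\N\setminus\bigcup_{j=0}^g(E-j))$ is syndetic and that $E\supset S\cap T'$ for a suitable thick $T'$.

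\emph{Step 1: a point with bounded gaps.} Let $\chi=1_A$, extended by $0$ on $\Z\setminus\N$. Since $A$ is piecewise syndetic, fix $h$ with $A\cup(A-1)\cup\ldots\cup(A-h)$ thick. Then there are $t_n\to\infty$ such that every window of length $h+1$ inside $[t_n-n,t_n+n]$ meets $A$. Let $y$ be a limit point of $T^{t_n}\chi$. Then every window of length $h+1$ in $\Z$ contains a $1$ of $y$. This property is closed and shift invariant, so it holds for every point of $\overline{\{T^m y:m\in\Z\}}$.

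\emph{Step 2: the minimal system.} By \cref{lem_minimal_systems}, this orbit closure contains a minimal subsystem $(X,T)$. Put $U=\{x\in X: x(0)=1\}$, which is open. It is nonempty: any $x\in X$ has $x(j)=1$ for some $j\in[0,h]$, and then $T^jx\in U$. Because $y$ is a limit of $T^{t_n}\chi$ with $t_n\to\infty$, every point of $X$ is a limit of points $T^{t}\chi$ with $t$ arbitrarily large. Consequently, for every $x\in X$ and every finite window $[-N,N]$, there are arbitrarily large $t\in\N$ with $\chi(t+j)=x(j)$ for all $|j|\le N$.

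\emph{Step 3: transfer of nonempty intersections.} Suppose $x\in T^{-n_1}U\cap\ldots\cap T^{-n_\ell}U$, and let $V=\bigcap_i T^{-n_i}U$, which is open in $X$. By minimality and \cref{lem_finite_cover}, the return-time set $R=\{m\in\N: T^m x\in V\}$ is syndetic, say with gaps at most $g$. Let $E=(A-n_1)\cap\ldots\cap(A-n_\ell)$. Fix $N$ and set $K=N+\max_i|n_i|$. By Step 2, choose $t$ large with $\chi(t+j)=x(j)$ for $|j|\le K$. For $m\in R\cap[0,N]$ we have $x(m+n_i)=1$ for all $i$, hence $t+m\in A-n_i$ for all $i$, that is, $t+m\in E$. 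So $E$ contains a translate of $R\cap[0,N]$ for every $N$. Since $R$ has gaps at most $g$, the set $E\cup(E-1)\cup\ldots\cup(E-g)$ contains arbitrarily long intervals, so it is thick. By the characterization above, $E$ is piecewise syndetic.

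The main obstacle is Step 2. One must ensure that the minimal subsystem actually meets $\{x(0)=1\}$, which is why the bounded-gap point $y$ is built first. One must also keep the approximating shifts $t$ positive and large, so that the translates of $R\cap[0,N]$ land inside $\N$. The remaining steps are routine.
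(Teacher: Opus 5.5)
Your proof is correct, but it takes a different route from the paper.

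\textbf{The paper's route.} The paper never works with $1_A$ directly. It first applies the intersectivity lemma (\cref{prop_additive_interesectivity_lemma}) to get a syndetic set $L$ such that $(L-n_1)\cap\ldots\cap(L-n_\ell)\neq\emptyset$ forces $(A-n_1)\cap\ldots\cap(A-n_\ell)$ to be piecewise syndetic. It then treats the cover $L\cup(L-1)\cup\ldots\cup(L-h)=\N$ as a finite coloring and applies the correspondence principle for colorings (\cref{prop_top_correspondence_principle_1}). Finally it takes $U$ to be any nonempty piece $U_i$. All the combinatorial work, ultimately the partition regularity of piecewise syndetic sets (\cref{lem_pws_partition_regularity}), is hidden inside the intersectivity lemma.

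\textbf{Your route.} You use the natural system: a minimal subsystem of the orbit closure of $1_A$, with $U$ the cylinder $\{x(0)=1\}$. Your only inputs are:
\begin{itemize}
\item the bounded-gap limit point $y$, which guarantees that $U\neq\emptyset$ in the minimal subsystem;
\item syndeticity of return times in minimal systems;
\item the fact that every point of $X$ is approximated by $T^t1_A$ with $t\to\infty$.
\end{itemize}
This is essentially the classical dynamical characterization of piecewise syndetic sets.

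\textbf{What each buys.} Your argument is self-contained and avoids partition regularity entirely. It also gives quantitative information: the intersection contains translates of arbitrarily long initial segments of a single syndetic set $R$, with a fixed gap bound.

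It also reverses the paper's logical order. Pick $x\in U$ and set $L=\{m\in\N: T^mx\in U\}$, which is syndetic by minimality. Your proposition then yields the intersectivity lemma immediately as a corollary.

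The paper's route, by contrast, is very short given the machinery it has already built. It shows how the coloring version of the correspondence principle bootstraps formally to piecewise syndetic sets.
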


\begin{proof}
Suppose $A \subset \mathbb{N}$ is piecewise syndetic. By the intersectivity lemma for piecewise syndetic sets (\cref{prop_additive_interesectivity_lemma}), there exists a syndetic set $L \subset \mathbb{N}$ such that for all $n_1, \dots, n_\ell \in \mathbb{Z}$, we have
\begin{equation}
\label{eqn_intersectivity_correspondence}    
(L - n_1) \cap \dots \cap (L - n_k) \text{ is infinite}~~\implies~~(A - n_1) \cap \dots \cap (A - n_k) \text{ is piecewise syndetic}.
\end{equation}
Since $L$ is syndetic, we can find $h \in \mathbb{N}$ such that 
\begin{equation}
\label{eqn_coloring_correspondence}
    L \cup (L - 1) \cup \dots \cup (L - h) = \mathbb{N}.
\end{equation}
After removing overlaps arbitrarily, \eqref{eqn_coloring_correspondence} can be viewed as a finite partition of $\mathbb{N}$.

By \cref{prop_top_correspondence_principle_1}, there exists a minimal system $(X, T)$ and an open cover $X = U_0 \cup \dots \cup U_h$ such that for all $n_1, \dots, n_\ell \in \mathbb{Z}$ and all $i_1, \dots, i_\ell \in \{0, 1, \dots, h\}$, we have
\[
T^{-n_1} U_{i_1} \cap \dots \cap T^{-n_\ell} U_{i_\ell} \neq \emptyset \quad \implies \quad (L - i_1 - n_1) \cap \dots \cap (L - i_\ell - n_\ell) \text{ is infinite}.
\]
Now, take any $i \in \{0, \dots, h\}$ such that $U_i$ is non-empty, and define $U = U_i$. Note that $(L - i - n_1) \cap \dots \cap (L - i - n_\ell)$ is infinite if and only if $(L - n_1) \cap \dots \cap (L - n_\ell)$ is infinite. Hence, we have
\[
T^{-n_1} U \cap \dots \cap T^{-n_\ell} U \neq \emptyset \quad \implies \quad (L - n_1) \cap \dots \cap (L - n_\ell) \text{ is infinite}.
\]
Combining this with \eqref{eqn_intersectivity_correspondence}, we conclude that $(A - n_1) \cap \dots \cap (A - n_k)$ is piecewise syndetic, finishing the proof.
\end{proof}

\begin{remark}
Interestingly, \cref{prop_correspondence_principle_for_pws-sets} allows for a quick proof of \ref{TDI}$\implies$\ref{AIII}. Indeed, given a piecewise syndetic set $A\subset\N$, we can find with the help of \cref{prop_correspondence_principle_for_pws-sets} a minimal system $(X,T)$ and an non-empty open set $U\subset X$ such that for all $n_1,\ldots,n_\ell\in\Z$,
\[
T^{-n_1}U\cap \ldots\cap T^{-n_\ell}U\neq\emptyset \quad \implies\quad (A-n_1)\cap \ldots\cap (A-n_\ell)~\text{is piecewise syndetic}.
\]
By \ref{TDI}, there is some $n$ with $U\cap T^{-n} U\cap \ldots \cap T^{-(k-1)n}U\neq\emptyset$, which implies that the set $B= A\cap (A-n)\cap \ldots\cap (A-(k-1))$ is piecewise syndetic. Hence $\{b,b+n,\ldots,b+(k-1)n: b\in B\}\subset A$ as desired in \ref{AIII}.
\end{remark}

\subsection{Equivalences of ultrafilter versions of van der Waerden's theorem}
\label{sec_algebraic_vdW}

It remains to verify the equivalences 
% \[
% \text{\ref{COII}}\iff\text{\ref{UI}}.
% \]
\begin{center}
\begin{tikzcd}
 \text{\ref{COII}}\arrow[swap, r, Leftrightarrow]&\text{\ref{UI}}\arrow[swap, r, Leftrightarrow]&\text{\ref{UII}}.
% %\arrow[bend left=40, ll, Rightarrow]
% \arrow[to path={ -- ([yshift=-4ex]\tikztostart.south) -| (\tikztotarget)},Rightarrow, rounded corners=12pt]{ll}
\end{tikzcd}
\end{center}
To prove the first implication \ref{COII}$\implies$\ref{UI}, we need the following lemma.

\begin{lemma}[cf.~{\cite[Theorem~3~in~Section~6.2]{GRS90}}]
\label{lem_partition_regular_gives_ultrafilter}
Let $\mathcal{P}$ be a property of subsets of $\N$ satisfying the following two conditions:
\begin{itemize}
\item[(i)]
If a set $A\subset\N$ has property $\mathcal{P}$ then any superset of $A$ also has property $\mathcal{P}$.
\item[(ii)]
For any finite coloring of $\N$, at least one of the colors has property $\mathcal{P}$.
\end{itemize}
Then there exists an ultrafilter $\ultra{p}\in\beta\N$ such that every set in $\ultra{p}$ has property $\mathcal{P}$.
\end{lemma}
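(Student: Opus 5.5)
The plan is to pass to the family of ``small'' sets and show that it generates a proper ideal, then take an ultrafilter avoiding that ideal. Let $\mathcal{N}=\{B\subset\N: B \text{ does not have property }\mathcal{P}\}$. By (i), $\mathcal{N}$ is closed under taking subsets: if $B\in\mathcal{N}$ and $B'\subset B$, then $B'\in\mathcal{N}$, since otherwise $B$ would be a superset of a $\mathcal{P}$-set. The key claim is that $\mathcal{N}$ is closed under finite unions and that $\N\notin\mathcal{N}$.

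For finite unions, suppose $B_1,\ldots,B_m\in\mathcal{N}$ but $B=B_1\cup\ldots\cup B_m$ has property $\mathcal{P}$. Color $\N$ with the $m+1$ colors $B_1$, $B_2\setminus B_1$, $\ldots$, $B_m\setminus(B_1\cup\ldots\cup B_{m-1})$, and $\N\setminus B$. By (ii), one of these colors has $\mathcal{P}$. The first $m$ colors are subsets of the $B_i$, so they lie in $\mathcal{N}$ by downward closure, and none of them has $\mathcal{P}$. Hence $\N\setminus B$ has $\mathcal{P}$.

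This alone does not give a contradiction. So the better formulation is to show directly that the family
\[
\mathcal{F}=\{A\subset\N: \N\setminus A\in\mathcal{N}\}
\]
is a filter. Note that $\N\in\mathcal{F}$ because $\emptyset\in\mathcal{N}$: by (ii) with the single color $\N$, the set $\N$ has $\mathcal{P}$, and $\emptyset$ cannot have $\mathcal{P}$. Indeed, coloring $\N$ trivially while counting $\emptyset$ as an extra color is allowed if colorings may have empty classes. If they may not, apply (i)/(ii) to rule it out; I expect to handle $\emptyset$ as a small convention point.

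The main step is showing that $\mathcal{F}$ is closed under intersections, which amounts to $\mathcal{N}$ being closed under unions. Take $B_1,B_2\in\mathcal{N}$ and color $\N$ by $B_1$, $B_2\setminus B_1$, $\N\setminus(B_1\cup B_2)$. By (ii) and downward closure, the third class $\N\setminus(B_1\cup B_2)$ has $\mathcal{P}$. This is exactly what is needed: whenever $B_1,\ldots,B_m\in\mathcal{N}$, the complement $\N\setminus(B_1\cup\ldots\cup B_m)$ has $\mathcal{P}$. In particular it is nonempty, so the family $\{\N\setminus B: B\in\mathcal{N}\}$ has the finite intersection property. Moreover, every finite intersection of its members has $\mathcal{P}$.

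Now extend this family to an ultrafilter $\ultra{p}$ by Zorn's lemma, as in \cite[Theorem~3.8]{HS12a}. Then $\ultra{p}$ contains $\N\setminus B$ for every $B\in\mathcal{N}$. If some $A\in\ultra{p}$ failed $\mathcal{P}$, then $A\in\mathcal{N}$, so $\N\setminus A\in\ultra{p}$, contradicting $A\in\ultra{p}$. Hence every member of $\ultra{p}$ has $\mathcal{P}$.

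The only delicate point is the bookkeeping in the coloring argument: the union bound must be phrased via complements rather than by claiming that $\mathcal{N}$ is closed under unions outright. Once that is done, the rest is routine.
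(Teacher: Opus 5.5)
The argument you end up with is correct. For $B_1,\ldots,B_m\in\mathcal{N}$, coloring $\N$ by $B_1, B_2\setminus B_1,\ldots$ and $\N\setminus(B_1\cup\ldots\cup B_m)$ and using (i) and (ii) shows that the last class has $\mathcal{P}$. Hence $\mathcal{F}=\{\N\setminus B: B\in\mathcal{N}\}$ has the finite intersection property, and any ultrafilter containing $\mathcal{F}$ consists only of $\mathcal{P}$-sets.

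This is the filter-theoretic dual of the paper's proof, which argues by contradiction. If every $\ultra{p}$ contains some $A_{\ultra{p}}$ lacking $\mathcal{P}$, the clopen sets $\cl(A_{\ultra{p}})$ cover $\beta\N$. A finite subcover gives $A_{\ultra{p}_1}\cup\ldots\cup A_{\ultra{p}_m}=\N$, contradicting (ii) via the same disjointification and (i) that you make explicit. The combinatorial input is identical --- no finitely many sets lacking $\mathcal{P}$ can cover $\N$ --- and only the infinitary device differs. You extend a family with the finite intersection property to an ultrafilter. The paper instead invokes compactness of $\beta\N$ (\cref{prop_betaS_compact_Hausdorff}), which is itself usually proved by that extension theorem. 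Your route is more self-contained and identifies the relevant ultrafilters exactly as those containing $\mathcal{F}$. The paper's route fits the topological language used elsewhere in the survey.

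Two points in your write-up should be repaired.

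First, delete the claims that $\mathcal{N}$ is closed under finite unions and that $\mathcal{F}$ is a filter; they are false in general. Take $\mathcal{P}=$ ``contains a $3$-term arithmetic progression'', which is upward closed and satisfies (ii) by van der Waerden's theorem. The sets $\{1,2\}$ and $\{3\}$ lie in $\mathcal{N}$ but $\{1,2,3\}$ does not. So $\N\setminus\{1,2\}$ and $\N\setminus\{3\}$ belong to $\mathcal{F}$ while their intersection does not. Only the finite intersection property holds, and that is all the Zorn step needs.

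Second, your claim that $\emptyset$ cannot have $\mathcal{P}$ is unjustified; it fails when every subset of $\N$ has $\mathcal{P}$. The right case split is as follows. If $\mathcal{N}=\emptyset$, any ultrafilter works. If $\mathcal{N}\neq\emptyset$, then $\emptyset\in\mathcal{N}$ by downward closure, so sets with $\mathcal{P}$ are nonempty, which is exactly what the finite intersection property requires. The convention on empty color classes is then irrelevant: discard empty classes and apply (ii). The class with $\mathcal{P}$ cannot be one of the pieces contained in some $B_i$, so it must be $\N\setminus(B_1\cup\ldots\cup B_m)$.
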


\begin{proof}
By way of contradiction, assume that there is no such ultrafilter. This means every ultrafilter $\ultra{p}\in\beta\N$ contains at least one set $A_\ultra{p}\in\ultra{p}$ that does not have property $\mathcal{P}$.
Note that $\cl(A_\ultra{p})$, the closure of $A_{\ultra{p}}$ in $\beta\N$ introduced in \cref{sec_algebraic_versions_of_vdW}, is a clopen subset of $\beta\N$ containing $\ultra{p}$. Hence
\[
\bigcup_{\ultra{p}\in\beta\N} \cl(A_\ultra{p})
\]
is an open cover of $\beta\N$. By compactness, there exists a finite subcover, i.e., there are $\ultra{p}_1,\ldots,\ultra{p}_m\in\beta\N$ such that
$\cl(A_{\ultra{p}_1})\cup \ldots\cup \cl(A_{\ultra{p}_m})=\beta\N$. This is equivalent to $A_{\ultra{p}_1}\cup\ldots\cup A_{\ultra{p}_m}=\N$. By assumption, at least one element in this finite cover of $\N$ must possess property $\mathcal{P}$, which contradicts the fact that none of these sets have this property.
\end{proof}

\begin{proof}[Proof of {\ref{COII}$\implies$\ref{UI}}]
By combining \ref{COII} (Theorem~2.2) and \cref{lem_partition_regular_gives_ultrafilter}, there exists an ultrafilter $\ultra{p}$ such that every member of $\ultra{p}$ contains a $k$-term progression. For every $A\in\ultra{p}$ let $H_A$ be the set of all $\ultra{q}\in \beta I_k$ with the property that $A\in \pi_i(\ultra{q})$ for all $i=1,\ldots,k$. Note that $H_A$ is non-empty, because $A$ contains a $k$-term progression. Moreover, $H_A$ is a closed subset of $\beta I_k$ and for $A_1,\ldots,A_m\in\ultra{p}$ we have $H_{A_1}\cap \ldots\cap H_{A_k}\supset H_{A_1\cap\ldots\cap A_k}\neq\emptyset$. By compactness, it follows that
\[
\bigcap_{A\in\ultra{p}}H_A\neq\emptyset.
\]
Now any ultrafilter $\ultra{q}$ in this intersection satisfies $\pi_{1}(\ultra{q})=\ldots=\pi_{k}(\ultra{q})=\ultra{p}$, completing the proof. 
\end{proof}

\begin{proof}[Proof of {\ref{UI}$\implies$\ref{COII}}]
According to \ref{UI} (Theorem~2.12), there exist ultrafilters $\ultra{q}\in\beta I_k$ and $\ultra{p}\in\beta\N$ such that $\pi_{1}(\ultra{q})=\ldots=\pi_{k}(\ultra{q})=\ultra{p}$.
Suppose we are given a finite coloring of $\N$, where $\N = \bigcup_{i=1}^r C_i$, with $C_1, C_2, \dots, C_r$ denoting the color classes. By \eqref{eqn_part_reg_ultrafilters}, there exists some $i$ such that $C_i\in\ultra{p}$. This implies $(C_i\times\ldots\times C_i)\cap I_k \in\ultra{q}$ and therefore $(C_i\times\ldots\times C_i)\cap I_k\neq\emptyset$. Hence $C_i$ contains a $k$-term arithmetic progression.
\end{proof}

The implication \ref{UII}$\implies$\ref{UI} is straightforward, so it only remains to prove the reverse direction, for which we use the following lemma.

\begin{lemma}[see {\cite[Lemma 1.45]{HS12a}}]
\label{lem_ideals_contain_minimal_left_ideals}
Let $(S,+)$ be a commutative semigroup.
Then any two-sided ideal of $(\beta S,+)$ contains all minimal left ideals of $(\beta S,+)$.
\end{lemma}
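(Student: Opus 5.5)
The plan is to show that a two-sided ideal $I$ meets every minimal left ideal $L$, and then use minimality of $L$ to conclude $L\subset I$. The convention is the one fixed in \cref{sec_algebraic_versions_of_vdW}: a left ideal is a non-empty $L$ with $\beta S+L\subset L$. A two-sided ideal is a non-empty $I$ with $\beta S+I\subset I$ and $I+\beta S\subset I$.

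Fix a minimal left ideal $L$ and a two-sided ideal $I$, and pick any $\ultra{p}\in L$ and $\ultra{q}\in I$. The key object is the set $I\cap L$, and I would show it is non-empty using the element $\ultra{q}+\ultra{p}$.
\begin{itemize}
\item Since $L$ is a left ideal, $\ultra{q}+\ultra{p}\in\beta S+L\subset L$.
\item Since $I$ is a right ideal, $\ultra{q}+\ultra{p}\in I+\beta S\subset I$.
\end{itemize}
Hence $\ultra{q}+\ultra{p}\in I\cap L$, so this intersection is non-empty.

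Next I check that $I\cap L$ is itself a left ideal. For any $\ultra{r}\in\beta S$ we have $\ultra{r}+(I\cap L)\subset(\ultra{r}+I)\cap(\ultra{r}+L)\subset I\cap L$, using that $I$ is also a left ideal. Since $I\cap L$ is a left ideal contained in the minimal left ideal $L$, minimality forces $I\cap L=L$, that is, $L\subset I$. As $L$ was an arbitrary minimal left ideal, this proves the lemma.

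There is no real obstacle here. The only step needing care is keeping the order of addition straight, since $\beta S$ is not commutative even though $S$ is: we must use $\ultra{q}+\ultra{p}$ with $\ultra{q}$ on the left, so that the left-ideal property of $L$ and the right-ideal property of $I$ both apply.
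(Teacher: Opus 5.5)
Your proof is correct and is essentially the paper's argument. The paper applies minimality to the left ideal $H+L\subset L$ to get $L=H+L\subset H$, while you apply it to $I\cap L$, whose non-emptiness you witness with an element of $I+L$; both versions use the same ingredients, namely the left-ideal property of $L$ and the right-ideal property of the two-sided ideal.
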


\begin{proof}
Let $H$ be a two-sided ideal of $(\beta S,+)$ and $L$ a minimal left ideal of $(\beta S,+)$. Our goal is to show $L\subset H$.
Since $H$ is a right ideal, we have $H+L\subset H$. On the other hand, since $H$ is also a left ideal, we have that $H+L$ is a left ideal contained in $L$. By minimality, we must have $H+L=L$. Combined with $H+L\subset H$, we conclude that $L\subset H$ as desired. 
\end{proof}

\begin{proof}[Proof of {\ref{UI}$\implies$\ref{UII}}]
Define
\[
H=\{\ultra{p}\in\beta\N: \exists \ultra{q}\in\beta I_k~\text{with}~\pi_{1}(\ultra{q})=\ldots=\pi_{k}(\ultra{q})=\ultra{p}\}.
\]
This set is non-empty due to \ref{UI} (Theorem~2.12).
For $\ultra{u}\in\beta\N$, define $\ultra{u}^\Delta=\{B\subset \N^k: \{n\in\N: (n,\ldots,n)\in B\}\in\ultra{u}\}$, which is the natural way of embedding an ultrafilter from $\beta\N$ diagonally into $\beta I_k$.
Note that
\[
\pi_i(\ultra{u}^\Delta+\ultra{q})=\ultra{u}+\pi_i(\ultra{q})\quad\text{and}\quad \pi_i(\ultra{q}+\ultra{u}^\Delta)=\pi_i(\ultra{q})+\ultra{u}.
\]
This shows that if $\ultra{p}\in H$ and $\ultra{u}\in\beta\N$ then both $\ultra{u}+\ultra{p}\in H$ and $\ultra{p}+\ultra{u}\in H$. In other words, $H$ is a two-sided ideal of $(\beta\N,+)$. It now follows from \cref{lem_ideals_contain_minimal_left_ideals} that $H$ contains all minimal left ideals, and hence it contains all minimal ultrafilters. This verifies \ref{UII} (Theorem~2.13).
\end{proof}

\section{Three proofs of van der Waerden's theorem}
\label{sec_proofs_of_vdW}

Given the wide variety of equivalences established in Sections~\ref{sec_equivalent_forms_of_vdW} and~\ref{sec_proof_of_equivalentcies_of_vdW}, it is not surprising that van der Waerden's theorem has an equally diverse collection of ways in which one can prove it.  
In this section we provide three different proofs.
First, in Section~3.1, we give a short combinatorial proof. In Section~3.2, building on the reformulation of van der Waerden's theorem given in \cref{sec_top_formulation_of_vdW}, a proof is presented using ideas from topological dynamics. Finally in Section~3.3, we provide an algebraic proof which utilizes algebraic properties of ultrafilters on $\N$.

\subsection{A combinatorial proof}
\label{sec_combinatorial_proof}

The short proof of van der Waerden's theorem that we present in this section is a new addition to a rather long list of various combinatorial proofs that exist in the literature. While there are certain similarities between different existing combinatorial proofs, each of the proofs in \cite{vdW27,Lukomskaya48,Khinchin52,GR74,Taylor82,Deuber82,Shelah88} showcases an additional aspect of van der Waerden's theorem. %exposes feature

Unlike many other combinatorial proofs of van der Waerden's theorem in the literature, we do not use a double-induction argument, where the ``outer'' induction is on the length of the arithmetic progression $k$ and the ``inner'' induction is on the number of colors $r$. Instead, we use the intersectivity lemma for piecewise syndetic sets, \cref{prop_additive_interesectivity_lemma}, to drive the induction.

\begin{proof}[Proof of {\ref{COII}} (Theorem~2.2)]
We proceed by induction on $k$.
If $k=2$ then \ref{COII} holds trivially. So let us assume that $k\geq 2$ and that \ref{COII} has already been proven for $k$; we want to show that any finite coloring of $\N$ admits a monochromatic $(k+1)$-term arithmetic progression.
For the proof of the inductive step, we follow the scheme
\begin{center}
\begin{tikzcd}
\text{\ref{COII} for $k$}\arrow[r, Rightarrow, "\substack{\text{Proved in}\\ \text{\cref{sec_intersectivity_lemmas}}\vspace{.6em}}"]&\text{\ref{AIII} for $k$}\arrow[swap, r, Rightarrow]&\text{\ref{COII} for $k+1$.}
\end{tikzcd}
\end{center}
% \[
% \text{\ref{COII}~for $k$}\implies\text{\ref{AIII}~for $k$}\implies\text{\ref{COII}~for $k+1$}.
% \]
The first implication has already been established in \cref{sec_intersectivity_lemmas} and the key ingredient behind it is the intersectivity lemma \cref{prop_additive_interesectivity_lemma}. It remains to deal with the second implication.

Suppose $\N$ is finitely colored. By \cref{lem_pws_partition_regularity}, one of the colors is a piecewise syndetic set; let us call it $A_0$.
Using the assumption that \ref{COII} has already been proven for $k$, we can invoke \ref{AIII} (Theorem~2.5) for $k$, which was shown to be equivalent to \ref{COII} in \cref{sec_intersectivity_lemmas}, to find a piecewise syndetic set $B\subset\N$ and $d_1\in\N$ such that
\[
\{b, b+d_1,\ldots,b+(k-1)d_1: b\in B\}\subset A_0.
\]
The finite coloring of $\N$ induces a finite coloring of the set $B-d_1$ and hence, by \cref{lem_pws_partition_regularity}, there exists a monochromatic piecewise syndetic set $A_1\subset B-d_1$.
It follows that
\[
\{a+d_1,\ldots,a+kd_1: a\in A_1\}\subset A_0.
\]
Repeating the same argument with $A_1$ in place of $A_0$, we can find another monochromatic piecewise syndetic set $A_2$ and a number $d_2\in\N$ such that
\[
\{a+d_2,\ldots,a+kd_2: a\in A_2\}\subset A_1.
\]
Continuing this procedure, we end up with a sequence of monochromatic piecewise syndetic sets $A_0,A_1,A_2,\ldots\subset\N$ and numbers $d_1,d_2,\ldots \in\N$ such that for all $i\in\N$ we have
\begin{equation}
\label{eqn_prf_vdW_1}
\{a+d_i,\ldots,a+kd_i: a\in A_i\}\subset A_{i-1}.
\end{equation}
Since there are only finitely many colors, there must exist $i<j\in\N$ such that $A_i$ and $A_j$ have the same color.
Let $a$ be any element in $A_j$ and define 
\begin{equation}
\label{eqn_prf_vdW_2}
d=d_{i+1}+\ldots+d_{j}.
\end{equation}
From \eqref{eqn_prf_vdW_1} it follows that $\{a+d,\ldots, a+kd\}\subset A_i$.
Since $a\in A_j$ and $\{a+d,\ldots, a+kd\}\subset A_i$, and since all elements in $A_i$ and $A_j$ have the same color, we conclude that the $(k+1)$-term progression $\{a,a+d,\ldots,a+kd\}$ is monochromatic. 
\end{proof}

\subsection{A topological proof}
\label{sec_proof_of_top_vdW}

In this subsection we present a short proof of \ref{TDI} (Theorem~2.8), the topological version of van der Waerden's theorem introduced in \cref{sec_top_formulation_of_vdW}. As was already mentioned above, the idea of using topological dynamics to approach problems in Ramsey Theory was introduced in the pioneering paper \cite{FW78}, where the first topological proof of van der Waerden's theorem was given. Since then, a few more topological proofs have appeared. See for example \cite{BPT89,BL99}.
The novelty of the proof that we present here is that it is based on a topological version of the classical van der Corput difference theorem (\cref{prop_top_vdC_N-actions}) which appeared in \cite{BM16}.

We start with a simple technical lemma.

\begin{lemma}
\label{lem_uniform_recurrence}
Let $(X,T)$ be a minimal system. For every non-empty open set $U\subset X$ there exists a number $M=M(U)\in\N$ such that for all non-empty $V\subset X$ one can find some $m\in\{1,\ldots,M\}$ with $V\cap T^{-m}U\neq\emptyset$.
\end{lemma}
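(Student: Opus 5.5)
The plan is to derive this directly from \cref{lem_finite_cover}, which already contains essentially all of the content. Given a minimal system $(X,T)$ and a non-empty open set $U\subset X$, I will apply \cref{lem_finite_cover} to $U$. This produces $M\in\N$ with
\[
X=T^{-1}U\cup T^{-2}U\cup\ldots\cup T^{-M}U.
\]
I then set $M(U)\coloneqq M$. The point to emphasise is that $M$ depends only on $U$ (and on the system), not on any set $V$; this is what makes the conclusion uniform.

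Next, let $V\subset X$ be an arbitrary non-empty set; it need not be open or even measurable. I pick any point $v\in V$. Since the sets $T^{-m}U$, $1\leq m\leq M$, cover $X$, there is some $m\in\{1,\ldots,M\}$ with $v\in T^{-m}U$. Then $v\in V\cap T^{-m}U$, so $V\cap T^{-m}U\neq\emptyset$, which is the claim.

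There is no real obstacle here. The only substantive ingredient is the compactness step hidden in \cref{lem_finite_cover}: minimality gives that $\bigcup_{m\in\N}T^{-m}U$ is an open cover of $X$, and compactness extracts a finite subcover. One small point to check is that the union starts at $m=1$ rather than $m=0$, so that $m$ lands in $\{1,\ldots,M\}$ as required. This holds because every point has a dense forward orbit $\{x,Tx,T^2x,\ldots\}$, and so does $Tx$. Hence every $x$ satisfies $T^{m}x\in U$ for some $m\geq 1$, and this was already used in the proof of \cref{lem_finite_cover}.
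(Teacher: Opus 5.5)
Your proposal is correct and follows essentially the same route as the paper: it applies \cref{lem_finite_cover} to get $X=\bigcup_{m=1}^M T^{-m}U$ and then notes that any non-empty $V$ meets one of these sets. Your extra check that the cover can start at $m=1$, by applying forward-orbit density to $Tx$, is sound.
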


\begin{proof}
By \cref{lem_finite_cover}, there exists a positive integer $M=M(U)\in\N$ such that $\bigcup_{m=1}^M T^{-m}U=X$. Hence for any non-empty $V\subset X$ there is $m\in\{1,\ldots,M\}$ with $V\cap T^{-m}U\neq\emptyset$.
\end{proof}

\begin{definition}[Multiple topological recurrence]
\label{def_multiple_topological_recurrence}
Given sequences $f_1,\ldots,f_k\colon\N\to\Z$, we say that $\{f_1(n),\ldots,f_k(n)\}$ is a \define{family of multiple topological recurrence} if for every minimal system $(X,T)$ and every non-empty open set $U\subset X$ there exists $n\in\N$ such that $U\cap T^{-f_1(n)}U\cap\ldots\cap T^{-f_k(n)}U\neq\emptyset$.
\end{definition}

The following proposition plays a fundamental role in the inductive proofs of theorems dealing with (topological) multiple recurrence which are based on ``complexity reduction'', and may be viewed as a topological analogue of the classical van der Corput's difference theorem. In particular, it is the main technical ingredient in our proof of \ref{TDI}, and is also used later in \cref{sec_poly_vdW} for the proof of the polynomial van der Waerden theorem, \cref{thm_poly_vdw_top}.
For more information of how van der Corput's difference theorem and its variants are used to reduce complexity in theory of uniform distribution, ergodic theory, and arithmetic combinatorics, see the survey \cite{BM16}.

\begin{proposition}[cf.~{\cite[Lemma~8.5]{BM16}}]
\label{prop_top_vdC_N-actions}
Let $f_1,\ldots,f_k\colon\N\to\Z$ be given and set $f_i(0)=0$ for all $i=1,\ldots,k$. If for every finite non-empty set $F\subset\N\cup\{0\}$ the family 
\[
\{n\mapsto f_i(n+h)-f_1(n)-f_i(h): h\in F,~i\in\{1,\ldots,k\}\}
\]
is a family of multiple topological recurrence, then so is $\{f_1(n),\ldots,f_k(n)\}$.
\end{proposition}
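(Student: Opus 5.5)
The plan is a ``colouring by return times'' argument. We build a nested sequence of non-empty open sets $V_0\supset V_1\supset V_2\supset\ldots$ together with integers $n_1,n_2,\ldots\in\N$, and then pigeonhole on the return times provided by \cref{lem_uniform_recurrence}.

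Fix a minimal system $(X,T)$ and a non-empty open set $U$, and let $M=M(U)$ be as in \cref{lem_uniform_recurrence}. For $l\le j$ write $s_{l,j}=n_{l+1}+\ldots+n_j$, so that $s_{j,j}=0$. At stage $j$ we will have produced a non-empty open set $V_j$ and labels $m_0,\ldots,m_j\in\{1,\ldots,M\}$ satisfying the invariant
\[
T^{f_i(s_{l,j})}V_j\subset T^{-m_l}U\qquad\text{for all }l\le j\text{ and }i\in\{1,\ldots,k\}.
\]
For $l=j$ this reads $V_j\subset T^{-m_j}U$, using $f_i(0)=0$. To start, choose $m_0$ with $U\cap T^{-m_0}U\neq\emptyset$ and put $V_0=U\cap T^{-m_0}U$.

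For the inductive step, apply the hypothesis to the finite set $F=\{s_{l,j}:0\le l\le j\}\subset\N\cup\{0\}$ and to the open set $V_j$. This gives $n\in\N$ and $x\in V_j$ with $T^{f_i(n+s)-f_1(n)-f_i(s)}x\in V_j$ for all $s\in F$ and all $i$. Put $y=T^{-f_1(n)}x$. Then
\[
T^{f_i(n+s_{l,j})}y=T^{f_i(s_{l,j})}\bigl(T^{f_i(n+s_{l,j})-f_1(n)-f_i(s_{l,j})}x\bigr)\in T^{f_i(s_{l,j})}V_j\subset T^{-m_l}U .
\]
Hence $y$ lies in the open set
\[
W=\bigcap_{l\le j}\,\bigcap_{i=1}^{k}T^{-f_i(n+s_{l,j})}T^{-m_l}U,
\]
which is therefore non-empty. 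By \cref{lem_uniform_recurrence} there is $m_{j+1}\in\{1,\ldots,M\}$ with $W\cap T^{-m_{j+1}}U\ne\emptyset$. Set $n_{j+1}=n$ and $V_{j+1}=W\cap T^{-m_{j+1}}U$. Since $s_{l,j+1}=n+s_{l,j}$, the invariant holds at stage $j+1$. The labels $m_l$ are fixed once assigned; this is why the invariant is phrased with the old labels $m_l$.

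After $M+1$ steps, pigeonhole gives $l<j$ with $m_l=m_j=:m$. Take any $y\in V_j$, set $N=s_{l,j}\ge1$ and $z=T^{m}y$. The case $l=j$ of the invariant gives $z\in U$. The case $l$ of the invariant gives $T^{f_i(N)}z=T^m T^{f_i(N)}y\in U$ for every $i$, using that powers of $T$ commute. Thus $U\cap T^{-f_1(N)}U\cap\ldots\cap T^{-f_k(N)}U\neq\emptyset$, which is what we need.

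The main obstacle is choosing the invariant so that it is preserved. The hypothesis controls the shifted differences $f_i(n+h)-f_1(n)-f_i(h)$, and these must be recombined with the accumulated shifts $f_i(s_{l,j})$. The recombination works only because the new point is taken to be $y=T^{-f_1(n)}x$, so that the $f_1(n)$ term cancels and each $f_i(s_{l,j})$ is absorbed by the invariant at stage $j$. The remaining steps are the open-set bookkeeping and the pigeonhole.
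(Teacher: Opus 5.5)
Your proof is correct and follows essentially the same route as the paper. You build open sets with return labels $m_j\in\{1,\ldots,M\}$ from \cref{lem_uniform_recurrence}, apply the hypothesis with $F$ equal to the set of accumulated partial sums $n_{l+1}+\cdots+n_j$, pass to $T^{-f_1(n)}x$ to cancel the $f_1(n)$ term, and pigeonhole two equal labels. Your invariant $T^{f_i(s_{l,j})}V_j\subset T^{-m_l}U$ is a slightly leaner version of the paper's bookkeeping, which uses $U_j\subset T^{-f_i(n_j+\cdots+n_{l+1})}U_l$ together with $U_l\subset T^{-m_l}U$, and you spell out the recombination step that the paper only covers by ``repeating the same argument.'' One harmless slip: your opening paragraph calls the $V_j$ nested, which your construction does not guarantee, but the argument never uses it.
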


\begin{proof}
Let $M=M(U)$ be as guaranteed by \cref{lem_uniform_recurrence}, and let $U_0=U$.
Applying the hypothesis with $F=\{0\}$, we can find $n_1\in\N$ such that
\[
V_1=U_0\cap T^{f_2(n_1)-f_1(n_1)}U_0\cap \ldots\cap T^{f_k(n_1)-f_1(n_1)}U_0\neq \emptyset.
\]
In view of \cref{lem_uniform_recurrence}, there exists some $m_1\in\{1,\ldots,M\}$ such that
$
U_1=T^{-f_1(n_1)}V_1\cap T^{-m_1}U\neq\emptyset.
$
Clearly, $U_1$ is an open set. In summary, we have found $m_1\in\{1,\ldots,M\}$, $n_1\in\N$, and a non-empty open set $U_1\subset T^{-m_1} U$ such that
\[
U_1\subset T^{-f_1(n_1)}U_0\cap\ldots\cap T^{-f_k(n_1)}U_0.
\]
Repeating the same argument with $U_1$ in place of $U_0$ and $F=\{0,n_1\}$ in place of $F=\{0\}$, we can find $m_2\in\{1,\ldots,M\}$, $n_2\in\N$, and a non-empty open set $U_2\subset T^{-m_2} U$ such that
\[
\begin{cases}
U_2\subset T^{-f_1(n_2)}U_1\cap\ldots\cap T^{-f_k(n_2)}U_1,
\\
U_{2}\subset T^{-f_1(n_2+n_1)}U_0\cap\ldots\cap T^{-f_k(n_2+n_1)}U_0,
\end{cases}
\]
Continuing this procedure, we obtain sequences of numbers $m_1,m_2,\ldots\in\{1,\ldots,M\}$, $n_1,n_2,\ldots\in\N$, and a sequence of non-empty open sets $U_0,U_1,U_2,\ldots$ with $U_i\subset T^{-m_i} U$ and
\begin{equation}
\label{eqn_top_proof_21}
\begin{cases}
U_i\subset T^{-f_1(n_i)}U_{i-1}\cap\ldots\cap T^{-f_k(n_i)}U_{i-1},
\\
U_{i}\subset T^{-f_1(n_i+n_{i-1})}U_{i-2}\cap\ldots\cap T^{-f_k(n_i+n_{i-1})}U_{i-2},
\\
~~~~\,\vdots
\\
U_{i}\subset T^{-f_1(n_i+\ldots+n_1)}U_0\cap\ldots\cap T^{-f_k(n_i+\ldots+n_1)}U_0.
\end{cases}
\end{equation}
Since all elements in the sequence $m_1,m_2,\ldots$ belong to the finite set $\{1,\ldots,M\}$, the exist $i<j$ and $m\in \{1,\ldots,M\}$ with $m=m_i=m_j$.
Using \eqref{eqn_top_proof_21} and $U_i,U_j\subset T^{-m}U$, we see that the intersection
\[
U\cap T^{-f_1(n_j+\ldots+n_{i+1})}U\cap\ldots\cap T^{-f_k(n_j+\ldots+n_{i+1})}U
\]
contains $T^{m}U_j$ and is therefore non-empty, proving that $\{f_1(n),\ldots,f_k(n)\}$ is a family of multiple topological recurrence.
\end{proof}

\begin{proof}[Proof of {\ref{TDI}} (Theorem~2.8)]
Let $(X,T)$ be a minimal system and let $U\subset X$ be a non-empty open set. Our goal is to show that for all $k\geq 2$ there exists $n\in\N$ such that $U\cap T^{-n}U\cap\ldots\cap T^{-(k-1)n}U\neq \emptyset$, or equivalently, that $\{n,2n,\ldots,(k-1)n\}$ is a family of topological multiple recurrence.
The case $k=2$ follows by choosing $U=V$ in \cref{lem_uniform_recurrence}.
Applying \cref{prop_top_vdC_N-actions} with $f_i(n) = i n$, it follows that if $\{n,2n,\ldots,(k-1)n\}$ is a family of topological multiple recurrence, then so is $\{n,2n,\ldots,kn\}$. Thus, the claim follows by induction on $k$.
\end{proof}

In the above proof, the use of minimality played an essential role. 
When dealing with topological multiple recurrence, one can usually facilitate the proof by making a reduction to minimal systems, either using \cref{lem_uniform_recurrence} or a variant thereof. See, for example, the proofs of \cref{thm_IPvdW-dyn} and \cref{thm_poly_vdw_top} below. 
There are also ``non-minimal'' proofs, which have the advantage to be more general in scope, see for example the proof of the polynomial Hales-Jewett theorem in \cite{BL99}.

\subsection{An algebraic proof}
\label{sec_alg_proof}

By an algebraic proof of van der Waerden's theorem we mean a proof utilizing the algebra in the Stone-\Cech{} compactification.
The idea of this proof technique goes back to Furstenberg and Katznelson \cite{FK89}, and was fine-tuned in \cite{BFHK89}.
Renditions of the algebraic proof have been provided in \cite[Section 3]{BH90}, \cite[Section 16]{Todorcevic97}, and \cite[Section 14.1]{HS12a}.
Recently, one more algebraic proof was obtained in \cite{DiNasso26arXiv}.
The proof that we provide below is somewhat different from previous algebraic proofs.
Similarly to other algebraic proofs, our starting point is the subsemigroup $I_k=\{(n,n+d,\ldots, n+(k-1)d): n\in\N,\,d\in\N\}\subset \N^k$ introduced in \cref{sec_algebraic_versions_of_vdW}.
Note that both $\beta(\N^k)$ and $(\beta \N)^k$ contain $\{\delta_{x}: x\in I_k\}$ as a subset, but since they have different topologies, the closure of $\{\delta_{x}: x\in I_k\}$ in these two spaces is not the same. The main distinction in our proof is that we consider the latter instead of the former, and then utilize \cref{lem_idempotents_are_identities} below.
{We thank Neil Hindman for sharing the ideas through private communication that lead to this new approach.}

One of the main technical ingredients is the following classical lemma.
\begin{lemma}[cf.~{\cite[Lemma 1.30(b)]{HS12a}}]
\label{lem_idempotents_are_identities}
Let $L$ be a minimal left ideal of $(\beta S,+)$ and let $\ultra{q}=\ultra{q}+\ultra{q}\in L$ be an idempotent ultrafilter in $L$. Then $\ultra{q}$ is a right-identity on $L$, i.e., for any $\ultra{p}\in L$ we have $\ultra{p}+\ultra{q}=\ultra{q}$.
\end{lemma}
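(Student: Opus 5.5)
The plan is to use minimality of $L$ to write an arbitrary $\ultra{p}\in L$ as a left translate of $\ultra{q}$, and then absorb the extra $\ultra{q}$ using idempotency. One remark first: the conclusion that ``$\ultra{q}$ is a right identity on $L$'' means $\ultra{p}+\ultra{q}=\ultra{p}$ for all $\ultra{p}\in L$. The displayed equation $\ultra{p}+\ultra{q}=\ultra{q}$ is presumably a typo, and I will prove $\ultra{p}+\ultra{q}=\ultra{p}$. Throughout I use that the operation on $\beta S$ defined in \eqref{eqn_ultrafilter_sum} is associative. This is part of the standard fact that $(\beta S,+)$ is a semigroup and is implicit in \cref{prop_betaS_compact_Hausdorff}.

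\textbf{Step 1: the set $\beta S+\ultra{q}$ is a left ideal contained in $L$.} It is non-empty because it contains $\ultra{q}+\ultra{q}$. For any $\ultra{r}\in\beta S$, associativity gives
\[
\ultra{r}+(\beta S+\ultra{q})=(\ultra{r}+\beta S)+\ultra{q}\subset \beta S+\ultra{q},
\]
so it is a left ideal. Since $\ultra{q}\in L$ and $L$ is a left ideal, each $\ultra{u}+\ultra{q}$ lies in $L$. Hence $\beta S+\ultra{q}\subset L$.

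\textbf{Step 2: apply minimality.} Because $L$ is minimal, Step 1 forces $\beta S+\ultra{q}=L$. So for a given $\ultra{p}\in L$ there is some $\ultra{r}\in\beta S$ with $\ultra{p}=\ultra{r}+\ultra{q}$.

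\textbf{Step 3: use idempotency.} Then
\[
\ultra{p}+\ultra{q}=(\ultra{r}+\ultra{q})+\ultra{q}=\ultra{r}+(\ultra{q}+\ultra{q})=\ultra{r}+\ultra{q}=\ultra{p},
\]
which is the claim.

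There is no real obstacle here. The only points needing care are that associativity of the ultrafilter sum is being used, and that $S$ being commutative does not make $\beta S$ commutative. For that reason every step keeps the operands in their original order.
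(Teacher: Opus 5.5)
Your proof is correct and is essentially the paper's argument. The paper uses the left ideal $L+\ultra{q}\subset L$ where you use $\beta S+\ultra{q}$, gets equality with $L$ by minimality, and absorbs the extra $\ultra{q}$ by idempotency exactly as you do. You are also right that the intended conclusion is $\ultra{p}+\ultra{q}=\ultra{p}$; that is what the paper's own proof derives, so the displayed $\ultra{p}+\ultra{q}=\ultra{q}$ in the statement is a typo.
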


\begin{proof}
Note that $L+\ultra{q}$ is a left ideal contained in $L$, and hence by minimiality, we have $L+\ultra{q}=L$. This means that any $\ultra{p}\in L$ can be written as $\ultra{u}+\ultra{q}$ for some $\ultra{u}\in L$. We now have
\[
\ultra{p}+\ultra{q}=\ultra{u}+\ultra{q}+\ultra{q}=\ultra{u}+\ultra{q}=\ultra{p},
\]
finishing the proof.
\end{proof}

\begin{proof}[Proof of {\ref{UII}} (Theorem~2.13)]
Let $S= I_k\cup \{(n,\ldots,n): n\in\N\}$, which is a subsemigroup of $(\N^k,+)$.
Let $L$ be a minimal left ideal of $(\beta\N,+)$ and $\ultra{p}$ an ultrafilter in $L$. 
We can embed $\ultra{p}$ diagonally into $S$ by defining $\ultra{p}^\triangle= \{B\subset S: \{n\in\N:(n,\ldots,n)\in B \}\in\ultra{p}\}$; note that $\ultra{p}^\triangle$ is indeed an ultrafilter in $\beta S$. Consider the set $\beta I_k+\ultra{p}^\triangle\subset \beta S$. Since $I_k$ is both a left and a right ideal of $(S,+)$, $\beta I_k$ is both a left and a right ideal of $(\beta S,+)$. Therefore, $\beta I_k+\ultra{p}^\triangle$ is a left ideal of $(\beta S,+)$ and a subset of $\beta I_k$. By \cref{lem_existence_min_left_ideal}, $\beta I_k+\ultra{p}^\triangle$ contains a minimal left ideal, which we denote by $L^*$.
Note that $\pi_1(L^*),\ldots,\pi_k(L^*)\subset L$, and since $\pi_1(L^*),\ldots,\pi_k(L^*)$ are left ideals, by minimality of $L$ we have $\pi_1(L^*)=\ldots=\pi_k(L^*)=L$.
By \cref{lem_existence_idempotent}, there is an idempotent $\ultra{q}\in L^*$. Define $\ultra{q_i}=\pi_i(\ultra{q})$ for $i=1,\ldots,k$. Since the projections $\pi_i$ are homomorphisms and $\ultra{q}$ is idempotent, we conclude that $\ultra{q}_i$ is an idempotent in $L$. So for every $i=1,\ldots,k$ we have
\[
\pi_i(\ultra{p}^\triangle+\ultra{q})=\pi_i(\ultra{p}^\triangle)+\pi_i(\ultra{q})=\ultra{p}+\ultra{q_i}=\ultra{p},
\]
where the last identity follows from \cref{lem_idempotents_are_identities}. This completes the proof of \ref{UII}.
\end{proof}

The three different proofs of van der Waerden's theorem presented above contain, in the embryonic form, ideas and methods which lead to significant generalizations. We discuss four such generalizations in the next section. 

\section{Outgrowths of van der Waerden's theorem in partition Ramsey theory}
\label{sec_outgrowths_of_vdW}

Van der Waerden's theorem served as a strong impetus for many important developments ranging from combinatorics to topological dynamics, ergodic theory, and number theory.
We have already described in the Prologue how the Brauer-Schur refinement of van der Waerden's theorem has led to the proof of Schur’s conjecture on consecutive quadratic residues and nonresidues. 
Another important development stimulated by van der Waerden's theorem is Rado's theory of partition-regular systems of equations (see more on this in \cref{sec_rado}).
A density version of van der Waerden's theorem was provided by \Szemeredi{} \cite{Szemeredi75} (see~Footnote~\ref{footnote_szemeredi_thm}); interestingly, van der Waerden's theorem played an essential role in its proof.
Similarly, a multidimensional variant of van der Waerden's theorem was utilized by Furstenberg and Katznelson in their proof of the multidimensional \Szemeredi{} theorem \cite{FK79}.
In \cite{FW78} a rather general IP van der Waerden theorem was obtained (which we treat in \cref{sec_IP_vdW}).
A density generalization of this IP extension of van der Waerden's theorem was provided in \cite{FK85}, and it relied on the  Hales-Jewett theorem, an abstract variant of van der Waerden's theorem (discussed in \cref{sec_hales-jewett}). 
An infinitary variant of the Hales-Jewett theorem obtained in \cite{FK89} played a decisive role in the proof of the Furstenberg-Katznelson's Density Hales-Jewett Theorem \cite{FK91}, the fundamental result containing various major results as rather special cases.
A polynomial generalization of van der Waerden's theorem, which we discuss in \cref{sec_poly_vdW}, was essential for the proof of the Polynomial \Szemeredi{} theorem in \cite{BL96}. The polynomial Hales Jewett theorem, which was subsequently obtained in \cite{BL99} and which relates to the polynomial van der Waerden theorem in the same way as the classical Hales-Jewett theorem relates to van der Waerden's theorem, paved the way to polynomial density results obtained in \cite{BM96}, \cite{BLM05}, and \cite{BM00}.
In a momentous breakthrough, Green and Tao \cite{GT08} proved that the prime numbers contain arbitrarily arithmetic progressions. In their work, Furstenberg's ergodic approach \cite{Furstenberg77} to \Szemeredi{}'s theorem played an essential role. The Green-Tao theorem was generalized to more general linear patterns in \cite{GT10} and to polynomial progressions in \cite{TZ08}.
Finally, it is worth mentioning that van der Waerden's theorem and its polynomial amplifications can be further extended to the nilpotent set-up which in turn leads to new strong results in density Ramsey theory (see \cite{Leibman94, Leibman98,BL03,ZK14,JR17}).
Although all these developments ultimately trace back to van der Waerden's theorem, it is beyond the scope of this survey to provide a detailed account for all of them. We focus therefore on outgrowths of van der Waerden's theorem in \emph{partition} Ramsey theory.
More precisely, this section is devoted to discussing and proving Rado's theorem, the IP~van der Waerden Theorem, the Hales-Jewett theorem, and the polynomial van der Waerden theorem.

\subsection{Rado's theorem}
\label{sec_rado}

\newcommand{\Rado}{\text{\hyperref[{thm_rado}]{\abbrvfont{R}}}}
\newcommand{\Radomsyn}{\text{\hyperref[{thm_rado_msyn}]{\abbrvfont{R-mult.syn}}}}
\newcommand{\Radompws}{\text{\hyperref[{thm_rado_mpws}]{\abbrvfont{R-mult.pws}}}}

Van der Waerden's theorem can be formulated  as follows: for any $k\in\N$, the system of equations 
\begin{equation}
\label{eqn_vdW_system_of_equations}
\begin{split}
x_1-2x_2+x_3&=0,\\
x_2-2x_3+x_4&=0,\\
\vdots &\\
x_{k-2}-2x_{k-1}+x_k&=0,
\end{split}
\end{equation}
admits for any finite coloring of $\N$ a monochromatic solution consisting of distinct numbers.
This naturally leads to the question about the partition regularity of other systems of linear homogeneous equations.
This question was answered by Schur's student Richard Rado, who in \cite{Rado33} established necessary and sufficient conditions which a system of homogeneous linear equations has to satisfy to admit monochromatic solutions.
Besides the original proof of Rado's theorem in \cite{Rado33}, several other proofs have appeared in the literature, including \cite{Deuber73, Furstenberg81a, BBDF09}.
In this subsection, we give the statement of Rado's theorem, provide two new equivalent formulations of it, and then present yet another proof. %, which uses notions and ideas developed in Section~\ref{sec_mult_equivalences}. 
The novelty of our approach is to make extensive use of multiplicative notions of largeness in $\N$, which are especially well suited in the study of homogeneous linear equations because of the dilation invariance of the solution set to such equations.
For this, we build on the ideas developed in Section~\ref{sec_mult_equivalences}.
In particular, we show that \emph{any} multiplicatively piecewise syndetic set contains solutions to all partition regular systems of homogeneous equations. (Recall that by Lemma~\ref{lem_mult_pws_partition_regularity}, whenever $\N$ is finitely colored, at least one of the colors is multiplicatively piecewise syndetic.)

\begin{figure}[h!]
\centering
\begin{tikzcd}
&\arrow[dl, Leftrightarrow]
\Rado
\arrow[dr, Leftrightarrow]&
\\
\Radompws
\arrow[rr,Leftrightarrow]
&&
\Radomsyn
\end{tikzcd}
\begin{minipage}{0.8\textwidth}
\caption{This implication diagram provides a quick overview of the equivalent forms of Rado's theorem discussed in this subsection.}
\label{fig_implicationgraph_2}
\end{minipage}
\vspace*{.7em}

\setlength{\tabcolsep}{.07em}
\resizebox{0.78\textwidth}{!}{
\begin{tabular}{p{8em}l}
    \Rado$\dotfill$& Rado's theorem (\cref{thm_rado}) \\
    \Radomsyn$\dotfill$& Rado's theorem for multiplicatively syndetic sets (\cref{thm_rado_msyn}) \\
    \Radompws$\dotfill$& Rado's theorem for multiplicatively piecewise syndetic sets (\cref{thm_rado_mpws})
\end{tabular}
}
\end{figure}
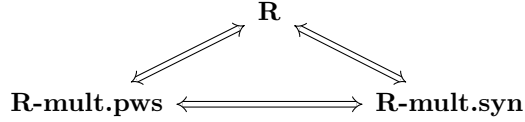

Let $m,k\in\N$ and
consider a system of $m$ homogeneous liner equations with integer coefficients $(a_{ij})_{\substack{1\leq i\leq m,\,1\leq j\leq k}}$ and in the variables $x_1,\ldots,x_k$, 
\begin{equation}
\label{eqn_system_of_linear_equations}
\begin{split}
a_{11}x_1+a_{12}x_2+\ldots+a_{1k}x_k&=0,
\\
a_{21}x_1+a_{22}x_2+\ldots+a_{2k}x_k&=0,
\\
\vdots\phantom{\ldots+a_{1k}x_k}&
\\
a_{m 1}x_1+a_{m 2}x_2+\ldots+a_{m k}x_k&=0.   
\end{split}    
\end{equation}
\begin{definition}[Partition regularity of systems of equations]
A system of equations of the form \eqref{eqn_system_of_linear_equations} is called \define{partition regular} if every finite coloring of $\N$ admits a monochromatic solution, or equivalently, if for any finite partition of $\N$ one of the cells of the partition contains numbers $x_1, x_2, \dots, x_k$ such that the tuple $(x_1,\ldots,x_k)$ satisfies the system of equations in \eqref{eqn_system_of_linear_equations}.
\end{definition}

\begin{example}
\label{example_rado_4_coloring}
Not every homogeneous linear equation is partition regular. For instance, the equation $2x+2y=z$ is not. To verify this claim, recall that every positive integer $n$ can be written uniquely as $n=5^st$ for some $s\in \N\cup\{0\}$ and $t\in\N$ with $5\nmid t$. Let $\chi_{[4]}\colon \N\to \{1,2,3,4\}$ be the $4$-coloring of $\N$ defined via
\[
\chi_{[4]}(n)=t\bmod 5,\qquad\text{where}~n=5^st,~5\nmid t.
\]
This $4$-coloring has the property that if $x$ and $y$ are of one color then $2x+2y$ is of a different color. Hence the equation $2x+2y=z$ admits no monochromatic solution.
\end{example}

%The system of linear equations in \eqref{eqn_system_of_linear_equations} can be compactly represented as a matrix equation.
Letting
\[
\mathbf{A} = \begin{pmatrix}
a_{11} & a_{12} & \cdots & a_{1k} \\
a_{21} & a_{22} & \cdots & a_{2k} \\
\vdots & \vdots & \ddots & \vdots \\
a_{m 1} & a_{m 2} & \cdots & a_{m k}
\end{pmatrix},
\qquad
\mathbf{x} = \begin{pmatrix}
x_1 \\
x_2 \\
\vdots \\
x_k
\end{pmatrix},
\qquad
\text{and}
\qquad
\mathbf{0} = \begin{pmatrix}
0 \\
0 \\
\vdots \\
0
\end{pmatrix},
\]
the system of equations \eqref{eqn_system_of_linear_equations} can be written more compactly in matrix form as
\begin{equation*}
%\label{eqn_system_of_linear_equations_matrix_form}
\mathbf{A}\cdot \mathbf{x} = \mathbf{0}.    
\end{equation*}

\begin{definition}[Columns condition]
\label{def_columns_condition}
Let $\ell\in\N\cup 0$. We say a matrix $\mathbf{A} \in \Z^{m \times k}$ satisfies the \define{columns condition at level $\ell$} if there exists a partition of the set $\{1, 2, \dots, k\}$ into $\ell+1$ disjoint subsets $J_0,J_1, \dots, J_\ell$, and for all $s\in \{1,\ldots,\ell\}$ and $j\in J_0\cup J_1\cup\ldots\cup J_{s-1}$ there is a rational number $\lambda_{js}\in\Q$ such that 
\begin{align*}
\sum_{j\in J_0} \mathbf{a}_{j}&=0,
\\
\sum_{j\in J_1} \mathbf{a}_{j}&=\sum_{j\in J_0} \lambda_{j1} \mathbf{a}_{j},
\\
&\vdots
\\
\sum_{j\in J_\ell} \mathbf{a}_{j}&=\sum_{j\in J_0\cup J_1\cup\ldots\cup J_{\ell-1}} \lambda_{j\ell} \mathbf{a}_{j},
\end{align*}
where $\mathbf{a}_{1},\ldots,\mathbf{a}_{r}$ denote the columns of the matrix $\mathbf{A}$.
In general, we say a matrix $\mathbf{A}$ satisfies the \define{columns condition} if it satisfies the columns condition at some level $\ell\in\N\cup\{0\}$.
\end{definition}

Equivalently stated, the columns condition says that the columns of the matrix $\mathbf{A}$ can be partitioned into disjoint subsets $J_0,J_1, \dots, J_\ell$ such that the columns in $J_0$ sum to $\mathbf{0}$, and for every $s\in\{1,\ldots,\ell\}$ the sum of the columns in $J_s$ belongs to the (rational) linear span of the columns in $J_0\cup J_1\cup\ldots\cup J_{s-1}$.

Rado proved that the columns condition describes exactly those systems of homogeneous linear equations that are partition regular. 

\begin{theorem}[Rado's Theorem (\Rado)]
\label{thm_rado}
Let $\mathbf{A} \in \Z^{m \times k}$. The equation $\mathbf{A}\cdot \mathbf{x} = \mathbf{0}$ is partition regular if and only if $\mathbf{A}$ satisfies the columns condition. 
\end{theorem}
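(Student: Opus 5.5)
Both directions of Rado's theorem need to be proved.

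\emph{Necessity.} I would follow Rado's classical prime-coloring argument. Fix a prime $p$ larger than every quantity we will meet (in particular larger than all absolute values of sums of subsets of entries of $\mathbf{A}$). Color $n=p^s t$, with $p\nmid t$, by the residue of $t\bmod p$, just as in \cref{example_rado_4_coloring}; this is a $(p-1)$-coloring. Given a monochromatic solution $\mathbf{x}$ with common color $c$, let $s_1<s_2<\ldots<s_\ell$ be the distinct $p$-adic valuations occurring among the $x_j$. Put $J_{i-1}=\{j: v_p(x_j)=s_i\}$.

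Reducing each equation modulo $p^{s_1+1}$ and dividing by $p^{s_1}$ gives $c\sum_{j\in J_0}\mathbf{a}_j\equiv 0 \bmod p$. Since $p$ is large and $c\not\equiv0$, this forces $\sum_{j\in J_0}\mathbf{a}_j=0$.

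Next I look at the equations modulo $p^{s_{i}+1}$, with everything involving $J_0,\ldots,J_{i-2}$ kept exactly. This shows that $c\sum_{j\in J_{i-1}}\mathbf{a}_j$ lies in the span over $\Z/p$ of the vectors $\mathbf{a}_j$, $j\in J_0\cup\ldots\cup J_{i-2}$. Because $p$ exceeds all minors of $\mathbf{A}$, membership in the span modulo $p$ lifts to rational linear dependence; this is a Cramer's-rule argument. I expect this lifting step to be the fiddliest part of necessity, but it is routine linear algebra.

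\emph{Sufficiency.} Here I would use the multiplicative machinery and prove the stronger statement (\Radompws): every multiplicatively piecewise syndetic set contains a solution. By \cref{lem_mult_pws_partition_regularity} this gives partition regularity.

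First, Deuber/Rado-style linear algebra shows that if $\mathbf{A}$ satisfies the columns condition at level $\ell$, then for suitable $m$, $p$, $c$ every \emph{$(m,p,c)$-set} contains a solution. An $(m,p,c)$-set is a set of the form
\[
\Big\{ c\,y_i+\sum_{j>i}\lambda_j y_j : 0\le i\le \ell,\ |\lambda_j|\le p\Big\},
\]
where the $\lambda_j$ range over integers. The explicit solution assigns to the variables in $J_s$ the element with leading term $cy_s$; the coefficients come from the $\lambda_{js}$ after clearing denominators by $c$.

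So it suffices to show that every multiplicatively piecewise syndetic set $A$ contains an $(m,p,c)$-set. I would do this by induction on the number of rows $\ell$, always maintaining that the remaining set of candidate next generators is multiplicatively piecewise syndetic.

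The key step is to apply \cref{prop_multiplicative_interesectivity_lemma} to $A$. This gives a multiplicatively syndetic $L$ all of whose finite subsets $F$ have $\bigcap_{m\in F}A/m$ multiplicatively piecewise syndetic. By \ref{MI} and the dilation invariance used in the proof of \ref{COII}$\implies$\ref{MI}, $L$ contains a long progression together with a dilate of its difference. Concretely, applying the Brauer--Schur style argument to the finite cover $L\cup L/2\cup\ldots\cup L/h=\N$, I get $a,d$ with $\{cd\}\cup\{a+\lambda d:|\lambda|\le p\}\subset L$. Then for any $n$ in the corresponding intersection of the sets $A/m$, the numbers $cdn$ and $an+\lambda dn$ all lie in $A$, and that intersection is still multiplicatively piecewise syndetic. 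Iterating inside this intersection builds the rows $y_0=dn,\ldots$ of the $(m,p,c)$-set.

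The main obstacle is organizing this induction correctly, because the generators appear in the order "last row first". I would try first to run the induction from the bottom row up: the new element multiplies all previously constructed rows. This works because multiplying an $(m,p,c)$-set by $n$ gives another one, and because the required intersections $\bigcap A/m$ over the finitely many current elements $m$ stay multiplicatively piecewise syndetic by \cref{prop_multiplicative_interesectivity_lemma}.
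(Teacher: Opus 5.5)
Your necessity argument is the standard prime-coloring argument, which the paper only cites, and it is fine in outline. The lifting step is cleanest if you pair the equations with a fixed integer vector that annihilates the lower columns exactly but not $\sum_{j\in J_{i-1}}\mathbf{a}_j$. The gap is in sufficiency, at the step you flag yourself: proving that every multiplicatively piecewise syndetic set $A$ contains an $(m,p,c)$-set.

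The mechanism you propose cannot produce more than one row. Suppose the rows built from generators $y_{i+1},\dots,y_m$ have been placed in $A$ after dilating by some $n$. A new row $cy_i+\sum_{j>i}\lambda_j\,ny_j$ is under multiplicative control only if $y_i=nz$. It then lies in $A$ exactly when $cz+\sum_{j>i}\lambda_jy_j\in A/n$ for every choice of the $\lambda_j$. So you need the enlarged configuration, with one more row, inside the set of admissible multipliers ($A/n$, or the set $L$ from \cref{prop_multiplicative_interesectivity_lemma}). That is the same problem again with one more row, so the argument is circular.

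The intersectivity lemma only transports a configuration already found in the multiplicatively syndetic set $L$ into $A$ by a dilation. It never creates the additive link between a new row and the old generators. Your Brauer--Schur configuration $\{cd\}\cup\{a+\lambda d\}$ yields at most a $(1,p,c)$-set, and only after arranging $c\mid a$. Finding an $(m,p,c)$-set in $L$ amounts to partition regularity of $(m,p,c)$-sets, i.e.\ Deuber's theorem, which is what you set out to prove. Iterating the lemma does not help either: it only produces dilated product sets $nF_2F_1$. Their elements carry cross terms such as $\lambda\lambda' d_1d_2$ with dependent coefficients, so they are not $(m,p,c)$-sets.

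The missing idea is the separation of scales used in the paper: compactness plus a long progression. Induct on $m$ for the coloring statement, and let $S$ be multiplicatively syndetic with $S\cup S/2\cup\dots\cup S/h=\N$.
\begin{enumerate}
\item By the inductive hypothesis and \cref{thm_compactness_principle}, get $N$ such that every $h$-coloring of $\{1,\dots,N\}$ has a monochromatic $(m-1,p,c)$-set.
\item Apply van der Waerden's theorem for multiplicatively syndetic sets to $S/c$ to get $\{ca+\mu cd:|\mu|\le mphN\}\subset S$.
\item Color $z\le N$ by the least $j\le h$ with $z\in S/(jcd)$. Take a monochromatic $(m-1,p,c)$-set with generators $z_1,\dots,z_m\le N$, and put $y_0=a$, $y_i=jcd\,z_i$.
\end{enumerate}
Rows $1,\dots,m$ then lie in $S$. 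Row $0$ equals $ca+\big(\sum_i\lambda_i j z_i\big)cd$ with $|\sum_i\lambda_i j z_i|\le mphN$, so it lies in the progression. The intersectivity lemma and dilation invariance then carry this from multiplicatively syndetic sets to one color of any finite coloring, which closes the induction.

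This is exactly the paper's argument. The paper phrases the induction through \cref{lem_R-modification}, lowering the level of the columns condition, rather than through Deuber sets. With that step supplied, your Deuber-set route becomes a valid repackaging of it.
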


The conclusion in Rado's theorem states that \emph{some} color class must contain a solution to the given system of equations, but it does not reveal how to identify this color class among the others or explain why it has this property.
Next, we present two equivalent formulations of \cref{thm_rado} which provide a more explicit structural description of the sets that admit solutions to systems of equations satisfying the columns condition. Recall the definitions of multiplicatively syndetic sets and multiplicatively piecewise syndetic sets introduced in \cref{sec_additional_combinatorial_versions_of_vdW}.

\begin{theorem}[Rado's theorem for multiplicatively syndetic sets (\Radomsyn)]
\label{thm_rado_msyn}
The equation $\mathbf{A}\cdot \mathbf{x} = \mathbf{0}$ admits solutions in every multiplicatively syndetic subset of $\N$ if and only if $\mathbf{A}$ satisfies the columns condition. 
\end{theorem}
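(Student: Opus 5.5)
The plan is to prove the two directions of \cref{thm_rado_msyn} separately. Sufficiency will be derived from the sufficiency half of \cref{thm_rado}, exactly as \ref{COII}$\implies$\ref{MI} was derived in \cref{sec_mult_equivalences}. Necessity will be proved through the classical Rado coloring from \cref{example_rado_4_coloring}. The key point there is that every colour class of that coloring is itself multiplicatively syndetic.

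\emph{Sufficiency.} Assume $\mathbf{A}$ satisfies the columns condition, and let $S\subset\N$ be multiplicatively syndetic. Choose $h$ with $S\cup S/2\cup\ldots\cup S/h=\N$. After removing overlaps, this is a finite partition of $\N$. By (the sufficiency part of) \cref{thm_rado}, some cell $S/j$ contains a solution $\mathbf{x}=(x_1,\ldots,x_k)$ of $\mathbf{A}\cdot\mathbf{x}=\mathbf{0}$. The system is homogeneous, so $j\mathbf{x}$ is again a solution, and by the definition of $S/j$ all of its entries lie in $S$.

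\emph{Necessity.} Suppose $\mathbf{A}\cdot\mathbf{x}=\mathbf{0}$ has a solution in every multiplicatively syndetic set. Fix a prime $p$, to be taken large in terms of $\mathbf{A}$. Write each $n\in\N$ uniquely as $n=p^s t$ with $p\nmid t$, and for $c\in\{1,\ldots,p-1\}$ let $S_c=\{n: t\equiv c \bmod p\}$.

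\begin{itemize}
\item \textbf{Each $S_c$ is multiplicatively syndetic.} Given $n=p^st$, pick $j\in\{1,\ldots,p-1\}$ with $jt\equiv c\bmod p$. Then $jn=p^s(jt)$ with $p\nmid jt$, so $jn\in S_c$. Hence $S_c\cup S_c/2\cup\ldots\cup S_c/(p-1)=\N$.
\item \textbf{A solution in $S_c$.} By hypothesis there is a solution $\mathbf{x}$ with every $x_j\in S_c$; write $x_j=p^{s_j}t_j$ with $t_j\equiv c$.
\item \textbf{Extracting the columns condition.} This is Rado's original argument. Let $J_0$ be the set of indices with minimal exponent $s_j$. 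Reducing the equations modulo $p^{\min s_j+1}$ gives $c\sum_{j\in J_0}\mathbf{a}_j\equiv \mathbf{0}\bmod p$. Hence $\sum_{j\in J_0}\mathbf{a}_j\equiv\mathbf{0}\bmod p$, and for $p$ larger than all entries of the finitely many possible column sums this forces $\sum_{j\in J_0}\mathbf{a}_j=\mathbf{0}$.
\item \textbf{Higher levels.} Proceed to the next exponent levels $J_1,J_2,\ldots$. Each time, the congruence shows that $\sum_{j\in J_s}\mathbf{a}_j$ lies modulo $p$ in the span of the earlier columns.
\end{itemize}

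I expect the main obstacle to be the last step: passing from ``in the span modulo $p$'' to ``in the rational span''. The plan is the standard one. There are only finitely many ordered partitions $(J_0,\ldots,J_\ell)$ of $\{1,\ldots,k\}$, so running the argument for infinitely many primes $p$, one fixed partition occurs for infinitely many of them. For that fixed partition, suppose some $\sum_{j\in J_s}\mathbf{a}_j$ were not in the $\Q$-span of the columns in $J_0\cup\ldots\cup J_{s-1}$. Then a suitable nonzero integer minor would witness this. That minor is divisible by only finitely many primes, so for all remaining primes the vector would also fail to lie in the span modulo $p$, contradicting the congruence. Hence the fixed partition witnesses the columns condition for $\mathbf{A}$.
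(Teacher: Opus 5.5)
Your proof is correct. The sufficiency direction matches the paper's (dilate a solution found in a cell of the covering $S\cup S/2\cup\ldots\cup S/h=\N$), but the necessity direction takes a different route.

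\textbf{The paper's route.} The paper never works with the columns condition when proving this theorem. For each fixed $\mathbf{A}$ it shows:
\begin{itemize}
\item solvability in every multiplicatively syndetic set implies solvability in every multiplicatively piecewise syndetic set (by \cref{prop_multiplicative_interesectivity_lemma} plus dilation invariance);
\item the latter implies partition regularity (by \cref{lem_mult_pws_partition_regularity});
\item partition regularity gives the columns condition via the necessity half of \cref{thm_rado}, used as a black box.
\end{itemize}

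\textbf{Your route.} You bypass both lemmas. You note that each class of the Rado $(p-1)$-coloring is itself multiplicatively syndetic; the paper records this fact only inside its proof of \cref{thm_rado}. You then rerun Rado's $p$-adic argument on a solution lying in a single class.

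\textbf{What each buys.} Your route is more elementary, and it shows the obstruction is already visible inside a single multiplicatively syndetic set. The paper's route gives a structural fact: for every fixed $\mathbf{A}$, partition regularity, solvability in all multiplicatively syndetic sets, and solvability in all multiplicatively piecewise syndetic sets coincide. The paper then uses this to reduce the inductive step of its proof of Rado's theorem to multiplicatively syndetic sets.

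\textbf{One step to state more carefully.} At level $s$ the congruence reads
$\sum_{j\in J_s}\mathbf{a}_j\equiv -c^{-1}p^{-e_s}\sum_{j\in J_0\cup\ldots\cup J_{s-1}}\mathbf{a}_jx_j \pmod p$,
where $e_s$ is the common exponent on $J_s$ and $c^{-1}$ is an inverse of $c$ modulo $p$. The right-hand side is an integer vector in the rational span of the earlier columns. However, its coefficients $x_j/p^{e_s}$ need not be $p$-integral, so it is not literally an $\mathbb{F}_p$-combination of those columns, and ``lies in the span modulo $p$'' is imprecise as stated. Your minor argument still closes the step. The same minor with that vector in place of $\sum_{j\in J_s}\mathbf{a}_j$ vanishes exactly. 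By multilinearity the original minor is then divisible by $p$, which happens for only finitely many primes.
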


\begin{theorem}[Rado's theorem for multiplicatively piecewise syndetic sets (\Radompws)]
\label{thm_rado_mpws}
The equation $\mathbf{A}\cdot \mathbf{x} = \mathbf{0}$ admits solutions in every multiplicatively piecewise syndetic subset of $\N$ if and only if $\mathbf{A}$ satisfies the columns condition. 
\end{theorem}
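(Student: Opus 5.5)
The plan is to prove the two directions separately. The forward direction reduces to the classical necessity part of \cref{thm_rado}. The reverse direction, which is the real content, is a direct construction inside an arbitrary multiplicatively piecewise syndetic set. It is driven by \ref{MII}, \cref{lem_mult_pws_partition_regularity} and the intersectivity lemma \cref{prop_multiplicative_interesectivity_lemma}, in the same way that \cref{prop_additive_interesectivity_lemma} drove the combinatorial proof in \cref{sec_combinatorial_proof}.

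\emph{Necessity.} Suppose $\mathbf{A}\cdot\mathbf{x}=\mathbf{0}$ has solutions in every multiplicatively piecewise syndetic set. By \cref{lem_mult_pws_partition_regularity}, every finite coloring of $\N$ has a multiplicatively piecewise syndetic color class, so the system is partition regular. It then remains to show that partition regularity forces the columns condition. I would adapt \cref{example_rado_4_coloring}: fix a prime $p$ larger than all sums of subsets of the coefficients, and color $n=p^s t$ with $p\nmid t$ by $t\bmod p$. Take a monochromatic solution and group the variables by the $p$-adic valuation of $x_j$. Reading the equations modulo successive powers of $p$ then shows that the columns of lowest valuation sum to $\mathbf{0}$ modulo $p$, hence exactly, since $p$ is large. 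At each subsequent level, the column sum lies in the span of the earlier columns. This is Rado's standard linear-algebra argument, and I would follow it closely.

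\emph{Sufficiency.} First I would use the columns condition at level $\ell$, after clearing denominators, to parametrize solutions. There are integers $c\ge 1$ and $q_{j,t}$ such that for any $y_0,\ldots,y_\ell\in\N$ the vector with
\[
x_j = c\,y_s+\sum_{t<s} q_{j,t}\,y_t \qquad (j\in J_s)
\]
solves the system. It therefore suffices to show that every multiplicatively piecewise syndetic set $A$ contains a configuration
\[
\Bigl\{c\,y_s+\sum_{t<s} i_t\,y_t:\ 0\le s\le\ell,\ |i_t|\le K\Bigr\}
\]
(a Deuber-type $(\ell,K,c)$-set) for arbitrary $K$, with all entries positive. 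I would prove this by induction on $\ell$, carrying the stronger hypothesis that the set $D$ of admissible multipliers $d$, meaning those for which $d$ times some such configuration lies in $A$, is itself multiplicatively piecewise syndetic.

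\emph{Base case.} Since $A/c$ is multiplicatively piecewise syndetic, I can pick $y_0$ with $cy_0\in A$, and the set of such dilations is large.

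\emph{Inductive step.} Apply \cref{prop_multiplicative_interesectivity_lemma} to obtain $L$, multiplicatively syndetic, such that the sets $\bigcap_{m\in F} A/m$ remain multiplicatively piecewise syndetic. Then invoke \ref{MI} or \ref{MII} on $L$, together with a Brauer--Schur type step as in the Prologue, to find a progression $\{c\,y+i z: |i|\le K\}\subset L$ in which the difference $z$ also carries the inductive structure. Multiplying by a common element $n$ of the corresponding intersection gives the new layer $y_\ell$, while the old layers are absorbed into the multipliers $z\cdot n$.

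The main obstacle is this inductive step. I have to arrange simultaneously that the difference of the progression lies in the previously built configuration, that the leading coefficient $c$ appears, and that the relevant intersection stays multiplicatively piecewise syndetic, so that the induction can continue. My first attempt would be to mimic the iteration in \cref{sec_combinatorial_proof}. I would build a nested sequence of multiplicatively piecewise syndetic sets $A_0\supset A_1\supset\cdots$ using \cref{lem_mult_pws_partition_regularity}, and then use a pigeonhole argument on finitely many colors to close up the configuration, as was done with $d=d_{i+1}+\cdots+d_j$ there.
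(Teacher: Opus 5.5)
Your necessity direction is fine and follows the paper's route: \cref{lem_mult_pws_partition_regularity} reduces it to partition regularity, and Rado's base-$p$ coloring argument does the rest.

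The sufficiency direction has a genuine gap, exactly at the step you flag as the main obstacle. Suppose you have a progression $P=\{c\,y+iz:|i|\le K'\}\subset L$ and an old configuration $C'=\{c\,w_s+\sum_{t<s}i_tw_t\}$ with $zC'\subset L$, and you take $n\in\bigcap_{m\in P\cup zC'}A/m$. Set $y_t=nzw_t$ for $t<\ell$ and $y_\ell=ny$. The new layer is then $n\bigl(c\,y+z\sum_{t<\ell}i_tw_t\bigr)$, which lies in $nP$ only if $K'\ge \ell K\max_t w_t$.

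If $C'$ is chosen after $z$, as ``the difference carries the inductive structure'' suggests, nothing bounds $\max_t w_t$ independently of $z$. So $K'$ cannot be fixed in advance. Your strengthened hypothesis does not help, because it is purely qualitative: it follows from the plain statement via \cref{prop_multiplicative_interesectivity_lemma}, just as \ref{AIII} follows from \ref{AI}.

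If instead $C'$ is fixed first, you need a long progression in $L$ whose difference lies in the prescribed set $\bigcap_{m\in C'}L/m$. That is a two-set variant of Brauer--Schur which nothing in your toolkit provides. The Brauer--Schur argument of the Prologue is an induction on the number of colors, and the nested-sets scheme of \cref{sec_combinatorial_proof} pigeonholes over the colors. Both are arguments about finite colorings, not about a single set $L$.

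The missing idea is to run the induction on partition regularity for colorings. There the compactness principle (\cref{thm_compactness_principle}) supplies exactly the uniform bound you lack, and this is what the paper does:
\begin{itemize}
\item By \cref{lem_R-modification}, a level-$\ell$ matrix $\mathbf{A}$ reduces to a level-$(\ell-1)$ matrix $\mathbf{B}$, at the cost of a free common shift $a$ on the columns in $J_0$. This is allowed because those columns sum to $\mathbf{0}$.
\item Induction plus compactness give $N$ such that every $h$-coloring of $\{1,\ldots,N\}$ has a monochromatic solution of $\mathbf{B}\cdot\mathbf{y}=\mathbf{0}$.
\item Let $S$ be multiplicatively syndetic with $S\cup S/2\cup\cdots\cup S/h=\N$. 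With $N$ already fixed, \ref{MI} yields $\{a+id:|i|\le phN\}\subset S$, where $p=\max_{j\in J_0}|\sigma_j|$.
\item The $h$ dilates $S/(jcd)$ cover $\{1,\ldots,N\}$, so some $S/(bcd)$ contains a solution $\mathbf{y}$. This places the lower-level structure among multiples of $d$ with the required size control, and no Brauer--Schur step is needed.
\item Passing from colorings to multiplicatively piecewise syndetic sets is then formal, via \cref{prop_multiplicative_interesectivity_lemma} and dilation invariance of the solution set.
\end{itemize}
Your Deuber-set route can be repaired the same way. First prove the coloring version: get row-monochromatic configurations by induction using van der Waerden's theorem and compactness, then pigeonhole on the row colors. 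Transfer to multiplicatively piecewise syndetic sets afterwards.

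Separately, your parametrization is indexed backwards. For $j\in J_s$ the lower-order terms must involve the later variables: $x_j=c\,y_s-c\sum_{t>s}\lambda_{jt}y_t$. For example, take $x_1+x_2-x_3=0$ with $J_0=\{1,3\}$ and $J_1=\{2\}$. Your formula gives $\mathbf{A}\cdot\mathbf{x}=c\,y_1+q_{2,0}y_0$, which does not vanish identically. Your target set is just a Deuber set with the labels of the $y_t$ reversed, so the reduction itself survives once the $J_s$ are matched to the rows in reverse order.
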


In line with the second principle of Ramsey theory postulated in \cite[p.~4]{Bergelson96}, it is of interest to look for partition regular families whose members satisfy the conclusion of Rado’s theorem. Furstenberg showed that the family of central sets has this property; see \cite{Furstenberg81a} for the definition and a proof. A larger family, called $D$-sets, was shown to work in \cite{BBDF09}. 
The family of multiplicatively piecewise syndetic sets, which appears in \cref{thm_rado_mpws}, provides yet another example, but of totally different nature. Indeed, central sets and $D$-sets are notions of largeness that arise from the additive structure of $\N$, and in particular they are sets with positive density (positive additive upper Banach density to be precise). In contrast, multiplicatively piecewise syndetic sets are defined through its multiplicative structure and it is not hard to construct examples that have zero additive Banach density.

\begin{proof}[Proof of {\Rado}$\iff${\Radomsyn}$\iff${\Radompws}]
By Lemma~\ref{lem_mult_pws_partition_regularity}, every finite coloring has a color class that is multiplicatively piecewise syndetic. It follows that if every multiplicatively piecewise syndetic set admits a solution to $\mathbf{A}\cdot \mathbf{x} = \mathbf{0}$ then every finite coloring of $\N$ admits a monochromatic solution to the same equation. Using the intersectivity lemma for multiplicatively piecewise syndetic sets (\cref{prop_multiplicative_interesectivity_lemma}) and the fact that solutions to $\mathbf{A}\cdot \mathbf{x} = \mathbf{0}$ are dilation invariant, we see that if every multiplicatively syndetic set admits a solution to $\mathbf{A}\cdot \mathbf{x} = \mathbf{0}$ then so does every multiplicatively piecewise syndetic set. Finally, recall that by definition a set is multiplicatively syndetic if finitely many dilations of it cover all of $\N$, which can be viewed as a finite coloring. Using dilation invariance once more, we conclude that if every finite coloring of $\N$ admits a monochromatic solution to $\mathbf{A}\cdot \mathbf{x} = \mathbf{0}$ then every multiplicatively syndetic set admits a solution as well.
\end{proof}

The following lemma is similar to \cite[Lemma~8.23]{Furstenberg81a}.

\begin{lemma}
\label{lem_R-modification}
Let $\mathbf{A} \in \Z^{m \times k}$ be an integer matrix satisfying the columns condition at level $\ell$.
Then there exist $c\in\N$, $J\subset \{1,\ldots,k\}$, $\sigma_j\in\Z$ for all $j\in J$, and a matrix $\mathbf{B}\in \Z^{m \times k}$ such that 
\begin{itemize}
\item $\mathbf{B}$ satisfies the columns condition at level $\ell-1$;
\item if $\mathbf{y}=(y_1,\ldots,y_k)\in\Z^k$ satisfies $\mathbf{B}\cdot \mathbf{y} = \mathbf{0}$ then for every $a\in\Z$ the vector $\mathbf{x}=(x_1,\ldots,x_k)$ defined via
\[
x_j=
\begin{cases}
a+\sigma_j y_j,&\text{if}~j\in J,
\\
cy_j,&\text{if}~j\notin J,
\end{cases}
\]
satisfies $\mathbf{A}\cdot \mathbf{x} = \mathbf{0}$.
\end{itemize}
\end{lemma}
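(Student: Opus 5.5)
The plan is to take $J=J_0$, the block of columns summing to zero, and to let $\mathbf{B}$ be a column-rescaling of $\mathbf{A}$. Throughout we assume $\ell\geq 1$, since level $\ell-1$ is only meaningful then. Fix a partition $J_0,\ldots,J_\ell$ and rationals $\lambda_{js}$ witnessing the columns condition at level $\ell$ as in \cref{def_columns_condition}. Write $\mathbf{a}_j$ for the columns of $\mathbf{A}$.

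\emph{Definition of $\mathbf{B}$.} Given $c\in\N$ and $\sigma_j\in\Z$ for $j\in J_0$, let $\mathbf{B}$ have columns
\[
\mathbf{b}_j=\sigma_j\mathbf{a}_j \ \ (j\in J_0),\qquad \mathbf{b}_j=c\,\mathbf{a}_j \ \ (j\notin J_0).
\]
The second bullet then holds for any such choice. Indeed, for $\mathbf{x}$ as in the statement,
\[
\mathbf{A}\cdot\mathbf{x}=a\sum_{j\in J_0}\mathbf{a}_j+\sum_{j\in J_0}\sigma_j y_j\mathbf{a}_j+c\sum_{j\notin J_0}y_j\mathbf{a}_j = 0+\mathbf{B}\cdot\mathbf{y}=\mathbf{0}.
\]
Here the term involving $a$ vanishes because $\sum_{j\in J_0}\mathbf{a}_j=0$, which is exactly why $J$ should be $J_0$.

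\emph{Choice of $c$ and $\sigma_j$.} We want $\mathbf{B}$ to satisfy the columns condition at level $\ell-1$ with the merged partition $J_0'=J_0\cup J_1$ and $J_s'=J_{s+1}$ for $1\leq s\leq \ell-1$. Take $c\in\N$ to be a common denominator of the rationals $\lambda_{j1}$, $j\in J_0$, so that each $c\lambda_{j1}\in\Z$. Then
\[
\sum_{j\in J_0'}\mathbf{b}_j=\sum_{j\in J_0}\sigma_j\mathbf{a}_j+c\sum_{j\in J_1}\mathbf{a}_j=\sum_{j\in J_0}(\sigma_j+c\lambda_{j1})\mathbf{a}_j .
\]
This vanishes if we set $\sigma_j=cM-c\lambda_{j1}$ for a single integer $M$, again using $\sum_{J_0}\mathbf{a}_j=0$. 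The freedom in $M$ is what handles the main obstacle. If some $\sigma_j$ were $0$, then $\mathbf{b}_j=0$ and the column $\mathbf{a}_j$ would be lost from the span of the $\mathbf{b}$'s. Choosing $M$ large makes every $\sigma_j$ a nonzero integer.

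\emph{Higher levels.} For $s\geq 1$, the span condition for $J_s'=J_{s+1}$ carries over. We have $\mathbf{a}_j=\sigma_j^{-1}\mathbf{b}_j$ for $j\in J_0$ and $\mathbf{a}_j=c^{-1}\mathbf{b}_j$ otherwise. Hence
\[
\sum_{j\in J_{s+1}}\mathbf{b}_j=c\sum_{j\in J_0\cup\cdots\cup J_s}\lambda_{j,s+1}\mathbf{a}_j
\]
is a rational combination of the $\mathbf{b}_j$ with $j\in J_0'\cup\cdots\cup J_{s-1}'$. So $\mathbf{B}$ satisfies the columns condition at level $\ell-1$, which completes the plan.
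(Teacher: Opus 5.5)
Your proof is correct and uses the same construction as the paper: $J=J_0$, columns $\mathbf{b}_j$ obtained by rescaling $\mathbf{a}_j$ by $\sigma_j$ on $J_0$ and by $c$ off $J_0$, and the merged partition $J_0\cup J_1,J_2,\ldots,J_\ell$. Your write-up is in fact more careful than the paper's. The paper takes $\sigma_j=c\lambda_{j1}$ and $\mathbf{b}_j=-\sigma_j\mathbf{a}_j$, so $\mathbf{A}\cdot\mathbf{x}=\mathbf{B}\cdot\mathbf{y}$ is off by a sign on the $J_0$-terms, and it divides by $\lambda_{j1}$, which fails if some $\lambda_{j1}=0$ (then $\mathbf{b}_j=\mathbf{0}$ and $\mathbf{B}$ can genuinely lose the columns condition); your shift $\sigma_j=c(M-\lambda_{j1})$, legitimate because $\sum_{j\in J_0}\mathbf{a}_j=\mathbf{0}$, repairs exactly these two points.
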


\begin{proof}
Let $J_0, J_1, \dots, J_\ell$ denote the partition and $\lambda_{js}\in\Q$ the coefficients that appear in the definition of the columns condition at level $\ell$ for the matrix $\mathbf{A}$.
Let $J=J_0$ and let $c$ be any positive integer such that the numbers $\sigma_j=c\lambda_{j1}$, for $j\in J$, are integers. 
Define $\mathbf{B}$ to be the matrix whose columns $\mathbf{b}_1,\ldots,\mathbf{b}_k$ are given by
\begin{equation*}
\mathbf{b}_{j}=
\begin{cases}
- \sigma_j \mathbf{a}_{j},&\text{if}~j\in J,
\\
c\mathbf{a}_{j},&\text{if}~j\notin J.
\end{cases}
\end{equation*}
Taking $\eta_{js}=\frac{\lambda_{j(s+1)}}{\lambda_{j1}}$ when $j\in J_0$ and $\eta_{js}=\lambda_{j(s+1)}$ otherwise, we see that the matrix $\mathbf{B}$ satisfies
\begin{align*}
\sum_{j\in J_0\cup J_1} \mathbf{b}_{j}&=0,
\\
\sum_{j\in J_2} \mathbf{b}_{j}&=\sum_{j\in J_0\cup J_1} \eta_{j1} \mathbf{b}_{j},
\\
&\vdots
\\
\sum_{j\in J_\ell} \mathbf{b}_{j}&=\sum_{j\in J_0\cup J_1\cup\ldots\cup J_{\ell-1}} \eta_{j(\ell-1)} \mathbf{b}_{j}.
\end{align*}
Hence $\mathbf{B}$ satisfies the columns condition at level $\ell-1$ with respect to the partition $J_0\cup J_1, J_2,\ldots, J_\ell$.
Moreover, we have
\begin{align*}
\mathbf{A}\cdot \mathbf{x}
= \sum_{j\in J_0} \mathbf{a}_j x_j +
\sum_{j\notin J_0} \mathbf{a}_j x_j 
&= a\bigg(\overbrace{\sum_{j\in J_0} \mathbf{a}_j}^{=0}\bigg) + \sum_{j\in J_0} \sigma_j \mathbf{a}_j y_j +  \sum_{j\notin J_0} c \mathbf{a}_j y_j
\\
&= \sum_{j\in J_0} \mathbf{b}_j y_j +  \sum_{j\notin J_0}  \mathbf{b}_j y_j
=\mathbf{B}\cdot\mathbf{y},
\end{align*}
completing the proof.

\end{proof}

\begin{proof}[Proof of Rado's theorem (\cref{thm_rado})]
Analogous to the way $\chi_{[4]}$ is defined in \cref{example_rado_4_coloring}, one can define for every prime number $p$ a coloring $\chi_{[p-1]}\colon\N\to \{1,\ldots,p-1\}$ called the \emph{Rado $(p-1)$-coloring of $\N$}. These colorings possess rich multiplicative structure, since they satisfy
\[
\chi_{[p-1]}(n\cdot m)=\chi_{[p-1]}(n)\cdot \chi_{[p-1]}(m)\bmod p,\qquad\forall n,m\in\N.
\] 
In particular, each color class in this coloring is a multiplicatively syndetic set. One can show that if $\mathbf{A}$ does not satisfy the columns condition then for all but at most finitely many prime numbers $p$ the equation $\mathbf{A}\cdot \mathbf{x} = \mathbf{0}$ admits no monochromatic solutions with respect to the Rado $(p-1)$-coloring. We omit the proof of this claim and refer the reader to \cite[Section~3.3,~Lemma~7]{GRS90} for details.
Therefore, the columns condition is a necessary condition for $\mathbf{A}$ to be partition regular.

Let us prove that it is also a sufficient condition.
We proceed by induction on the level $\ell$ appearing in the definition of the columns condition.  
If $\mathbf{A}$ is a matrix that satisfies the columns condition at level $0$ then the trivial solution $x_1=x_2=\ldots=x_k\in S$ works. So suppose $\mathbf{A}$ is a matrix that satisfies the columns condition at level $\ell\geq 1$.
By the equivalence between \cref{thm_rado} and \cref{thm_rado_msyn}, it suffices to show that every multiplicatively syndetic set $S\subset\N$ admits a solution to the equation $\mathbf{A}\cdot \mathbf{x} = \mathbf{0}$.  
Let $c\in\N$, $J\subset \{1,\ldots,k\}$, $\sigma_j\in\Z$, and $\mathbf{B}\in \Z^{m \times k}$ be as guaranteed by \cref{lem_R-modification}.
Choose $h\in\N$ such that $S\cup S/2\cup\ldots\cup S/h=\N$.
Since $\mathbf{B}$ satisfies the columns condition at level $\ell-1$, by the induction hypothesis, any $h$-coloring of $\N$ admits a monochromatic solution to the equation $\mathbf{B}\cdot \mathbf{y} = \mathbf{0}$. Using the compactness principle (\cref{thm_compactness_principle}), we can find $N\in\N$ such that any $h$-coloring of $\{1,\ldots,N\}$ already admits a monochromatic solution to the equation $\mathbf{B}\cdot \mathbf{y} = \mathbf{0}$. Let $p=\max_{j\in J} |\sigma_j|$. By van der Waerden's theorem for multiplicatively syndetic sets (\hyperref[{MI}]{Theorem 2.6}), there exist $a,d\in\N$ such that
$\{a+id: -phN\leq i\leq phN\}\subset S$.
Since $S/cd\cup S/2cd\cup\ldots\cup S/hcd= \N$, the sets
\[
(S/cd)\cap [1,N],~(S/2cd)\cap [1,N],\ldots,~(S/cd)\cap [1,N],
\]
cover $\{1,\ldots,N\}$.
Given our hypothesis on $N$, at least one of the sets in this cover, say $(S/bcd)\cap [1,N]$ where $b\in\{1,\ldots,h\}$, must contain $y_1,\ldots,y_k$ such that $\mathbf{y}=(y_1,\ldots,y_k)$ satisfies $\mathbf{B}\cdot \mathbf{y} = \mathbf{0}$.
If we now define
\[
x_j=
\begin{cases}
a+\sigma_jbd y_j,&\text{if}~j\in J,
\\
bcdy_j,&\text{if}~j\notin J,
\end{cases}
\]
then $x_1,\ldots,x_k\in S$ and $\mathbf{x}=(x_1,\ldots,x_k)$ satisfies $\mathbf{A}\cdot \mathbf{x} = \mathbf{0}$ as desired.
\end{proof}

\begin{remark}
Suppose $\mathbf{A}$ satisfies the columns condition. One can show that if $\mathbf{A}\cdot \mathbf{x} = \mathbf{0}$ admits at least one solution $\mathbf{x}=(x_1,\ldots,x_k)$ where all the $x_i$ are distinct, then also for any finite coloring of $\N$ the equation $\mathbf{A}\cdot \mathbf{x} = \mathbf{0}$ admits a monochromatic solution where all the $x_i$ are distinct. For a short proof of this fact, see \cite[Section 3.3,~Corollary~8$\frac{1}{2}$]{GRS90}.
\end{remark}

\subsection{IP van der Waerden theorem}
\label{sec_IP_vdW}

\newcommand{\IPvdW}{\text{\hyperref[{thm_IPvdW-comb}]{\abbrvfont{IPvdW}}}}
\newcommand{\IPvdWsimplex}{\text{\hyperref[{thm_IPvdW-simplex}]{\abbrvfont{IPvdW-simplex}}}}
\newcommand{\IPvdWtop}{\text{\hyperref[{thm_IPvdW-dyn}]{\abbrvfont{IPvdW-top}}}}
\newcommand{\GallaiWitt}{\text{\hyperref[{thm_gallai-witt}]{\abbrvfont{Gallai-Witt}}}}
\newcommand{\AffSpaces}{\text{\hyperref[{cor_geometric_van_der_Waerden_theorem}]{\abbrvfont{Aff.Spaces}}}}

Most proofs of van der Waerden's theorem have an interesting feature in common. Due to their iterative nature, the proofs ``produce'' monochromatic progressions whose differences have the form $d = d_{i+1}+\ldots+d_{j}$, where $(d_i)_{i\in\N}$ is some increasing sequence of numbers and $i<j\in\N$; see, for example, equation \eqref{eqn_prf_vdW_2} in our combinatorial proof of van der Waerden's theorem given in \cref{sec_combinatorial_proof}. 
This observation hints at a natural and far-reaching extension of van der Waerden's theorem, called the \emph{IP van der Waerden theorem}, which replaces regular arithmetic progressions with more general ``IP progressions'' and also holds for arbitrary abelian groups. This extension was first introduced by Furstenberg and Weiss in \cite{FW78} and has found numerous combinatorial applications.   

In this section, we give three equivalent formulations of the IP van der Waerden theorem (two combinatorial and one topological), and derive two important corollaries, the Gallai-Witt multidimensional generalization of van der Waerden's theorem and a variant of van der Waerden's theorem concerning affine spaces of vector spaces over finite fields.

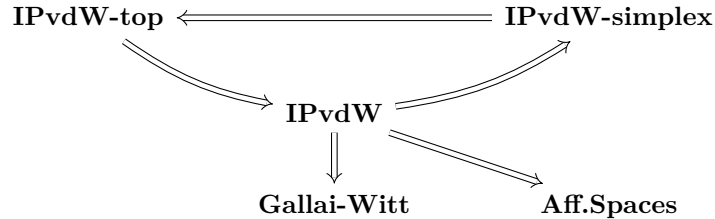
\begin{figure}[h!]
\centering
\begin{tikzcd}
\IPvdWtop
\arrow[rr,Leftarrow]
&&
\IPvdWsimplex
\\
&\arrow[bend right=-12, ul, Leftarrow]
\IPvdW
\arrow[bend right=12, ur, Rightarrow]&
\\
&\arrow[u,Leftarrow]\GallaiWitt&
\arrow[ul,Leftarrow]
\AffSpaces
\end{tikzcd}
\begin{minipage}{0.8\textwidth}
\caption{This diagram summarizes the implications proved among the formulations of the IP van der Waerden theorem presented in this section. It also shows how the Gallai-Witt theorem and the affine spaces theorem are derived as corollaries from the IP van der Waerden theorem.}
\label{fig_implicationgraph_3}
\end{minipage}
\vspace*{.7em}

\setlength{\tabcolsep}{.07em}
\resizebox{0.73\textwidth}{!}{
\begin{tabular}{p{8em}l}
    \IPvdW$\dotfill$& IP van der Waerden theorem -- combinatorial version (\cref{thm_IPvdW-comb}) \\
    \IPvdWsimplex$\dotfill$& IP van der Waerden theorem -- simplex version (\cref{thm_IPvdW-simplex}) \\
    \IPvdWtop$\dotfill$& IP van der Waerden theorem -- topological version (\cref{thm_IPvdW-dyn}) \\
    \GallaiWitt$\dotfill$& Gallai-Witt theorem (\cref{thm_gallai-witt}) \\
    \AffSpaces$\dotfill$& Affine spaces theorem  (\cref{cor_geometric_van_der_Waerden_theorem})
%    \\
%    \abbrvfont{Brauer-Schur}$\dotfill$& Brauer-Schur theorem (\cref{thm_generalized_brauer_schur})
\end{tabular}
}
\end{figure}

We start the formal discussion with the definition of an IP~sequence.

\begin{definition}[IP~sequence,~{cf.~\cite[Definition~2.2]{FW78}}]
\label{def_IP_sequence}
Let $\mathcal{F}(\N)$ denote the set of all finite non-empty subsets of $\N$ and let $(G,+)$ be an abelian group. An \define{IP~sequence in $G$} is a map $u\colon \mathcal{F}(\N)\to G$ with the property that for all $\alpha,\beta\in\mathcal{F}(\N)$ we have
\[
\alpha\cap\beta=\emptyset~~\implies~~ u(\alpha\cup\beta)=u(\alpha)+u(\beta).
\]
\end{definition}

Note that any IP~sequence $u\colon \mathcal{F}(\N)\to G$ is uniquely determined by its values on singletons $u(\{1\}), u(\{2\}), u(\{3\}), \ldots$, and conversely, any sequence $u_1,u_2,u_3,\ldots \in G$ gives rise to a IP~sequence $u\colon \mathcal{F}(\N)\to G$ defined by
\[
u(\alpha)=\sum_{n\in\alpha} u_n,\qquad\forall\alpha\in\mathcal{F}(\N).
\]

\begin{theorem}[IP van der Waerden theorem -- combinatorial version (\IPvdW)]
\label{thm_IPvdW-comb}
Let $m\geq 1$ and let $(G,+)$ be an abelian group.
For any finite coloring $G=C_1\cup\ldots\cup C_r$ there is a color $C_i$ such that for any IP~sequences $u_1,\ldots,u_{m}\colon \mathcal{F}(\N)\to G$ there exist $a\in G$ and $\gamma\in\mathcal{F}(\N)$ with 
\[
\{a, a+u_1(\gamma),\ldots,a+u_{m}(\gamma)\}\subset C_i.
\]
\end{theorem}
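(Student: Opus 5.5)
The plan is to derive \IPvdW{} from its topological counterpart, \IPvdWtop{} (\cref{thm_IPvdW-dyn}), which I expect to assert the following. For a minimal action of $G$ by homeomorphisms $(T_g)_{g\in G}$ on a compact space $X$, any non-empty open $U\subset X$, and any IP~sequences $u_1,\ldots,u_m$, there is $\gamma\in\mathcal{F}(\N)$ with $U\cap T_{u_1(\gamma)}^{-1}U\cap\ldots\cap T_{u_m(\gamma)}^{-1}U\neq\emptyset$. The route is a correspondence principle in the spirit of \cref{prop_top_correspondence_principle_1}, with one twist: the color $C_i$ must be chosen \emph{before} the IP~sequences are given, so it must be determined by the coloring alone.

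\textbf{Step 1: build the system.} Encode the coloring as $\chi\in\{1,\ldots,r\}^G$. Let $G$ act on $\{1,\ldots,r\}^G$ by shifts, $(T_gx)(h)=x(h+g)$. The space need not be metrizable if $G$ is uncountable, but the remark after \cref{prop_top_correspondence_principle_1} notes that compact Hausdorff spaces suffice. Take the orbit closure of $\chi$ and, by Zorn's lemma as in \cref{lem_minimal_systems}, a minimal subsystem $X$. Set $U_j=\{x\in X: x(0)=j\}$. These sets are clopen and cover $X$, so some $U_i$ is non-empty, and this fixes the color $C_i$ independently of the $u_\ell$.

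\textbf{Step 2: transfer recurrence back to $G$.} Given IP~sequences $u_1,\ldots,u_m$, apply \IPvdWtop{} to $U_i$ to get $\gamma$ and a point $x\in U_i\cap\bigcap_\ell T_{u_\ell(\gamma)}^{-1}U_i$. This intersection is open in $X$ and is cut out by finitely many coordinate conditions. Since $X$ lies in the orbit closure of $\chi$, some translate $T_a\chi$ satisfies the same conditions, namely $\chi(a)=\chi(a+u_\ell(\gamma))=i$ for all $\ell$. Hence $\{a,a+u_1(\gamma),\ldots,a+u_m(\gamma)\}\subset C_i$.

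\textbf{Main obstacle: proving \IPvdWtop{} itself,} if it is not already available at this point in the paper. I would imitate the proof of \cref{prop_top_vdC_N-actions}. Induct on $m$ using the IP structure: at stage $n$, choose $\gamma_n\subset\{\max\gamma_{n-1}+1,\ldots\}$ and non-empty open sets $U_n\subset T_{g_n}^{-1}U$, where the $g_n$ range over a finite set $F$ coming from the minimal cover of \cref{lem_finite_cover}, adapted to $G$-actions. Each new step applies the inductive hypothesis to the $m$ IP~sequences $u_\ell-u_1$, restricted to sets above $\max\gamma_{n-1}$. Pigeonholing $g_i=g_j$ with $i<j$ and setting $\gamma=\gamma_{i+1}\cup\ldots\cup\gamma_j$ should then give the recurrence. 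The delicate point is bookkeeping all the unions $\gamma_{n}\cup\ldots\cup\gamma_{p}$ simultaneously; additivity on disjoint sets is what makes this telescoping work, in the same way as the sums $d_{i+1}+\ldots+d_j$ in \cref{sec_combinatorial_proof}.
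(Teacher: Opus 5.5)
Your proposal is correct and follows essentially the paper's route: a correspondence principle for $G$-actions (\cref{prop_top_correspondence_principle_G-systems}) fixes the color $C_i$ via a non-empty $U_i$ before any IP~sequence is given, and \cref{thm_IPvdW-dyn} is then proved by induction on $m$ using the differences $u_\ell-u_1$, a finite cover $X=\bigcup_{g\in G'}T_g^{-1}U$ coming from minimality, and a pigeonhole argument on the $g_n$. Your choice of $\gamma_n$ above $\max\gamma_{n-1}$ (exactly what makes $u_\ell(\gamma_{i+1}\cup\ldots\cup\gamma_j)=u_\ell(\gamma_{i+1})+\ldots+u_\ell(\gamma_j)$ legitimate) and your remark that $X$ may be non-metrizable for uncountable $G$ are both details the paper's written proof leaves implicit; the only slip is that the inductive step should use the $m-1$ non-trivial sequences $u_\ell-u_1$, $2\leq\ell\leq m$, rather than $m$ of them.
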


Before discussing equivalent formulations of \cref{thm_IPvdW-comb} and taking care of its proof, we present some of the interesting combinatorial consequences that follow from it.
As a first application, we recover the Gallai-Witt theorem, one of the earliest extensions of van der Waerden's theorem. Initially proved by Gallai (though unpublished, see \cite[p.~123]{Rado43} where Rado attributed this result to G.~Gr\"unwald whom he confused with T. Gr\"unwald who later changed his name to Galai), it was later %independently 
rediscovered and proved by Witt \cite{Witt52}.
We will see that the Gallai-Witt theorem is a simple corollary of the IP van der Waerden theorem. To state the result, we first need the following definition.

\begin{definition}[Homothetic]
Let $V,W\subset \Z^d$.
We say that $W$ is \define{homothetic} to $V$ if $V$ can be shifted and dilated to become $W$, i.e., there exist $\mathbf{v}\in \Z^d$ and $\lambda\in\Z\setminus\{0\}$ such that $W=\lambda V+\mathbf{v}$.    
\end{definition}

\begin{corollary}[Gallai-Witt theorem (\GallaiWitt)]
\label{thm_gallai-witt}
Let $V\subset \Z^d$ be a finite set. For any finite coloring of $\Z^d$ there exists a monochromatic set of points homothetic to $V$.
\end{corollary}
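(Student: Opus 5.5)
We will derive the Gallai-Witt theorem directly from the IP van der Waerden theorem (\cref{thm_IPvdW-comb}), applied in the abelian group $G=\Z^d$. Write $V=\{\mathbf{v}_0,\mathbf{v}_1,\ldots,\mathbf{v}_m\}$; we may assume $m\geq 1$, since a one-point set is trivially homothetic to any monochromatic point. Because homothety allows translation, we may also assume $\mathbf{v}_0=\mathbf{0}$: replace $V$ by $V-\mathbf{v}_0$. A set homothetic to the translated set is homothetic to $V$ itself.

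Given a finite coloring $\Z^d=C_1\cup\ldots\cup C_r$, \cref{thm_IPvdW-comb} supplies a color $C_i$ that works for every choice of IP~sequences. Take the simplest choice, $u_j(\alpha)=|\alpha|\,\mathbf{v}_j$ for $j=1,\ldots,m$. Each $u_j$ is an IP~sequence, generated by the constant sequence $\mathbf{v}_j,\mathbf{v}_j,\ldots$, because cardinality is additive over disjoint sets. The theorem then gives $\mathbf{a}\in\Z^d$ and $\gamma\in\mathcal{F}(\N)$ with
\[
\{\mathbf{a},\mathbf{a}+|\gamma|\mathbf{v}_1,\ldots,\mathbf{a}+|\gamma|\mathbf{v}_m\}\subset C_i .
\]

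This set equals $\lambda V+\mathbf{a}$ with $\lambda=|\gamma|\in\N$, so $\lambda\neq 0$ and the set is homothetic to $V$. That completes the argument.

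There is no real obstacle here; the substance lies entirely in \cref{thm_IPvdW-comb}. The only points to check are that $u_j$ satisfies the IP property and that $\lambda$ is nonzero, which holds because $\gamma$ is non-empty.
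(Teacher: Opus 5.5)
Your proposal is correct and takes essentially the paper's approach: apply \cref{thm_IPvdW-comb} in $\Z^d$ with the IP~sequences $u_j(\alpha)=|\alpha|\,\mathbf{v}_j$. The only difference is cosmetic. The paper uses one IP~sequence for every point of $V$ and reads off $\mathbf{a}+|\gamma|V$ directly, discarding the extra point $\mathbf{a}$. You instead translate so that $\mathbf{v}_0=\mathbf{0}$ and let $\mathbf{a}$ itself serve as the image of $\mathbf{v}_0$, which costs one IP~sequence fewer but needs the harmless normalization step.
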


Note that van der Waerden's theorem corresponds to the $d=1$ case of the Gallai-Witt theorem, since in the setting of $\Z$, a $k$-term progression is the same as a homothetic image of the set $\{1,\ldots,k\}$.

\begin{proof}[Proof of \cref{thm_gallai-witt}]
Let $V=\{\mathbf{x}_1,\ldots,\mathbf{x}_m\}$ be a finite subset of $\Z^d$. Define the IP~sequences $u_1,\ldots,u_m\colon\mathcal{F}(\N)\to\Z^d$ as
\[
u_i(\alpha)=|\alpha|\cdot \mathbf{x}_i,\qquad\forall\alpha\in\mathcal{F}(\N),~\forall i=1,\ldots,m.
\]
By \cref{thm_IPvdW-comb}, for any finite coloring of $\Z^d$ there exists $\mathbf{a}\in \Z^d$ and $\gamma\in\mathcal{F}(\N)$ such that
\[
\{\mathbf{a}+x_1(\gamma),\ldots,\mathbf{a}+x_m(\gamma)\}=
 \mathbf{a}+|\gamma|\, V
\]
is monochromatic. Thus we have found a monochromatic, homothetic image of $V$, completing the proof.
\end{proof}

Next, we derive from \cref{thm_IPvdW-comb} a combinatorial result that provides an analogue of van der Waerden's theorem in the setting of vector spaces over finite fields.

\begin{corollary}[Affine Spaces theorem (\AffSpaces)]
\label{cor_geometric_van_der_Waerden_theorem}
Let $(\mathbb{F},+,\cdot)$ be a finite field and $V_{\mathbb{F}}$ an infinite dimensional vector space over $\mathbb{F}$. Then any finite coloring of $V_{\mathbb{F}}$ admits arbitrarily large monochromatic affine subspaces.
\end{corollary}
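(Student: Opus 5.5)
We derive the corollary directly from \cref{thm_IPvdW-comb}, applied to the abelian group $G=(V_{\mathbb{F}},+)$. An affine subspace of dimension $n$ has the form $a+\operatorname{span}\{w_1,\ldots,w_n\}$ with $w_1,\ldots,w_n$ linearly independent. So for each $n$ we must find $a\in V_{\mathbb{F}}$ and independent $w_1,\ldots,w_n$ such that every point $a+\sum_{j}c_jw_j$ with $c_j\in\mathbb{F}$ has the same color. The plan is to let the IP sequences enumerate all such linear combinations. Since the family $\{a+\sum_j c_j w_j\}$ is finite, \cref{thm_IPvdW-comb} with finitely many IP sequences covers it.

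\emph{Construction.} Fix $n$ and a finite coloring $V_{\mathbb{F}}=C_1\cup\ldots\cup C_r$. Choose linearly independent vectors $e_{i,j}\in V_{\mathbb{F}}$ for $i\in\N$ and $j\in\{1,\ldots,n\}$; this is possible because $V_{\mathbb{F}}$ is infinite dimensional. Index the IP sequences by the nonzero vectors $c=(c_1,\ldots,c_n)\in\mathbb{F}^n\setminus\{0\}$; there are $m=|\mathbb{F}|^n-1$ of them. Define
\[
u_c(\alpha)=\sum_{i\in\alpha}\sum_{j=1}^n c_j e_{i,j},\qquad \alpha\in\mathcal{F}(\N).
\]
Each $u_c$ is an IP sequence, since it is defined by summing over the singleton values $u_c(\{i\})=\sum_j c_je_{i,j}$. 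By \cref{thm_IPvdW-comb} there are a color $C_i$, an element $a\in V_{\mathbb{F}}$ and a set $\gamma\in\mathcal{F}(\N)$ such that $a$ and all $a+u_c(\gamma)$ lie in $C_i$.

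\emph{Verification.} Set $w_j=\sum_{i\in\gamma}e_{i,j}$. Then $u_c(\gamma)=\sum_j c_jw_j$, so the monochromatic set is exactly $a+\operatorname{span}\{w_1,\ldots,w_n\}$. Here $a$ itself accounts for $c=0$. The vectors $w_j$ are linearly independent because they are sums over pairwise disjoint subsets of the independent family $\{e_{i,j}\}$. Hence this affine subspace has dimension exactly $n$. Since $n$ was arbitrary, the corollary follows.

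\emph{Main obstacle.} The only subtle point is guaranteeing genuine dimension $n$, which is why the $w_j$ must be independent. A naive choice such as $u_c(\alpha)=\sum_{i\in\alpha}\sum_j c_j e_j$ with fixed $e_j$ fails: it gives $|\gamma|\sum_j c_je_j$, and this can vanish when $\operatorname{char}\mathbb{F}$ divides $|\gamma|$. Using fresh independent vectors $e_{i,j}$ for each index $i$ avoids this degeneration.
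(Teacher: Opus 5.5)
Your proof is correct and follows the paper's argument: the paper also applies the combinatorial IP van der Waerden theorem to IP sequences indexed by coefficient vectors in $\mathbb{F}^M$, built from disjoint blocks $e_{Mn+i-1}$ of basis vectors, and gets independence of $v_i=\sum_{n\in\gamma}e_{Mn+i-1}$ exactly as you get independence of the $w_j$. The only differences are cosmetic: you use an arbitrary countable independent family $e_{i,j}$ instead of a canonical basis, and you handle $c=0$ through $a$ rather than as a zero IP sequence.
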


\begin{proof}
Fix $M\in\N$.
Let $e_1,e_2,e_3,\ldots $ denote a canonical basis for the vector space $V_{\mathbb{F}}$. For every element $(b_1,\ldots,b_M)\in \mathbb{F}^M$ define an IP~sequence $u_{(b_1,\ldots,b_M)}\colon\mathcal{F}(\N)\to V_{\mathbb{F}}$ via
\[
u_{(b_1,\ldots,b_M)}(\alpha) = \sum_{i=1}^M b_i\Bigg( \sum_{n\in\alpha}  e_{Mn+i-1}\Bigg),\qquad\forall\alpha\in\mathcal{F}(\N).
\]
By \cref{thm_IPvdW-comb}, for any finite coloring of $V_{\mathbb{F}}$ there exist $a\in V_{\mathbb{F}}$ and $\gamma\in\mathcal{F}(\N)$ such that
\[
\Big\{a+u_{(b_1,\ldots,b_M)}(\gamma): (b_1,\ldots,b_M)\in \mathbb{F}^M\Big\}
\]
is monochromatic.
Define $v_i = \sum_{n \in \gamma} e_{Mn + i - 1}$, and note that $v_1, \dots, v_M$ are linearly independent, as they are composed of distinct canonical vectors. Moreover, we have
\[
\Big\{a+u_{(b_1,\ldots,b_M)}(\gamma): (b_1,\ldots,b_M)\in \mathbb{F}^M\Big\}
=
\Big\{a+b_1v_1+\ldots+b_Mv_M: b_1,\ldots,b_M\in \mathbb{F}\Big\}
\]
and hence we have found a monochromatic $M$-dimensional affine subspace.
\end{proof}

We will introduce now another equivalent form of the IP van der Waerden theorem.
It provides a different viewpoint that will be useful in later sections when we consider generalizations such as the Hales-Jewett theorem (in \cref{sec_hales-jewett}) and the Polynomial Hales-Jewett theorem (in \cref{sec_poly_vdW}).
 
\begin{theorem}[IP van der Waerden theorem -- simplex version (\IPvdWsimplex)]
\label{thm_IPvdW-simplex}
Let $(G,+)$ be an abelian group and let $d\in\N$. For any finite coloring $G^d=C_1\cup\ldots \cup C_r$ there is a color $C_i$ such that for any IP~sequence $u\colon\mathcal{F}(\N)\to G$ there exist $(a_1,\ldots,a_d)\in G^d$ and $\gamma\in\mathcal{F}(\N)$ % $g\in \{u(\alpha): \alpha\in\mathcal{F}(\N)\}$
such that
\[
\big\{(a_1,a_2,\ldots,a_d),\,
(a_1+u(\gamma),a_2,\ldots,a_d),\,
(a_1,a_2+u(\gamma),\ldots,a_d),\,\ldots,\,
(a_1,a_2,\ldots,a_d+u(\gamma))\big\}\subset C_i.
\]
\end{theorem}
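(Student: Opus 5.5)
The plan is to derive the simplex version from the combinatorial version \IPvdW{} (\cref{thm_IPvdW-comb}), as indicated by the arrow \IPvdW$\Rightarrow$\IPvdWsimplex{} in \cref{fig_implicationgraph_3}. The key observation is that the simplex configuration in $G^d$ is exactly a configuration of the form $\{\mathbf{a},\mathbf{a}+w_1(\gamma),\ldots,\mathbf{a}+w_d(\gamma)\}$ for suitable IP~sequences $w_1,\ldots,w_d$ in the abelian group $G^d$.

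Concretely, I apply \cref{thm_IPvdW-comb} to the abelian group $(G^d,+)$, with $m=d$ and the given coloring $G^d=C_1\cup\ldots\cup C_r$. This yields a color $C_i$ that is ``good'' for every choice of $d$ IP~sequences into $G^d$. Given an arbitrary IP~sequence $u\colon\mathcal{F}(\N)\to G$, I then define, for $j=1,\ldots,d$,
\[
w_j(\alpha)=u(\alpha)\,\mathbf{e}_j\in G^d,
\]
meaning the vector whose $j$-th coordinate is $u(\alpha)$ and whose other coordinates are $0$. Each $w_j$ is an IP~sequence: for disjoint $\alpha,\beta$ we have $w_j(\alpha\cup\beta)=(u(\alpha)+u(\beta))\mathbf{e}_j=w_j(\alpha)+w_j(\beta)$, since coordinate embedding $G\to G^d$ is a homomorphism.

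By the choice of $C_i$, there exist $\mathbf{a}=(a_1,\ldots,a_d)\in G^d$ and $\gamma\in\mathcal{F}(\N)$ such that $\{\mathbf{a},\mathbf{a}+w_1(\gamma),\ldots,\mathbf{a}+w_d(\gamma)\}\subset C_i$. Unwinding the definition of $w_j$, this set is precisely the simplex displayed in \cref{thm_IPvdW-simplex}. The order of quantifiers matches, because the color $C_i$ in \cref{thm_IPvdW-comb} is chosen before the IP~sequences and works for all of them.

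There is essentially no obstacle beyond checking this uniformity of the color over all $u$. The only point to state carefully is that the uniformity in \cref{thm_IPvdW-comb} is over all $m$-tuples of IP~sequences into $G^d$, and our tuples $(w_1,\ldots,w_d)$ form a subfamily of those. If one also wanted the reverse implication \IPvdWsimplex$\Rightarrow$\IPvdWtop, that would be the harder step, but it is not required for the statement at hand.
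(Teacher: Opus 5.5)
Your proposal is correct and is essentially identical to the paper's proof: the paper likewise applies \cref{thm_IPvdW-comb} to the group $(G^d,+)$ with the $d$ IP~sequences $u_i(\alpha)=(0,\ldots,0,u(\alpha),0,\ldots,0)$, which place $u(\alpha)$ in the $i$th coordinate. You also checked explicitly that the color is chosen before the IP~sequences, which is the point that makes the quantifier order carry over.
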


\begin{proof}[Proof of {\IPvdW$\implies$\IPvdWsimplex}]
Out of the given IP~sequence $u\colon\mathcal{F}(\N)\to G$, we can construct new IP~sequences in $G^d$, $u_1,\ldots,u_d\colon\mathcal{F}(\N)\to G^d$, by taking
\[
u_i(\alpha)=(0,\ldots,0,u(\tikzmark{i-th_position}\alpha),0,\ldots,0).
\]
The claim now follows from \cref{thm_IPvdW-comb} applied to $(G^d,+)$ and the family of IP~sequences $u_1,\ldots,u_d$.
\begin{tikzpicture}[remember picture,overlay]
\draw[<-] 
  ([shift={(0pt,12pt)}]pic cs:i-th_position) |- ([shift={(81pt,17pt)}]pic cs:i-th_position) 
  node[anchor=west] {$\scriptstyle i\text{th position}$}; 
\end{tikzpicture}
\end{proof}

Next, we present a topological analogue of \cref{thm_IPvdW-comb}, which can be seen as the IP counterpart to the topological version of van der Waerden's theorem formulated in \cref{sec_top_formulation_of_vdW}.
To do this, we need to extend the notion of a topological dynamical system from $\mathbb{Z}$-actions to actions of arbitrary abelian groups. Let $(G, +)$ be an abelian group. A \define{$G$-system} is a pair $(X, (T_g)_{g \in G})$, where $X$ is a compact metric space and $(T_g)_{g \in G}$ is an action of $G$ on $X$ by homeomorphisms, that is, for each $g \in G$ the map $T_g\colon X \to X$ is a homeomorphism, and for all $g, h \in G$ we have $T_g \circ T_h = T_{g+h}$.
%Note that a $\mathbb{Z}$-system is simply a `regular' topological dynamical system, as introduced in \cref{sec_top_formulation_of_vdW}.
A $G$-system $(X, (T_g)_{g \in G})$ is \define{minimal} if every point has a dense orbit, i.e., for all $x \in X$, $\overline{\{T_g x : g \in G\}} = X$.

We also need to introduce the dynamical counterpart of an IP~sequence (\cref{def_IP_sequence}).

\begin{definition}[IP systems]
\label{def_IP_system}
Given a $G$-system $(X, (T_g)_{g \in G})$, a collection of commuting maps $(T_{\alpha})_{\alpha\in\mathcal{F}(\N)}$ is called an \define{IP~system} in $(X, (T_g)_{g \in G})$ if for every $\alpha\in\mathcal{F}(\N)$ the map $T_\alpha\colon X\to X$ is a homeomorphism on $X$ that commutes with $T_g$ for all $g \in G$, and for all $\alpha,\beta\in\mathcal{F}(\N)$ we have
\[
\alpha\cap\beta=\emptyset~~\implies~~ T_{\alpha\cup\beta}=T_\alpha\circ T_\beta.
\]
\end{definition}

\begin{theorem}[IP van der Waerden theorem -- topological version (\IPvdWtop)]
\label{thm_IPvdW-dyn}
Suppose $m\geq 1$, $(G,+)$ is an abelian group, $(X,(T_g)_{g\in G})$ is a minimal $G$-system, and $(T_{1,\alpha})_{\alpha\in\mathcal{F}(\N)},\ldots,(T_{m,\alpha})_{\alpha\in\mathcal{F}(\N)}$ are IP~systems in $(X,(T_g)_{g\in G})$. Then for any non-empty open set $U\subset X$ there exists $\gamma\in\mathcal{F}(\N)$ such that
\[
U\cap T_{1,\gamma}^{-1}U\cap \ldots\cap T_{m,\gamma}^{-1}U\neq\emptyset.
\]
\end{theorem}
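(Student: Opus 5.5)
I would prove the topological IP van der Waerden theorem by induction on $m$, modeled on the proof of \cref{prop_top_vdC_N-actions}. The induction is driven by a "uniform recurrence" step in the spirit of \cref{lem_uniform_recurrence}, combined with a coloring/pigeonhole argument along finite sums. The first step is a $G$-analogue of \cref{lem_finite_cover}. Minimality gives $X=\bigcup_{g\in G}T_g^{-1}U$, so by compactness there is a finite set $\{g_1,\ldots,g_M\}\subset G$ with $X=\bigcup_{s=1}^M T_{g_s}^{-1}U$. Consequently every non-empty open $V$ meets some $T_{g_s}^{-1}U$.

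Next comes the key strengthening of the inductive hypothesis. Since the maps $T_{i,\alpha}$ are homeomorphisms commuting with $G$, I would pass to the "relative" IP systems $S_{i,\alpha}=T_{i,\alpha}T_{1,\alpha}^{-1}$ for $i=2,\ldots,m$. These are again commuting IP systems in the same minimal $G$-system, and there are only $m-1$ of them. The case $m=1$ is handled by the same scheme with an empty family, where the "hypothesis" step is vacuous.

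Induction step, building the nested sequence. I would construct non-empty open sets $U_0=U,U_1,U_2,\ldots$, indices $s_1,s_2,\ldots\in\{1,\ldots,M\}$, and finite sets $\alpha_1<\alpha_2<\cdots$ (with $\max\alpha_j<\min\alpha_{j+1}$) such that $U_j\subset T_{g_{s_j}}^{-1}U$ and, for every $0\le i<j$,
\[
U_j\subset T_{1,\beta}^{-1}U_i\cap\ldots\cap T_{m,\beta}^{-1}U_i,\qquad \beta=\alpha_{i+1}\cup\ldots\cup\alpha_j.
\]
To pass from $U_{j-1}$ to $U_j$, I would apply the induction hypothesis for the $m-1$ relative systems $S_{i,\alpha}$, restricted to $\alpha$ with $\min\alpha>\max\alpha_{j-1}$ (restricting to a tail of $\N$ is again an IP system), inside the open set $U_{j-1}$. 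This yields $\alpha_j$ and a point $y$ with $y,S_{2,\alpha_j}y,\ldots,S_{m,\alpha_j}y\in U_{j-1}$. Setting $x=T_{1,\alpha_j}^{-1}y$, one gets $T_{i,\alpha_j}x\in U_{j-1}$ for all $i$. Then $V=\bigcap_i T_{i,\alpha_j}^{-1}U_{j-1}$ is open and non-empty, and I intersect it with some $T_{g_{s_j}}^{-1}U$ to get $U_j$. The displayed inclusions for smaller $i$ propagate because $T_{i,\alpha_j\cup\beta'}=T_{i,\alpha_j}T_{i,\beta'}$ for disjoint sets.

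Here lies the \textbf{main obstacle}. The inductive hypothesis only gives a single recurrence in $U_{j-1}$. To get the inclusions for all earlier $U_i$ simultaneously, I need the hypothesis applied to the open set $U_{j-1}\cap\bigcap_{i<j-1}(\ldots)$, which already encodes them; the chain above is designed so that this works. This is exactly the bookkeeping from \eqref{eqn_top_proof_21}, and I would check it carefully with the commutation relations.

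Conclusion. By pigeonhole there are $i<j$ with $s_i=s_j=s$. Take $\gamma=\alpha_{i+1}\cup\ldots\cup\alpha_j$. Since $U_j\subset T_{g_s}^{-1}U$ and $U_j\subset\bigcap_l T_{l,\gamma}^{-1}U_i\subset\bigcap_l T_{l,\gamma}^{-1}T_{g_s}^{-1}U$, and since $T_{g_s}$ commutes with each $T_{l,\gamma}$, the set $T_{g_s}U_j$ is a non-empty subset of $U\cap T_{1,\gamma}^{-1}U\cap\ldots\cap T_{m,\gamma}^{-1}U$. This proves the claim.
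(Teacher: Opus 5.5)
Your proposal is correct and follows essentially the same route as the paper's proof. Both arguments induct on $m$ via the relative maps $T_{i,\alpha}T_{1,\alpha}^{-1}$, take a finite cover $X=\bigcup_{g\in G'}T_g^{-1}U$ from minimality, build a chain with $U_j\subset T_{g_j}^{-1}U$ and $U_j\subset\bigcap_l T_{l,\alpha_j}^{-1}U_{j-1}$, and conclude by pigeonhole on the $g_j$. Three small remarks. Your passage to tails, so that $\max\alpha_{j-1}<\min\alpha_j$, supplies the disjointness the paper uses silently when it writes $T_{l,\gamma_j\cup\cdots\cup\gamma_{i+1}}$. Your ``main obstacle'' is not really one: the IP property alone telescopes the one-step inclusions, so applying the hypothesis to $U_{j-1}$ suffices. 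Finally, the relative maps form IP systems only if the $m$ IP systems commute with one another, not merely with $G$; the paper also assumes this tacitly.
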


In \cref{sec_correspondence_principle}, we used the topological correspondence principle (\cref{prop_top_correspondence_principle_1}) to derive van der Waerden's theorem from \ref{TDI}~(Theorem~2.8). In analogy, we shall derive \cref{thm_IPvdW-comb}{} from \cref{thm_IPvdW-dyn}{} by using an extension of this correspondence principle that applies to general $G$-systems.

\begin{proposition}
\label{prop_top_correspondence_principle_G-systems}
Let $(G,+)$ be an abelian group.
For any finite coloring $G=C_1\cup\ldots\cup C_r$ there exist a minimal $G$-system $(X,(T_g)_{g\in G})$ and an open cover $X=U_1\cup\ldots\cup U_r$ such that for all $g_1,\ldots,g_k\in G$ and all $i_1,\ldots,i_k\in\{1,\ldots,r\}$ we have
\begin{equation}
\label{eqn_correspondence_G-systems}
T_{g_1}^{-1}U_{i_1}\cap \ldots\cap T_{g_k}^{-1}U_{i_k}\neq\emptyset \quad \implies\quad (C_{i_1}-g_1)\cap \ldots\cap (C_{i_k}-g_k)~\text{is infinite}.
\end{equation}
\end{proposition}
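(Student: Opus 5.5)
The plan is to imitate the proof of \cref{prop_top_correspondence_principle_1}, replacing the shift on $\{1,\ldots,r\}^\Z$ by the shift action of $G$ on $\Omega=\{1,\ldots,r\}^G$. We tacitly assume $G$ is infinite, since otherwise the conclusion ``is infinite'' cannot hold. Let $\chi\colon G\to\{1,\ldots,r\}$ encode the coloring via $\chi(g)=i$ iff $g\in C_i$. Endow $\Omega$ with the product topology; it is compact by Tychonoff. For $g\in G$ define $T_g\colon\Omega\to\Omega$ by $(T_gx)(h)=x(h+g)$. Each $T_g$ is a homeomorphism and $T_g\circ T_h=T_{g+h}$, so $(\Omega,(T_g)_{g\in G})$ is a $G$-system.

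To get the word ``infinite'' rather than merely ``non-empty'', we do not take the full orbit closure of $\chi$. Instead we take the set of accumulation points
\[
Y=\bigcap_{F\subset G\text{ finite}}\overline{\{T_h\chi: h\in G\setminus F\}}.
\]
Each set in this intersection is non-empty and closed, because $G$ is infinite. The family is also closed under finite intersections in the sense that $F_1\cup F_2$ refines both $F_1$ and $F_2$. By compactness, $Y\neq\emptyset$. Moreover $Y$ is closed, and it is $T_g$-invariant because $T_g$ maps $\{T_h\chi:h\notin F\}$ onto $\{T_{h}\chi: h\notin F+g\}$.

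The Zorn's lemma argument of \cref{lem_minimal_systems} carries over verbatim to $G$-actions. It produces a closed, non-empty, invariant $X\subset Y$ on which every $G$-orbit is dense, i.e.\ a minimal $G$-system. Set $U_i=\{x\in X: x(0)=i\}$. These sets are clopen in $X$ and cover $X$.

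Now suppose $T_{g_1}^{-1}U_{i_1}\cap\ldots\cap T_{g_k}^{-1}U_{i_k}\neq\emptyset$. Let $V_i=\{x\in\Omega: x(0)=i\}$. Then
\[
W=T_{g_1}^{-1}V_{i_1}\cap\ldots\cap T_{g_k}^{-1}V_{i_k}
\]
is an open subset of $\Omega$ that meets $X\subset Y$. By the definition of $Y$, for every finite $F\subset G$ there is $h\notin F$ with $T_h\chi\in W$. Hence the set of such $h$ is infinite. For any such $h$ we have $\chi(h+g_j)=i_j$ for all $j$, i.e.\ $h\in(C_{i_1}-g_1)\cap\ldots\cap(C_{i_k}-g_k)$, which proves \eqref{eqn_correspondence_G-systems}.

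The main obstacle is metrizability. The paper defines $G$-systems on compact metric spaces, but $\{1,\ldots,r\}^G$ is metrizable only when $G$ is countable. For countable $G$ the argument above is complete. For uncountable $G$ I would first try to invoke the remark following \cref{prop_top_correspondence_principle_1}: the compact Hausdorff setting suffices for the applications, in particular for deriving \cref{thm_IPvdW-comb} from \cref{thm_IPvdW-dyn}. Alternatively, in that derivation one can replace $G$ by the countable subgroup generated by the values of the finitely many IP~sequences $u_1,\ldots,u_m$. One then applies the countable case to the restricted coloring on each coset, or simply on the subgroup itself, where the coloring restricts to a finite coloring.
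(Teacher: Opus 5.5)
Your argument is correct and is exactly what the paper intends when it says the proof follows the same steps as for $G=\Z$ and omits it. The ingredients match: the shift action on $\{1,\ldots,r\}^G$, the accumulation set of the orbit of $\chi$ (which you rightly index by complements of finite sets, since $G$ has no order), a Zorn-minimal subsystem, and the cylinder sets $U_i=\{x\in X: x(0)=i\}$.

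Your caveats are genuine restrictions of the statement rather than gaps in your proof. The statement fails for finite $G$. For uncountable $G$ the compact metric requirement cannot be met in general. Any action of an uncountable group on a compact metric space has some $h\neq 0$ for which $T_h$ moves every point by less than a Lebesgue number of the cover $U_1,\ldots,U_r$. The correspondence would then give, for every finite $F\subset G$, some $a$ with $\chi(a+g)=\chi(a+g+h)$ for all $g\in F$. This fails for every $h\neq 0$ for the coloring $a\mapsto\sum_\alpha\chi_0(a_\alpha)\bmod 2$ of $\bigoplus_{\alpha<\omega_1}\Z$, where $\chi_0$ is a two-sided Sturmian sequence normalized by $\chi_0(0)=0$. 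So the compact Hausdorff reading you suggest, which the proof of the topological IP van der Waerden theorem tolerates, is the right fix.
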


Note that \cref{prop_top_correspondence_principle_1} corresponds to the special case of \cref{prop_top_correspondence_principle_G-systems} when $G = \Z$. Moreover, the proof of \cref{prop_top_correspondence_principle_G-systems} follows the same steps as that of \cref{prop_top_correspondence_principle_1} given in \cref{sec_correspondence_principle}, with $(\mathbb{Z}, +)$ replaced by an arbitrary abelian group $(G, +)$ and therefore we omit the details and leave the adaptation of the argument to the reader.

\begin{proof}[Proof of {\IPvdWtop$\implies$\IPvdW}]
Given a finite coloring $G = C_1 \cup \dots \cup C_r$, we can apply  \cref{prop_top_correspondence_principle_G-systems} to find a minimal $G$-system $(X,(T_g)_{g\in G})$ and an open cover $X=U_1\cup\ldots\cup U_r$ such that \eqref{eqn_correspondence_G-systems} applies.
Furthermore, given IP~sequences $u_1,\ldots,u_{m}\colon \mathcal{F}(\N)\to G$, we define the IP~systems $(T_{1,\alpha})_{\alpha\in\mathcal{F}(\N)},\ldots,(T_{m,\alpha})_{\alpha\in\mathcal{F}(\N)}$ as
\[
T_{1,\alpha}=T_{u_1(\alpha)}, \ldots, T_{m,\alpha}=T_{u_{m}(\alpha)},\qquad\forall \alpha\in\mathcal{F}(\N).
\]
Then for any $i \in \{1, \dots, r\}$ for which $U_i$ is non-empty, if follows from \cref{thm_IPvdW-dyn} that there exists some $\gamma \in \mathcal{F}(\N)$ such that
\[
U_i\cap T_{1,\gamma}^{-1}U_i\cap \ldots\cap T_{m,\gamma}^{-1}U_i\neq\emptyset.
\]
Via \eqref{eqn_correspondence_G-systems}, this implies that
\[
C_i\cap (C_i-u_1(\gamma))\cap \ldots\cap (C_i-u_{m}(\gamma))\neq\emptyset.
\]
Hence, there exists $a\in G$ such that $\{a, a+u_1(\gamma),\ldots,a+u_{m}(\gamma)\}$ is a subset of the color class $C_i$ and therefore monochromatic.    
\end{proof}

Since we have already established \IPvdW$\implies$\IPvdWsimplex{} and \IPvdWtop$\implies$\IPvdW, it remains to prove that \IPvdWsimplex$\implies$\IPvdWtop{} in order to verify the equivalence between the three statements.

\begin{proof}[Proof of {\IPvdWsimplex$\implies$\abbrvfont{IPvdW-top}}]
Let $(X,(T_g)_{g\in G})$ be a minimal $G$-system, and suppose $(T_{1,\alpha})_{\alpha\in\mathcal{F}(\N)},\ldots,(T_{m,\alpha})_{\alpha\in\mathcal{F}(\N)}$ are IP~systems.
Let $H$ be the group generated by all $m$-tuples $(T_{1,\alpha}, \ldots, T_{m,\alpha})$ for $\alpha\in \mathcal{F}(\N)$. This is an abelian group and $u\colon \mathcal{F}(\N)\to H$, $u(\alpha)=(T_{1,\alpha}, \ldots, T_{m,\alpha})$ constitutes an IP sequence in this group. Given a non-empty and open set $U\subset X$, by minimality, we can find elements $a_1,\ldots,a_r\in G$ such that
\[
T_{a_1}^{-1} U\cup\ldots\cup T_{a_r}^{-1} U=X.
\]
Let $x\in X$ be arbitrary. For any element $((S_1^{(1)},\ldots,S_m^{(1)}),\ldots,(S_1^{(m)},\ldots,S_m^{(m)}))\in H^{m}$, the point
\[
S_1^{(1)}S_2^{(2)}\cdots S_m^{(m)}x   
\]
belongs to $T_{a_i}^{-1} U$ for some $i\in\{1,\ldots,r\}$.
This induces an $r$-coloring of $H^{m}$. 
Applying \cref{thm_IPvdW-simplex} to this coloring of $H^{m}$ and the IP~sequence $u(\alpha)$, we can find $\gamma\in \mathcal{F}(\N)$, $S_1^{(1)},S_2^{(2)},\ldots,S_m^{(m)}$ and $i\in\{1,\ldots,r\}$ such that
\[
\begin{aligned}
S_1^{(1)}S_2^{(2)}\cdots S_m^{(m)}x & \in T_{a_i}^{-1}U,
\\
S_1^{(1)}S_2^{(2)}\cdots S_m^{(m)}T_{1,\gamma}x & \in T_{a_i}^{-1}U,
\\
\vdots
\\
S_1^{(1)}S_2^{(2)}\cdots S_m^{(m)}T_{m,\gamma}x & \in T_{a_i}^{-1}U.
\end{aligned}
% S_1^{(1)}S_2^{(2)}\cdots S_m^{(m)}x,~ T_{1,\alpha}S_1^{(1)}S_2^{(2)}\cdots S_m^{(m)}x,~\ldots, T_{m,\alpha}S_1^{(1)}S_2^{(2)}\cdots S_m^{(m)}x  \in T_{a_i}^{-1}U.
\]
It follows that the intersection $U\cap T_{1,\gamma}^{-1}U\cap \ldots\cap T_{m,\gamma}^{-1}U$ is non-empty because it contains the point $ S_1^{(1)}S_2^{(2)}\cdots S_m^{(m)}T_{a_i}x$.
\end{proof}

\begin{proof}[Proof of \IPvdWtop{} (\cref{thm_IPvdW-dyn})]
We follow an argument similar to the one used in \cite[Theorem 6.3.8]{Bergelson00b}.
We proceed by induction on $m$. For $m=0$ the conclusion holds trivially, so assume $m\geq 1$.
Since $(X,(T_g)_{g\in G})$ is minimal, the union
$\bigcup_{g\in G}T_g^{-1}U$ covers all of $X$. By compactness, there exists a finite set $G'\subset G$ such that
\begin{equation}
\label{eqn_G_min_concl}
\bigcup_{g\in G'} T_{g}^{-1}U=X.
\end{equation}
Define $U_0=U$ and $S_{1,\alpha}=T_{2,\alpha}\circ T_{1,\alpha}^{-1},\ldots,S_{m-1,\alpha}=T_{m,\alpha}\circ T_{1,\alpha}^{-1}$ for all $\alpha\in\mathcal{F}(\N)$.
Using the induction hypothesis, we can find $\gamma_1\in\N$ such that
\[
V_1=U_0\cap S_{1,\gamma_1}^{-1}U_0\cap\ldots\cap S_{m-1,\gamma_1}^{-1}U_0\neq \emptyset.
\]
Note that by the definition of $S_{1,\gamma_1},\ldots,S_{m-1,\gamma_1}$, we have
\[
T_{1,\gamma_1}^{-1}V_1= T_{1,\gamma_1}^{-1}U_0\cap T_{2,\gamma_1}^{-1}U_0\cap\ldots\cap T_{m,\gamma_1}^{-1}U_0.
\]
By \eqref{eqn_G_min_concl}, there exists some $g_1\in G'$ with
$
U_1=T_{1,\gamma_1}^{-1}V_1\cap T_{g_1}^{-1}U\neq\emptyset.
$
In summary, we have found $g_1\in G'$, $\gamma_1\in\mathcal{F}(\N)$, and a non-empty open set $U_1\subset T_{g_1}^{-1} U$ with
\[
U_1\subset T_{1,\gamma_1}^{-1}U_0\cap T_{2,\gamma_1}^{-1}U_0\cap\ldots\cap T_{m,\gamma_1}^{-1}U_0.
\]
Using the same reasoning with $U_1$ in place of $U_0$, we can find $g_2\in G'$, $\gamma_2\in\mathcal{F}(\N)$, and a non-empty open set $U_2\subset T_{g_2}^{-1} U$ with
\[
U_2\subset T_{1,\gamma_2}^{-1}U_1\cap T_{2,\gamma_2}^{-1}U_1\cap\ldots\cap T_{m,\gamma_2}^{-1}U_1.
\]
Proceeding with this process, we are left with sequences $g_1,g_2,\ldots\in G'$, $\gamma_1,\gamma_2,\ldots\in\N$, and non-empty open sets $U_0,U_1,U_2,\ldots$ with $U_i\subset T_{g_i}^{-1} U$ and
\[
U_{i+1}\subset T_{1,\gamma_{i+1}}^{-1}U_i\cap \ldots\cap T_{m,\gamma_{i+1}}^{-1}U_i.
\]
This implies that for all $i<j$ we have
\begin{equation}
\label{eqn_top_proof_IP_1}
U_{j}\subset T_{1,\gamma_j\cup\ldots\cup\gamma_{i+1}}^{-1}U_i\cap\ldots\cap T_{m,\gamma_j\cup\ldots\cup\gamma_{i+1}}^{-1}U_i.
\end{equation}
Since $G'$ is a finite set, the exist $i<j$ and $g\in G'$ with $g=g_i=g_j$.
Using \eqref{eqn_top_proof_IP_1} and $U_i,U_j\subset T_g^{-1}U$, we see that the intersection
\[
U\cap T_{1,\gamma_j\cup\ldots\cup\gamma_{i+1}}^{-1}U\cap\ldots\cap T_{m,\gamma_j\cup\ldots\cup\gamma_{i+1}}^{-1}U
\]
contains $T_gU_j$, which proves that it is non-empty.
\end{proof}

\subsection{Hales-Jewett theorem}
\label{sec_hales-jewett}

\newcommand{\FinHJ}{\text{\hyperref[{thm_HJ_finitary}]{\abbrvfont{Fin.HJ}}}}
\newcommand{\InfHJ}{\text{\hyperref[{thm_HJ_infinitary}]{\abbrvfont{Inf.HJ}}}}
\newcommand{\SetHJ}{\text{\hyperref[{thm_HJ_sets}]{\abbrvfont{Set.HJ}}}}
\newcommand{\Lift}{\text{\hyperref[{thm_semigroup_generalization_of_HJ}]{\abbrvfont{Lift}}}}

In this subsection, we discuss and prove the Hales-Jewett theorem \cite{HJ63}. The following citation from the book ``Ramsey Theory'' by Graham, Rothschild and Spencer \cite{GRS90} provides an excellent description of the content and role of this important generalization of van der Waerden's theorem:
\begin{quote}
\centering
\begin{minipage}{0.8\textwidth}
\textsl{``In its essence, van der Waerden’s theorem should be regarded, not as a result dealing with integers, but rather as a theorem about finite sequences formed from finite sets. The Hales-Jewett theorem strips van der Waerden’s Theorem of its unessential elements and reveals the heart of Ramsey theory. It provides a focal point from which many results can be derived and acts as a cornerstone for much of the more advanced work. Without this
result, Ramsey theory would more properly be called Ramseyian Theorems.''}
\end{minipage}
\end{quote}
\medskip

Below, we give several equivalent formulations of the Hales-Jewett theorem, and present a new result that generalizes the Hales-Jewett theorem. Our proof of this generalization is an adaptation of the algebraic proof of van der Waerden's theorem presented in \cref{sec_alg_proof}. In turn, this yields a new proof of the Hales-Jewett theorem.
For other proofs of the Hales-Jewett theorem see \cite{GRS90,Blass93,BBH94,BL99,HS12a}.

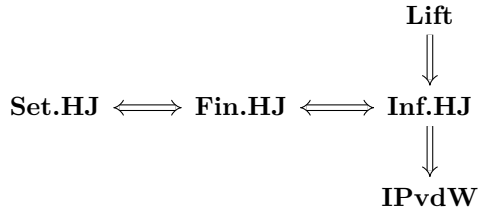
\begin{figure}[h!]
\centering
\begin{tikzcd}
&&\arrow[d,Rightarrow]\Lift
\\
\SetHJ
\arrow[ r,Leftrightarrow]
&
\FinHJ
\arrow[ r,Leftrightarrow]
&
\InfHJ
\\
&&\arrow[u,Leftarrow]\IPvdW
\end{tikzcd}
\begin{minipage}{0.8\textwidth}
\caption{This figure provides a synopsis of the content of this section. We present three equivalent forms of the Hales-Jewett theorem, introduce a new extension of the Hales-Jewett theorem, and prove that the Hales-Jewett theorem implies the IP van der Waerden theorem.}
\label{fig_implicationgraph_4}
\end{minipage}
\vspace*{.7em}

\setlength{\tabcolsep}{.07em}
\resizebox{0.73\textwidth}{!}{
\begin{tabular}{p{8em}l}
    \FinHJ$\dotfill$& Hales-Jewett theorem -- finitary version (\cref{thm_HJ_finitary}) \\
    \InfHJ$\dotfill$& Hales-Jewett theorem -- infinitary version (\cref{thm_HJ_infinitary}) \\
    \SetHJ$\dotfill$& Hales-Jewett theorem -- set-theoretic version (\cref{thm_HJ_sets}) \\
    \IPvdW$\dotfill$& IP van der Waerden theorem -- combinatorial version (\cref{thm_IPvdW-comb}) \\
    \Lift$\dotfill$& Semigroup structure lifting theorem  (\cref{thm_semigroup_generalization_of_HJ})
%    \\
%    \abbrvfont{Brauer-Schur}$\dotfill$& Brauer-Schur theorem (\cref{thm_generalized_brauer_schur})
\end{tabular}
}
\end{figure}

Let $\Sigma$ be a finite set, called the \define{alphabet}, and let $n$ be a positive integer.
A \define{word of length $n$ over the alphabet $\Sigma$} is a string $w=w_1w_2\cdots w_n$ of $n$ symbols, where $w_i\in \Sigma$ for all $i \in \{1, 2, \dots, n\}$; we call $w_i$ the \define{$i$-th letter} in the word $w$. We write $\Sigma^n$ for the set of all words of length $n$ over the alphabet $\Sigma$. Clearly, the cardinality of $\Sigma^n$ is $|\Sigma|^n$.
If $\Sigma'$ is a subset of $\Sigma$, then we view $(\Sigma')^n$ as a subset of $\Sigma^n$ simply by identifying it with the collection of all words in $\Sigma^n$ that do not have any letters from the set $\Sigma\setminus \Sigma'$ in them.

Let $k\in\N$ and write $[k]$ for the set $\{1,\ldots,k\}$. Let $\wildcard$ be another symbol not contained in $[k]$, which we call the \define{wildcard} letter.

\begin{definition}[Variable word]
\label{def_variable_word}
Any word in $([k]\cup \{\wildcard\})^n$ that is not in $[k]^n$ is called a \define{variable word}. In other words, a variable word is any word in the letters $[k]\cup \{\wildcard\}$ in which the wildcard letter $\wildcard$ appears at least once.
\end{definition}

We also consider the natural ``projection maps'' $\pi_1,\ldots,\pi_k\colon ([k]\cup \{\wildcard\})^n\to [k]^n$, where $\pi_i$ applied to a word in $([k]\cup \{\wildcard\})^n$ is defined as the word in $[k]^n$ obtained by replacing all occurrences of the wildcard letter $\wildcard$ by the letter $i$.
For example, if $k = 3$, $n=7$, and $w=31\wildcard2\wildcard\wildcard1$, then
\[
\pi_1(w)=3112111,\qquad \pi_2(w)=3122221 \qquad\text{and}\qquad \pi_3(w)=3132331.
\]

\begin{definition}[Combinatorial line]
A \define{combinatorial line} in $[k]^n$ is any set of the form
\[\{\pi_1(w),\ldots,\pi_k(w)\},\] where $w$ is a variable word in $([k]\cup \{\wildcard\})^n$. 
\end{definition}

\begin{theorem}[Hales-Jewett theorem -- finitary version (\FinHJ)]
\label{thm_HJ_finitary}
For every $k,r\in\N$ with $k\geq 2$ there exists a number $HJ(r,k)\in\N$ such that if $N\geq HJ(r,k)$ then any $r$-coloring of $[k]^N$ admits a monochromatic combinatorial line.
\end{theorem}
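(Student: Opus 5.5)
The plan is to derive \FinHJ{} from an infinitary statement about the free semigroup of words, proved by adapting the algebraic proof of \ref{UII} from \cref{sec_alg_proof}, and then to pass back to a fixed length $N$ using the compactness principle (\cref{thm_compactness_principle}).

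\textbf{Setup.} Let $W=\bigcup_{n\geq1}[k]^n$ be the set of all finite words over $[k]$, and let $V$ be the set of all variable words of arbitrary finite length. Then $S=W\cup V$ is a (non-commutative) semigroup under concatenation $\concat$, and $V$ is a two-sided ideal of $S$. The projections $\pi_1,\ldots,\pi_k\colon S\to W$ are semigroup homomorphisms that restrict to the identity on $W$. They extend continuously to homomorphisms $\pi_i\colon\beta S\to\beta W$. As in \cref{sec_algebraic_versions_of_vdW}, $\beta S$ is a compact right-topological semigroup; \cref{lem_existence_min_left_ideal}, Ellis's lemma (\cref{lem_existence_idempotent}) and \cref{lem_idempotents_are_identities} carry over verbatim to non-commutative semigroups, since their proofs never use commutativity.

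\textbf{Step 1 (infinitary statement).} Claim: for every finite coloring of $W$ there is a variable word $w\in V$ with $\pi_1(w),\ldots,\pi_k(w)$ all of the same color.
\begin{itemize}
\item Fix a minimal left ideal $L$ of $\beta W$ and some $\ultra{p}\in L$, viewed inside $\beta S$. Since $V$ is an ideal of $S$, $\beta V$ is a two-sided ideal of $\beta S$. Hence $\beta V+\ultra{p}$ is a left ideal of $\beta S$ contained in $\beta V$.
\item By \cref{lem_existence_min_left_ideal}, $\beta V+\ultra{p}$ contains a minimal left ideal $L^*$, and by Ellis's lemma $L^*$ contains an idempotent $\ultra{q}$.
\item Since $\pi_i$ is a homomorphism fixing $\ultra{p}$, we get $\pi_i(L^*)\subset\pi_i(\beta V)+\ultra{p}\subset \beta W+\ultra{p}\subset L$. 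So $\pi_i(\ultra{q})$ is an idempotent lying in $L$.
\item \cref{lem_idempotents_are_identities} then gives $\pi_i(\ultra{p}+\ultra{q})=\ultra{p}+\pi_i(\ultra{q})=\ultra{p}$ for every $i$, while $\ultra{p}+\ultra{q}\in\beta V$.
\item Given a coloring, choose the color class $C\in\ultra{p}$. Then $V\cap\pi_1^{-1}(C)\cap\ldots\cap\pi_k^{-1}(C)\in\ultra{p}+\ultra{q}$. In particular this set is non-empty, which yields the required variable word.
\end{itemize}

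\textbf{Step 2 (finitization).} Apply \cref{thm_compactness_principle} with $Y=W$ and $\mathcal{H}$ the family of all combinatorial lines (of all lengths). This gives a finite set $Z\subset W$ such that every $r$-coloring of $Z$ has a monochromatic line inside $Z$. Let $HJ(r,k)$ be the maximal length of a word in $Z$, and fix $N\geq HJ(r,k)$ together with an $r$-coloring of $[k]^N$.
\begin{itemize}
\item Color each word $u\in Z$ of length $n$ by the color of the padded word $u\concat 1^{N-n}\in[k]^N$.
\item A monochromatic line $\{\pi_1(w),\ldots,\pi_k(w)\}\subset Z$ becomes, after padding, the set $\{\pi_1(w\concat1^{N-n}),\ldots,\pi_k(w\concat1^{N-n})\}$. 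This is a monochromatic combinatorial line in $[k]^N$, since padding preserves the wildcard positions.
\end{itemize}

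\textbf{Main obstacle.} The delicate point is Step 1, specifically the bookkeeping that makes the identity $\pi_i(\ultra{p}+\ultra{q})=\ultra{p}$ legitimate. The order of the sum matters in the non-commutative setting, so one needs $\pi_i(\ultra{q})\in L$ (obtained via $\pi_i(\beta V+\ultra{p})\subset\beta W+\ultra{p}$) and must use the right-identity property of idempotents in $L$. One also has to check that $\pi_i$ extends to a homomorphism of $\beta S$. This follows because each $\pi_i$ is a homomorphism and the extension of $\ultra{x}\mapsto\pi_i(\ultra{x})$ respects the sum defined in \eqref{eqn_ultrafilter_sum}, adapted to non-commutative $S$.
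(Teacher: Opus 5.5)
Your proposal is correct and follows the paper's route. Step 1 is the paper's proof of the lifting theorem (\cref{thm_semigroup_generalization_of_HJ}) specialized to $S=W\cup V$, $T=W$, $I=V$, which yields the infinitary Hales--Jewett theorem, and Step 2 is the paper's passage to the finitary version via the compactness principle (\cref{thm_compactness_principle}). Your write-up is also slightly more careful than the paper's in two places: you use explicitly that each $\pi_i$ is a homomorphism on all of $S$ fixing $W$, which is exactly what the identity $\pi_i(\mathfrak{p}+\mathfrak{q})=\mathfrak{p}+\pi_i(\mathfrak{q})$ needs; and you supply the padding-by-$1$'s argument that gives the conclusion for every $N\geq HJ(r,k)$.
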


We denote by $[k]^\N$ the union $\bigcup_{N\in\N}[k]^N$, or in other words, the collection of all finite-length words in the alphabet $[k]$. Naturally, $[k]^N$ is a subset of $[k]^\N$ for every $N\in\N$, and by a combinatorial line in $[k]^\N$ we simply mean any set that is a combinatorial line in $[k]^N$ for some $N\in\N$ embedded into $[k]^\N$.

\begin{theorem}[Hales-Jewett theorem -- infinitary version (\InfHJ)]
\label{thm_HJ_infinitary}
Let $k\in\N$ with $k\geq 2$. Any finite coloring of $[k]^\N$ admits a monochromatic combinatorial line.
\end{theorem}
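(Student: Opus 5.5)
The plan is to adapt the algebraic proof of \ref{UII} from \cref{sec_alg_proof} to the non-commutative semigroup of words under concatenation. Let $S=([k]\cup\{\wildcard\})^\N$ be the set of all finite words over $[k]\cup\{\wildcard\}$, with concatenation $\concat$ as the operation. It contains $[k]^\N$ as a subsemigroup. The set $V\subset S$ of variable words is a two-sided ideal of $S$, because concatenating anything with a word containing $\wildcard$ still contains $\wildcard$. Each projection $\pi_i\colon S\to[k]^\N$, which replaces $\wildcard$ by $i$, is a semigroup homomorphism and is the identity on $[k]^\N$. I would extend it to a continuous homomorphism $\pi_i\colon\beta S\to\beta[k]^\N$ by $\pi_i(\ultra{u})=\{A:\pi_i^{-1}(A)\in\ultra{u}\}$. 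I will need to check the routine facts that the extension is a homomorphism of the right-topological semigroups, and that $\beta V$ (the ultrafilters containing $V$) is a two-sided ideal of $\beta S$. Both follow directly from the definition of the product of ultrafilters, here in its non-commutative form, together with the ideal property of $V$.

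Next I would fix a minimal left ideal $L$ of $\beta[k]^\N$, which exists by \cref{lem_existence_min_left_ideal}. Since $L$ is closed and hence compact, Ellis's lemma (\cref{lem_existence_idempotent}) gives an idempotent $\ultra{p}\in L$. Regarding $\ultra{p}$ as an ultrafilter on $S$ concentrated on $[k]^\N$, we have $\pi_i(\ultra{p})=\ultra{p}$ for every $i$.

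Then I would consider $\beta V\concat\ultra{p}$. It is a left ideal of $\beta S$ contained in $\beta V$, so by \cref{lem_existence_min_left_ideal} it contains a minimal left ideal $L^*$, and $L^*$ contains an idempotent $\ultra{q}$ by Ellis's lemma. Since $\pi_i$ is a surjective homomorphism, $\pi_i(L^*)$ is a left ideal of $\beta[k]^\N$. It is contained in $\beta[k]^\N\concat\ultra{p}\subset L$, so minimality of $L$ forces $\pi_i(L^*)=L$. Hence each $\pi_i(\ultra{q})$ is an idempotent in $L$.

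The key step, and the main place where care is needed, is the order of multiplication in this non-commutative setting. \cref{lem_idempotents_are_identities} says an idempotent of a minimal left ideal is a right identity on it; its proof actually gives $\ultra{p}\concat\ultra{q}=\ultra{p}$. So I must form $\ultra{r}=\ultra{p}\concat\ultra{q}$ and not $\ultra{q}\concat\ultra{p}$. Then $\ultra{r}\in\beta V$, because $\beta V$ is a two-sided ideal and $\ultra{q}\in\beta V$. Moreover,
\[
\pi_i(\ultra{r})=\pi_i(\ultra{p})\concat\pi_i(\ultra{q})=\ultra{p}\concat\pi_i(\ultra{q})=\ultra{p}\qquad\text{for all } i=1,\ldots,k.
\]

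To finish, take a finite coloring $[k]^\N=C_1\cup\ldots\cup C_r$. By \eqref{eqn_part_reg_ultrafilters}, some $C_j$ belongs to $\ultra{p}$. Then $\pi_i^{-1}(C_j)\in\ultra{r}$ for every $i$, and also $V\in\ultra{r}$. The intersection $V\cap\pi_1^{-1}(C_j)\cap\ldots\cap\pi_k^{-1}(C_j)$ therefore lies in $\ultra{r}$ and is non-empty. Any variable word $w$ in this intersection gives a monochromatic combinatorial line $\{\pi_1(w),\ldots,\pi_k(w)\}\subset C_j$.
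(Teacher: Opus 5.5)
Your proposal is correct and is essentially the paper's proof: the paper proves a general lifting theorem (semigroup $S$, subsemigroup $T$, two-sided ideal $I$) by taking a minimal left ideal $L^*\subset\beta I\cdot\ultra{p}$ and an idempotent $\ultra{q}\in L^*$, and using the right-identity property to get $\pi_i(\ultra{p}\cdot\ultra{q})=\ultra{p}$; it then specializes to $S=([k]\cup\{\wildcard\})^\N$, $T=[k]^\N$, $I=S\setminus T$, which is exactly what you carry out directly. Two details in your write-up are what make this argument airtight: you track the order of multiplication (the right-identity lemma's proof really gives $\ultra{p}\concat\ultra{q}=\ultra{p}$), and you define $\pi_i$ on all of $S$ so that it fixes $[k]^\N$, which is what makes $\pi_i(\ultra{p})=\ultra{p}$ meaningful.
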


Theorems~\ref{thm_HJ_finitary} and~\ref{thm_HJ_infinitary} are quickly seen to be equivalent.

\begin{proof}[Proof of {\FinHJ$\iff$\InfHJ}]
The forward implication is immediate, whereas the reverse implication follows from the compactness principle (\cref{thm_compactness_principle}) applied to $Y=[k]^\N$ and $\mathcal{H}$ equal to the family of all combinatorial lines in $[k]^\N$.
\end{proof}

Instead of using words over the alphabet $[k]$, one can also formulate the Hales-Jewett theorem using set-arithmetic on $\mathcal{F}([N])$, the set of all finite non-empty subsets of $[N]=\{1,\ldots,N\}$ (cf.~\cite[p.~35]{BL99}).

\begin{theorem}[Hales-Jewett theorem -- set-theoretic version (\SetHJ)]
\label{thm_HJ_sets}
For every $k,r \in \mathbb{N}$ with $k\geq 2$, and all sufficiently large $N\in\N$, if $\mathcal{F}([N])^{k-1}$ is $r$-colored then there exist sets $\alpha_1,\alpha_2,\ldots,\alpha_{k-1},\gamma \in \mathcal{F}([N])$ with $\alpha_i\cap\gamma=\emptyset$ for all $i\in\{1,\ldots,k-1\}$, such that
\begin{equation}
\label{eqn_set_simplex}    
(\alpha_1,\ \alpha_2,\ldots,\alpha_{k-1}),\ (\alpha_1\cup\gamma,\ \alpha_2,\ldots,\alpha_{k-1}),\ (\alpha_1,\ \alpha_2\cup\gamma,\ldots,\alpha_{k-1}),
\ldots,\ (\alpha_1,\ \alpha_2,\ldots,\alpha_{k-1}\cup \gamma)
\end{equation}
are all of the same color.
\end{theorem}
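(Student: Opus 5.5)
The natural route is to show that the set-theoretic version \SetHJ{} is equivalent to the finitary version \FinHJ{}. We do this by translating words over $[k]$ into $(k-1)$-tuples of subsets of $[N]$, and translating combinatorial lines into the "simplex" configurations \eqref{eqn_set_simplex}. The dictionary is as follows. To a word $w=w_1\cdots w_N\in[k]^N$ we associate the tuple $(\alpha_1,\ldots,\alpha_{k-1})$ with $\alpha_i=\{n\in[N]: w_n=i\}$, using the letter $k$ as the "background" letter. Conversely, a tuple of pairwise disjoint subsets determines a word. Under this dictionary, the line $\{\pi_1(w),\ldots,\pi_k(w)\}$ of a variable word $w$ with wildcard set $\gamma$ becomes
\[
(\alpha_1\cup\gamma,\alpha_2,\ldots,\alpha_{k-1}),\ \ldots,\ (\alpha_1,\ldots,\alpha_{k-1}\cup\gamma),\ (\alpha_1,\ldots,\alpha_{k-1}).
\]
Here $\alpha_i$ is the set of positions of the fixed letter $i$, and $\pi_k(w)$ gives the base tuple. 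This is exactly \eqref{eqn_set_simplex}, with $\gamma$ disjoint from each $\alpha_i$.

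\textbf{\FinHJ$\implies$\SetHJ.} Elements of $\mathcal{F}([N])^{k-1}$ need not be pairwise disjoint and need not be non-empty in the word picture. So we restrict to a structured subfamily. Take $N=M+(k-1)$ with $M\geq HJ(r,k)$, reserve the positions $M+1,\ldots,M+k-1$, and to a word $w\in[k]^M$ associate
\[
\alpha_i(w)=\{n\leq M: w_n=i\}\cup\{M+i\}.
\]
The extra point guarantees $\alpha_i(w)\neq\emptyset$. Pull the $r$-coloring of $\mathcal{F}([N])^{k-1}$ back along $w\mapsto(\alpha_1(w),\ldots,\alpha_{k-1}(w))$ to get an $r$-coloring of $[k]^M$. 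A monochromatic line with wildcard set $\gamma\subset[M]$ (non-empty, disjoint from the fixed positions and from the reserved points) then produces precisely the configuration \eqref{eqn_set_simplex}, with $\alpha_i$ the image of $\pi_k(w)$.

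\textbf{\SetHJ$\implies$\FinHJ.} Given an $r$-coloring $\chi$ of $[k]^N$, color a tuple $(\alpha_1,\ldots,\alpha_{k-1})$ by $\chi$ of the word
\[
w_n=\min\{i: n\in\alpha_i\}\quad\text{(and } w_n=k \text{ if } n \text{ lies in no } \alpha_i\text{)}.
\]
This is well defined for arbitrary, possibly overlapping, tuples. Given a monochromatic configuration \eqref{eqn_set_simplex} with $\gamma$ disjoint from all $\alpha_i$, let $w$ be the variable word with wildcards on $\gamma$ and the base-word letters elsewhere. Because $\gamma$ is disjoint from all $\alpha_i$, adjoining $\gamma$ to $\alpha_i$ changes exactly the letters on $\gamma$ to $i$, giving $\pi_i(w)$. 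The base tuple gives $\pi_k(w)$. So the configuration maps onto a monochromatic combinatorial line.

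Since \FinHJ{} is equivalent to \InfHJ{}, and the section later proves the Hales--Jewett theorem via \Lift{}, this equivalence completes the statement.

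The step needing the most care is the bookkeeping in the first direction: the $\alpha_i$ must be non-empty and $\gamma$ must avoid them, which is why the reserved points are needed. The minimum-index convention in the second direction handles overlaps, so it causes no difficulty.
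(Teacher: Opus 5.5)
Your proposal is correct and follows the same route as the paper: you prove the equivalence of the finitary and set-theoretic versions by translating between words and set tuples (positions of a fixed letter become the sets $\alpha_i$, the wildcard positions become $\gamma$), pulling colorings back and pushing monochromatic configurations forward, and you rely on the lifting theorem for the truth of Hales--Jewett itself. Your two refinements are genuinely needed and repair defects in the paper's maps: its $\Phi$, with $\alpha_i=\{j: w_j=i+1\}$, can produce empty sets and so need not land in $\mathcal{F}([N])^{k-1}$, which your reserved points $M+i$ fix; and its $\Psi$, with $w_j=1+|\{i: j\in\alpha_i\}|$, puts the letter $2$ on $\gamma$ for every perturbed tuple $(\alpha_1,\ldots,\alpha_i\cup\gamma,\ldots,\alpha_{k-1})$, so for $k\geq 3$ a simplex does not map onto a full combinatorial line, whereas your minimum-index decoding sends that tuple exactly to $\pi_i(w)$.
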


\begin{proof}[Proof of {\FinHJ$\iff$\SetHJ}]
In order to show the implication \cref{thm_HJ_finitary}$\implies$\cref{thm_HJ_sets}, consider the map
\[
\Phi\colon [k]^N \to \mathcal{F}([N])^{k-1},\quad \Phi(w_1\cdots w_N)= (\alpha_1,\ldots,\alpha_{k-1})
\]
where the sets $\alpha_1,\ldots,\alpha_{k-1}$ are defined as $\alpha_i=\{j\in\{1,\ldots,N\}: w_j=i+1\}$.
Through this map, any $r$-coloring of $\mathcal{F}([N])$ can be pulled back to an $r$-coloring of $[k]^N$. Moreover, any combinatorial line in $[k]^\mathbb{N}$ which is monochromatic with respect to the pulled-back coloring, can be pushed forward under $\Phi$ to yield a ``simplex'' as in \eqref{eqn_set_simplex} with all its elements of the same color according to the original coloring.

For the proof of the implication \cref{thm_HJ_sets}$\implies$\cref{thm_HJ_finitary}, define the map
\[
\Psi \colon \mathcal{F}([N])^{k-1} \to [k]^N,\quad \Psi(\alpha_1,\ldots,\alpha_{k-1})= w_1\cdots w_N
\]
where the letters $w_1,\ldots,w_N$ are given by $w_j= 1+|\{1\leq i \leq k-1: j\in \alpha_i\}|$. Arguing as in the first part of the proof, for any $r$-coloring of $[k]^N$ the map $\Psi$ induces a $r$-coloring of $\mathcal{F}([N])^{k-1}$, and a monochromatic ``simplex'' \eqref{eqn_set_simplex} carries over through $\Psi$ to become a monochromatic combinatorial line in $[k]^N$.
\end{proof}

To highlight the generality and significance of the Hales-Jewett theorem, we demonstrate next that it implies the IP van der Waerden theorem, which, as we have seen in \cref{sec_IP_vdW}, finds itself wide-ranging applications.
%\fkr{[[Give IP$_r$ van der Waerden as a corollary too, not just IP van der Waerden?]]}

\begin{proof}[Proof of \InfHJ$\implies$\IPvdW]
Let $(G,+)$ be an abelian group, and let $u_1,\ldots,u_{k-1}\colon \mathcal{F}(\N)\to G$ be IP~sequences in $G$. Define $u_0(\alpha)=0$ for all $\alpha\in\mathcal{F}(\N)$, and consider the map
\[
\Theta\colon [k]^\N\to G
\]
given by $\Theta(w_1\cdots w_N)=u_{w_1-1}(\{1\})+u_{w_2-1}(\{2\})+\ldots+u_{w_N-1}(\{N\})$.
Any finite coloring of $G$ induces, via $\Theta$, a finite coloring of $[k]^\N$. Using \cref{thm_HJ_infinitary}, we can find a variable word $w$ in $([k]\cup \{\wildcard\})^N$ such that the combinatorial line $\{\pi_1(w),\ldots,\pi_k(w)\}$ is monochromatic with respect to this induced coloring.
This implies that
$\{\Theta(\pi_1(w)),\ldots,\Theta(\pi_k(w))\}$ is a monochromatic set in $G$.
Let $\gamma\subset [N]$ denote the set of all the positions where the wildcard letter $\wildcard$ appears in the word $w$. Since $w$ is a variable word, the set $\gamma$ is non-empty. It follows from the definition of $\Theta$ that
$\Theta(\pi_i(w))=\Theta(p_1(w))+u_{i-1}(\gamma)$. Hence, taking $a=\Theta(\pi_1(w))$, we conclude that the set 
\[
\{\Theta(\pi_1(w)),\ldots,\Theta(\pi_k(w))\}=\{a, a+u_1(\gamma),\ldots,a+u_{k-1}(\gamma)\}
\]
is monochromatic as desired.
\end{proof}

To prove the Hales-Jewett theorem, we generalize the algebraic proof of van der Waerden's theorem given in \cref{sec_alg_proof}.
The main hurdle is that we first need to extend the definition of the topological algebra on $\beta S$ from commutative semigroups, as introduced in \cref{sec_algebraic_versions_of_vdW}, to arbitrary semigroups.
To that end, let $(S,\cdot)$ be a semigroup, that is not necessarily commutative. Ultrafilters, and the topology on $\beta S$ are defined in the same way as in the commutative case, since these definitions do not depend on the semigroup operation on $S$. The main difference in the non-commutative setting is that the ultrafilter ``sum'' is replaced by an ultrafilter ``product''. Indeed, given a set $A\subset S$ and an element $s\in S$ we define $s^{-1}A=\{t\in S: st\in A\}$, and given two ultrafilters $\ultra{p},\ultra{q}\in\beta S$ we set
\begin{equation}
\label{eqn_ultrafilter_product}    
\ultra{p}\cdot\ultra{q}=\{A\subset S: \{s\in S:s^{-1}A\in\ultra{q}\}\in\ultra{p}\}.
\end{equation}
With this operation, $(\beta S,\cdot)$ is once again a right-topological compact Hausdorff semigroup, and minimal left ideals and minimal ultrafilters in $(\beta S,\cdot)$ are defined in complete analogy to $(\beta S,+)$.
Also, the analogue of \cref{lem_idempotents_are_identities} equally holds in this new setting.

\begin{lemma}[cf.~{\cite[Lemma 1.30(b)]{HS12a}}]
\label{lem_idempotents_are_identities_noncommutative}
Let $L$ be a minimal left ideal of $(\beta S,\cdot)$ and let $\ultra{q}=\ultra{q}\cdot \ultra{q}\in L$ be an idempotent ultrafilter in $L$. Then $\ultra{q}$ is a right-identity on $L$, i.e., for any $\ultra{p}\in L$ we have $\ultra{p}\cdot\ultra{q}=\ultra{q}$.
\end{lemma}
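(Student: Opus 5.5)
We will argue exactly as in the commutative case (\cref{lem_idempotents_are_identities}); the only property of the operation that gets used is associativity of the product \eqref{eqn_ultrafilter_product} on $\beta S$, and no commutativity enters. We note at the outset that the intended conclusion is $\ultra{p}\cdot\ultra{q}=\ultra{p}$ for all $\ultra{p}\in L$ (the right-hand side ``$\ultra{q}$'' in the displayed statement is a typo). This is what ``right-identity'' means, and it is what we will prove.

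First, we show that $L\cdot\ultra{q}$ is a left ideal of $(\beta S,\cdot)$ contained in $L$. It is non-empty because $L$ is. For any $\ultra{u}\in\beta S$, associativity gives $\ultra{u}\cdot(L\cdot\ultra{q})=(\ultra{u}\cdot L)\cdot\ultra{q}\subset L\cdot\ultra{q}$, since $L$ is a left ideal, so $L\cdot\ultra{q}$ is a left ideal. It lies in $L$ because every $\ultra{p}\cdot\ultra{q}$ with $\ultra{p}\in L$ belongs to $\beta S\cdot\ultra{q}\subset \beta S\cdot L\subset L$, using $\ultra{q}\in L$ and that $L$ is a left ideal.

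Second, minimality of $L$ forces $L\cdot\ultra{q}=L$. Hence every $\ultra{p}\in L$ can be written as $\ultra{p}=\ultra{u}\cdot\ultra{q}$ for some $\ultra{u}\in L$.

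Third, associativity and idempotence of $\ultra{q}$ give
\[
\ultra{p}\cdot\ultra{q}=(\ultra{u}\cdot\ultra{q})\cdot\ultra{q}=\ultra{u}\cdot(\ultra{q}\cdot\ultra{q})=\ultra{u}\cdot\ultra{q}=\ultra{p}.
\]
There is no genuine obstacle in this argument. The only point needing care is the associativity of the ultrafilter product in the non-commutative setting, which is part of the standard fact, quoted in the text, that $(\beta S,\cdot)$ is a (right-topological compact Hausdorff) semigroup.
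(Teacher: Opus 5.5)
Your proof is correct and is exactly the paper's argument: the paper just transfers the proof of \cref{lem_idempotents_are_identities} with $+$ replaced by $\cdot$. That proof shows $L\cdot\ultra{q}$ is a left ideal inside $L$, hence equal to $L$ by minimality, writes $\ultra{p}=\ultra{u}\cdot\ultra{q}$, and uses idempotence. You are also right that the intended conclusion is $\ultra{p}\cdot\ultra{q}=\ultra{p}$; this is what the paper's proof actually establishes and how the lemma is used in the proof of \cref{thm_semigroup_generalization_of_HJ}.
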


The proof of \cref{lem_idempotents_are_identities_noncommutative} follows exactly the same reasoning as the proof of \cref{lem_idempotents_are_identities}, with only the operation $+$ replaced by $\cdot$; no other modifications are necessary.

The following theorem, which is valid for general (not necessarily commutative) semigroups, can be viewed as an ultimate variant of \ref{UII}~(Theorem~2.14) and is easily seen to imply the Hales-Jewett theorem.

\begin{theorem}[Lifting theorem (\Lift)]
\label{thm_semigroup_generalization_of_HJ}
Let $(S,\cdot)$ be a semigroup, let $T$ be a subsemigroup of $(S,\cdot)$ and let $I$ be a two-sided ideal of $(S,\cdot)$. Assume $\pi_1,\ldots,\pi_k\colon I\to T$ are homomorphisms from $(I,\cdot)$ to $(T,\cdot)$. Then for any minimal ultrafilter $\ultra{p}\in\beta T$ there exists a minimal ultrafilter $\ultra{q}\in\beta I$ such that $\pi_1(\ultra{q})=\ldots=\pi_k(\ultra{q})=\ultra{p}$.   
\end{theorem}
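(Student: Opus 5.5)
The plan is to rerun the algebraic proof of \ref{UII} from \cref{sec_alg_proof} in the non-commutative setting. I read the statement the same way the Hales--Jewett application will use it. Here $T\subset S$, and each $\pi_i$ is compatible with left multiplication by $T$: $\pi_i(ts)=t\,\pi_i(s)$ for $t\in T$ and $s\in I$. Equivalently, $\pi_i$ extends to a homomorphism $S\to S$ that is the identity on $T$ and maps $I$ into $T$. This parallels the diagonal embedding $\ultra{p}^\triangle$ in the proof of \ref{UII}. In the Hales--Jewett situation, $S$ is the set of words over $[k]\cup\{\wildcard\}$, $T$ the set of constant words, $I$ the set of variable words, and $\pi_i$ is substitution. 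I will use this compatibility throughout.

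\emph{Step 1: set-up.} I identify $\beta T$ and $\beta I$ with the closures $\cl(T)$ and $\cl(I)$ in $\beta S$. Since $I$ is a two-sided ideal of $S$, $\beta I$ is a two-sided ideal of $(\beta S,\cdot)$. Each $\pi_i$ extends to a continuous map $\beta S\to\beta S$ with $\pi_i(\beta I)\subset\beta T$. I then need to check that the extension is still a homomorphism and satisfies
\[
\pi_i(\ultra{u}\cdot\ultra{x})=\ultra{u}\cdot\pi_i(\ultra{x}),\qquad \ultra{u}\in\beta T,\ \ultra{x}\in\beta I.
\]
The argument is the standard one: write both sides as iterated ultrafilter limits. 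First use continuity of left multiplication by principal ultrafilters $\delta_t$ to pass from $t\in T$, $s\in I$ to $t\in T$, $\ultra{x}\in\beta I$. Then use continuity of $\ultra{u}\mapsto\ultra{u}\cdot\ultra{x}$ to pass to general $\ultra{u}$ (cf.~\cite[Corollary~4.22]{HS12a}). I expect this step to be the main technical obstacle, because in a right-topological semigroup one must carry out the limits in exactly this order.

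\emph{Step 2: the minimal left ideal $L^*$.} Let $\ultra{p}$ lie in a minimal left ideal $L$ of $\beta T$. The set $\beta I\cdot\ultra{p}$ is a left ideal of $\beta S$ contained in $\beta I$, so by \cref{lem_existence_min_left_ideal} it contains a minimal left ideal $L^*$ of $\beta S$. For $\ultra{r}\in\beta I$,
\[
\pi_i(\ultra{r}\cdot\ultra{p})=\pi_i(\ultra{r})\cdot\ultra{p}\in\beta T\cdot\ultra{p}\subset L,
\]
so $\pi_i(L^*)\subset L$. By the equivariance from Step 1, $\pi_i(L^*)$ is a left ideal of $\beta T$, hence $\pi_i(L^*)=L$ by minimality of $L$. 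By Ellis's lemma (\cref{lem_existence_idempotent}) there is an idempotent $\ultra{e}\in L^*$. Each $\pi_i(\ultra{e})$ is then an idempotent in $L$.

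\emph{Step 3: the lifted ultrafilter.} Put $\ultra{q}=\ultra{p}\cdot\ultra{e}$. It lies in $L^*$ because $L^*$ is a left ideal of $\beta S$. By Step 1 and \cref{lem_idempotents_are_identities_noncommutative}, read as $\ultra{p}\cdot\ultra{e}'=\ultra{p}$ for an idempotent $\ultra{e}'\in L$, we get
\[
\pi_i(\ultra{q})=\ultra{p}\cdot\pi_i(\ultra{e})=\ultra{p}\qquad\text{for every } i.
\]
It remains to see that $\ultra{q}$ is minimal in $\beta I$, i.e.\ that $L^*$ is a minimal left ideal of $\beta I$. Let $\ultra{x}\in L^*$. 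Then $\beta I\cdot\ultra{x}\subset L^*$, and since $\beta S\cdot\beta I\subset\beta I$, the set $\beta I\cdot\ultra{x}$ is a left ideal of $\beta S$. Minimality of $L^*$ in $\beta S$ gives $\beta I\cdot\ultra{x}=L^*$, so $L^*$ is also a minimal left ideal of $\beta I$. This completes the proof.
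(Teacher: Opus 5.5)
Your proposal is correct and is essentially the paper's own argument: $L^*\subset\beta I\cdot\ultra{p}$, then $\pi_i(L^*)=L$, then an idempotent $\ultra{e}\in L^*$, and the lift $\ultra{p}\cdot\ultra{e}$. You also usefully make explicit that $\pi_i$ must act as the identity on $T$, which the paper uses tacitly when it writes $\pi_i(\ultra{p})=\ultra{p}$, and you check that the lift is minimal in $\beta I$, which the paper omits.

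One correction is needed. The left compatibility $\pi_i(ts)=t\,\pi_i(s)$ is not equivalent to $\pi_i$ extending to a homomorphism $S\to S$ that fixes $T$. On its own it does not give the identity $\pi_i(\ultra{r}\cdot\ultra{p})=\pi_i(\ultra{r})\cdot\ultra{p}$ that you use in Step~2. For a counterexample, adjoin a zero $e$ to the right-zero semigroup $T=\{t_1,t_2\}$, take $I=\{e\}$ and $\pi(e)=t_1$. Then $\ultra{p}=\delta_{t_2}$ is minimal but has no lift. So take the stronger reading (a homomorphism $S\to S$ that is the identity on $T$) as your hypothesis; this is exactly what the Hales--Jewett application provides.
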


\begin{proof}
[Proof of \Lift$\implies$\InfHJ]
%[Proof that \cref{thm_semigroup_generalization_of_HJ} implies \cref{thm_HJ_infinitary}]
Given a word $w$ of length $n$ and a word $v$ of length $m$, we can consider their concatenation $w \concat v$, which is the word of length $n + m$ formed by appending $v$ to the end of $w$.
Define
\[
S=([k]\cup \{\wildcard\})^\N,\qquad T=[k]^\N,\qquad\text{and}\quad I=S\setminus T.
\]
Then $(S,\concat)$ is a semigroup, $(T,\concat)$ is one of its subsemigroups, and $(I,\concat)$ is one of its two-sided ideals.
Also, the projection maps $\pi_1,\ldots,\pi_k\colon ([k]\cup \{\wildcard\})^\N\to [k]^\N$ introduced after \cref{def_variable_word} are homomorphisms from $I$ onto $T$. Thus \cref{thm_semigroup_generalization_of_HJ} implies that any set that belongs to a minimal ultrafilter of $(T,\concat)$ contains a combinatorial line. In particular, for any finite coloring of $T$ one can find a monochromatic combinatorial line, proving \cref{thm_HJ_infinitary}.
\end{proof}

\begin{proof}[Proof of \cref{thm_semigroup_generalization_of_HJ}]
Let $L$ be a minimal left ideal of $(\beta T,\cdot)$ and $\ultra{p}$ an ultrafilter in $L$.
Note that both $\beta T$ and $\beta I$ can be viewed as a subsets of $\beta S$, simply by identifying them with $\cl(T)\cap \beta S$ and $\cl(I)\cap \beta S$ respectively. 
Consider the set $\beta I \cdot \ultra{p} \subset \beta S$. Since $I$ is a two-sided ideal of $(S,\cdot)$, $\beta I$ is a two-sided ideal of $(\beta S,\cdot)$ and hence $\beta I\cdot \ultra{p}$ is both a left ideal of $(\beta S,\cdot)$ and a subset of $\beta I$. By \cref{lem_existence_min_left_ideal}, $\beta I\cdot \ultra{p}$ contains a minimal left ideal, which we denote by $L^*$.
Note that $\pi_1(L^*),\ldots,\pi_k(L^*)\subset L$, and since $\pi_1(L^*),\ldots,\pi_k(L^*)$ are left ideals, by minimality of $L$ we have $\pi_1(L^*)=\ldots=\pi_k(L^*)=L$.
By \cref{lem_existence_idempotent}, there is an idempotent $\ultra{q}\in L^*$. Define $\ultra{q_i}=\pi_i(\ultra{q})$ for $i=1,\ldots,k$. Since the projections $\pi_i$ are homomorphisms and $\ultra{q}$ is idempotent, we conclude that $\ultra{q}_i$ is an idempotent in $L$. So for every $i=1,\ldots,k$ we have
\[
\pi_i(\ultra{p}\cdot \ultra{q})=\pi_i(\ultra{p})\cdot \pi_i(\ultra{q})=\ultra{p}\cdot \ultra{q_i}=\ultra{p},
\]
where the last identity follows from \cref{lem_idempotents_are_identities_noncommutative}. This completes the proof.
\end{proof}

\subsection{Polynomial van der Waerden theorem}
\label{sec_poly_vdW}

\newcommand{\QvdW}{\text{\hyperref[{thm_quadratic_vdw_comb}]{\abbrvfont{QvdW}}}}
\newcommand{\QvdWtop}{\text{\hyperref[{thm_quadratic_vdw_top}]{\abbrvfont{QvdW-top}}}}
\newcommand{\PvdW}{\text{\hyperref[{thm_poly_vdw_comb}]{\abbrvfont{PvdW}}}}
\newcommand{\PvdWtop}{\text{\hyperref[{thm_poly_vdw_top}]{\abbrvfont{PvdW-top}}}}
\newcommand{\MPvdW}{\text{\hyperref[{thm_multi_poly_vdw_comb}]{\abbrvfont{MPvdW}}}}
\newcommand{\PHJ}{\text{\hyperref[{thm_poly_HJ_sets}]{\abbrvfont{PHJ}}}}

This section is dedicated to the polynomial version of van der Waerden's theorem, which was obtained in \cite{BL96}. In particular, we prove the topological one-dimensional case of the polynomial van der Waerden theorem and derive from it its equivalent combinatorial form. These results can be seen as natural polynomialisations of \ref{TDI}~(Theorem~2.8) and \ref{COII}~(Theorem~2.2) respectively. 
An essential role in our proof is played by  \cref{prop_top_vdC_N-actions}, the ``topological van der Corput difference theorem''.

\begin{figure}[h!]
\centering
\begin{tikzcd}
\arrow[bend right=13, dr, Rightarrow]\PHJ&&&
\\
&\arrow[bend right=13, dr, Rightarrow]\MPvdW&&
\\
&&\arrow[r, Rightarrow]\arrow[d, Leftrightarrow]\PvdW&\arrow[d, Leftrightarrow]\QvdW
\\
&&\arrow[r, Rightarrow]\PvdWtop&\QvdWtop
\end{tikzcd}
\begin{minipage}{0.8\textwidth}
\caption{The above diagram describes the connections between the polynomial extensions of van der Waerden's theorem that we deal with in this section.}
\label{fig_implicationgraph_5}
\end{minipage}
\vspace*{.7em}

\setlength{\tabcolsep}{.07em}
\resizebox{0.73\textwidth}{!}{
\begin{tabular}{p{8em}l}
    \PHJ$\dotfill$& Polynomial Hales-Jewett theorem (\cref{thm_poly_HJ_sets}) \\
    \MPvdW$\dotfill$& Multidimensional polynomial van der Waerden theorem (\cref{thm_multi_poly_vdw_comb}) \\
    \PvdW$\dotfill$& Polynomial van der Waerden theorem -- combinatorial version (\cref{thm_poly_vdw_comb}) \\
    \PvdWtop$\dotfill$& Polynomial van der Waerden theorem -- topological version (\cref{thm_poly_vdw_top}) \\
    \QvdW$\dotfill$& Quadratic van der Waerden theorem -- combinatorial version (\cref{thm_quadratic_vdw_comb}) \\
    \QvdWtop$\dotfill$& Quadratic van der Waerden theorem -- topological version (\cref{thm_quadratic_vdw_top}) 
\end{tabular}
}
\end{figure}
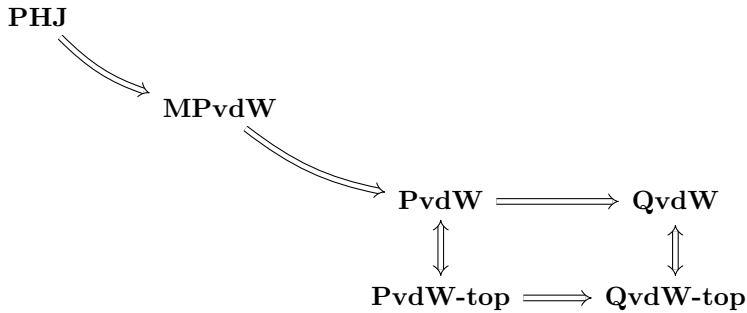

As a warm-up, we include a separate treatment of the quadratic case, the analogue of the Furstenberg-\Sarkozy{} theorem \cite{Sarkozy78,Furstenberg77} for finite colorings of $\N$. This special case captures the essential ideas of the argument and prepares the ground for the proof of the general result given afterwards.

\begin{theorem}[Quadratic van der Waerden theorem -- combinatorial version (\QvdW)]
\label{thm_quadratic_vdw_comb}
For any finite coloring of $\N$ there exist $a,d\in\N$ such that $\{a, a+d^2\}$ is monochromatic.
\end{theorem}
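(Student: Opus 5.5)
I plan to deduce \QvdW{} from its topological counterpart: for every minimal system $(X,T)$ and every non-empty open set $U\subset X$ there is $d\in\N$ with $U\cap T^{-d^2}U\neq\emptyset$. In the terminology of \cref{def_multiple_topological_recurrence}, this says that $\{d^2\}$ is a family of multiple topological recurrence. The reduction from the topological statement to the combinatorial one runs exactly as in the proof of \ref{TDI}$\implies$\ref{COII}.

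Given a coloring $\N=C_1\cup\ldots\cup C_r$, \cref{prop_top_correspondence_principle_1} produces a minimal system $(X,T)$ and an open cover $X=U_1\cup\ldots\cup U_r$. Pick $i$ with $U_i\neq\emptyset$. Topological recurrence along squares then gives $U_i\cap T^{-d^2}U_i\neq\emptyset$ for some $d$. By the correspondence principle, $C_i\cap(C_i-d^2)$ is non-empty (indeed infinite), so some $a$ has $\{a,a+d^2\}\subset C_i$.

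For the topological statement I will use the van der Corput principle, \cref{prop_top_vdC_N-actions}, with $k=1$ and $f_1(n)=n^2$ (and $f_1(0)=0$). The derived family is
\[
f_1(n+h)-f_1(n)-f_1(h)=2hn,
\]
so the hypothesis to check is this: for every finite non-empty $F\subset\N\cup\{0\}$, the family $\{2hn:h\in F\}$ is a family of multiple topological recurrence. Concretely, for every minimal $(X,T)$ and non-empty open $U$ I need some $n$ with
\[
U\cap\bigcap_{h\in F}T^{-2hn}U\neq\emptyset .
\]
If $h=0$ the corresponding term is just $U$, so I may assume $F\subset\N$. Let $H=\max F$. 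Apply \ref{TDI} (proved in \cref{sec_proof_of_top_vdW}) to the system $(X,T^2)$ with progression length $H+1$. It gives $m$ with $U\cap T^{-2m}U\cap\ldots\cap T^{-2Hm}U\neq\emptyset$, and taking $n=m$ covers every $h\in F$.

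The main obstacle is that $(X,T^2)$ need not be minimal, so \ref{TDI} does not apply to it directly. I would handle this in one of two ways. The first is to note that \ref{TDI} holds for arbitrary systems when $U$ is replaced by a set meeting a minimal subsystem: by \cref{lem_minimal_systems}, $(X,T^2)$ has a minimal subsystem $Y$. Since $X=Y\cup TY$ by minimality of $(X,T)$, the open set $U$ meets $Y$ or $TY$, and both are minimal for $T^2$; apply \ref{TDI} to $(Y,T^2)$ or $(TY,T^2)$ with the relatively open set $U\cap Y$ or $U\cap TY$. The second, cleaner way avoids $T^2$ altogether: apply \ref{TDI} to $(X,T)$ with length $2H+1$ to get $m$ with $U\cap T^{-jm}U\neq\emptyset$ simultaneously for $j=0,\ldots,2H$, and then $n=m$ works for the even multiples $2hm$, $h\in F$. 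With this, \cref{prop_top_vdC_N-actions} yields the claim and hence \QvdW.
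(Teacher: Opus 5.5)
Your proposal is correct and is essentially the paper's proof. The paper also deduces the statement from \cref{thm_quadratic_vdw_top} via the topological correspondence principle (\cref{prop_top_correspondence_principle_1}), and it proves \cref{thm_quadratic_vdw_top} by applying \cref{prop_top_vdC_N-actions} to $f_1(n)=n^2$, noting that the derived family $\{2hn: h\in F\}$ is a subfamily of $\{0,n,2n,\ldots,kn\}$ with $k=2\max F$, and invoking \ref{TDI} for $(X,T)$ itself. That is exactly your ``second, cleaner way,'' so the detour through the possibly non-minimal system $(X,T^2)$ is not needed, although your treatment of it via $X=Y\cup TY$ is also correct.
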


\begin{theorem}[Quadratic van der Waerden theorem -- topological version (\QvdWtop)]
\label{thm_quadratic_vdw_top}
Let
$(X, T)$ be a minimal system and let $U \subset X$ be a non-empty open set. Then there exists $n\in\N$ such that
$$
U \cap T^{-n^2} U \neq \emptyset.
$$
\end{theorem}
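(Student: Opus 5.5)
We will prove \cref{thm_quadratic_vdw_top} by a single application of the topological van der Corput difference theorem, \cref{prop_top_vdC_N-actions}, with $k=1$ and $f_1(n)=n^2$. The convention $f_1(0)=0$ is consistent with this choice. With this choice, the hypothesis of \cref{prop_top_vdC_N-actions} concerns the derived family
\[
\{n\mapsto f_1(n+h)-f_1(n)-f_1(h) : h\in F\}=\{n\mapsto 2hn : h\in F\},
\]
indexed by a finite non-empty set $F\subset\N\cup\{0\}$. So it suffices to check that, for every such $F$, the family $\{2hn: h\in F\}$ is a family of multiple topological recurrence in the sense of \cref{def_multiple_topological_recurrence}. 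Once this is done, \cref{prop_top_vdC_N-actions} tells us that $\{n^2\}$ is a family of multiple topological recurrence. That is exactly the assertion that $U\cap T^{-n^2}U\neq\emptyset$ for some $n\in\N$.

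The key step is therefore to verify the linear hypothesis, and here \ref{TDI} (Theorem~2.8) does the work. Fix a minimal system $(X,T)$, a non-empty open set $U$, and a finite set $F$. Put $H=\max F$ and $K=2H+1$. By \ref{TDI}, there is $n\in\N$ with
\[
U\cap T^{-n}U\cap T^{-2n}U\cap\ldots\cap T^{-2Hn}U\neq\emptyset.
\]
Each exponent $2hn$ with $h\in F$ appears among $0,n,2n,\ldots,2Hn$. Hence $U\cap\bigcap_{h\in F}T^{-2hn}U$ contains this non-empty set and is itself non-empty. The element $h=0$ is harmless, since it contributes only the trivial map $n\mapsto 0$, i.e.\ the set $U$ itself.

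The argument is thus a direct composition of earlier results, and there is no real obstacle. The only points needing care are bookkeeping ones. First, the derived polynomials $2hn$ must be linear with integer coefficients and must vanish at $n=0$, so that they fit the framework of \cref{prop_top_vdC_N-actions}. Second, \ref{TDI} must be applied with progression length large enough, namely $2\max F+1$, that every required multiple $2hn$ is covered simultaneously by one common $n$. Degenerate cases such as $F=\{0\}$ reduce to the triviality $U\neq\emptyset$.
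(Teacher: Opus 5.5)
Your proposal is correct and is essentially the paper's own proof. Like the paper, you apply \cref{prop_top_vdC_N-actions} to the single sequence $f_1(n)=n^2$, note that the derived family $\{2hn : h\in F\}$ is a subfamily of $\{0,n,2n,\ldots,2(\max F)n\}$, and conclude via the Topological Multiple Recurrence Theorem (Theorem~2.8), with your progression length $2\max F+1$ and the degenerate case $F=\{0\}$ being exactly the bookkeeping the paper leaves implicit.
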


To prove the equivalence \QvdW$\iff$\QvdWtop{}, we follow the same scheme as the one used in \cref{sec_correspondence_principle} to show that \ref{COII}$\iff$\ref{TDI}.

\begin{proof}
[Proof of {\QvdWtop}$\implies${\QvdW}]
Given a finite coloring of the natural numbers, represented as $\mathbb{N} = C_1 \cup \dots \cup C_r$, we can apply the correspondence principle (\cref{prop_top_correspondence_principle_1}) to find a minimal system $(X, T)$ and an open cover $X = U_1 \cup \dots \cup U_r$, such that for any integers $n_1, \dots, n_k \in \mathbb{Z}$ and any indices $i_1, \dots, i_k \in \{1, \dots, r\}$, 
\[
T^{-n_1} U_{i_1} \cap \dots \cap T^{-n_k} U_{i_k} \neq \emptyset \quad \implies \quad (C_{i_1} - n_1) \cap \dots \cap (C_{i_k} - n_k) \neq \emptyset.
\]
Pick any $i \in \{1, \dots, r\}$ for which $U_i$ is non-empty. By \cref{thm_quadratic_vdw_top}, there exists some $n \in \mathbb{N}$ such that $U_i \cap T^{-n^2} U_i\neq \emptyset$.
This implies $C_i\cap (C_i-n^2)\neq\emptyset$, and hence the color class $C_i$ contains an arrangement of the form $\{a,a+n^2\}$.
\end{proof}

\begin{proof}
[Proof of {\QvdW}$\implies${\QvdWtop}]
Suppose we are given a minimal system $(X, T)$ and a non-empty open set  $U \subset X$. 
By \cref{lem_finite_cover}, there exists some $M\in\N$ such that $X=T^{-1}U\cup\ldots\cup T^{-M}U$.
Let $x\in X$ be arbitrary. Define $C_i=\{n\in\N: T^nx\in T^{-i}U\}$ and note that $\N=C_1\cup\ldots\cup C_M$. By assumption, there exist $m\in\{1,\ldots,M\}$ and $a,d\in\N$ such that $\{a,a+d^2\}\subset C_m$. It follows that $T^{m+a}x \in U\cap T^{-d^2}U$ and hence $U\cap T^{-d^2}U\neq\emptyset$ as desired.
\end{proof}

\begin{proof}[Proof of \cref{thm_quadratic_vdw_top}]
We need to show that the family $\{n^2\}$, consisting of a single sequence $a_1(n) = n^2$, $n \in \mathbb{N}$, is a family of topological multiple recurrence (see \cref{def_multiple_topological_recurrence}). 
Using \cref{prop_top_vdC_N-actions}, it suffices to show that for every $F \subset \mathbb{N} \cup \{0\}$, the family
$$
\left\{ n \mapsto a_1(n+h) - a_1(n) - a_1(h) \mid h \in F \right\}
$$
is a family of topological multiple recurrence.
But this family is a subfamily of $\{0, n, 2n, \dots, kn\}$ for $k = 2 \max F$, and hence it is good for topological multiple recurrence because of the Topological Multiple Recurrence Theorem (Theorem~2.8).
\end{proof}

Let us now state the (one-dimensional) polynomial van der Waerden theorem.

\begin{theorem}[Polynomial van der Waerden theorem -- combinatorial version (\PvdW)]
\label{thm_poly_vdw_comb}
Let $p_1,\ldots,p_m$ be a finite collection of polynomials with integer coefficients satisfying $p_1(0)=\ldots=p_m(0)=0$.
Then for any finite coloring of $\N$ there exist $a,d\in\N$ such that $\{a, a+p_{1}(d),\ldots,a+p_m(d)\}$ is monochromatic.
\end{theorem}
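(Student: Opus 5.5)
The plan is to follow the scheme already used for the quadratic case. We first pass to the topological version, which we expect to be stated next: for every minimal system $(X,T)$ and every non-empty open $U\subset X$ there is $n\in\N$ with $U\cap T^{-p_1(n)}U\cap\ldots\cap T^{-p_m(n)}U\neq\emptyset$. In the language of \cref{def_multiple_topological_recurrence}, this says that $\{p_1(n),\ldots,p_m(n)\}$ is a family of multiple topological recurrence. The implication to \cref{thm_poly_vdw_comb} goes exactly as in the proof of \QvdWtop$\implies$\QvdW. Apply \cref{prop_top_correspondence_principle_1} to the coloring $\N=C_1\cup\ldots\cup C_r$, pick $i$ with $U_i\neq\emptyset$, and obtain $n$ with $U_i\cap T^{-p_1(n)}U_i\cap\ldots\cap T^{-p_m(n)}U_i\neq\emptyset$. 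Then $C_i\cap(C_i-p_1(n))\cap\ldots\cap(C_i-p_m(n))$ is infinite. Since the polynomials may take negative values, we choose $a$ large in this infinite set; then $a+p_j(n)\in C_i\subset\N$ automatically, and we take $d=n$.

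For the topological statement we argue by PET induction, using \cref{prop_top_vdC_N-actions} as the complexity-reducing step. Discard zero polynomials and repeated polynomials, and assume $p_1$ has minimal degree. To a finite family of polynomials vanishing at $0$ we attach its weight vector: for each degree $D$, count the distinct leading coefficients occurring among members of degree $D$. We order these vectors lexicographically, with higher degrees compared first; this is a well-order. The base case is a family of linear polynomials $\{c_1n,\ldots,c_mn\}$. For these, take $k$ larger than all $|c_j|$. The case of negative $c_j$ is handled by noting that the family $\{c_1n,\ldots,c_mn\}$ is contained, after a shift by $-cn$, in the family $\{n,2n,\ldots,kn\}$ for a suitable $c$. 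The shift is harmless: if $x\in U\cap T^{-n}U\cap\ldots\cap T^{-kn}U$, then $T^{cn}x$ (for the appropriate $c$) lies in the needed intersection. So the base case reduces to \ref{TDI}.

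For the inductive step, apply \cref{prop_top_vdC_N-actions} with $f_j=p_j$. It then suffices to show that for every finite $F\subset\N\cup\{0\}$ the family $\{p_j(n+h)-p_1(n)-p_j(h): h\in F,\ 1\le j\le m\}$, with zero polynomials removed, is a family of multiple topological recurrence. Each member vanishes at $0$. The key computation is that this family has strictly smaller weight than $\{p_1,\ldots,p_m\}$:
\begin{itemize}
\item If $\deg p_j>\deg p_1$, then $p_j(n+h)-p_1(n)-p_j(h)$ has the same degree and leading coefficient as $p_j$, so those weight entries are unchanged.
\item If $p_j$ has the same degree and leading coefficient as $p_1$, the difference drops in degree.
\item Polynomials of the same degree as $p_1$ but with different leading coefficients keep their degree and their class count, whereas the class of $p_1$ itself disappears.
\end{itemize}
Hence the count at $\deg p_1$ drops by one, while only lower-degree entries may grow. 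The induction hypothesis then applies.

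The main obstacle is bookkeeping this weight correctly, since each $h\in F$ contributes new polynomials. One must check that, for fixed $j$, the polynomials $p_j(n+h)-p_1(n)-p_j(h)$ with different $h$ share their leading term whenever the degree is at least that of $p_1$. If the degree of $p_1$ is $1$, the degenerate case also needs attention: the differences may become constants, which are zero after normalization and so can be dropped. We will check these cases carefully; everything else is routine.
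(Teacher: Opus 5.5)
Your proposal is correct and follows the paper's route. You pass to the topological version through \cref{prop_top_correspondence_principle_1} exactly as in the quadratic case, then run PET-induction on the same weight vectors with \cref{prop_top_vdC_N-actions} as the reduction step, using the same complexity-drop computation as \cref{prop_poly_complexity_reduction}. One small slip in your linear base case: to shift $\{0,c_1n,\ldots,c_mn\}$ into $\{0,n,\ldots,kn\}$ you need $k\geq \max(0,\max_j c_j)-\min(0,\min_j c_j)$ (for instance $k\geq 2\max_j|c_j|$), not merely $k>\max_j|c_j|$; this is trivially fixed, since $k$ is arbitrary in the Topological Multiple Recurrence Theorem.
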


It is evident that \cref{thm_quadratic_vdw_comb} corresponds the special case of \cref{thm_poly_vdw_comb} when $m = 1$ and $p_1(x) = x^2$. Additionally, it is clear that \cref{thm_quadratic_vdw_comb} implies van der Waerden's theorem. This can be seen by taking the polynomials $p_1(x) = x, p_2(x) = 2x, \dots, p_{k-1}(x) = (k-1)x$.

Similar to van der Waerden's theorem and \cref{thm_quadratic_vdw_comb}, the combinatorial version of the Polynomial van der Waerden theorem also has a topological counterpart. Let us now state that counterpart.

\begin{theorem}[Polynomial van der Waerden theorem -- topological version (\PvdWtop)]
\label{thm_poly_vdw_top}
Let $p_1,\ldots,p_m$ be a finite collection of polynomials with integer coefficients satisfying $p_1(0)=\ldots=p_m(0)=0$. Then $\{ p_1, p_2, \dots, p_m \}$ is a family of topological multiple recurrence, i.e., for any minimal system $(X, T)$ and any non-empty open set  $U \subset X$ there exists $n\in\N$ such that
$$
U \cap T^{-p_1(n)} U \cap \ldots\cap T^{-p_m(n)} U \neq \emptyset.
$$
\end{theorem}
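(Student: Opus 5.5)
I will prove \cref{thm_poly_vdw_top} by Bergelson's PET induction, using the topological van der Corput difference theorem (\cref{prop_top_vdC_N-actions}) as the reduction step. The induction parameter is a weight vector attached to each finite family of integer polynomials vanishing at $0$. Group the polynomials into equivalence classes, where $p\sim q$ if $\deg p=\deg q$ and $\deg(p-q)<\deg p$. For each degree $j$, let $w_j$ be the number of classes of degree $j$. The weight is $(w_1,\ldots,w_D)$, ordered lexicographically with the highest degree compared first. This order is a well-order, so it suffices to show that every family is good provided all families of smaller weight are good. First I discard zero polynomials, since they do not affect the intersection. The base case is the family $\{an\}$ and, more generally, families of linear polynomials. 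These reduce to the Topological Multiple Recurrence Theorem (\hyperref[{TDI}]{Theorem 2.8}), because $\{a_1n,\ldots,a_mn\}$ is a subfamily of $\{n,2n,\ldots,Kn\}$, after allowing negative multiples. Negative multiples are handled by replacing $U$ with $T^{-c}U$, or by noting that the intersection condition is symmetric under shifting the base point.

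For the inductive step, let $\{p_1,\ldots,p_m\}$ be given. I reorder so that $p_1$ has minimal degree among the nonzero polynomials. If all polynomials are linear, the base case applies. Otherwise I apply \cref{prop_top_vdC_N-actions} with $f_i=p_i$. It then suffices to show that for every finite $F\subset\N\cup\{0\}$ the family
\[
\{\,q_{i,h}(n)=p_i(n+h)-p_1(n)-p_i(h): h\in F,\ 1\leq i\leq m\,\}
\]
is a family of multiple topological recurrence. Each $q_{i,h}$ is an integer polynomial in $n$ that vanishes at $0$, so the family is of the same type as the original. The key combinatorial claim is that this new family has strictly smaller weight.

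Verifying the weight drop is the main technical point. For $i$ with $p_i\sim p_1$, the leading terms cancel, so $\deg q_{i,h}<\deg p_1$. For $i$ with $p_i\not\sim p_1$, $q_{i,h}$ is equivalent to $p_i-p_1$, up to lower-order terms depending on $h$. For varying $h$ these all lie in the single class of $p_i-p_1$, because $p_i(n+h)-p_i(n)$ has degree less than $\deg p_i$. Two cases need checking.
\begin{itemize}
\item If $\deg p_i>\deg p_1$, then $p_i-p_1$ has the same degree and leading coefficient as $p_i$, so the classes in degrees above $\deg p_1$ are preserved exactly.
\item In degree $\deg p_1$, the class of $p_1$ disappears entirely, while the other classes of that degree map injectively to classes of $p_i-p_1$.
\end{itemize}
Only lower degrees can gain classes, so the weight decreases lexicographically, and the induction hypothesis finishes the proof.

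The delicate point is the case where $\deg p_1$ is the only degree present and the class of $p_1$ is the only one in that degree. Then the new family consists of lower-degree polynomials, possibly including zero polynomials or many new classes. Lexicographic comparison by highest degree handles this case, so I expect it to go through once the weight vector is set up carefully.
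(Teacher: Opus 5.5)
Your proposal is correct and follows essentially the same route as the paper. Both arguments use PET-induction on the weight vector that counts distinct leading coefficients in each degree (your classes under $\sim$ are exactly these), apply the topological van der Corput proposition (\cref{prop_top_vdC_N-actions}) with $f_i=p_i$ and $p_1$ of minimal degree, and verify the weight drop as in \cref{prop_poly_complexity_reduction}. Your proposal also fills in two details the paper leaves implicit: it handles negative linear coefficients by shifting the base point (apply the Topological Multiple Recurrence Theorem to a progression of length $2K+1$ and pass to its middle term), and it observes explicitly that the order on weight vectors is a well-order.
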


The proof of the equivalence between \cref{thm_poly_vdw_top} and \cref{thm_poly_vdw_comb} follows essentially the same argument as the proof that \cref{thm_quadratic_vdw_top} is equivalent to \cref{thm_quadratic_vdw_comb} presented above, and is therefore omitted.

To prove \cref{thm_poly_vdw_top}, we generalize the argument used in the proof of \cref{thm_quadratic_vdw_top}. The main difficulty arises from the increased complexity of the induction scheme when dealing with larger families of polynomials. This induction scheme, introduced in \cite{Bergelson87b} and further developed in \cite{BL96}, is commonly referred to as \define{PET-induction}. The key idea behind PET-induction is to define a convenient partial ordering on all possible finite collections of polynomials. We then perform induction along this partial ordering, with the main technical step being to show that an application of our topological analogue of van der Corput's difference theorem, \cref{prop_top_vdC_N-actions}, reduces the complexity within this partial ordering.

Let $\{ p_1, p_2, \dots, p_m \}$ be a finite family of polynomials with integer coefficients. Let $D$ be the maximum degree of these polynomials. For $1 \leq i \leq D$, let $w_i$ be the number of distinct leading coefficients of the polynomials in $\{ p_1, p_2, \dots, p_m \}$ of degree $i$. We define the \define{weight vector} of $\{ p_1, p_2, \dots, p_m \}$ to be $(w_1, w_2, \dots, w_D)\in (\N\cup\{0\})^D$. Note that constant polynomials do not contribute to the weight vector; in particular, the families $\{ p_1, p_2, \dots, p_m \}$ and $\{ 0, p_1, p_2, \dots, p_m \}$ possess the same weight vector. Also, the last entry in a weight vector is always non-zero, i.e., $w_D\neq 0$.

To illustrate the concept, let us provide some simple examples. For instance, the weight vector associated with $\{0,x,x^2\}$ is $(1,1) \in (\mathbb{N} \cup \{0\})^2$, and the weight vector associated with the family $\{x, 2x, \dots, (k-1)x\}$ is $(k-1) \in \mathbb{N} \cup \{0\}$. More generally, the weight vector of the family
\[
\{-x,\, 3x,\, 11x,\, x^2 + 3x,\, x^2 + 5x,\, x^2 - x,\, -x^2 + x,\, 2x^3 - x,\, 3x^3 + x^2 - 5x,\, 2x^5 - x^2\}
\]
is $(3, 2, 2, 0, 1) \in (\mathbb{N} \cup \{0\})^5$.

Now, suppose we have two weight vectors, $(w_1, w_2, \dots, w_D) \in (\mathbb{N} \cup \{0\})^D$ and $(v_1, v_2, \dots, v_{D'}) \in (\mathbb{N} \cup \{0\})^{D'}$. By definition, the last entries of the weight vectors are always non-zero, i.e., $w_D \neq 0$ and $v_{D'} \neq 0$. We say that
\[
(v_1, v_2, \dots, v_{D'})\prec (w_1, w_2, \dots, w_D)
\]
if either $D' < D$, or if $D = D'$ and there exists an index $i_0 \in \{1, \dots, D\}$ such that $v_{i_0} < w_{i_0}$ and $w_i = v_i$ for all $i \geq i_0$. This is easily seen to define a partial ordering on the set of all weight vectors.

\begin{proposition}
\label{prop_poly_complexity_reduction}
Let $\{ p_1, p_2, \dots, p_m \}$ be a finite family of integral polynomials satisfying $p_1(0)=\ldots=p_m(0)=0$, and assume without loss of generality that $\deg(p_1)\leq \deg(p_2)\leq \ldots\leq \deg(p_m)$.
Let $F\subset\N\cup\{0\}$ be a finite non-empty set and consider the family of polynomials 
\begin{equation}
\label{eqn_derived_family_of_polynomials}
\{n\mapsto p_i(n+h)-p_1(n)-p_i(h): h\in F,~i\in\{1,\ldots,k-1\}\}.
\end{equation}
If $(w_1, w_2, \dots, w_D)$ is the weight vector of the initial family $\{ p_1, p_2, \dots, p_m \}$ and $(v_1, v_2, \dots, v_{D'})$ the one of the derived family in \eqref{eqn_derived_family_of_polynomials}, then $(v_1, v_2, \dots, v_{D'})\prec (w_1, w_2, \dots, w_D)$.
\end{proposition}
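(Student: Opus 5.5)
The plan is to compare the leading terms of the derived polynomials $q_{i,h}(n)=p_i(n+h)-p_1(n)-p_i(h)$ with those of the $p_i$, one degree at a time. Here $h\in F$ and $i$ runs over $\{1,\ldots,m\}$; the index bound $k-1$ in \eqref{eqn_derived_family_of_polynomials} should be read as $m$.

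\emph{Preliminary reduction.} Zero polynomials contribute nothing to the weight vector. I would discard them, so that $p_1$ is a nonzero polynomial of minimal degree $d_1\geq 1$ with leading coefficient $c_1$. This step is essential: if $p_1=0$, the derived family has the same leading coefficients as the original and no reduction occurs. I would also read the ordering $\prec$ with the condition $w_i=v_i$ for $i>i_0$, which is clearly the intended meaning. Finally, I adopt the convention that a family consisting only of constants has the empty weight vector, which is smaller than every nonempty one.

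\emph{Leading-term computation.} Write $p_i$ with degree $d_i$ and leading coefficient $c_i$. Since $n\mapsto p_i(n+h)$ has the same degree and leading coefficient as $p_i$, and $p_i(h)$ is a constant, there are three cases.
\begin{itemize}
\item[(a)] If $d_i>d_1$, then $q_{i,h}$ has degree $d_i$ and leading coefficient $c_i$.
\item[(b)] If $d_i=d_1$ and $c_i\neq c_1$, then $q_{i,h}$ has degree $d_1$ and leading coefficient $c_i-c_1$.
\item[(c)] If $d_i=d_1$ and $c_i=c_1$, then the top terms cancel and $\deg q_{i,h}<d_1$.
\end{itemize}
None of these conclusions depends on $h$, which is the point to check carefully. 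It follows that for every degree $j>d_1$ the set of distinct leading coefficients in degree $j$ is exactly the same for both families, so $v_j=w_j$. In degree $d_1$, the distinct leading coefficients of the derived family are $\{c-c_1\}$, where $c$ ranges over the degree-$d_1$ leading coefficients of the original family other than $c_1$. Hence $v_{d_1}=w_{d_1}-1$. In degrees below $d_1$ the counts may change arbitrarily, but the ordering does not look at them.

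\emph{Conclusion.} There are two cases. If $d_1<D$, or if $d_1=D$ and $w_D\geq 2$, then the top degree is unchanged, so $D'=D$, and $i_0=d_1$ witnesses $(v_j)\prec(w_j)$. If $d_1=D$ and $w_D=1$, then every nonzero $p_i$ has degree $D$ and leading coefficient $c_1$. Every derived polynomial then falls under case (c), so $D'<D$, or the family is constant and has the empty weight vector.

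I expect the main obstacle to be bookkeeping rather than mathematics: the zero-polynomial and empty-weight-vector conventions, and the slight misstatement of $\prec$. The substantive content is the observation that translating by $h$ preserves leading coefficients, so subtracting $p_1$ removes exactly one leading coefficient in the minimal degree, uniformly in $h\in F$.
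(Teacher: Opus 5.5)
Your proposal is correct and follows the same route as the paper's proof. Both use a three-case comparison of leading terms: higher degree, same degree with a different leading coefficient, and same degree with the same leading coefficient. This shows that the derived weight vector has the form $(*,\ldots,*,w_{d_1}-1,w_{d_1+1},\ldots,w_D)$.

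Your additional bookkeeping fills in points that the paper's proof passes over silently:
\begin{itemize}
\item removing zero polynomials before forming the derived family, which is genuinely necessary since the statement fails when $p_1=0$;
\item reading $\prec$ with $i>i_0$ instead of $i\geq i_0$;
\item handling the case $d_1=D$, $w_D=1$, where the top degree drops.
\end{itemize}
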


\begin{proof}
Let $i_0 = \deg(p_1)$, so that the weight vector of $\{ p_1, p_2, \dots, p_m \}$ is $(0, \dots, 0, w_{i_0}, w_{i_0+1}, \dots, w_D)$. 
If $p_i$ has a degree greater than that of $p_1$, then the degree of $n \mapsto p_i(n + h) - p_1(n) - p_i(h)$ remains the same. Likewise, if $p_i$ has the same degree as $p_1$ but a different leading coefficient, the degree of $n \mapsto p_i(n + h) - p_1(n) - p_i(h)$ remains unchanged. Only if $p_i$ has the same degree and the same leading coefficient as $p_1$ will the degree of $n \mapsto p_i(n + h) - p_1(n) - p_i(h)$ be strictly lower than the degree of $p_1$. 
This means that the weight vector of the family defined in \eqref{eqn_derived_family_of_polynomials} is of the form
\[
(*, \dots, *, w_{i_0} - 1, w_{i_0 + 1}, \dots, w_D),
\]
where the $*$ placeholders represent the numbers that arise from the degrees obtained from polynomials of the form $n \mapsto p_i(n + h) - p_1(n) - p_i(h)$, where $p_i$ ranges over all polynomials that have the same degree and leading coefficient as $p_1$.
Since
\[
(*, \dots, *, w_{i_0} - 1, w_{i_0 + 1}, \dots, w_D)\prec
(0, \dots, 0, w_{i_0}, w_{i_0+1}, \dots, w_D),
\]
the proof is complete.
\end{proof}

\begin{proof}[Proof of \cref{thm_poly_vdw_top}]
We use PET-induction, that is, induction with respect to the partial ordering defined on the set of all weight vectors.
Let $\{ p_1, p_2, \dots, p_m \}$ be a finite collection of polynomials with integer coefficients satisfying $p_1(0)=\ldots=p_m(0)=0$, and let $(w_1, w_2, \dots, w_D)$ be the associated weight vector.
Our goal is to show that $\{ p_1, p_2, \dots, p_m \}$ is a family of topological multiple recurrence. 
If $D=1$ then all  polynomials in the given family are linear and the conclusion follows readily from \ref{TDI}~(Theorem~2.8). Thus, let $D\geq 2$ and assume we have already proven that any family of integral polynomials $\{q_1,\ldots,q_\ell\}$ with $q_1(0)=\ldots=q_\ell(0)=0$ whose weight vector $(v_1, v_2, \dots, v_{D'})$ satisfies
$(v_1, v_2, \dots, v_{D'})\prec (w_1, w_2, \dots, w_D)$ is a family of topological multiple recurrence. In view of \cref{prop_poly_complexity_reduction}, this means that for any finite non-empty set $F\subset \N\cup\{0\}$ the family of polynomials defined in \eqref{eqn_derived_family_of_polynomials} is a family of multiple topological recurrence. By \cref{prop_top_vdC_N-actions}, this proves that $\{ p_1, p_2, \dots, p_m \}$ is a family of topological multiple recurrence and we are done.
\end{proof}

The polynomial van der Waerden theorem (\cref{thm_poly_vdw_comb}) can also be extended to $\mathbb{Z}^d$, as originally demonstrated in {\cite[Theorems~B' and~C]{BL96}}. Here is one of the equivalent forms of this extension.

\begin{theorem}[Multidimensional polynomial van der Waerden theorem (\MPvdW)]
\label{thm_multi_poly_vdw_comb}
Let $c,d\in\N$ and let $P\colon \mathbb{Z}^d \to \mathbb{Z}^c$ be a polynomial mapping satisfying $P(\mathbf{0}) = \mathbf{0}$. Let $F \subseteq \mathbb{Z}^d$ be a finite set. For any finite coloring of $\mathbb{Z}^c$ there exists some $b \in \mathbb{N}$ and $\mathbf{a} \in \mathbb{Z}^c$ such that 
\[
\{\mathbf{a} + P(b\mathbf{n}):\mathbf{n}\in F\}
\]
is monochromatic.
\end{theorem}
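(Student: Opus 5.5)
The plan is to prove a topological statement for $\Z^c$-actions and then transfer it to colorings, copying the route used for \cref{thm_poly_vdw_top}.

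\textbf{Reduction to a one-variable family.} For each $\mathbf{n}\in F$, define $q_{\mathbf{n}}\colon\Z\to\Z^c$ by $q_{\mathbf{n}}(b)=P(b\mathbf{n})$. Each $q_{\mathbf{n}}$ is a vector-valued integral polynomial in the single variable $b$, and $q_{\mathbf{n}}(0)=\mathbf{0}$. So it suffices to prove the following claim.

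\emph{Claim.} Let $q_1,\ldots,q_m\colon\Z\to\Z^c$ be integral polynomials vanishing at $0$. Let $(X,(T_g)_{g\in\Z^c})$ be a minimal $\Z^c$-system and $U\subset X$ a non-empty open set. Then there is $b\in\N$ with
\[
U\cap T_{q_1(b)}^{-1}U\cap\ldots\cap T_{q_m(b)}^{-1}U\neq\emptyset.
\]

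Given the claim, \cref{thm_multi_poly_vdw_comb} follows exactly as in the proof of \IPvdWtop$\implies$\IPvdW. Apply \cref{prop_top_correspondence_principle_G-systems} with $G=\Z^c$, choose a non-empty $U_i$, and read off a point $\mathbf{a}$ lying in $C_i\cap\bigcap_{\mathbf{n}\in F}(C_i-P(b\mathbf{n}))$.

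\textbf{Two ingredients.} The first is a $\Z^c$-version of \cref{prop_top_vdC_N-actions}. It states: if for every finite $H\subset\N\cup\{0\}$ the derived family
\[
\{b\mapsto q_i(b+h)-q_1(b)-q_i(h): h\in H,\ 1\le i\le m\}
\]
has the multiple recurrence property for all minimal $\Z^c$-systems, then so does $\{q_1,\ldots,q_m\}$. The proof is verbatim the one given for \cref{prop_top_vdC_N-actions}. The only change is to replace \cref{lem_uniform_recurrence} by the cover $\bigcup_{g\in G'}T_g^{-1}U=X$ with $G'\subset\Z^c$ finite, exactly as in \eqref{eqn_G_min_concl}. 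The pigeonhole on the indices $g_i\in G'$ then works unchanged.

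The second ingredient is the base case, where all $q_i$ are linear, say $q_i(b)=b\,\mathbf{v}_i$ with $\mathbf{v}_i\in\Z^c$. Set $T_{i,\alpha}=T_{|\alpha|\mathbf{v}_i}$. These are IP systems commuting with the action, so \cref{thm_IPvdW-dyn} gives $\gamma$ with $U\cap\bigcap_i T_{|\gamma|\mathbf{v}_i}^{-1}U\neq\emptyset$. Taking $b=|\gamma|$ settles the linear case.

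\textbf{PET induction.} Define the weight vector $(w_1,\ldots,w_D)$ as before. Here $w_j$ counts the distinct leading coefficients, now vectors in $\Z^c$, among the polynomials of degree $j$ in the family, and zero polynomials are discarded since $T_{\mathbf 0}^{-1}U=U$. Use the same order $\prec$, which is well-founded. Order the family so that $q_1$ has minimal degree, and check the analogue of \cref{prop_poly_complexity_reduction}:
\begin{itemize}
\item if $\deg q_i>\deg q_1$, or the degrees agree but the leading vectors differ, then $q_i(b+h)-q_1(b)-q_i(h)$ keeps its degree and leading vector, independent of $h$;
\item if $q_i$ has the same degree and leading vector as $q_1$, then the difference drops in degree.
\end{itemize}
Hence the entry $w_{\deg q_1}$ decreases by one, higher entries are unchanged, and only lower entries can change, so the derived family is strictly $\prec$-smaller. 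All derived polynomials vanish at $b=0$. Induction along $\prec$, starting from the linear base case and stepping up via the $\Z^c$-van der Corput proposition, proves the claim.

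The main obstacle I expect is this complexity-reduction step in the vector-valued setting. One must verify carefully that leading vectors are preserved by the shift $b\mapsto b+h$ uniformly in $h\in H$, so that the count of distinct leading vectors in each higher degree does not grow when $H$ has many elements. One must also handle polynomials that become identically zero or coincide. I would treat these by passing to the set of distinct nonzero polynomials, which does not affect the recurrence statement.
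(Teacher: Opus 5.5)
Your proposal is correct and follows essentially the route the paper indicates. You transfer to minimal $\Z^c$-systems via \cref{prop_top_correspondence_principle_G-systems}, run the argument of \cref{prop_top_vdC_N-actions} with the finite cover $\bigcup_{g\in G'}T_g^{-1}U=X$ in place of \cref{lem_uniform_recurrence}, and perform PET induction. Because the dilation parameter $b$ is one-dimensional, counting distinct leading vectors of the one-variable maps $b\mapsto P(b\mathbf{n})$ suffices in place of the weight matrix of \cite{BL96}, and \cref{thm_IPvdW-dyn} legitimately handles the linear base case.

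One small slip: when $\deg q_i=\deg q_1$ but the leading vectors differ, the derived polynomial keeps its degree, but its leading vector is the difference of the two leading vectors rather than that of $q_i$. Since this is a translation by the fixed leading vector of $q_1$, distinct vectors remain distinct, so your count $w_{\deg q_1}-1$ and the resulting $\prec$-decrease are unaffected.
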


The proof of \cref{thm_multi_poly_vdw_comb} follows a similar approach to that of its 1-dimensional counterpart, \cref{thm_poly_vdw_comb}. The key idea is to extend the concept of a weight vector for polynomials on $\mathbb{Z}$ to a weight matrix for polynomial mappings $P \colon \mathbb{Z}^d \to \mathbb{Z}^c$. The overall structure of the proof is analogous, and for further details, we refer the reader to \cite{BL96}.

We finish this section with the statement of one of the most general coloring results in Ramsey theory.
It is the polynomial version of the Hales Jewett theorem, and it contains both the Hales Jewett theorem and the Multidimensional polynomial van der Waerden theorem as special cases. It even implies an IP version of the Multidimensional polynomial van der Waerden theorem, see \cite[Sections~0.13--0.17]{BL99} for details.
Recall that $[N]=\{1,\ldots,N\}$, and for $d\in\N$ let $\mathcal{F}([N]^d)$ denote the set of all finite non-empty subsets of $[N]^d$.

\begin{theorem}[Polynomial Hales-Jewett theorem (\PHJ)]
\label{thm_poly_HJ_sets}
For every $k,r,d \in \mathbb{N}$ with $k\geq 2$, and all sufficiently large $N\in\N$, if $\mathcal{F}([N]^d)^{k-1}$ is $r$-colored then there exist $\alpha_1,\alpha_2,\ldots,\alpha_{k-1}\in \mathcal{F}([N]^d)$ and $\gamma\in\mathcal{F}([N])$ with $\alpha_i\cap\gamma^d=\emptyset$ for all $i\in\{1,\ldots,k-1\}$, such that
\begin{equation*}
(\alpha_1,\ \alpha_2,\ldots,\alpha_{k-1}),\ (\alpha_1\cup\gamma^d,\ \alpha_2,\ldots,\alpha_{k-1}),\ (\alpha_1,\ \alpha_2\cup\gamma^d,\ldots,\alpha_{k-1}),
\ldots,\ (\alpha_1,\ \alpha_2,\ldots,\alpha_{k-1}\cup \gamma^d)
\end{equation*}
are all of the same color.
\end{theorem}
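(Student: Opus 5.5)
The plan is to prove the infinitary version of \cref{thm_poly_HJ_sets} by topological dynamics, following the pattern used for \cref{thm_IPvdW-dyn} and \cref{thm_poly_vdw_top}, and then pass to the finitary statement. The infinitary version replaces $[N]$ by $\N$, with $\gamma$ required to lie above the maxima of the coordinates appearing in the $\alpha_i$. \cref{thm_compactness_principle}, applied to $Y=\mathcal{F}(\N^d)^{k-1}$ and to $\mathcal{H}$ the family of all configurations as in the statement, then gives the version with $[N]$. One small check is needed here: a finite $Z$ lies inside $\mathcal{F}([N]^d)^{k-1}$ for some $N$, and configurations contained in $Z$ only involve $\gamma\subset[N]$.

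\textbf{Passing to a group action.} Let $G=\mathcal{F}_0(\N^d)^{k-1}$, where $\mathcal{F}_0$ denotes all finite subsets including $\emptyset$ and the group operation is symmetric difference $\triangle$. This is an abelian group (a vector space over $\mathbb{F}_2$), and the coloring extends to it. If $\gamma^d\cap\alpha_i=\emptyset$, then $\alpha_i\cup\gamma^d=\alpha_i\triangle\gamma^d$. So it suffices to find a $G$-system $(X,(T_g)_{g\in G})$ with a nonempty open $U$, and, with $e_i(\beta)$ denoting the element of $G$ having $\beta$ in coordinate $i$ and $\emptyset$ elsewhere, a point of
\[
U\cap T_{e_1(\gamma^d)}^{-1}U\cap\ldots\cap T_{e_{k-1}(\gamma^d)}^{-1}U,
\]
with $\gamma$ chosen to lie above a prescribed finite set. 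By \cref{prop_top_correspondence_principle_G-systems} such recurrence yields infinitely many $a\in G$ realizing the configuration. This gives freedom to take $\gamma$ with $\min\gamma$ beyond all coordinates of a suitable $a$, which secures disjointness. To make this rigorous I would run the recurrence argument for $\gamma$ restricted to sets with $\min\gamma>M$ for arbitrary $M$, so that it holds along every IP-ring.

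\textbf{Polynomial recurrence by PET induction.} The maps $v_i(\gamma)=e_i(\gamma^d)$ are ``IP-polynomials'' of degree $d$. For disjoint $\gamma,\beta$ we have
\[
(\gamma\cup\beta)^d=\gamma^d\sqcup\beta^d\sqcup(\text{mixed products}),
\]
and for fixed $\beta$ the mixed part, viewed as a function of $\gamma$, has degree at most $d-1$ in $\gamma$. I would prove an IP analogue of \cref{prop_top_vdC_N-actions}. The statement: if for every finite $F$ of previously chosen blocks $\beta$ the derived family
\[
\gamma\mapsto v_i(\gamma\cup\beta)-v_1(\gamma)-v_i(\beta),\qquad \beta\in F,\ i\le k-1,
\]
has multiple recurrence along IP-rings, then so does $\{v_1,\ldots,v_{k-1}\}$. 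The proof copies that of \cref{prop_top_vdC_N-actions} and of \cref{thm_IPvdW-dyn}. Using minimality and compactness, choose a finite $G'$ with $\bigcup_{g\in G'}T_g^{-1}U=X$. Then build $\gamma_1<\gamma_2<\cdots$ and open sets $U_j\subset T_{g_j}^{-1}U$ nested as in \eqref{eqn_top_proof_21}, with sums of indices replaced by unions $\gamma_{i+1}\cup\ldots\cup\gamma_j$. Pigeonholing on $g_j$ finishes the argument.

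\textbf{Main obstacle.} The key difficulty is defining a weight, in the spirit of the weight vectors of \cref{prop_poly_complexity_reduction}, on finite families of IP-polynomials $\mathcal{F}(\N)\to G$, and checking that the derived family strictly decreases it. The difficulty is that ``leading coefficients'' must be replaced by leading parts of the multilinear decomposition, and two polynomials have the same leading part exactly when their difference has lower degree. The derived family must lose one leading class at the top degree of $v_1$ while creating only lower-degree terms. The base case, degree one, consists of genuine IP systems and is exactly \cref{thm_IPvdW-dyn}. I would try first to phrase everything for the concrete functions $\gamma\mapsto e_i(\text{union of products of }\gamma \text{ with fixed sets})$, where degrees are simply the number of $\gamma$-factors, rather than developing general IP-polynomial theory.
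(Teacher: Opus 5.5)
The paper gives no proof of this theorem; it refers to \cite{BL99, Walters00, McCutcheon99}. So your outline has to stand on its own. The compactness reduction is fine. The IP van der Corput step together with PET for set-polynomials is plausible: this is essentially what \cite{BL99} carries out, and you rightly flag the weight as unfinished.

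\textbf{The gap: disjointness.} The condition $\alpha_i\cap\gamma^d=\emptyset$ is lost when you replace $\cup$ by $\triangle$ and pass to a minimal $G$-system. In your scheme the recurrence theorem produces $\gamma$ first, even with $\min\gamma>M$ for a prescribed $M$. Only afterwards does \cref{prop_top_correspondence_principle_G-systems} produce a base point $a$ in
\[
R_\gamma=\bigcap_{j=0}^{k-1}\big(C_i-e_j(\gamma^d)\big),\qquad e_0(\cdot)=0 .
\]
So ``take $\gamma$ with $\min\gamma$ beyond all coordinates of a suitable $a$'' has the quantifiers reversed: the threshold $M$ would have to depend on $a$, which depends on $\gamma$.

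Knowing that $R_\gamma$ is infinite, or even piecewise syndetic, does not help. The condition you need says that $a$ lies in the finite-index subgroup $H_\gamma=\{a\in G: a_i\cap\gamma^d=\emptyset\ \forall i\}$. Nothing in the argument prevents $R_\gamma$ from lying in a nontrivial coset of $H_\gamma$. In that case every realizing $a$ has $a_i\triangle\gamma^d\neq a_i\cup\gamma^d$ for some $i$.

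Concretely, for $d=1$ your argument yields exactly what \cref{thm_IPvdW-comb} gives for the group $(\mathcal{F}_0(\N)^{k-1},\triangle)$ with $u_j(\gamma)=e_j(\gamma)$. That is, you get $\triangle$-simplices with no disjointness, not the set form of Hales--Jewett. Note that the paper only derives IP van der Waerden \emph{from} Hales--Jewett, never the converse.

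\textbf{Why the natural patch fails.} One might work in $\big(\bigoplus_{\N^d}\Z\big)^{k-1}$ and give a junk colour to every tuple that is not $\{0,1\}$-valued. A monochromatic configuration in a genuine colour would then automatically be disjoint. But the set of $\{0,1\}$-valued tuples is not piecewise syndetic in that group. To see this, compare $a$ with $a+e_1(f)$, where $f$ equals $2$ at a point $p$ outside all relevant supports and $0$ elsewhere. Hence every genuine colour has $U_i=\emptyset$ in the minimal subsystem given by the correspondence principle.

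\textbf{What is missing.} You need a recurrence statement in which the base configuration is fixed first, supported in $[M]^d$, and $\gamma$ is then found in the IP ring above $M$. This is a point-recurrence statement about the given colouring in an ordered, concatenation-like structure. It cannot be obtained by passing to an arbitrary minimal subsystem. This is why the dynamical proof of \cite{BL99} is non-minimal, as the paper remarks at the end of \cref{sec_proof_of_top_vdW}. It is also why the paper proves Hales--Jewett via concatenation of words in \cref{thm_semigroup_generalization_of_HJ} rather than via $\triangle$. Alternatively, see the combinatorial proof in \cite{Walters00}.

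\textbf{Minor point.} You also need $\alpha_i\neq\emptyset$. Give tuples with an empty coordinate an extra colour. This is harmless, since that class is a finite union of infinite-index subgroups and hence not piecewise syndetic.
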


For a proof of \cref{thm_poly_HJ_sets}, we refer to \cite{BL99, Walters00, McCutcheon99}.

\section{Two open problems}
\label{sec_two_open_problems}

In this section, we present two of our favorite conjectures.

The first conjecture aims at generalizing the ``abelian'' multidimensional IP van der Waerden theorem to a non-commutative setting.
More precisely, one would like to know for which (classes of) non-commutative groups one has a theorem similar to \cref{thm_IPvdW-simplex}.
A generalization of \cref{thm_IPvdW-simplex} to nilpotent groups was obtained in \cite{BL03}. 
On the other hand, it was shown in \cite{BH92b} that the full-fledged analogue of \cref{thm_IPvdW-simplex} does not hold for a free group with infinitely many generators.
The results and discussion in \cite{BM07} suggest that amenable groups form a natural class for an extension of \cref{thm_IPvdW-simplex}, see \cref{conj_amenable_IP_vdW} and \cref{remark_why_amenable}.

The second conjecture belongs to density Ramsey theory and deals with a density version of the
polynomial Hales-Jewett theorem (\cref{thm_poly_HJ_sets}).
If true, this 30-year-old conjecture would constitute a far-reaching generalization of various polynomial \Szemeredi{}-type theorems obtained in \cite{BL96, BM00, BLM05}. It can be viewed as a destination point in a long chain of developments that started with van der Waerden's theorem.

\subsection{IP van der Waerden theorem for amenable groups}

In a countable group $(G,\cdot)$, a sequence $\Phi = (\Phi_N)_{N \in \N}$ of finite subsets of $G$ is called a \define{left \Folner{} sequence} if for all $g \in G$, we have
\[
\lim_{N \to \infty} \frac{|\Phi_N \cap g \Phi_N |}{|\Phi_N|} = 1.
\]
Similarly, \define{right \Folner{} sequences} are defined in the same way, except with $g \Phi_N $ replaced by $\Phi_N g$.

Note that if a group has a left (resp.~right) \Folner{} sequence $\Phi = (\Phi_N)_{N \in \N}$, then it also has a right (resp.~left) \Folner{} sequence, given by its inverse $\Phi^{-1} = (\Phi_N^{-1})_{N \in \N}$. 
A countable group $(G,\cdot)$ is \define{amenable} if it has a left \Folner{} sequence (or equivalently a right \Folner{} sequence). 
Also, we say a set $E\subset G$ has \define{positive left upper Banach density} if there exists a left \Folner{} sequence $\Phi = (\Phi_N)_{N \in \N}$ such that
\[
\limsup_{N\to\infty}\frac{|E\cap \Phi_N|}{|\Phi_N|}>0.
\]

\begin{definition}[Right syndetic]
A set $S\subset G$ is called \define{right syndetic} if there exists a finite non-empty set $F\subset G$ such that $G=\bigcup_{g\in F} Sg^{-1}$.
\end{definition}

\begin{theorem}[\Szemeredi{}'s theorem for amenable groups, \cite{Austin16}]
\label{thm_amenable_Szemeredi_theorem}
Let $(G,\cdot)$ be a countable amenable group. For any set
$E \subseteq G^{k-1}$ with positive left upper Banach density the set
\[
\left\{
g \in G \Bigg| 
\begin{aligned}
&\,\exists (x_1, \dots, x_{k-1}) \in G^{k-1} \text{ such that }
\\
&\,\big\{(x_1, \dots, x_{k-1}),(gx_1, \dots, x_{k-1}), (g x_1, gx_2, \dots, x_{k-1}), \dots, (g x_1, \dots, g x_{k-1}) \big\} \subseteq E
\end{aligned}
\right\}
\]
is right-syndetic.
\end{theorem}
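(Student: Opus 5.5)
The plan is to follow Furstenberg's ergodic-theoretic route, adapted to non-commuting actions of an amenable group as in \cite{Austin16}. There are three stages: a correspondence principle, a multiple recurrence theorem for a suitable family of actions, and a passage from positive averages to right syndeticity.

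\emph{Step 1 (correspondence principle).} Write the configuration dynamically. For $g\in G$ and $1\le i\le k-1$, let $\tau_i(g)$ act on $G^{k-1}$ by left-multiplying the first $i$ coordinates by $g$. The required pattern is then
\[
\{\mathbf{x},\ \tau_1(g)\mathbf{x},\ \ldots,\ \tau_{k-1}(g)\mathbf{x}\}\subseteq E .
\]
I would prove an amenable analogue of Furstenberg's correspondence principle, in the spirit of \cref{prop_top_correspondence_principle_G-systems} but measure-theoretic. Take the shift space $\{0,1\}^{G^{k-1}}$ with the right-translation action of $G^{k-1}$. The Følner sequence witnessing positive density of $E$ gives, by a weak$^*$ limit of empirical measures, an invariant probability measure $\mu$ and a clopen set $A$ with $\mu(A)>0$. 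Along the way one must check that each $g\mapsto\tau_i(g)$ becomes a measure-preserving $G$-action $T_i$, and that
\[
\mu\bigl(A\cap T_1(g)^{-1}A\cap\ldots\cap T_{k-1}(g)^{-1}A\bigr)>0
\]
implies the pattern occurs for $g$. The actions $T_i$ do not commute with one another. They do satisfy the key structural relation that $T_i(g)T_j(g)^{-1}$ only affects coordinates $j+1,\ldots,i$ when $j<i$.

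\emph{Step 2 (multiple recurrence).} Show that for every right Følner sequence $(\Psi_N)$,
\[
\liminf_{N\to\infty}\frac{1}{|\Psi_N|}\sum_{g\in\Psi_N}\mu\Bigl(\bigcap_{i=0}^{k-1}T_i(g)^{-1}A\Bigr)>0,\qquad T_0=\mathrm{id}.
\]
This is the main obstacle, since the commuting-action arguments of Furstenberg–Katznelson do not apply. I would follow Austin's strategy.
\begin{itemize}
\item[(a)] Pass to a \emph{sated extension} of the system. In such an extension, the characteristic factors for the multiple ergodic averages are joins of factors invariant under the partial actions $T_iT_j^{-1}$. These factors are obtained by a van der Corput-type estimate, the measure analogue of \cref{prop_top_vdC_N-actions}, together with an inductive "satedness" argument.
\item[(b)] On these joins, the intersection above becomes a multilinear expression over a product of partially invariant $\sigma$-algebras. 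Its positivity follows from an infinitary hypergraph removal/Tao-type argument, by induction on $k$.
\end{itemize}
The genuine difficulty is showing that satedness yields characteristic factors of exactly this form for non-commuting actions, so I expect to spend most of the effort there.

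\emph{Step 3 (syndeticity).} Set $c$ to be the infimum of the liminf in Step 2 over all right Følner sequences; a diagonal argument shows $c>0$. If the set $R$ of good $g$ were not right syndetic, its complement would be "right thick". A right thick set contains right translates $Fh$ of every finite $F$, and from these one builds a right Følner sequence supported outside $R$. Along that sequence the averages vanish, contradicting Step 2. Hence $R$ is right syndetic, and by Step 1 every $g\in R$ yields the desired configuration in $E$.
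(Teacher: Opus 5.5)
Your architecture (correspondence principle, then multiple recurrence for the staircase actions, then syndeticity) is the standard deduction and the route of \cite{Austin16}, which the paper simply cites without proof. Treating Step~2 as a black box is therefore legitimate, but you should name it precisely: your sketch (a)--(b) points to Austin's argument rather than giving one. The hypothesis it needs is that $T_i=S_1\cdots S_i$, where $S_j(g)$ is the action induced by left-multiplying the $j$-th coordinate by $g$, and that $S_1,\ldots,S_{k-1}$ pairwise commute. This is what makes $g\mapsto T_i(g)T_j(g)^{-1}=S_{j+1}(g)\cdots S_i(g)$ a genuine $G$-action. It is also what lets a van der Corput step reduce $(S_1,\ldots,S_{k-1})$ to $(S_2,\ldots,S_{k-1})$.

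The real gap is in the left/right bookkeeping, which is exactly where the mixed conventions of the statement (left upper Banach density, right syndeticity) bite.

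\emph{Step~3.} With the paper's definition, $R$ is right syndetic iff some finite $F$ satisfies $xF\cap R\neq\emptyset$ for all $x\in G$. So if $R$ is not right syndetic, $R^c$ contains \emph{left} translates $x_N\Psi_N$, and these are right \Folner{} whenever $\Psi_N$ is. The right translates $\Psi_Nh_N$ you wrote are in general not right \Folner{}: that would need $\Psi_N$ almost invariant under $h_Ngh_N^{-1}$. Right translates of a left \Folner{} sequence are left \Folner{}, but that version only shows that finitely many left translates $f^{-1}R$ cover $G$.

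\emph{Step~2.} After this correction you genuinely need recurrence along right \Folner{} sequences, but the machinery you describe gives it along left ones. Differencing replaces $u_g=\prod_i 1_A\circ T_i(g)$ by averages of $u_{hg}$, which requires $h\Psi_N\approx\Psi_N$. It then uses $T_i(hg)=T_i(h)T_i(g)$ before composing with $T_1(g)^{-1}$; with $u_{gh}$ the variables do not separate.

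The missing observation is an inversion symmetry. Applying $T_{k-1}(g)$ and using commutativity,
\[
\mu\Bigl(\bigcap_{i=0}^{k-1}T_i(g)^{-1}A\Bigr)=\mu\Bigl(\bigcap_{i=0}^{k-1}S_{i+1}(g)\cdots S_{k-1}(g)A\Bigr)=\mu\Bigl(\bigcap_{j=0}^{k-1}T'_j(g^{-1})^{-1}A\Bigr),
\]
where $T'_j=S_{k-1}S_{k-2}\cdots S_{k-j}$ (with $T'_0=\mathrm{id}$) is the staircase built from the reversed commuting tuple $(S_{k-1},\ldots,S_1)$. Since $\Psi_N$ is right \Folner{} iff $\Psi_N^{-1}$ is left \Folner{}, left-\Folner{} recurrence for all commuting tuples yields right-\Folner{} recurrence. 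In fact this shows $R$ is both left and right syndetic.

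With these corrections the deduction goes through. The constant $c$ in Step~3 is unnecessary: you only need positivity along the single sequence $x_N\Psi_N$.
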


\begin{definition}[IP~sequence]
\label{def_IP_sequence_nonabelian}
An \define{(increasing) IP~sequence in $G$} is a map $u\colon \mathcal{F}(\N)\to G$ with the property that for all $\alpha,\beta\in\mathcal{F}(\N)$ we have
\[
\max(\alpha)<\min(\beta)~~\implies~~ u(\alpha\cup\beta)=u(\alpha)\cdot u(\beta).
\]
A set $D\subset G$ is called an \define{IP$^{*}$-set} if it has non-empty intersection with the image of every (increasing) IP~sequence. 
\end{definition}

\begin{conjecture}
\label{conj_amenable_IP_vdW}
Let $G$ be a countable amenable group. For any finite partition $G^{k-1} = \bigcup_{i=1}^r C_i$, there exists $1 \leq i \leq r$ such that
\[
\left\{
g \in G \Bigg| 
\begin{aligned}
&\,\exists (x_1, \dots, x_{k-1}) \in G^{k-1} \text{ such that }
\\
&\,\big\{(x_1, \dots, x_{k-1}), (gx_1,x_2, \dots, x_{k-1}), (g x_1, g x_2, \dots, x_{k-1}), \dots, (g x_1, \dots, g x_{k-1}) \big\} \subseteq C_i
\end{aligned}
\right\}
\]
is an IP$^{*}$-set.
\end{conjecture}

The case $k=3$ of \cref{conj_amenable_IP_vdW} was recently settled in \cite{parini2024vanderwaerdentype}.
If the group $G$ is nilpotent, then the conjecture also holds, which follows from the main result in \cite{BL03}.
For $k\geq 4$ and non-nilpotent groups, the conjecture remains wide open.

\begin{remark}
\label{remark_why_amenable}
It is shown in \cite[Corollary 4.9]{BH92b} that for an infinitely generated free group, the conclusion of \cref{conj_amenable_IP_vdW} fails even in the case $k=3$, hereby indicating that the amenable setting is rather natural for \cref{conj_amenable_IP_vdW}.
\end{remark}

\subsection{Density Polynomial Hales Jewett Conjecture}

The following density version of the polynomial Hales-Jewett theorem was first conjectured in \cite[p.~56]{Bergelson96}.
It bears the same relation to the polynomial Hales-Jewett theorem (\cref{thm_poly_HJ_sets}) as the density Hales-Jewett theorem of Furstenberg-Katznelson \cite{FK91} has to the original Hales-Jewett theorem (\cref{thm_HJ_finitary}).

\begin{conjecture}[Density polynomial Hales-Jewett problem]
\label{conj_poly_DHJ_sets}
Let $k,d \in \mathbb{N}$ with $k\geq 2$. For every $\delta>0$, all sufficiently large $N\in\N$, and all sets
$E\subset \mathcal{F}([N]^d)^{k-1}$ with
\[
\frac{|E|}{|\mathcal{F}([N]^d)^{k-1}|}\geq \delta
\]
there exist $\alpha_1,\alpha_2,\ldots,\alpha_{k-1}\in \mathcal{F}([N]^d)$ and $\gamma\in\mathcal{F}([N])$ with $\alpha_i\cap\gamma^d=\emptyset$ for all $i\in\{1,\ldots,k-1\}$, such that
\begin{equation*}
(\alpha_1,\ \alpha_2,\ldots,\alpha_{k-1}),\ (\alpha_1\cup\gamma^d,\ \alpha_2,\ldots,\alpha_{k-1}),\ (\alpha_1,\ \alpha_2\cup\gamma^d,\ldots,\alpha_{k-1}),
\ldots,\ (\alpha_1,\ \alpha_2,\ldots,\alpha_{k-1}\cup \gamma^d)\subset E.
\end{equation*}
\end{conjecture}

%BIBLIOGRAPHY
\bibliographystyle{aomalphanomr}
\bibliography{mynewlibrary.bib,survey_specific_references.bib}

\providecommand{\bysame}{\leavevmode\hbox to3em{\hrulefill}\thinspace}
\providecommand{\noopsort}[1]{}
\providecommand{\zbl}[1]{\href{http://www.zentralblatt-math.org/zmath/en/search/?q=an:#1}{Zbl~#1}}
\providecommand{\jfm}[1]{\href{http://www.emis.de/cgi-bin/JFM-item?#1}{JFM~#1}}
\providecommand{\arxiv}[1]{\href{http://www.arxiv.org/abs/#1}{arXiv~#1}}
\providecommand{\doi}[1]{\url{https://doi.org/#1}}
\providecommand{\href}[2]{#2}
\begin{thebibliography}{BFHK89}

\bibitem[Aus16]{Austin16}
\bgroup\scshape{}T.~Austin\egroup{}, Non-conventional ergodic averages for several commuting actions of an amenable group,  \emph{J. Anal. Math.} \textbf{130} (2016), 243--274. \doi{10.1007/s11854-016-0036-6}.

\bibitem[BO79]{BO79}
\bgroup\scshape{}M.~D. Beeler\egroup{} and \bgroup\scshape{}P.~E. O'Neil\egroup{}, Some new van der {W}aerden numbers,  \emph{Discrete Math.} \textbf{28} no.~2 (1979), 135--146. \doi{10.1016/0012-365X(79)90090-6}.

\bibitem[BBDF09]{BBDF09}
\bgroup\scshape{}M.~Beiglb\"{o}ck\egroup{}, \bgroup\scshape{}V.~Bergelson\egroup{}, \bgroup\scshape{}T.~Downarowicz\egroup{}, and \bgroup\scshape{}A.~Fish\egroup{}, Solvability of rado systems in {$D$}-sets,  \emph{Topology Appl.} \textbf{156} no.~16 (2009), 2565--2571. \doi{10.1016/j.topol.2009.04.019}.

\bibitem[BBHS06]{BBHS06}
\bgroup\scshape{}M.~Beiglb{\"o}ck\egroup{}, \bgroup\scshape{}V.~Bergelson\egroup{}, \bgroup\scshape{}N.~Hindman\egroup{}, and \bgroup\scshape{}D.~Strauss\egroup{}, Multiplicative structures in additively large sets,  \emph{J. Combin. Theory Ser. A} \textbf{113} no.~7 (2006), 1219--1242. \doi{10.1016/j.jcta.2005.11.003}.

\bibitem[BBHS08]{BBHS08}
\bgroup\scshape{}M.~Beiglb\"{o}ck\egroup{}, \bgroup\scshape{}V.~Bergelson\egroup{}, \bgroup\scshape{}N.~Hindman\egroup{}, and \bgroup\scshape{}D.~Strauss\egroup{}, Some new results in multiplicative and additive {R}amsey theory,  \emph{Trans. Amer. Math. Soc.} \textbf{360} no.~2 (2008), 819--847. \doi{10.1090/S0002-9947-07-04370-X}.

\bibitem[Ber85]{Bergelson85}
\bgroup\scshape{}V.~Bergelson\egroup{}, Sets of recurrence of {$\mathbb{Z}^m$}-actions and properties of sets of differences in {$\mathbb{Z}^m$},  \emph{J. London Math. Soc. (2)} \textbf{31} no.~2 (1985), 295--304. \doi{10.1112/jlms/s2-31.2.295}.

\bibitem[Ber87]{Bergelson87b}
\bgroup\scshape{}V.~Bergelson\egroup{}, Weakly mixing {PET},  \emph{Ergodic Theory Dynam. Systems} \textbf{7} no.~3 (1987), 337--349. \doi{10.1017/S0143385700004090}.

\bibitem[Ber96]{Bergelson96}
\bgroup\scshape{}V.~Bergelson\egroup{}, Ergodic {R}amsey theory -- an update,  in \emph{Ergodic theory of $\mathbb{Z}^d$ actions (Warwick, 1993--1994)}, \emph{London Math. Soc. Lecture Note Ser.} \textbf{228}, Cambridge Univ. Press, Cambridge, 1996, pp.~1--61. \doi{10.1017/CBO9780511662812.002}.

\bibitem[BFHK89]{BFHK89}
\bgroup\scshape{}V.~Bergelson\egroup{}, \bgroup\scshape{}H.~Furstenberg\egroup{}, \bgroup\scshape{}N.~Hindman\egroup{}, and \bgroup\scshape{}Y.~Katznelson\egroup{}, An algebraic proof of van der {W}aerden's theorem,  \emph{Enseign. Math. (2)} \textbf{35} no.~3-4 (1989), 209--215. \doi{10.5169/seals-57373}.

\bibitem[BHK05]{BHK05}
\bgroup\scshape{}V.~Bergelson\egroup{}, \bgroup\scshape{}B.~Host\egroup{}, and \bgroup\scshape{}B.~Kra\egroup{}, Multiple recurrence and nilsequences,  \emph{Invent. Math.} \textbf{160} no.~2 (2005), 261--303, With an appendix by Imre Ruzsa. \doi{10.1007/s00222-004-0428-6}.

\bibitem[BL96]{BL96}
\bgroup\scshape{}V.~Bergelson\egroup{} and \bgroup\scshape{}A.~Leibman\egroup{}, Polynomial extensions of van der {W}aerden's and {S}zemer\'edi's theorems,  \emph{J. Amer. Math. Soc.} \textbf{9} no.~3 (1996), 725--753. \doi{10.1090/S0894-0347-96-00194-4}.

\bibitem[BL99]{BL99}
\bgroup\scshape{}V.~Bergelson\egroup{} and \bgroup\scshape{}A.~Leibman\egroup{}, Set-polynomials and polynomial extension of the {H}ales-{J}ewett theorem,  \emph{Ann. of Math. (2)} \textbf{150} no.~1 (1999), 33--75. \doi{10.2307/121097}.

\bibitem[BL03]{BL03}
\bgroup\scshape{}V.~Bergelson\egroup{} and \bgroup\scshape{}A.~Leibman\egroup{}, Topological multiple recurrence for polynomial configurations in nilpotent groups,  \emph{Adv. Math.} \textbf{175} no.~2 (2003), 271--296. \doi{10.1016/S0001-8708(02)00052-X}.

\bibitem[BLM05]{BLM05}
\bgroup\scshape{}V.~Bergelson\egroup{}, \bgroup\scshape{}A.~Leibman\egroup{}, and \bgroup\scshape{}R.~McCutcheon\egroup{}, Polynomial {S}zemer\'{e}di theorems for countable modules over integral domains and finite fields,  \emph{J. Anal. Math.} \textbf{95} (2005), 243--296. \doi{10.1007/BF02791504}.

\bibitem[BM07]{BM07}
\bgroup\scshape{}V.~Bergelson\egroup{} and \bgroup\scshape{}R.~McCutcheon\egroup{}, Central sets and a non-commutative {R}oth theorem,  \emph{Amer. J. Math.} \textbf{129} no.~5 (2007), 1251--1275. \doi{10.1353/ajm.2007.0031}.

\bibitem[Ber00]{Bergelson00b}
\bgroup\scshape{}V.~Bergelson\egroup{}, Ergodic theory and {D}iophantine problems,  in \emph{Topics in symbolic dynamics and applications ({T}emuco, 1997)}, \emph{London Math. Soc. Lecture Note Ser.} \textbf{279}, Cambridge Univ. Press, Cambridge, 2000, pp.~167--205.

\bibitem[Ber03]{Bergelson09}
\bgroup\scshape{}V.~Bergelson\egroup{}, Minimal idempotents and ergodic {R}amsey theory,  in \emph{Topics in dynamics and ergodic theory}, \emph{London Math. Soc. Lecture Note Ser.} \textbf{310}, Cambridge Univ. Press, Cambridge, 2003, pp.~8--39. \doi{10.1017/CBO9780511546716.004}.

\bibitem[Ber05]{Bergelson05}
\bgroup\scshape{}V.~Bergelson\egroup{}, Multiplicatively large sets and ergodic {R}amsey theory, \textbf{148}, 2005, Probability in mathematics, pp.~23--40. \doi{10.1007/BF02775431}.

\bibitem[Ber10]{Bergelson10}
\bgroup\scshape{}V.~Bergelson\egroup{}, Ultrafilters, {IP} sets, dynamics, and combinatorial number theory,  in \emph{Ultrafilters across mathematics}, \emph{Contemp. Math.} \textbf{530}, Amer. Math. Soc., Providence, RI, 2010, pp.~23--47. \doi{10.1090/conm/530/10439}.

\bibitem[BBH94]{BBH94}
\bgroup\scshape{}V.~Bergelson\egroup{}, \bgroup\scshape{}A.~Blass\egroup{}, and \bgroup\scshape{}N.~Hindman\egroup{}, Partition theorems for spaces of variable words,  \emph{Proc. London Math. Soc. (3)} \textbf{68} no.~3 (1994), 449--476. \doi{10.1112/plms/s3-68.3.449}.

\bibitem[BG20]{BG20}
\bgroup\scshape{}V.~Bergelson\egroup{} and \bgroup\scshape{}D.~Glasscock\egroup{}, On the interplay between additive and multiplicative largeness and its combinatorial applications,  \emph{J. Combin. Theory Ser. A} \textbf{172} (2020), 105203, 60. \doi{10.1016/j.jcta.2019.105203}.

\bibitem[BH90]{BH90}
\bgroup\scshape{}V.~Bergelson\egroup{} and \bgroup\scshape{}N.~Hindman\egroup{}, Nonmetrizable topological dynamics and {R}amsey theory,  \emph{Trans. Amer. Math. Soc.} \textbf{320} no.~1 (1990), 293--320. \doi{10.2307/2001762}.

\bibitem[BH92]{BH92b}
\bgroup\scshape{}V.~Bergelson\egroup{} and \bgroup\scshape{}N.~Hindman\egroup{}, Some topological semicommutative van der {W}aerden type theorems and their combinatorial consequences,  \emph{J. London Math. Soc. (2)} \textbf{45} no.~3 (1992), 385--403. \doi{10.1112/jlms/s2-45.3.385}.

\bibitem[BH93]{BH93b}
\bgroup\scshape{}V.~Bergelson\egroup{} and \bgroup\scshape{}N.~Hindman\egroup{}, Additive and multiplicative {R}amsey theorems in {${\bf N}$}---some elementary results,  \emph{Combin. Probab. Comput.} \textbf{2} no.~3 (1993), 221--241. \doi{10.1017/S0963548300000638}.

\bibitem[BM96]{BM96}
\bgroup\scshape{}V.~Bergelson\egroup{} and \bgroup\scshape{}R.~McCutcheon\egroup{}, Uniformity in the polynomial {S}zemer\'{e}di theorem,  in \emph{Ergodic theory of {${\bf Z}^d$} actions ({W}arwick, 1993--1994)}, \emph{London Math. Soc. Lecture Note Ser.} \textbf{228}, Cambridge Univ. Press, Cambridge, 1996, pp.~273--296. \doi{10.1017/CBO9780511662812.010}.

\bibitem[BM00]{BM00}
\bgroup\scshape{}V.~Bergelson\egroup{} and \bgroup\scshape{}R.~McCutcheon\egroup{}, An ergodic {IP} polynomial {S}zemer\'{e}di theorem,  \emph{Mem. Amer. Math. Soc.} \textbf{146} no.~695 (2000), viii+106. \doi{10.1090/memo/0695}.

\bibitem[BM16]{BM16}
\bgroup\scshape{}V.~Bergelson\egroup{} and \bgroup\scshape{}J.~Moreira\egroup{}, Van der {C}orput's difference theorem: some modern developments,  \emph{Indag. Math. (N.S.)} \textbf{27} no.~2 (2016), 437--479. \doi{10.1016/j.indag.2015.10.014}.

\bibitem[BCT18]{BCT18}
\bgroup\scshape{}T.~Blankenship\egroup{}, \bgroup\scshape{}J.~Cummings\egroup{}, and \bgroup\scshape{}V.~Taranchuk\egroup{}, A new lower bound for van der {W}aerden numbers,  \emph{European J. Combin.} \textbf{69} (2018), 163--168. \doi{10.1016/j.ejc.2017.10.007}.

\bibitem[Bla93]{Blass93}
\bgroup\scshape{}A.~Blass\egroup{}, Ultrafilters: where topological dynamics {$=$} algebra {$=$} combinatorics,  \emph{Topology Proc.} \textbf{18} (1993), 33--56.

\bibitem[BPT89]{BPT89}
\bgroup\scshape{}A.~B{\l}aszczyk\egroup{}, \bgroup\scshape{}S.~Plewik\egroup{}, and \bgroup\scshape{}S.~Turek\egroup{}, Topological multidimensional van der {W}aerden theorem,  \emph{Comment. Math. Univ. Carolin.} \textbf{30} no.~4 (1989), 783--787.

\bibitem[Bra28]{Brauer28}
\bgroup\scshape{}A.~Brauer\egroup{}, {\"U}ber {Sequenzen} von {Potenzresten}.,  \emph{Sitzungsber. Preu{{\ss}}. Akad. Wiss., Phys.-Math. Kl.} \textbf{1928} (1928), 9--16 (German).

\bibitem[Bra55]{Brauer55}
\bgroup\scshape{}A.~Brauer\egroup{}, Book {R}eview: {D}rei {P}erlen der {Z}ahlentheorie // {B}ook {R}eview: {T}hree pearls of number theory,  \emph{Bull. Amer. Math. Soc.} \textbf{61} no.~4 (1955), 351--353. \doi{10.1090/S0002-9904-1955-09940-3}.

\bibitem[Bra69]{Brauer69}
\bgroup\scshape{}A.~Brauer\egroup{}, Combinatorial methods in the distribution of {$k$}th power residues,  in \emph{Combinatorial {M}athematics and its {A}pplications ({P}roc. {C}onf., {U}niv. {N}orth {C}arolina, {C}hapel {H}ill, {N}.{C}., 1967)}, \emph{University of North Carolina Monograph Series in Probability and Statistics, No. 4}, Univ. North Carolina Press, Chapel Hill, NC, 1969, pp.~14--37.

\bibitem[Bro71]{Brown71}
\bgroup\scshape{}T.~C. Brown\egroup{}, An interesting combinatorial method in the theory of locally finite semigroups,  \emph{Pacific J. Math.} \textbf{36} (1971), 285--289. \doi{10.2140/pjm.1971.36.285}.

\bibitem[Bro75]{Brown75}
\bgroup\scshape{}T.~C. Brown\egroup{}, {V}ariations on van der {W}aerden's and {R}amsey's theorems,  \emph{Amer. Math. Monthly} \textbf{82} no.~10 (1975), 993--995. \doi{10.2307/2318256}.

\bibitem[dB77]{deBruijn77}
\bgroup\scshape{}N.~G. de~Bruijn\egroup{}, \emph{Commentary}, Unpublished manuscript, pp. 116--124, 1977. Available at \url{http://alexandria.tue.nl/repository/freearticles/598841.pdf}.

\bibitem[Chv70]{Chvatal70}
\bgroup\scshape{}V.~Chv\'{a}tal\egroup{}, {S}ome unknown van der {W}aerden numbers,  in \emph{Combinatorial {S}tructures and their {A}pplications ({P}roc. {C}algary {I}nternat. {C}onf., {C}algary, {A}lta., 1969)}, Gordon and Breach, New York-London-Paris, 1970, pp.~31--33.

\bibitem[Deu73]{Deuber73}
\bgroup\scshape{}W.~Deuber\egroup{}, Partitionen und lineare {G}leichungssysteme,  \emph{Math. Z.} \textbf{133} (1973), 109--123. \doi{10.1007/BF01237897}.

\bibitem[Deu82]{Deuber82}
\bgroup\scshape{}W.~Deuber\egroup{}, {O}n van der {W}aerden's theorem on arithmetic progressions,  \emph{J. Combin. Theory Ser. A} \textbf{32} no.~1 (1982), 115--118. \doi{10.1016/0097-3165(82)90071-1}.

\bibitem[DN26]{DiNasso26arXiv}
\bgroup\scshape{}M.~Di~Nasso\egroup{}, A new ultrafilter proof of van der {W}aerden's theorem, 2026. Available at \url{https://arxiv.org/abs/2603.04043}.

\bibitem[Ell58]{Ellis58}
\bgroup\scshape{}R.~Ellis\egroup{}, Distal transformation groups,  \emph{Pacific J. Math.} \textbf{8} (1958), 401--405. \doi{10.2140/pjm.1958.8.401}.

\bibitem[ER52]{ER52}
\bgroup\scshape{}P.~Erd\H{o}s\egroup{} and \bgroup\scshape{}R.~Rado\egroup{}, Combinatorial theorems on classifications of subsets of a given set,  \emph{Proc. London Math. Soc. (3)} \textbf{2} (1952), 417--439. \doi{10.1112/plms/s3-2.1.417}.

\bibitem[Fur77]{Furstenberg77}
\bgroup\scshape{}H.~Furstenberg\egroup{}, Ergodic behavior of diagonal measures and a theorem of {S}zemer\'edi on arithmetic progressions,  \emph{J. Analyse Math.} \textbf{31} (1977), 204--256. \doi{10.1007/BF02813304}.

\bibitem[Fur81]{Furstenberg81a}
\bgroup\scshape{}H.~Furstenberg\egroup{}, \emph{Recurrence in ergodic theory and combinatorial number theory}, Princeton University Press, Princeton, N.J., 1981, M. B. Porter Lectures.

\bibitem[FK78]{FK79}
\bgroup\scshape{}H.~Furstenberg\egroup{} and \bgroup\scshape{}Y.~Katznelson\egroup{}, An ergodic {S}zemer\'{e}di theorem for commuting transformations,  \emph{J. Analyse Math.} \textbf{34} (1978), 275--291 (1979). \doi{10.1007/BF02790016}.

\bibitem[FK85]{FK85}
\bgroup\scshape{}H.~Furstenberg\egroup{} and \bgroup\scshape{}Y.~Katznelson\egroup{}, An ergodic {S}zemer\'{e}di theorem for {IP}-systems and combinatorial theory,  \emph{J. Analyse Math.} \textbf{45} (1985), 117--168. \doi{10.1007/BF02792547}.

\bibitem[FK89]{FK89}
\bgroup\scshape{}H.~Furstenberg\egroup{} and \bgroup\scshape{}Y.~Katznelson\egroup{}, Idempotents in compact semigroups and {R}amsey theory,  \emph{Israel J. Math.} \textbf{68} no.~3 (1989), 257--270. \doi{10.1007/BF02764984}.

\bibitem[FK91]{FK91}
\bgroup\scshape{}H.~Furstenberg\egroup{} and \bgroup\scshape{}Y.~Katznelson\egroup{}, A density version of the {H}ales-{J}ewett theorem,  \emph{J. Anal. Math.} \textbf{57} (1991), 64--119. \doi{10.1007/BF03041066}.

\bibitem[FW78]{FW78}
\bgroup\scshape{}H.~Furstenberg\egroup{} and \bgroup\scshape{}B.~Weiss\egroup{}, Topological dynamics and combinatorial number theory,  \emph{J. Analyse Math.} \textbf{34} (1978), 61--85 (1979). \doi{10.1007/BF02790008}.

\bibitem[GH11]{GH11}
\bgroup\scshape{}W.~Gasarch\egroup{} and \bgroup\scshape{}B.~Haeupler\egroup{}, Lower bounds on van der {W}aerden numbers: randomized- and deterministic-constructive,  \emph{Electron. J. Combin.} \textbf{18} no.~1 (2011), Paper 64, 21. \doi{10.37236/551}.

\bibitem[Gla24]{Glasscock24}
\bgroup\scshape{}D.~Glasscock\egroup{}, Simultaneous approximation in nilsystems and the multiplicative thickness of return-time sets,  \emph{Adv. Math.} \textbf{457} (2024), Paper No. 109936, 72. \doi{10.1016/j.aim.2024.109936}.

\bibitem[GKR19]{GKR19}
\bgroup\scshape{}D.~Glasscock\egroup{}, \bgroup\scshape{}A.~Koutsogiannis\egroup{}, and \bgroup\scshape{}F.~K. Richter\egroup{}, Multiplicative combinatorial properties of return time sets in minimal dynamical systems,  \emph{Discrete Contin. Dyn. Syst.} \textbf{39} no.~10 (2019), 5891--5921. \doi{10.3934/dcds.2019258}.

\bibitem[GH55]{GH55}
\bgroup\scshape{}W.~H. Gottschalk\egroup{} and \bgroup\scshape{}G.~A. Hedlund\egroup{}, \emph{Topological dynamics}, \emph{American Mathematical Society Colloquium Publications, Vol. 36}, American Mathematical Society, Providence, RI, 1955.

\bibitem[Gow01]{Gowers01}
\bgroup\scshape{}W.~T. Gowers\egroup{}, A new proof of {S}zemer\'edi's theorem,  \emph{Geom. Funct. Anal.} \textbf{11} no.~3 (2001), 465--588. \doi{10.1007/s00039-001-0332-9}.

\bibitem[GR74]{GR74}
\bgroup\scshape{}R.~L. Graham\egroup{} and \bgroup\scshape{}B.~L. Rothschild\egroup{}, A short proof of van der {W}aerden's theorem on arithmetic progressions,  \emph{Proc. Amer. Math. Soc.} \textbf{42} (1974), 385--386. \doi{10.2307/2039512}.

\bibitem[GRS90]{GRS90}
\bgroup\scshape{}R.~L. Graham\egroup{}, \bgroup\scshape{}B.~L. Rothschild\egroup{}, and \bgroup\scshape{}J.~H. Spencer\egroup{}, \emph{Ramsey theory}, second ed., \emph{Wiley-Interscience Series in Discrete Mathematics and Optimization}, John Wiley \& Sons, Inc., New York, 1990, A Wiley-Interscience Publication.

\bibitem[GT08]{GT08}
\bgroup\scshape{}B.~Green\egroup{} and \bgroup\scshape{}T.~Tao\egroup{}, The primes contain arbitrarily long arithmetic progressions,  \emph{Ann. of Math. (2)} \textbf{167} no.~2 (2008), 481--547. \doi{10.4007/annals.2008.167.481}.

\bibitem[GT10]{GT10}
\bgroup\scshape{}B.~Green\egroup{} and \bgroup\scshape{}T.~Tao\egroup{}, Linear equations in primes,  \emph{Ann. of Math. (2)} \textbf{171} no.~3 (2010), 1753--1850. \doi{10.4007/annals.2010.171.1753}.

\bibitem[HJ63]{HJ63}
\bgroup\scshape{}A.~W. Hales\egroup{} and \bgroup\scshape{}R.~I. Jewett\egroup{}, Regularity and positional games,  \emph{Trans. Amer. Math. Soc.} \textbf{106} (1963), 222--229. \doi{10.2307/1993764}.

\bibitem[HS12]{HS12a}
\bgroup\scshape{}N.~Hindman\egroup{} and \bgroup\scshape{}D.~Strauss\egroup{}, \emph{{A}lgebra in the {S}tone-\v{C}ech {C}ompactification -- {T}heory and {A}pplications}, \emph{de Gruyter Textbook}, Walter de Gruyter \& Co., Berlin, 2012, Second revised and extended edition. \doi{10.1515/9783110258356}.

\bibitem[Hin73]{Hindman73}
\bgroup\scshape{}N.~Hindman\egroup{}, Preimages of points under the natural map from {$\beta (N\times N)$} to {$\beta N\times \beta N$},  \emph{Proc. Amer. Math. Soc.} \textbf{37} (1973), 603--608. \doi{10.2307/2039493}.

\bibitem[Hin20]{Hindman20}
\bgroup\scshape{}N.~Hindman\egroup{}, Notions of size in a semigroup: an update from a historical perspective,  \emph{Semigroup Forum} \textbf{100} no.~1 (2020), 52--76. \doi{10.1007/s00233-019-10041-0}.

\bibitem[Jac06]{Jacobsthal06}
\bgroup\scshape{}E.~Jacobsthal\egroup{}, \emph{{A}nwendungen einer {F}ormel aus der {T}heorie der quadratischen {R}este}, Phd thesis, {F}riedrich-{W}ilhelms-{U}niversität zu {B}erlin, January 1906, Available at \url{http://resolver.sub.uni-goettingen.de/purl?PPN317964577}.

\bibitem[Jac07]{Jacobsthal07}
\bgroup\scshape{}E.~Jacobsthal\egroup{}, Über die {D}arstellung der {P}rimzahlen der {F}orm 4n + 1 als {S}umme zweier {Q}uadrate.,  \emph{Journal für die reine und angewandte Mathematik} \textbf{132} (1907), 238--245 (ger). Available at \url{http://eudml.org/doc/149263}.

\bibitem[JR17]{JR17}
\bgroup\scshape{}J.~H. Johnson\egroup{} and \bgroup\scshape{}F.~K. Richter\egroup{}, Revisiting the nilpotent polynomial {H}ales--{J}ewett theorem,  \emph{Adv. Math.} \textbf{321} (2017), 269--286. \doi{10.1016/j.aim.2017.09.033}.

\bibitem[KM30]{KM30}
\bgroup\scshape{}S.~Kakeya\egroup{} and \bgroup\scshape{}S.~Morimoto\egroup{}, {O}n a {T}heorem of {MM.} {B}andet and van der {W}aerden,  \emph{Japanese journal of mathematics: transactions and abstracts} \textbf{7} (1930), 163 -- 165. \doi{10.4099/jjm1924.7.0_163}.

\bibitem[Khi52]{Khinchin52}
\bgroup\scshape{}A.~Y. Khinchin\egroup{}, \emph{Three pearls of number theory}, Graylock Press, Rochester, NY, 1952.

\bibitem[Kou12]{Kouril12}
\bgroup\scshape{}M.~Kouril\egroup{}, {C}omputing the van der {W}aerden number {$W(3,4)=293$},  \emph{Integers} \textbf{12} (2012), Paper No. A46, 13.

\bibitem[KP08]{KP08}
\bgroup\scshape{}M.~Kouril\egroup{} and \bgroup\scshape{}J.~L. Paul\egroup{}, The van der {W}aerden number {$W(2,6)$} is 1132,  \emph{Experiment. Math.} \textbf{17} no.~1 (2008), 53--61. Available at \url{http://projecteuclid.org/euclid.em/1227031896}.

\bibitem[LR14]{LR14b}
\bgroup\scshape{}B.~M. Landman\egroup{} and \bgroup\scshape{}A.~Robertson\egroup{}, \emph{Ramsey theory on the integers}, second ed., \emph{Student Mathematical Library} \textbf{73}, American Mathematical Society, Providence, RI, 2014. \doi{10.1090/stml/073}.

\bibitem[Lei94]{Leibman94}
\bgroup\scshape{}A.~Leibman\egroup{}, Multiple recurrence theorem for nilpotent group actions,  \emph{Geom. Funct. Anal.} \textbf{4} no.~6 (1994), 648--659. \doi{10.1007/BF01896657}.

\bibitem[Lei98]{Leibman98}
\bgroup\scshape{}A.~Leibman\egroup{}, Multiple recurrence theorem for measure preserving actions of a nilpotent group,  \emph{Geom. Funct. Anal.} \textbf{8} no.~5 (1998), 853--931. \doi{10.1007/s000390050077}.

\bibitem[LSS24]{LSS24arXiv}
\bgroup\scshape{}J.~Leng\egroup{}, \bgroup\scshape{}A.~Sah\egroup{}, and \bgroup\scshape{}M.~Sawhney\egroup{}, Improved bounds for szemer\'{e}di's theorem, 2024. Available at \url{https://arxiv.org/abs/2402.17995}.

\bibitem[Luk48]{Lukomskaya48}
\bgroup\scshape{}M.~A. Lukomskaya\egroup{}, A new proof of the theorem of van der {W}aerden on arithmetic progressions and some generalizations of this theorem,  \emph{Russian Math. Surveys} \textbf{3} no.~6 (1948). Available at \url{http://mi.mathnet.ru//eng/rm8778}.

\bibitem[McC99]{McCutcheon99}
\bgroup\scshape{}R.~McCutcheon\egroup{}, \emph{Elemental methods in ergodic {R}amsey theory}, \emph{Lecture Notes in Mathematics} \textbf{1722}, Springer-Verlag, Berlin, 1999.

\bibitem[Mor17]{Moreira17}
\bgroup\scshape{}J.~Moreira\egroup{}, Monochromatic sums and products in {$\Bbb N$},  \emph{Ann. of Math. (2)} \textbf{185} no.~3 (2017), 1069--1090. \doi{10.4007/annals.2017.185.3.10}.

\bibitem[Par24]{parini2024vanderwaerdentype}
\bgroup\scshape{}E.~Parini\egroup{}, Van der waerden type theorem for amenable groups and fc-groups, 2024. Available at \url{https://arxiv.org/abs/2411.15987}.

\bibitem[Rab75]{Rabung75}
\bgroup\scshape{}J.~R. Rabung\egroup{}, {O}n applications of van der {W}aerden's theorem,  \emph{Math. Mag.} \textbf{48} (1975), 142--148. \doi{10.2307/2689695}.

\bibitem[Rad43]{Rado43}
\bgroup\scshape{}R.~Rado\egroup{}, Note on combinatorial analysis,  \emph{Proc. London Math. Soc. (2)} \textbf{48} (1943), 122--160. \doi{10.1112/plms/s2-48.1.122}.

\bibitem[Rad33]{Rado33}
\bgroup\scshape{}R.~Rado\egroup{}, Studien zur {K}ombinatorik,  \emph{Math. Z.} \textbf{36} no.~1 (1933), 424--470. \doi{10.1007/BF01188632}.

\bibitem[S{\'{a}}r78]{Sarkozy78}
\bgroup\scshape{}A.~S{\'{a}}rk\"{o}zy\egroup{}, On difference sets of sequences of integers. {I},  \emph{Acta Math. Acad. Sci. Hungar.} \textbf{31} no.~1--2 (1978), 125--149. \doi{10.1007/BF01896079}.

\bibitem[Sch73]{Schur73}
\bgroup\scshape{}I.~Schur\egroup{}, \emph{Gesammelte {A}bhandlungen. {B}and {I}}, Springer-Verlag, Berlin-New York, 1973, Herausgegeben von Alfred Brauer und Hans Rohrbach.

\bibitem[She88]{Shelah88}
\bgroup\scshape{}S.~Shelah\egroup{}, Primitive recursive bounds for van der {W}aerden numbers,  \emph{J. Amer. Math. Soc.} \textbf{1} no.~3 (1988), 683--697. \doi{10.2307/1990952}.

\bibitem[Soi09]{Soifer09}
\bgroup\scshape{}A.~Soifer\egroup{}, \emph{The mathematical coloring book}, Springer, New York, 2009, Mathematics of coloring and the colorful life of its creators, With forewords by Branko Gr\"{u}nbaum, Peter D. Johnson, Jr. and Cecil Rousseau.

\bibitem[SS78]{SS78}
\bgroup\scshape{}R.~S. Stevens\egroup{} and \bgroup\scshape{}R.~Shantaram\egroup{}, {C}omputer-generated van der {W}aerden partitions,  \emph{Math. Comp.} \textbf{32} no.~142 (1978), 635--636. \doi{10.2307/2006173}.

\bibitem[Sze75]{Szemeredi75}
\bgroup\scshape{}E.~Szemer{\'e}di\egroup{}, On sets of integers containing k elements in arithmetic progression,  \emph{Acta Arithmetica} \textbf{27} no.~1 (1975), 199--245 (eng). Available at \url{http://eudml.org/doc/205339}.

\bibitem[TZ08]{TZ08}
\bgroup\scshape{}T.~Tao\egroup{} and \bgroup\scshape{}T.~Ziegler\egroup{}, The primes contain arbitrarily long polynomial progressions,  \emph{Acta Mathematica} \textbf{201} no.~2 (2008), 213--305. \doi{10.1007/s11511-008-0032-9}.

\bibitem[Tay82]{Taylor82}
\bgroup\scshape{}A.~D. Taylor\egroup{}, A note on van der {W}aerden's theorem,  \emph{J. Combin. Theory Ser. A} \textbf{33} no.~2 (1982), 215--219. \doi{10.1016/0097-3165(82)90011-5}.

\bibitem[Tod97]{Todorcevic97}
\bgroup\scshape{}S.~Todorcevic\egroup{}, \emph{Topics in topology}, \emph{Lecture Notes in Mathematics} \textbf{1652}, Springer-Verlag, Berlin, 1997. \doi{10.1007/BFb0096295}.

\bibitem[vdW27]{vdW27}
\bgroup\scshape{}B.~L. van~der Waerden\egroup{}, Beweis einer {B}audetschen {V}ermutung,  \emph{Nieuw. Arch. Wisk.} \textbf{15} (1927), 212--216.

\bibitem[vdW98]{vdW98}
\bgroup\scshape{}B.~L. van~der Waerden\egroup{}, Wie der {B}eweis der {V}ermutung von {B}audet gefunden wurde,  \emph{Elem. Math.} \textbf{53} no.~4 (1998), 139--148. \doi{10.1007/s000170050045}.

\bibitem[Wal00]{Walters00}
\bgroup\scshape{}M.~Walters\egroup{}, Combinatorial proofs of the polynomial van der {W}aerden theorem and the polynomial {H}ales-{J}ewett theorem,  \emph{J. London Math. Soc. (2)} \textbf{61} no.~1 (2000), 1--12. \doi{10.1112/S0024610799008388}.

\bibitem[Wit52]{Witt52}
\bgroup\scshape{}E.~Witt\egroup{}, Ein kombinatorischer {S}atz der {E}lementargeometrie,  \emph{Math. Nachr.} \textbf{6} (1952), 261--262. \doi{10.1002/mana.19520060502}.

\bibitem[ZK14]{ZK14}
\bgroup\scshape{}P.~Zorin-Kranich\egroup{}, A nilpotent {IP} polynomial multiple recurrence theorem,  \emph{J. Anal. Math.} \textbf{123} (2014), 183--225. \doi{10.1007/s11854-014-0018-5}.

\end{thebibliography}

%==========================================================
%==========================================================

%AFFILIATION
\bigskip
\footnotesize
\noindent
Vitaly Bergelson\\
\textsc{The Ohio State University}\\
\href{mailto:bergelson.1@osu.edu}
{\texttt{bergelson.1@osu.edu}}

\bigskip
\noindent
Florian K.\ Richter\\
\textsc{{\'E}cole Polytechnique F{\'e}d{\'e}rale de Lausanne (EPFL)}\\
\href{mailto:f.richter@epfl.ch}
{\texttt{f.richter@epfl.ch}}

%END DOCUMENT
\end{document}